\documentclass{amsart}
\usepackage{basic}

\title{A comparison problem for abelian surfaces and descent for symplectic orbital integrals}
\author{Thomas R\"ud}
\address{Department of Mathematics, Massachusetts Institute of Technology\\Cambridge, MA, 02139}
\email{rud@mit.edu}
\date{}

\hypersetup{
    colorlinks=true,
    linkcolor=red,
    filecolor=magenta,      
    urlcolor=cyan,
    }
\setcounter{tocdepth}{1}

\begin{document}

\begin{abstract}
To answer a question about the distribution of products of elliptic curves in isogeny classes of abelian surfaces defined over finite fields, we compute specific orbital integrals in the group $\mathrm{GSp}_4$. More precisely, we compute integrals over the orbits of elements in the subgroup $\mathrm{GL}_2\times_{\det} \mathrm{GL}_2$. As a first step towards a complete solution of the problem, this article contains explicit computations for arbitrary orbital integrals of spherical functions over this subgroup, and also compute orbital integrals over $\mathrm{GSp}_4$ in a large number of cases. 
\end{abstract}

\maketitle

\tableofcontents

\section{Introduction}

A big focus of modern representation theory of $p$-adic groups is the study of several trace formulae (relative, stable, \dots). The 
geometric side of the trace formula is expressible in terms of orbital integrals. 

Given an algebraic group, such objects integrate test functions over some conjugation orbits. Despite their central role, explicit computations 
are still largely not explored for groups beyond $\mathrm{GL}_2$.

In recent explorations of mass formulae for abelian varieties over finite fields (\cite{ag-gsp}), the authors 
worked with orbital integrals appearing in Langlands--Kottwitz formula to measure the size of isogeny classes of
abelian varieties in terms of similitude groups. 

Jeff Achter followed this exploration with a question: ``Given a product of two elliptic curves, how many isomorphism classes
in its isogeny class as a principally polarized abelian variety also decompose as products of elliptic curves?''

At first glance, this problem may appear straightforward, as comparing orbital integrals is often simpler than computing them and has been addressed using tools like parabolic descent and endoscopy, but none of those tools can be applied to solve the problem fully.

While we do not fully answer the question in this paper, we introduce the computational tools to tackle explicit computations of orbital integrals and will be instrumental in the next steps. 
This article aims to illustrate the techniques and use them in new nontrivial examples. \\

Firstly, \S \ref{sec:prelim} introduces the groups that we will study, the terminology to state the problem.
Then, we dedicate \S \ref{sec:kottmethod} to introduce Kottwitz's approach, relating orbital integrals to point count on buildings, using $\mathrm{SL}_2$ as an example. In addition to provide an illustrated guide to this method, the computations will be useful in \S \ref{sec:exporbit} to fully determine orbital integrals over   $\mathrm{GL}_2\times_{\det}\mathrm{GL}_2$. This is applicable to count the number of isomorphism classes inside an isogeny 
class of products of elliptic curves (as products of elliptic curves). Theorems \ref{th:shalikhyper}, \ref{th:shalik-elliptic} and \ref{th:mixedshalika} give explicit Shalika germ expansions for spherical functions over hyperbolix, elliptic, and mixed orbits respectively.

When we are dealing with non-elliptic orbit, the computation in $\mathrm{GSp}_4$ can be reduced to an integral over the subgroup explicitly, using several instances of parabolic descent. We cover 
this in \S \ref{sec:parabdescent}. In particular, given a product of $n$ nonisogenous elliptic curves we give a formula for the descent of integrals in the Langlands--Kottwitz formula which holds for 
$1-2^{-n}$ proportion of integrals. This gives an approximation of the proportion of principally polarized abelian varieties decomposing as products of elliptic curves, described in Remarks \ref{rem:idealworld} and \ref{rem:idealworld2}

Finally, \S \ref{sec:equivalued} is dedicated explore elliptic orbits and give preliminary results. We compute orbital integrals of regular semisimple elements under a strong assumption of being \emph{equivalued} so that we can exploit the notion of  \emph{purity} of the corresponding  Springer fiber and apply results of \cite{gkmpure}. Although this fails to give a complete answer for elliptic orbits, this highlights the lack of straightforward descent formula such as the one in \S \ref{sec:parabdescent}.

We hope to achieve similar results to non-equivalued elements in the future and hence obtain an entirely explicit formula for the number of isomorphism classes in an isogeny class of a product of two elliptic curves (isogenies as principally polarized abelian varieties) in all possible cases, as well as answer Achter's question. \\

There are three appendices to this paper. The first one contains all the combinatorial facts about counting points on infinite regular trees, which we use heavily in \S \ref{sec:kottmethod} and \S \ref{sec:exporbit}. The second appendix 
is handling computations of the number of orbits within a stable orbit. Finally, the last appendix shows an example where Kottwitz's technique
 is applied to relative orbital integrals. Relative orbital integrals are growing in importance with the rise of the relative Langlands programme and the relative trace formula. 
 Such integrals are associated with \emph{spherical pairs} $(H,G)$ where $H$ is a subgroup of $G$, and correspond to the integral of a test function over the $H$-orbit of an element of $G$. We compute integrals 
 in the case of the pair  $(\mathrm{GL}_2,\mathrm{GL}_2\times\mathrm{GL}_2)$ where the first group embeds diagonally. 
 Similar methods could be used to compute integrals for other spherical pairs such as the Friedberg--Jacquet pair $(\mathrm{GL}_2\times \mathrm{GL}_2,\mathrm{GL}_4)$ or the Flicker--Rallis 
 pair $(\mathrm{GL}_2, R_{E/F}\mathrm{GL}_2)$ where $E/F$ is a quadratic extension.  

\subsection*{Acknowledgements} We thank Jeff Achter for presenting the problem, as well as Julia Gordon,  
and Niranjan Ramachandran for their help, discussions, and comments. We want to thank Cheng-Chiang Tsai in particular for many meetings
where he patiently taught the author the theory behind the computations in \S \ref{sec:equivalued}.
	``\section{Preliminaries}\label{sec:prelim}

\subsection{A distinguished subgroup of the similitude group}

Let $F$ be a field and $(V , \langle\ ,\ \rangle)$ be a nondegenerate symplectic space of dimension $2n$ over $F$.

Assume that $(V,\langle\ ,\ \rangle)$ splits as a direct sum $(W_1,\ \langle\ ,\ \rangle_1)\oplus (W_1,\ \langle\ ,\ \rangle_2)$. Write $\lambda : \mathrm{GSp}(V)\rightarrow F^\times$ for the multiplier of similitudes.

Define $H = \mathrm{GSp}(W_1)\times_{\lambda}\mathrm{GSp}(W_2)$, defined as the pullback 
\[\begin{tikzcd}
H \ar[d]\ar[r]& \mathrm{GSp}(W_1)\ar[d,"\lambda"]\\
\mathrm{GSp}(W_2)\ar[r,"\lambda"]& F^\times
\end{tikzcd}.\]
 
 Then we have $H = \mathrm{GSp}(W_1)\times\mathrm{GSp}(W_2)\cap \mathrm{GSp}(V)$ inside $\mathrm{GL}(V)$. We are interested in a particular case of such a pullback, when $W_1,W_2$ are quadratic spaces. \\

Fix a Witt basis $\{e_{1}, \cdots, e_n,f_1,\dots, f_n\}$ such that $\langle e_i, f_i \rangle = \delta_{i,j}$. For $1\leq i\leq n$, write $(V_i, \langle\ ,\ \rangle_i)$ the two-dimensional symplectic space generated by $e_i, f_i$ and $\langle\ ,\ \rangle_i$ is defined by restriction of $\langle\ ,\ \rangle$. We have $V = \bigoplus_{i=1}^nV_i$.  This yields an embedding 
\[\mathrm{Sp}(V_1, \langle\ ,\ \rangle_1)\times \cdots \times \mathrm{Sp}(V_n, \langle\ ,\ \rangle_n)\xhookrightarrow{} \mathrm{Sp}(V,\langle\ ,\ \rangle).\]

If $g\in \mathrm{GSp}(V, \langle\ ,\ \rangle)$ (resp. $\mathrm{GSp}_{i}(V_i, \langle\ ,\ \rangle_i)$), denote by $\lambda(g)$ (resp. $\lambda_i(g)$) its multiplier. 
Note that for $g\in \mathrm{GSp}_{i}(V_i, \langle\ ,\ \rangle_i) = \mathrm{GL}(V_i)$, we have $\lambda_i(g)  =\det(g)$.\\

Given $g = (g_1,\cdots, g_n) \in \mathrm{GSp}(V_1, \langle\ ,\ \rangle_1)\times \cdots \times \mathrm{GSp}(V_n, \langle\ ,\ \rangle_n)$, the action of $g$ on $V = \bigoplus_{i=1}^n$ belongs to $\mathrm{GSp}(V,\langle\ ,\ \rangle)$ whenever $\lambda(g_i) = \lambda(g_j)$ for all $1\leq i,j\leq n$.\\

Define $\mathbf{G} := \left(\mathrm{GL}_2\times\cdots\times \mathrm{GL}_2\right)^\circ$ so that 
\[G = \mathbf{G}(F) = \left\lbrace(g_1,\cdots, g_n)\in \prod_{i=1}^n \mathrm{GL}(V_i) : \ \det(g_i)= \det(g_j)\ \forall i,j\right\rbrace .\]

\[\begin{tikzcd}
    \mathrm{Sp}(V,\langle\ ,\ \rangle)\arrow[hookrightarrow]{r}& \mathrm{GSp}(V,\langle\ ,\ \rangle)\arrow[hookrightarrow]{r}&\mathrm{GL}(V)\\
     \prod_{i=1}^n\mathrm{SL}(V_i)\arrow[hookrightarrow]{u}\arrow[hookrightarrow]{r}& G\arrow[hookrightarrow]{u}\arrow[hookrightarrow]{r}& \prod_{i=1}^n\mathrm{GL}(V_i)\arrow[hookrightarrow]{u}\\
\end{tikzcd}\]
Working in  the Witt basis, we can write the inclusion of $G$ into $\mathrm{GSp}(V,\langle\ ,\ \rangle)= \mathrm{GSp}_{2n}$ as

\[\left(\begin{pmatrix}
    a_1&b_1\\ c_1&d_1
\end{pmatrix}, \cdots, \begin{pmatrix}
    a_n&b_n\\ c_n&d_n
\end{pmatrix}\right)\mapsto \left(\begin{array}{ccc|ccc}
a_1&      &   &b_1&      &   \\
   &\ddots&   &   &\ddots&   \\
   &      &a_n&   &      &b_n \\\hline
   c_1&      &   &d_1&      &   \\
   &\ddots&   &   &\ddots&   \\
   &      &c_n&   &      &d_n \\
\end{array}\right).\]

\begin{rem}
We will mostly focus on the case $n=2$, and will therefore use the more explicit notation
$\mathbf{G} = \mathrm{GL}_2\times_{\det}\mathrm{GL}_2\subset \mathrm{GSp}_4$.
\end{rem}

On the level of Lie algebras, $\mathrm{Lie}(G)$ is the subalgebra of $\mathfrak{gsp}_{2n}$ spanned by the long roots. In particular, making a choice of positive roots (see \S \ref{sec:roots} for a description of the root system), the only face of the positive Weyl chamber perpendicular to all the long roots is the trivial one. Consequently, we have the following. 

\begin{prop}\label{prop:nonpara}
The group $\mathbf{G}(F)$  is not contained in any parabolic subgroup of $\mathrm{GSp}_{2n}(F)$.
\end{prop}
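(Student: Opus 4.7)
The plan is to reduce to a small root-system computation. Since $\mathbf{G}(F)$ contains the standard diagonal maximal torus $T$ of $\mathrm{GSp}_{2n}$ (the subtorus cut out in $\prod \mathrm{GL}(V_i)$ by a common similitude factor visibly fills out $T$), any parabolic $P$ with $\mathbf{G}(F)\subseteq P(F)$ automatically contains $T$. Parabolic subgroups of $\mathrm{GSp}_{2n}$ containing $T$ are in bijection with parabolic subsets of the root system $\Phi$ of type $C_n$, namely subsets $\Psi\subseteq\Phi$ closed under root-addition and satisfying $\Psi\cup(-\Psi)=\Phi$. Thus it suffices to show that the only such $\Psi$ with $\mathrm{Lie}(P_\Psi)\supseteq\mathrm{Lie}(\mathbf{G})$ is $\Psi=\Phi$.

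Writing the roots of $C_n$ as the short $\pm e_i\pm e_j$ ($i\neq j$) and the long $\pm 2e_i$, the description of $\mathrm{Lie}(\mathbf{G})$ in the preceding paragraph immediately forces $\pm 2e_i\in\Psi$ for every $i$. To pull the short roots in as well, fix one: $\beta=\epsilon_1 e_i+\epsilon_2 e_j$ with $i\neq j$ and $\epsilon_k\in\{\pm 1\}$. By the parabolic condition $\Psi\cup(-\Psi)=\Phi$ we may assume, after replacing $\beta$ by $-\beta$ if needed, that $-\beta\in\Psi$. Two successive applications of closure, using the long roots already in $\Psi$, yield
\[(-\beta)+2\epsilon_1 e_i=\epsilon_1 e_i-\epsilon_2 e_j\in\Psi,\qquad (\epsilon_1 e_i-\epsilon_2 e_j)+2\epsilon_2 e_j=\beta\in\Psi.\]
Hence every root lies in $\Psi$, so $\Psi=\Phi$ and $P=\mathrm{GSp}_{2n}$.

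I do not expect any real obstacle: the content is just the Weyl-chamber remark preceding the proposition (no nonzero face of the positive chamber is perpendicular to every long root) translated from the standard parabolics to all parabolics. The only delicate point is that an arbitrary parabolic of $\mathrm{GSp}_{2n}$ need not be standard, which is why the preliminary observation $T\subseteq P$ is essential — once it is in hand, the parabolic-subset classification turns the question into the elementary closure computation above.
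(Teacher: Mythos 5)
Your proof is correct and is essentially the same root-theoretic argument the paper invokes; it just makes the paper's one-line remark precise. The paper's proof amounts to the sentence preceding the Proposition (``$\mathrm{Lie}(G)$ is the subalgebra of $\mathfrak{gsp}_{2n}$ spanned by the long roots, and the only face of the positive Weyl chamber perpendicular to all the long roots is the trivial one''), with the conclusion treated as immediate. You supply the two steps that make this rigorous: first, that $\mathbf{G}(F)$ contains the diagonal maximal torus $T(F)$, so any parabolic $P\supseteq\mathbf{G}$ contains $T$ (Zariski density over the infinite field $F$ upgrades $T(F)\subseteq P(F)$ to $T\subseteq P$), reducing to parabolic subsets $\Psi$ of the $C_n$ root system; and second, the closure computation showing that any parabolic subset $\Psi$ containing all long roots $\pm 2e_i$ must contain every short root as well, hence $\Psi=\Phi$ and $P=\mathrm{GSp}_{2n}$. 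The two-step addition $(-\beta)+2\epsilon_1e_i=\epsilon_1e_i-\epsilon_2e_j$ followed by $(\epsilon_1e_i-\epsilon_2e_j)+2\epsilon_2e_j=\beta$ is exactly right. This is the same argument the paper intends, carried out carefully.
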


Another important fact is that the rank of $\mathrm{GSp}_{2n}$ is $n+1$, which is equal to the rank of $\mathbf{G}$, proving the following.
\begin{lemma}\label{lem:maxismax}
Maximal tori of $G$ are also maximal in $\mathrm{GSp}_{2n}$.
\end{lemma}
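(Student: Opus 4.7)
The plan is to leverage the rank comparison that the author flagged just before the lemma, turning the containment of maximal tori into an equality of dimensions.

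First I would make the rank count explicit. The group $\mathbf{G}$ is an index-one component of the fiber product of $n$ copies of $\mathrm{GL}_2$ along the determinant, so its dimension is $4n-(n-1)=3n+1$ and its rank can be read off the evident split torus of diagonal matrices $(t_i, s_i)_{i=1}^n$ subject to $t_i s_i = t_j s_j$ for all $i,j$: this cuts one relation out of the $2n$-dimensional diagonal torus of $\prod \mathrm{GL}_2$, giving a split torus of dimension $n+1$. For $\mathrm{GSp}_{2n}$, the standard diagonal maximal torus consists of matrices $\mathrm{diag}(t_1,\ldots,t_n,\lambda t_1^{-1},\ldots,\lambda t_n^{-1})$, again of dimension $n+1$. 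So both groups have rank $n+1$, as the author asserts.

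Now let $T\subset G$ be a maximal ($F$-)torus. Since being a torus is an intrinsic group-scheme property, $T$ is a torus of $\mathrm{GSp}_{2n}$ as well via the inclusion $G\hookrightarrow \mathrm{GSp}_{2n}$. Moreover, for any connected reductive group over a field, every maximal torus has dimension equal to the rank; hence $\dim T = n+1$. Pick any maximal torus $T'$ of $\mathrm{GSp}_{2n}$ containing $T$ (such $T'$ exists because every torus of a reductive group is contained in some maximal torus). Then $\dim T' = n+1 = \dim T$, and since $T\subset T'$ are both connected algebraic tori of the same dimension, they coincide. Therefore $T$ is itself a maximal torus of $\mathrm{GSp}_{2n}$.

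There is essentially no obstacle here beyond verifying the two rank computations; the only subtlety is conceptual, namely that maximal $F$-tori really do all share the same dimension equal to the absolute rank, which one uses implicitly in asserting that any torus of $\mathrm{GSp}_{2n}$ of dimension $n+1$ is automatically maximal. This is standard for connected reductive groups and requires no additional assumption on $F$.
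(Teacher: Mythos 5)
Your proof is correct and follows the same route the paper uses: the paper's justification is precisely the rank equality $\mathrm{rk}(\mathbf{G}) = \mathrm{rk}(\mathrm{GSp}_{2n}) = n+1$ stated in the preceding sentence, which you have simply made explicit by computing both ranks and noting that a torus of a reductive group whose dimension equals the absolute rank must be maximal.
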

\subsection{Statement of the problem.}

Now and for the rest of this paper, the field $F$ is a local nonarchimedean field unless stated otherwise. Let $\mathcal{O}_F$ denote the ring of integers 
of $F$ and $\varpi\in F$ a choice of uniformizer.

Given an algebraic group $\mathbf{H}$ defined over $F$ and  $\gamma\in \mathbf{H}(F)$, we recall the following terms:

\begin{itemize}
\item $\mathbf{H}_\gamma$ is the algebraic group corresponding to the centralizer of $\gamma$.
\item $\gamma$ is \emph{regular semisimple} if its centralizer $\mathbf{H}_\gamma(F)$ is the group of points of a maximal torus of $\mathbf{H}(F)$.
\item $\gamma$ is \emph{unipotent} if $\gamma-1$ is nilpotent. 
\item $\gamma$ is \emph{hyperbolic} if $\mathbf{H}_\gamma$ is a split torus.
\item $\gamma$ is \emph{elliptic} if $\mathbf{H}_\gamma(F)$ is compact modulo center.
\end{itemize}

In appendix \ref{sec:storbits} we introduce the notion of orbits and stable orbits of $\gamma$. Simply put, two 
elements are in the same stable orbit if they are conjugate by a group element (possibly defined over an extension of $F$); if this element is defined over $F$ then 
we say that those elements are in the same (ordinary/rational) orbit.

\begin{rem} Our definition of ``stably conjugate'' is sometimes different, and our definition corresponds to the term ``$\bar{F}$-conjugate'' instead. However, the two notions are  
equal when the derived subgroup of $\mathbf{H}$ is simply connected, which is always the case in this article. 
\end{rem}

As algebraic groups, the stable orbit can be identified with $(\mathbf{H}_\gamma\backslash\mathbf{H})(F)$, whereas the ordinary orbit is 
identified with the subset $\mathbf{H}_\gamma(F)\backslash\mathbf{H}(F)$.

Given $\gamma\in \mathbf{H}(F)$, and a function $f\in\mathcal{C}_c^\infty(\mathbf{H}(F))$, we define orbital integrals and \emph{stable} orbital integrals, as 

\[\mathrm{Orb}_{\mathbf{H}}(\gamma,f):= \int_{\mathbf{H}_\gamma(F)\backslash \mathbf{H}(F)} f(g^{-1}\gamma g)\ \mathrm{d}g/\mathrm{d}g_{\gamma}\]
and
\[\mathrm{Orb}^{\mathrm{st}}_{\mathbf{H}}(\gamma,f):= \int_{(\mathbf{H}\backslash \mathbf{H})(F)} f(g^{-1}\gamma g)\ \mathrm{d}g/\mathrm{d}g_{\gamma}\]
respectively. We may omit the subscript later, when there is no ambiguity in the group.  Such integrals are dependent of the choice of Haar measures on $\mathbf{H}(F)$ and $\mathbf{H}_\gamma(F)$. 

However, we will endeavor to pick \emph{compatible measures}, in the sense that if $\gamma_1,\dots, \gamma_r$ are representatives of 
the different orbits within the stable orbit of $\gamma$, we want 
\[\mathrm{Orb}^{\mathrm{st}}_{\mathbf{H}}(\gamma, f) = \sum_{i=1}^r \mathrm{Orb}_{\mathbf{H}}(\gamma_i,f).\] 
\begin{lemma}
Given $\gamma\in \mathbf{G}(F)$ a regular semisimple element, it is also a regular semisimple of $\mathrm{Gsp}_4$ and 
its centralizer with respect to $G$ and $\mathrm{GSp}_{2n}$ are equal. 
Moreover, the number of $\mathbf{G}$-orbits within its $\mathbf{G}$-stable orbits is always equal to the number of $\mathrm{GSp}_{2n}$ orbits 
within its $\mathrm{GSp}_{2n}$-stable orbit. When $n=2$, such a number can be found in Proposition \ref{prop:orbitsgl22}.
\end{lemma}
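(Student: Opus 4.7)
The plan is to exploit Lemma~\ref{lem:maxismax}. Since $\gamma$ is regular semisimple in $\mathbf{G}$, its centralizer $T := \mathbf{G}_\gamma$ is a maximal torus of $\mathbf{G}$, and by that lemma $T$ is also a maximal torus of $\mathrm{GSp}_{2n}$. Clearly $T \subseteq \mathrm{Cent}_{\mathrm{GSp}_{2n}}(\gamma)$, so I would first reduce the first two assertions to showing this inclusion is an equality. Since the centralizer of a semisimple element in a connected reductive group is itself reductive and contains every maximal torus through that element, $\mathrm{Cent}_{\mathrm{GSp}_{2n}}(\gamma)$ is generated by $T$ together with the root subgroups $U_\alpha$ for those roots $\alpha$ of $\mathrm{GSp}_{2n}$ with respect to $T$ satisfying $\alpha(\gamma) = 1$. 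Hence the task reduces to verifying $\alpha(\gamma) \neq 1$ for every root $\alpha$ of $\mathrm{GSp}_{2n}$.

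The root system of $\mathrm{GSp}_{2n}$ with respect to $T$ is of type $C_n$, and as recalled in the preliminaries it splits into the long roots---exactly the roots of $\mathbf{G}$---and the short roots. Regularity of $\gamma$ in $\mathbf{G}$ immediately gives $\alpha(\gamma) \neq 1$ for every long root. The main obstacle I expect is the analogous statement for short roots: writing $\gamma = (\gamma_1,\ldots,\gamma_n)$ with $\gamma_i \in \mathrm{GL}(V_i)$ having eigenvalues $\alpha_i, \beta_i$ subject to $\alpha_i \beta_i = \lambda$ for the common similitude factor $\lambda$, the short roots evaluate on $\gamma$ to ratios of eigenvalues coming from distinct $\mathrm{GL}_2$ factors. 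I would argue by contradiction: if some short root vanishes on $\gamma$, then $T$ together with the extra root subgroup generates a pseudo-Levi centralizing $\gamma$ that is strictly larger than $T$, forcing $\gamma$ to lie in a proper parabolic $P \subset \mathrm{GSp}_{2n}$. One then tries to leverage Proposition~\ref{prop:nonpara}, together with the fact that $\mathbf{G}$-conjugates of $\gamma$ should generate $\mathbf{G}$ in a suitable sense, to contradict the nonparabolicity of $\mathbf{G}$.

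For the orbit count I would switch to Galois cohomology. For any connected reductive $\mathbf{H}$ over $F$ with $\mathbf{H}_\gamma = T$ a maximal torus, the set of rational orbits inside the stable orbit of $\gamma$ is naturally in bijection with $\ker\!\bigl(H^1(F,T)\to H^1(F,\mathbf{H})\bigr)$. Applying this to both $\mathbf{H} = \mathbf{G}$ and $\mathbf{H} = \mathrm{GSp}_{2n}$ with the same torus $T$ (by the previous step), it suffices to establish $H^1(F,\mathbf{G}) = H^1(F,\mathrm{GSp}_{2n}) = 1$, so that both kernels become all of $H^1(F,T)$. This follows from the short exact sequences $1 \to \mathrm{Sp}_{2n} \to \mathrm{GSp}_{2n} \to \mathbb{G}_m \to 1$ and $1 \to \mathrm{SL}_2^n \to \mathbf{G} \to \mathbb{G}_m \to 1$ induced by the similitude character, together with Kneser's vanishing theorem for simply connected semisimple groups over non-archimedean local fields and Hilbert~90 for $\mathbb{G}_m$.

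The bulk of the work therefore lies in the short-root step for the first two assertions; once the centralizers are known to coincide, the equality of orbit counts is a formal consequence of the cohomological computation, and the explicit value for $n=2$ is then provided by Proposition~\ref{prop:orbitsgl22}.
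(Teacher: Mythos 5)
Your cohomological argument for the orbit count matches the paper's: the paper likewise invokes $H^1(F,\mathrm{GSp}_{2n}(\bar F))\cong H^1(F,\mathbf{G}(\bar F))=1$ to identify both orbit counts with $|H^1(F,\mathbf{G}_\gamma(\bar F))|$, just without spelling out Kneser's theorem and Hilbert~90. Your reduction of the centralizer claim to checking $\alpha(\gamma)\neq 1$ for the short roots is in fact more honest than the paper's own treatment (which simply declares ``therefore the two centralizers are equal'' once $G_\gamma$ is known to be a maximal torus of $\mathrm{GSp}_{2n}$), so you have correctly located the substantive step. But the route you sketch for it does not close the gap. Proposition~\ref{prop:nonpara} says the \emph{subgroup} $\mathbf{G}(F)$ is not contained in any proper parabolic of $\mathrm{GSp}_{2n}(F)$; a single element $\gamma$ certainly can be, and the pseudo-Levi you build centralizes only $\gamma$, not all of $\mathbf{G}$. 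Conjugating $\gamma$ by $g\in\mathbf{G}(F)$ also conjugates that pseudo-Levi, so the $\mathbf{G}$-conjugates of $\gamma$ never accumulate inside one fixed parabolic — there is no contradiction to be extracted from nonparabolicity.

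In fact the claim you are trying to prove needs an extra hypothesis. Write $\gamma=(\gamma_1,\gamma_2)$ with $\gamma_i$ having eigenvalues $t_i$ and $\lambda t_i^{-1}$; the short roots take the values $t_2/t_1$ and $t_1t_2/\lambda$ on $\gamma$, and at least one of them equals $1$ precisely when $\gamma_1$ and $\gamma_2$ share an eigenvalue, i.e.\ are $\mathrm{GL}_2(\bar F)$-conjugate. In that case $\gamma$ can still be regular semisimple in $\mathbf{G}$ — take $\gamma_1=\gamma_2$ regular semisimple in $\mathrm{GL}_2$; the centralizer in $\mathbf{G}$ is the rank-three torus $\{(s,t)\in T_1\times T_1 : \det s=\det t\}$ with $T_1=Z_{\mathrm{GL}_2}(\gamma_1)$ — but $Z_{\mathrm{GSp}_4}(\gamma)$ is then a Levi of positive semisimple rank, not a torus, so $\gamma$ is not regular semisimple in $\mathrm{GSp}_4$ and the two centralizers disagree. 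The paper's own proof has exactly the gap you identified, so the lemma as stated is not quite right; the fix is to add the hypothesis that no short root of $\mathrm{GSp}_{2n}$ vanishes on $\gamma$, equivalently that no two of the $\gamma_i$ are $\mathrm{GL}_2(\bar F)$-conjugate. In the paper's intended applications — Frobenii of pairwise non-isogenous elliptic curves — this holds automatically, so the downstream results are unaffected, but the step cannot be waved away.
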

\begin{proof}
The first part comes from Lemma \ref{lem:maxismax}. Indeed, 
the centralizer $G_\gamma$ inside $G$ is also a maximal torus of $\mathrm{GSp}_{2n}(F)$
and therefore the two centralizers are equal. 

Since $H^1(F, \mathrm{GSp}_{2n}(\bar{F})) \cong H^{1}(F,\mathbf{G}(\bar{F})) = 1$, then the number 
of orbits within a stable orbit for either group is equal to $|H^1(F, \mathbf{G}_\gamma(\bar{F}))|$.
\end{proof}

\begin{problem}\label{prob:main}
Let $\gamma\in \mathbf{G}(F)$ be regular semisimple element and let $f$ the characteristic function of the maximal compact $\mathrm{GSp}_{2n}(\mathcal{O}_F)$. Fix Haar measures on 
$G=\mathbf{G}(F), G_\gamma=\mathbf{G}_\gamma(F)$, and $\mathrm{GSp}_{2n}(F)$, find $R(\gamma), R^{\mathrm{st}}(\gamma)$ so that 
\[\int_{G_\gamma\backslash \mathrm{GSp}_{2n}(F)} f(h^{-1}\gamma h)\ \mathrm{d}h/\mathrm{d}g_\gamma = R(\gamma)\int_{G_\gamma\backslash G} f(g^{-1}\gamma g)\ \mathrm{d}g/\mathrm{d}g_\gamma ,\]
and 
\[\int_{(\mathbf{G}_\gamma\backslash \mathrm{GSp}_{2n})(F)} f(h^{-1}\gamma h)\ \mathrm{d}h/\mathrm{d}g_\gamma = R^{\mathrm{st}}(\gamma)\int_{(\mathbf{G}_\gamma\backslash \mathbf{G})(F)} f(g^{-1}\gamma g)\ \mathrm{d}g/\mathrm{d}g_\gamma .\]

\end{problem}
\begin{problem}\label{prob:spheric}
More generally, what happens when we replace $f$ by an arbitrary bi-$\mathrm{GSp}_{2n}(\mathcal{O}_F)$ invariant function on $\mathrm{GSp}_{2n}(F)$.
\end{problem}

\subsection{Relation to elliptic curves.}
The work of Gekeler \cite{gekeler}, followed by \cite{ag-gl2} and \cite{ag-gsp} aim to 
give concrete formulas to count points on Shimura varieties corresponding to isogeny classes
of principally polarized abelian varieties over some finite field $\mathbb{F}_q$.

The formulas in \cite{ag-gl2,ag-gsp} are obtained by changing the measure on the Langlands--Kottwitz formula below.

\begin{theorem}[Langlands--Kottwitz formula, {\cite{kottformula}}]
Let $[X,\lambda]$ be a principally polarized abelian variety of dimension $g$ defined over a finite field $\mathbb{F}_q$. We have 
\[\sum_{[Y,\mu]\sim[X,\lambda]}\frac{1}{|\mathrm{Aut([Y,\mu])|}} = \mathrm{vol}(\mathbf{T}(\mathbb{A}_f)/\mathbf{T}(\mathbb{Q}))\mathrm{TO}(\gamma, f_p)\prod_{\ell\neq p}\mathrm{Orb}_{\mathrm{GSp}_{2g}}(\gamma, f_\ell)\]
where: 
\begin{itemize}
\item $\gamma$ denotes the Frobenius element of $X$, 
\item $\mathbf{T}$ is the centralizer of $\gamma$ inside $\mathrm{GSp}_{2g}$,
\item $p=\mathrm{char}(\mathbb{F}_q)$,
\item $f_\ell = \mathds{1}_{\mathrm{GSp}_{2g}(\mathbb{Z}_\ell)}$ if $\ell\neq p$,
\item $\mathrm{TO}(\gamma, f_p)$ is a twisted orbital integral of a more complicated function that we do not define here.  
\end{itemize}
\end{theorem}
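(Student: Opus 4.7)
The plan is to follow Kottwitz's argument, which realizes the left-hand side as a groupoid cardinality and then unwinds it adelically, using Honda--Tate theory together with Dieudonné theory at the characteristic prime. First I would fix $(X,\lambda)$ as a base point and attach to it its Frobenius $\gamma$, its centralizer $\mathbf{T}=\mathrm{Cent}_{\mathrm{GSp}_{2g}}(\gamma)$, and for every finite place $v$ the corresponding rational Tate module (if $v=\ell\neq p$) or rational Dieudonné module (if $v=p$), each equipped with its induced symplectic form. Every $[Y,\mu]$ isogenous to $[X,\lambda]$ is then represented by a quasi-isogeny $X\to Y$ together with the induced integral structure at each finite place; two such representatives yield isomorphic polarized abelian varieties iff they differ by the diagonal action of an element of $\mathbf{T}(\mathbb{Q})$, and the automorphism group of $[Y,\mu]$ is exactly the stabilizer of the integral data in $\mathbf{T}(\mathbb{Q})$.

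Next I would invoke the standard ``mass equals volume'' principle: if we pick Haar measures on $\mathbf{T}(\mathbb{A}_f)$ so that every integral structure has stabilizer of volume $1$, then
\[\sum_{[Y,\mu]\sim [X,\lambda]}\frac{1}{|\mathrm{Aut}([Y,\mu])|}=\mathrm{vol}\bigl(\mathbf{T}(\mathbb{Q})\backslash \mathbf{T}(\mathbb{A}_f)\bigr)\cdot N,\]
where $N$ is the adelic volume of the set of admissible integral structures on the rationalized Tate and Dieudonné modules. Since self-duality and $\gamma$-stability are both local conditions, $N$ factors as a restricted product over the finite places.

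The third step would be to identify each local factor of $N$ with a local orbital integral. For $\ell\neq p$, a self-dual $\gamma$-stable $\mathbb{Z}_\ell$-lattice in the rational Tate module corresponds to a coset $g\,\mathrm{GSp}_{2g}(\mathbb{Z}_\ell)$ such that $g^{-1}\gamma g\in \mathrm{GSp}_{2g}(\mathbb{Z}_\ell)$, and integrating over such cosets with the prescribed measure reproduces exactly $\mathrm{Orb}_{\mathrm{GSp}_{2g}}(\gamma,f_\ell)$. At the prime $p$, the same analysis applied to Dieudonné lattices over the Witt vectors $W(\overline{\mathbb{F}_q})$ produces a twisted orbital integral rather than an ordinary one, because Frobenius is $\sigma$-linear over $W(\overline{\mathbb{F}_q})$; this is what the factor $\mathrm{TO}(\gamma,f_p)$ encodes.

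The principal obstacle is the $p$-adic factor. One must first establish that isomorphism classes within the isogeny class are truly classified by Dieudonné modules together with prime-to-$p$ Tate module data (Tate's isogeny theorem for $\ell\neq p$ and classical Dieudonné theory at $p$), and then match the enumeration of Dieudonné lattices with the analytic twisted orbital integral, which requires fixing a framing and identifying the space of such lattices with a set of $\sigma$-conjugacy classes in $\mathrm{GSp}_{2g}$ over the Witt vectors. A final delicate point is the compatibility of Haar-measure normalizations across all places so that the product formula holds on the nose rather than up to a global constant; Kottwitz handles this by working with the Tamagawa measure on $\mathbf{T}$ together with canonical measures at each finite place.
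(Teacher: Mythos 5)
The paper does not prove this statement; it is quoted verbatim from Kottwitz (the cited reference \cite{kottformula}) and used as a black box to motivate the comparison problem. There is therefore no proof in the paper to compare against. That said, your sketch is a faithful high-level account of Kottwitz's original argument: the groupoid-cardinality reformulation of the mass, the adelic unwinding via Tate modules at $\ell\neq p$ and Dieudonn\'e modules at $p$, the factorization into local volume computations, and the identification of those local volumes with orbital integrals (ordinary away from $p$, twisted at $p$ because Frobenius acts $\sigma$-semilinearly on the Dieudonn\'e module). The one thing I would flag is that you should be careful about what ``Haar measures so that every integral structure has stabilizer of volume $1$'' means: the stabilizers vary with the lattice, so the correct statement is that one fixes a Haar measure on $\mathbf{T}(\mathbb{A}_f)$ once and for all (Kottwitz uses the measure giving the canonical maximal compact volume $1$, or equivalently the Tamagawa normalization), and the weights $1/|\mathrm{Aut}|$ then emerge from the orbit--stabilizer count rather than from a measure choice. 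With that caveat, the outline is sound; filling it in to a complete proof would require the substantial technical work you already identify at the end (the $p$-adic framing via $\sigma$-conjugacy classes, and the global compatibility of measure normalizations), which is precisely the content of Kottwitz's paper.
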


The formula above still works when looking at different types of Shimura varieties. Specifically, Achter asked the following question: 

\begin{problem}\label{prob:absurf}
Given $E_1,E_2$ elliptic curves defined over $\mathbb{F}_q$, what  proportion of the isomorphism classes of abelian surfaces 
isogenous to $E_1\times E_2$ as principally polarized abelian varieties, are also isogenous as products of elliptic curves?\\
\end{problem}

When $q$ is prime or when $E_1\times E_2$ is ordinary, one can 
replace the twisted orbital integral in the Langlands--Kottwitz formula by an orbital integral of an explicit spherical function. 
Therefore, answering Problems \ref{prob:main} and \ref{prob:spheric} will also give an answer to Problem \ref{prob:absurf}.

More generally, one can be interested in estimating the proportion of points on a Shimura variety of a certain type. This would correspond to comparison of 
orbital integrals over various groups.

In \cite{milnecount}, the author counts the number of extensions of arbitrary abelian varieties. Although this does not answer
our problem, it will be interesting to compare the two results. 

\begin{theorem}[{\cite[Theorem 3, p.77]{milnecount}}] \label{th:milne}
If $A$ and $B$ are abelian varieties defined over a finite field $\mathbb{F}_q$, then 
\[\left|\mathrm{Ext}^1_{\mathbb{F}_q}(A,B)\right| = \frac{q^{d(A)d(B)}}{\left|\det(\langle \alpha_i,\beta_j \rangle)\right|}\prod_{a_i\neq b_j}\left(1-\frac{a_i}{b_j}\right),\]
where
\begin{itemize}
\item $d(A), d(B)$ are the dimensions of $A,B$ respectively, 
\item $(a_i)_{1\leq i\leq 2d(A)}, (b_j)_{1\leq j\leq 2d(B)}$ are the eigenvalues of the Frobenius 
elements of $A$ and $B$ respectively, 
\item $(\alpha_{i})_{1\leq i\leq r} : A\rightarrow B$ and $(\beta_{i})_{1\leq i\leq r}: B\rightarrow A$ 
are bases for $\mathrm{Hom}_{\mathbb{F}_q}(A,B)$ and $\mathrm{Hom}_{\mathbb{F}_q}(B,A)$,
\item $\langle \alpha_i,\beta_j\rangle$ is the trace of the endomorphism $\beta_j\alpha_i$.
\end{itemize}
\end{theorem}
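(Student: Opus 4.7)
The plan is to decompose $\mathrm{Ext}^1_{\mathbb{F}_q}(A,B)$ into $\ell$-primary parts, evaluate each one separately, and then assemble via a global product formula on rationals.

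For $\ell\neq p$, the standard route is to identify $\mathrm{Ext}^1_{\mathbb{F}_q}(A,B)\otimes\mathbb{Z}_\ell$ with the Galois cohomology group $H^1(\mathbb{F}_q, \mathrm{Hom}_{\mathbb{Z}_\ell}(T_\ell A, T_\ell B))$; starting from the multiplication-by-$\ell^n$ exact sequence on $B$ (viewed in the fppf topos) and taking $\mathrm{Hom}(A,-)$, one passes to the inverse limit and uses finite generation of $\mathrm{Hom}_{\mathbb{F}_q}(A,B)$. Since $\mathrm{Gal}(\bar{\mathbb{F}}_q/\mathbb{F}_q)$ is procyclic, this cohomology is computed as the cokernel of $1-F\otimes F^{-1}$ acting on $\mathrm{Hom}_{\mathbb{Z}_\ell}(T_\ell A,T_\ell B)$, where $F$ is the geometric Frobenius. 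The eigenvalues of $F\otimes F^{-1}$ on the rational tensor space are precisely the ratios $a_i/b_j$, and by the elementary divisor theorem the order of the cokernel equals $|\ker(1-F\otimes F^{-1})|\cdot\prod_{a_i\neq b_j}|1-a_i/b_j|_\ell^{-1}$, where the product ranges only over pairs with $a_i\neq b_j$ and the kernel factor absorbs the remaining eigenvalue-$1$ contributions.

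By Tate's isogeny theorem, the kernel of $1-F\otimes F^{-1}$ is canonically $\mathrm{Hom}_{\mathbb{F}_q}(A,B)\otimes\mathbb{Z}_\ell$. As $\ell$ varies, the kernel contributions combine---via the trace pairing $\langle\alpha,\beta\rangle = \mathrm{tr}(\beta\alpha)$ on $\mathrm{Hom}(A,B)\times\mathrm{Hom}(B,A)$---into the factor $|\det(\langle \alpha_i,\beta_j\rangle)|^{-1}$ appearing in the denominator, while the eigenvalue factors assemble into $\prod_{a_i\neq b_j}(1-a_i/b_j)$ by the product formula $\prod_\ell|x|_\ell^{-1}=|x|_\infty$ on $\mathbb{Q}^\times$. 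For $\ell=p$, the same strategy applies after replacing $T_\ell$ by the contravariant Dieudonn\'e module (or equivalently $H^1_{\mathrm{crys}}$): the eigenvalue factor extends by Manin--Dieudonn\'e slope theory, but there is an additional factor of $q^{d(A)d(B)}$ coming from the Hodge filtration, i.e.\ the difference between the Dieudonn\'e lattice and its Frobenius image, which is invisible on the $\ell$-adic side.

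The main obstacle will be the $p$-adic computation: one has to handle the semi-linear Frobenius on the Dieudonn\'e module carefully, keep track of the Newton--Hodge discrepancy, and verify that the correction to the naive $\ell$-adic template is exactly $q^{d(A)d(B)}$ regardless of the Newton polygon of $A\times B$. This is the technical heart of Milne's argument; the $\ell\neq p$ portion is essentially the classical Herbrand quotient calculation and is comparatively routine.
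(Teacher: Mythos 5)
The paper does not prove this theorem at all: it is imported verbatim from Milne's \emph{Extensions of abelian varieties defined over a finite field} (\cite{milnecount}) purely as a numerical input to Remark~\ref{rem:idealworld}, so there is no internal proof to compare against. Your outline does capture the broad architecture of Milne's actual argument: decompose into $\ell$-primary parts, compute $H^1(\hat{\mathbb{Z}},-)$ for $\ell\neq p$ via $1-F$, invoke Tate's isogeny theorem, switch to Dieudonn\'e modules at $p$, and reassemble through the product formula.

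However, the pivot of your $\ell$-adic step is not coherent as written. You assert that $|\mathrm{coker}(1-F\otimes F^{-1})|=|\ker(1-F\otimes F^{-1})|\cdot\prod_{a_i\neq b_j}|1-a_i/b_j|_{\ell}^{-1}$ with $1-F\otimes F^{-1}$ acting on the free $\mathbb{Z}_\ell$-module $\mathrm{Hom}_{\mathbb{Z}_\ell}(T_\ell A,T_\ell B)$; but the moment the kernel is nonzero --- precisely when $\mathrm{Hom}_{\mathbb{F}_q}(A,B)\neq 0$, which is the case the determinant factor is there to handle --- the cokernel of an endomorphism of a free $\mathbb{Z}_\ell$-module with nontrivial kernel has a free summand and is infinite, so both $|\ker|$ and $|\mathrm{coker}|$ fail to be finite and the identity is vacuous. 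Milne's actual computation does not take the cokernel of $1-F$ on the lattice directly: one works with the boundary maps in the long exact sequence coming from $0\to T_\ell\to V_\ell\to V_\ell/T_\ell\to 0$ (equivalently, with $H^1$ of the torsion module), which extracts a genuinely finite group, and it is there that the discriminant of the trace pairing $\langle\alpha_i,\beta_j\rangle$ materializes as a lattice index between $\mathrm{Hom}(A,B)\otimes\mathbb{Z}_\ell$ and its image. Your phrase ``the kernel contributions combine, via the trace pairing, into $|\det\langle\alpha_i,\beta_j\rangle|^{-1}$'' names the answer without supplying the mechanism, and that mechanism is exactly what is missing once the naive cokernel count collapses. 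The $p$-adic step inherits the same gap, compounded by the semilinear bookkeeping you rightly flag as the technical heart; accounting for the Hodge factor $q^{d(A)d(B)}$ again requires the lattice-index framework, not the cokernel formula. So the sketch names the right tools but omits the one calculation that makes the formula true in the non-coprime case.
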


In particular, when $E_1,E_2$ are nonisogenous elliptic curves defined over $\mathbb{F}_q$, 
then $\mathrm{Hom}_{\mathbb{F}_q}(E_1,E_2)=0$ and $a_1a_2 = b_1b_2 = q$ but $a_1+a_2 \neq b_1+b_2$ so 
eigenvalues of their respective Frobenius elements are distinct.

Therefore,  
\begin{align*}
\left|\mathrm{Ext}^1_{\mathbb{F}_q}(E_1,E_2)\right| 
&= q\left(1- \frac{a_1}{b_1}\right)\left(1- \frac{a_1}{b_2}\right)\left(1- \frac{a_2}{b_1}\right)\left(1- \frac{a_2}{b_2}\right) \\ 
&= \left(1-\frac{a_1}{b_1}\right)\left(1-\frac{b_1}{a_1}\right)\left(1-\frac{a_1b_1}{q}\right)\left(1-\frac{q}{a_1b_1}\right)\\ 
&= \frac{\left(b_1-a_1\right)^2\left(q-a_1b_1\right)^2}{qa_1^2b_1^2}.\end{align*}
is the number of abelian surfaces arising as extensions of $E_1$ by $E_2$ and therefore isogenous to $E_1\times E_2$. However, not every surface isogenous to $E_1\times E_2$ is an extension of $E_1$ by $E_2$.

\begin{rem}
The problem of decomposing an abelian variety into a product of elliptic curves boils down to decomposing a lattice in a CM-extension of $\mathbb{Q}$, which, according to class number considerations, is not unique. Therefore, a given abelian variety may be decomposed in many ways as a product of elliptic curves, even when fixing a polarization.
\end{rem}

\subsection{Approaches to the problem.}

A first approach is to use a method of ``parabolic descent'' for orbital integrals. 

Indeed, when $n=2$ then $G$ is the centralizer of the element $\pmat{1&&&\\&-1&&\\&&1&\\&&&-1}$ in $\mathrm{GSp}_{4}(F)$ but it is, however, not
a Levi subgroup. Moreover, Proposition \ref{prop:nonpara} tells us that $G$ is not contained in any parabolic subgroup. 

Parabolic descent can be used when $\gamma$ is not elliptic, as we will see in \S \ref{sec:parabdescent}. In that case,
the centralizer $G_\gamma$ is contained in a common Levi subgroup of $G$ and $\mathrm{GSp}_4$ so we descend both integrals to integrals over this subgroup. \\

Another appealing method is to use the fundamental lemma for endoscopy, but again, $\mathbf{G}$ is not an endoscopic group for $\mathrm{GSp}_{2n}$. In the case $n=2$, the group $\mathrm{GSp}_4$ is self-dual, hence  
$\mathbf{G}$ is \emph{dual} to an endoscopic group, so one could hope for a nice descent formula.

As mentioned in Appendix \ref{sec:storbits}, the orbits within a stable orbit are in correspondence with the 1-cocycles in $H^1(F,G_\gamma)$. The fundamental lemma asserts that 
 
\[\mathrm{Orb}^{\mathrm{st}}_{\mathbf{H}(F)}(\gamma_H, \mathds{1}_{K}) = \Delta(\gamma_H,\gamma) \sum_{\gamma'\in H^1(F,G_\gamma)}\kappa(\gamma') \mathrm{Orb}_{\mathrm{GSp}_{2n}}(\gamma, \mathds{1}_{\mathrm{GSp}_{2n}(\mathcal{O}_F)}),\]
where $\mathbf{H}$ is an endoscopic group, and for some choice of $\gamma_H\in \mathbf{H}(F)$, a maximal compact subgroup $K\subset \mathrm{H}(F)$  and some specific functions $\kappa$, $\Delta$.

If all $\kappa(\gamma')$ or $H^1(F,G_\gamma)$ were trivial, then we would obtain an equality of stable orbital integrals, but in those cases, we may use parabolic descent as mentioned above. We are still left to 
study the case of elliptic elements. \\

Orbital integrals on the distinguished subgroup can be evaluated explicitly using a method of Kottwitz, and we will do so in \ref{sec:kottmethod}. Such orbital integrals and their corresponding Shalika germs arise in the form of polynomials in $\mathbb{Z}[q^{-1}]$, but as shown in \cite{halesgsp6}, when $n\geq 3$ then subregular Shalika germs on $\mathrm{GSp}_{2n}$ can be 
interpreted as a point counts on hyperelliptic curves that we know not to be expressible as polynomials. This is another reason for which we will restrict our problem to $\mathrm{GSp}_4$ in this paper.

Shalika germs for $\mathrm{GSp}_4$ are computed in \cite{halesgsp4} but are stated in a different way to our explicit computations, which makes
direct comparison hard. We will instead study \emph{equivalued} orbital integrals in $\mathrm{GSp}_4$ explicitly using the study of Springer fibers from \cite{gkmpure}.
This will occupy \S \ref{sec:equivalued}.

\section{Kottwitz's approach and integrals over \texorpdfstring{$\mathrm{SL}_2$}{SL2}.}
\label{sec:kottmethod}
\subsection{Orbital integrals as point count on the building}

Let $\mathbf{G}$ be an algebraic group defined over $F$ acting on itself via conjugation. Let $G= \mathbf{G}(F)$. The Hecke algebra $\mathcal{H}(G)$ is the ring (under convolution product) of locally constant compactly-supported complex valued function on $G$

Letting $K$ be a maximal compact open subgroup of $G$, we consider the \emph{spherical} algebra $\mathcal{H}_K(G)= e_K \mathcal{H}(G)e_K$ of bi-$K$-invariant functions, where $e_K = \mathrm{vol}(K)^{-1}\mathds{1}_{K}$ is an averaging operator. These averaging operators, varying the compact-open subgroup, give $\mathcal{H}(G)$ the structure of an idempotented algebra.\\

Let $\mathcal{B}(\mathbf{G},F)$ be the (reduced) building associated to $G$. It comes 
with an action of $G$ and maximal compact subgroups of $G$ correspond to stabilizers of vertices of $\mathcal{B}(\mathbf{G},F)$. The group $G$ acts transitively on the pairs $(\mathcal{C},\mathcal{A})$, where $\mathcal{A}$ is an apartment containing the chamber $\mathcal{C}$, however note that $G$ does not necessarily act transitively on vertices, but one can color vertices so that the action of $G$ is color-preserving and transitive on vertices of a given color.

%\begin{rem}
%We will only consider the case of subgroups of $\mathrm{GL}_n$ or $\mathrm{SL}_n$ fixed by some involution. Such an involution acts on the building of $\mathrm{GL}_n$ and the building of our group will correspond to the set of fixed points under this involution. For example, given $\langle\ ,\ \rangle$ be a Hermitian/symplectic nondegenerate pairing on an $F$-vector space $V$, we get an involution on lattices of $V$ given by duality: $\Lambda\mapsto \Lambda^\vee = \{v\in V: \langle v,\Lambda\rangle \subset \mathcal{O}_F\}$.

%Lattices of maximal rank in $V$ up to scaling are vertices in the building of $\mathrm{GL}(V)$, and the involution $\Lambda\mapsto \Lambda^\vee$ induces an involution on the entire building, and the set of fixed points is exactly the reduced building of the subgroup of $\mathrm{GL}(V)$ preserving $\langle\ ,\ \rangle$. 
%\end{rem}

Let $\gamma \in G$. We fix a Haar measure $\mathrm{d}\dot{g}= \frac{\mathrm{d}g}{\mathrm{d}g_\gamma}$ on $\mathrm{Orb}_G(\gamma)\cong G_\gamma \backslash G$. Pick a vertex $x\in \mathcal{B}(\mathbf{G},F)$ fixed by  $K$.\\

Let $f = \mathds{1}_{KaK}\in\mathcal{H}_K(G)$ where $a\in G$. Then the map $g\mapsto g^{-1}\gamma g$ is right-$K$-invariant hence
\begin{align}\mathrm{Orb}_{\mathbf{G}}(\gamma,  \mathds{1}_{KaK}) &= \int_{G_\gamma \backslash G} \mathds{1}_{KaK}(g^{-1}\gamma g)\ \mathrm{d}\dot{g}\\
&= \mathrm{vol}_{\mathrm{d}g}(K) \int_{G_\gamma \backslash G/K} \mathds{1}_{KaK}(g^{-1}\gamma g)\ \mathrm{d}\dot{g}\\ 
&\label{eq:orbitsum}=\sum_{\underset{Kg^{-1}\gamma g K = KaK}{g\in G_\gamma \backslash G/K} }\frac{\mathrm{vol}_{\mathrm{d}g}(K)}{\mathrm{vol}_{\mathrm{d}g_\gamma}(G_\gamma\cap gKg^{-1})}.\end{align}
In particular, when $a=1$ we have 
\[\mathrm{Orb}_{\mathbf{G}}(\gamma,  \mathds{1}_{K})=\sum_{\underset{\gamma y = y}{y\in G_\gamma\backslash \mathrm{Orb}_G(x)}}\frac{\mathrm{vol}_{\mathrm{d}g}(K)}{\mathrm{vol}_{\mathrm{d}g_\gamma}(\mathrm{Stab}_{G_\gamma}(y))}.\]
Here, we sum over the $G$-orbit of $x\in \mathcal{B}(\mathbf{G},F)$, up to the action of the centralizer $G_\gamma$, which explains why one usually talks about orbital integrals as a point count on buildings. It bears emphasizing, however, that the point count is not on the whole building, but rather on the 
orbit of a vertex, which is called an \emph{affine Springer fiber}.

\subsection{Fixed points on the tree of \texorpdfstring{$\mathrm{SL}_2$}{SL2}}
Let $X = \mathcal{B}(\mathrm{GL}_2,F)$ be the building associated to our groups. 
The set of vertices of ${X}$ is $F^\times \backslash \mathrm{GL}_2(F)/(\mathrm{GL}_2(\mathcal{O}_F))$ which is the set of lattices inside $F^2$ up to scaling.  Two vertices $x\neq x'$ are neighbors if there are $\Lambda\in x, \Lambda'\in x'$ so that 
\[\varpi\Lambda'\subsetneq \Lambda \subsetneq \Lambda'.\]

The building is an infinite $q+1$-regular tree, the neigbours of $\Lambda$ corresponding to a choice of a line in $\Lambda/\varpi\Lambda\cong \mathbb{F}_q$ hence an element of $\mathbb{P}^1(\mathbb{F}_q)$. \\

\textbf{Distance.} Let $x,x'$ be two vertices. Pick $\Lambda\in x, \Lambda'\in x'$ so that $\Lambda\subset \Lambda'$. Write $\Lambda'/\Lambda \cong \mathcal{O}_F/\varpi^{e_1}\mathcal{O}_F\oplus \mathcal{O}_F/\varpi^{e_2}\mathcal{O}_F$. The distance $\mathrm{d}(x,x')$ is defined by  $|e_1-e_2|$.\\

\begin{rem}
 Let $K = \mathrm{GL}_2(\mathcal{O}_F)$ and $T$ the diagonal torus. By Cartan decomposition we have 
\[F^\times\backslash\mathrm{GL}_2(F) = F^\times\backslash KTK\cong \mathbb{Z}_{\geq 0} \]
via the map $K\begin{pmatrix}
    \varpi^a&\\ &\varpi^b
\end{pmatrix}\mapsto |a-b|$.

Writing $x = gx_0$ and $x' = g'x_0$ then Kottwitz define the invariant function 
\[\mathrm{inv}: \mathrm{GL}_2(F)/K\times \mathrm{GL}_2(F)/K\mapsto F^\times\backslash K\mathrm{GL}_2(F)K\cong \mathbb{Z},\]
by $inv(gK, g'K) = Kg^{-1}gK$. We can see that 
\[\rd(x,x') = \mathrm{inv}(gK,g'K).\]
\end{rem}

Let $x_i$ be the vertex corresponding to $\Lambda_i = \varpi^i\mathcal{O}_F\oplus \mathcal{O}_F $ where $i\in\mathbb{Z}$. We call $x_0$ the fundamental point, and  the  geodesic containing all $x_i$'s is the fundamental apartment $\mathcal{A}_0$.  Clearly we have $\mathcal{A}_0  = Tx_0$. Also note that $x_{-i}$ can be represented by $\mathcal{O}_F\oplus \varpi^i\mathcal{O}_F$

\[\begin{tikzpicture}
\begin{scope}[scale=0.7]
\draw[thick] (-6,0) -- (6,0);
\draw[thick, dashed] (-7,0)--(-6,0);
\draw[thick, dashed] (7,0)--(6,0);
\foreach \i in {-4,0,...,4}{
\node[draw,circle,inner sep=2pt,fill=black, thick] at (\i,0) {};} 
\foreach \i in {-6,-2,...,6}{
\node[draw,circle,inner sep=2pt,fill=black, thick] at (\i,0) {};} 
\node[above] at (0,0.2) {$\mathcal{O}\oplus \mathcal{O}$};
\node[above] at (4,0.2) {$\varpi^2\mathcal{O}\oplus\mathcal{O}$};
\node[above] at (-4,0.2) {$\mathcal{O}\oplus\varpi^2\mathcal{O}$};
\node[below] at (2,-0.2) {$\varpi\mathcal{O}\oplus\mathcal{O}$};
\node[below] at (6,-0.2) {$\varpi^3\mathcal{O}\oplus\mathcal{O}$};
\node[below] at (-6,-0.2) {$\mathcal{O}\oplus\varpi^3\mathcal{O}$};
\node[below] at (-2,-0.2) {$\mathcal{O}\oplus\varpi\mathcal{O}$};
%\node[above] at (0,0.6) {$\Lambda_0$};
%\node[above] at (4,0.6) {$\Lambda_2$};
%\node[above] at (-4,0.6) {$\Lambda_{-2}$};
%\node[below] at (2,-0.6) {$\Lambda_1$};
%\node[below] at (6,-0.6) {$\Lambda_3$};
%\node[below] at (-6,-0.6) {$\Lambda_{-3}$};
%\node[below] at (-2,-0.6) {$\Lambda_{-1}$};
\end{scope}
\end{tikzpicture}\]

The neighbors of $x_0$ are $x_1$, $x_{-1}$ and $\left\lbrace\begin{pmatrix}1&n\\ 0&1\end{pmatrix} x_1 \right\rbrace$ where $n\in \mathcal{O}_F$ spans over a set of representatives of the residue field.

For $\gamma\in \mathrm{GL}_2(F)$ we let $X^\gamma$ denote the set of fixed points of $\gamma$.

\begin{prop}\label{prop:distgx}
    Let $x\in X$ be a vertex and let $\gamma\in \mathrm{GL}_2(F)$. We have 
    \[\rd(\gamma x, x) = 2\rd(x, X^\gamma).\]
\end{prop}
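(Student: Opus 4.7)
The plan is to reduce the equality to the fact that in the tree $X$, the fixed-point set $X^\gamma$ of an isometry is (when nonempty) a convex subtree, and that the closest-point projection of any vertex $x$ onto this subtree is uniquely defined. The proposition is really the standard formula for the displacement of an elliptic isometry of a tree; implicitly we work in the case $X^\gamma \neq \emptyset$, since otherwise $\gamma$ acts hyperbolically with a positive translation length and the right-hand side becomes infinite while the left stays finite.

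First I would dispose of the trivial case $x\in X^\gamma$, where both sides vanish. Otherwise, let $y\in X^\gamma$ realize the minimum $\rd(x,y) = \rd(x, X^\gamma)=: d>0$; such a $y$ exists because $X^\gamma$ is a closed nonempty subtree and distances in $X$ are nonnegative integers. Since $\gamma$ is an isometry fixing $y$, we get immediately
\[\rd(\gamma x, y) = \rd(\gamma x, \gamma y) = \rd(x,y) = d,\]
hence $\rd(\gamma x, x) \leq 2d$ by the triangle inequality, and it remains only to prove the reverse inequality.

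To get equality, I would argue that the geodesics $[y,x]$ and $[y, \gamma x]$ diverge immediately at $y$. Let $z$ be the unique neighbor of $y$ on the geodesic $[y,x]$; then $\gamma z$ is the unique neighbor of $y$ on the geodesic $[\gamma y, \gamma x] = [y,\gamma x]$. If we had $z = \gamma z$, then $z$ would belong to $X^\gamma$ with $\rd(x,z) = d-1 < d$, contradicting the minimality of $y$. Hence $z\neq \gamma z$, so $[y,x]$ and $[y,\gamma x]$ meet only at $y$ and concatenate to form the geodesic from $x$ to $\gamma x$. This gives $\rd(x,\gamma x) = \rd(x,y) + \rd(y,\gamma x) = 2d$, as required.

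The only real subtlety is the convexity of $X^\gamma$, but in a tree this is immediate: if $y,y'\in X^\gamma$, then $\gamma$ maps the unique geodesic $[y,y']$ isometrically to a geodesic with the same endpoints, and unique geodesics in trees force pointwise fixing. With this in hand, the minimizer $y$ is unique and the argument above goes through without further effort.
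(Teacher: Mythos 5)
Your proof is correct and follows essentially the same route as the paper's: both arguments pick the vertex $y$ (resp.\ $v$) of $X^\gamma$ closest to $x$, observe that $\gamma$ fixes it and therefore $\rd(\gamma x, y) = \rd(x,y)$, and then check that the geodesics $[y,x]$ and $[y,\gamma x]$ concatenate to a genuine geodesic because they diverge at $y$ (since the first vertex on $[y,x]$ after $y$ is not fixed by $\gamma$, by minimality of $y$). Your version spells out the divergence step a bit more explicitly via the neighbor $z$, but the key idea is identical.
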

\begin{proof}
    Let $v\in X^\gamma$ be the closest vertex to $x$. Write 
    \[[v_0 = x, v_1,\cdots, v_{n-1}, v_n = v]\]
    for the unique shortest path from $x$ to $v$, where $\rd(x, v) = n$. The only vertex of this path fixed by $\gamma$ is $v$ by definition of $v$. Therefore, the path  
    \[[\gamma v_0, \gamma v_1,\cdots, \gamma v_{n-1}, v_n, v_{n-1}, \cdots, v_1,v_0]\] has no repeated vector, and is the unique shortest path from $\gamma x$ to $x$. We conclude 
    \[\rd(\gamma x, x) = 2n = 2\rd(x, v) = 2\rd(x,X^\gamma).\]
\end{proof}

\begin{lemma}\label{lem:unipfixed} Let $N$ be the group of unipotent matrices of the form $\begin{pmatrix}
    1&x\\ 0&1
\end{pmatrix}$. 
    The set of vertices of the fundamental apartment form a set of representatives for the $N$-orbit of vertices of $X$. Moreover, for each $i\in \mathbb{Z}$ we have 
    \[\mathrm{Stab}_N(x_i) = \left\lbrace\begin{pmatrix}
    1&\varpi^{i}\mathcal{O}\\ 0&1
\end{pmatrix}\right\rbrace\]
\end{lemma}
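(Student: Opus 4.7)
The plan is to combine the Iwasawa decomposition with a short direct computation. Setting $t_i := \mathrm{diag}(\varpi^i, 1)$, we have $x_i = t_i \cdot x_0$ and $\mathcal{A}_0 = T \cdot x_0$. A key point to keep track of throughout is that, since $x_0$ is a lattice class rather than a lattice, its stabilizer in $\mathrm{GL}_2(F)$ is $F^\times K$ rather than just $K$.

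First I would establish the existence of an orbit representative: the Iwasawa decomposition $\mathrm{GL}_2(F) = NTK$ lets me write any vertex $g \cdot x_0$ as $n \cdot (t \cdot x_0)$ with $t \cdot x_0 \in \mathcal{A}_0 = \{x_i\}_{i\in\mathbb{Z}}$, so every vertex lies in the $N$-orbit of some $x_i$. For the stabilizer, I would compute $\mathrm{Stab}_N(x_i) = N \cap t_i (F^\times K) t_i^{-1}$: the one-line conjugation $t_i^{-1} \begin{pmatrix}1 & x \\ 0 & 1\end{pmatrix} t_i = \begin{pmatrix}1 & \varpi^{-i}x \\ 0 & 1\end{pmatrix}$ has determinant $1$ and thus belongs to $F^\times K$ iff it belongs to $K$, which gives the stated condition $x \in \varpi^i \mathcal{O}$.

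The main step is uniqueness of the orbit representative: if $n \cdot x_i = x_j$ for some $n \in N$, then I need $i = j$. I would set up $t_j^{-1} n t_i = \begin{pmatrix}\varpi^{i-j} & \varpi^{-j}x \\ 0 & 1\end{pmatrix} \in F^\times K$ and write it as $\lambda k$ with $k \in K$; taking the determinant forces $v(\lambda) = (i-j)/2$, and then requiring the $(1,1)$ and $(2,2)$ entries of $\lambda^{-1}(t_j^{-1}nt_i)$ to be integral yields $i \geq j$ and $i \leq j$ respectively, forcing $i = j$.

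There is no real obstacle here: the only subtlety is remembering the $F^\times$ scaling in the vertex stabilizer, which is what allows the orbit $N \cdot x_i$ to intersect $\mathcal{A}_0$ at all when $i \neq 0$; once that is handled the computations are routine. An alternative route for the uniqueness step would be to invoke the $N$-fixed end $\infty \in \partial X$ corresponding to the line $Fe_1$ and observe that the associated Busemann function separates the $x_i$, but the direct matrix argument seems shorter.
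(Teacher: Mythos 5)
Your proposal is correct and follows essentially the same route as the paper: Iwasawa decomposition $\mathrm{GL}_2(F)=NTK$ for the existence of apartment representatives, and a short matrix computation for the stabilizer (the paper does this directly on lattices, you by conjugating into $F^\times K$, but these are the same calculation). The one place you go beyond the paper is in spelling out the uniqueness of the representative via the determinant argument, which the paper takes as implicit in the Iwasawa statement; that extra care is welcome, and the argument as you give it is sound.
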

\begin{proof}
Iwasawa decomposition $\mathrm{GL}_2(F) = NTK$  tells us that the vertices in the apartment corresponding to $T$ form a set of representatives for $N\backslash X$.

    Fix a basis $e,f$ so that $\Lambda_i$ (representing $x_i$) is spanned by $\varpi^i e$ and $f$. We have that $\begin{pmatrix}
    1&x\\ 0&1
\end{pmatrix} \Lambda_i$ is spanned by $\varpi^i e$ and $xe+ f$. We can deduce that $\Lambda_i$ is fixed if and only if $\nu_{\varpi}(x)\geq i$, otherwise $xe+f\notin \Lambda_i$. Therefore, we need $x\in \varpi^{i}\mathcal{O}_F$ as desired.
\end{proof}
\label{sec:fixedpoints}

\textbf{Regular semisimple elements.}
Assume that $\gamma \in \mathrm{GL}_2(\mathcal{O}_F)$ has two distinct eigenvalues $a,b\in F(\gamma)$ where $F(\gamma)$ to be the smallest field
extension containing these eigenvalues. 

Let
\[d_\gamma = \frac{\mathrm{val}_{F(\gamma)}\left(1-\frac{a}{b}\right)}{e_{F(\gamma)}}\in \frac{1}{e_{F[\gamma]}}\mathbb{Z}.\]
The role of the ramification index $e_{F(\gamma)}$ is to take into 
account that the extension of the building to a ramified extension adds vertices at each midpoint of every edges, 
and so the distance metric on $\mathcal{B}(\mathrm{GL}_2, F(\gamma))$ is half the one on the rational building.

\begin{lemma}
The set $X^\gamma$ is equal to the set of vertices of $X$ at distance at most $d_\gamma$ from 
the apartment (in $\mathcal{B}(\mathrm{GL}_2,F(\gamma))$) corresponding to the centralizer of $\gamma$.
\end{lemma}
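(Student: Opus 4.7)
The plan is to pass to the extended building $\mathcal{B}(\mathrm{GL}_2,F(\gamma))$, where $\gamma$ diagonalizes, and carry out a single explicit computation in an eigenbasis. Fix a basis $\{e,f\}$ of $F(\gamma)^2$ for which $\gamma=\mathrm{diag}(a,b)$; the apartment $\mathcal{A}$ associated with the centralizer of $\gamma$ is then the fundamental apartment of $\mathcal{B}(\mathrm{GL}_2,F(\gamma))$, with vertices $v_j = [\varpi_{F(\gamma)}^j\mathcal{O}_{F(\gamma)}e + \mathcal{O}_{F(\gamma)}f]$. Since $\gamma\in \mathrm{GL}_2(\mathcal{O}_F)$ forces $a,b\in\mathcal{O}_{F(\gamma)}^\times$, the whole apartment $\mathcal{A}$ is pointwise fixed by $\gamma$, so $\mathcal{A}\subseteq X^\gamma$, and the content of the lemma is just how far $X^\gamma$ thickens $\mathcal{A}$.

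By Lemma \ref{lem:unipfixed} applied over $F(\gamma)$, every vertex of the extended building is uniquely of the form $x = u(y)v_i$ with $u(y) = \begin{pmatrix} 1 & y \\ 0 & 1 \end{pmatrix}$, $y \in F(\gamma)$, and $i\in\mathbb{Z}$. Using $\gamma u(y) = u((a/b)y)\gamma$ together with $\gamma v_i = v_i$, I find $\gamma x = u((a/b)y)v_i$, so $\gamma x = x$ if and only if $u((1-a/b)y)$ stabilizes $v_i$. Lemma \ref{lem:unipfixed} then turns this into the valuation inequality
\[ \mathrm{val}_{F(\gamma)}(y) + \mathrm{val}_{F(\gamma)}(1-a/b) \geq i. \]

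The remaining step is to match the left-hand side with a distance to $\mathcal{A}$. I claim that
\[ d(x,\mathcal{A}) = \max\bigl(0,\, i - \mathrm{val}_{F(\gamma)}(y)\bigr) \]
in the edge-count metric of the extended building; dividing by $e_{F(\gamma)}$ to pass to the normalized metric in the statement (in which $d_\gamma = \mathrm{val}_{F(\gamma)}(1-a/b)/e_{F(\gamma)}$) then turns the fixed-point inequality into exactly $d(x,\mathcal{A}) \leq d_\gamma$. The upper bound is immediate because $u(y)$ is an isometry fixing $v_{\mathrm{val}(y)}$, hence $d(x, v_{\mathrm{val}(y)}) = d(v_i, v_{\mathrm{val}(y)}) = i - \mathrm{val}_{F(\gamma)}(y)$ whenever this is non-negative. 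The main obstacle is the reverse bound, namely that $v_{\mathrm{val}(y)}$ is actually the projection of $x$ onto $\mathcal{A}$. I would verify it by a direct Smith normal form calculation on the pair of lattices $u(y)\Lambda_i$ and $\Lambda_j$, or conceptually by noting that the residual action of $u(y)$ on $\Lambda_{\mathrm{val}(y)}/\varpi_{F(\gamma)}\Lambda_{\mathrm{val}(y)}$ fixes only the one direction leaving $\mathcal{A}$ toward $v_{\mathrm{val}(y)-1}$, so a geodesic from $x$ to $\mathcal{A}$ cannot re-enter the apartment along an alternative path and approach a strictly closer vertex.
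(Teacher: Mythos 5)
Your argument follows essentially the same route as the paper's proof: base change to $\mathcal{B}(\mathrm{GL}_2,F(\gamma))$, diagonalize $\gamma$, parametrize vertices via the unipotent radical of the Borel containing $G_\gamma$ (Iwasawa), and translate the fixed-point condition into the valuation inequality $\mathrm{val}_{F(\gamma)}(y)+\mathrm{val}_{F(\gamma)}(1-a/b)\geq i$. If anything you are slightly more careful than the paper at the last step of identifying $v_{\mathrm{val}(y)}$ as the projection of $u(y)v_i$ onto the apartment, which the paper passes over tersely; your residual-action sketch (the induced map on $\Lambda_{\mathrm{val}(y)}/\varpi\Lambda_{\mathrm{val}(y)}$ fixing only the line toward $v_{\mathrm{val}(y)-1}$, so the geodesic to $x$ exits the apartment at $v_{\mathrm{val}(y)}$ and never re-enters) is a valid way to close that step.
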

\begin{proof}
Let us consider the extension of the building over $F(\gamma)$. We may pick a basis so that 
$\gamma=\pmat{a&0\\0&b}$. Therefore, 
 $\gamma$ fixes the vertex $v_0 = \mathcal{O}_{F(\gamma)}^{\oplus 2}$ and $G_\gamma$ corresponds 
 to the fundamental apartment. 
 
 Let $v$ be a vertex in the building over $F(\gamma)$. 
 By transitivity of the action of $\mathrm{GL}_{2}(F(\gamma))$, we can write $v=gv_0$. 
 Therefore, $\gamma$ fixes $v$ if and only if $g^{-1}\gamma g$ fixes $v_0$, in other words, 
 $g^{-1}\gamma g\in \mathrm{GL}_2(\mathcal{O}_{F(\gamma)})$.

 Write $K = \mathrm{GL}_2(\mathcal{O}_{F(\gamma)})$.
 We can write $\mathrm{GL}_2(F(\gamma)) = TNK$,
 where $T$ is the centralizer of $\gamma$ and $TN$ is the standard Borel of upper-triangular matrices. 

Decompose $g = tnk$. We need to check when $n^{-1}\underset{ = \gamma}{\underbrace{t^{-1}\gamma t}}n \in K$ 
Since $\gamma\in K$, this is equivalent to $n^{-1}\gamma n \gamma^{-1}\in K$, and writing $n = \pmat{1&x\\0&1}$
this becomes $\pmat{1&(ab^{-1}-1)x\\ 0&1}\in K$, or equivalently, $x\in v(x)\geq v(1-ab^{-1})$, where $v$ is the valuation map 
on $F(\gamma)$.
 
  We can conclude by noting that this is equivalent to $v$ being at distance at most $v(1-a/b)$ from the fundamental apartment, since $n$ fixes $v_i$ if $i\geq -v(1-a/b)$, and 
  the distance from $ nv_0$ to $v_{i}$ is equal to the distance of $v_0$ to $v_i$ (and $T$ just 
  translates vertices on the fundamental apartment).
\end{proof}
We will call the set described in the Lemma above a \emph{tube}. Thusly, to compute integrals 
we need to count $\mathrm{Gal}(F(\gamma)/F)$-fixed vertices in the tube of radius $d_\gamma$ around 
the apartment corresponding to $G_\gamma$.

\subsection{The case of  \texorpdfstring{$\mathrm{SL}_2$}{SL2}}\label{sec:sl2}
In \cite{kott_bible}, Kottwitz applies the point count technique to $\mathrm{GL}_2$. Although 
very similar, we give a detailed computation of the $\mathrm{SL}_2$ due to a major difference. That is, 
rational orbits may no longer equal the corresponding stable orbits. 

\subsubsection{Setup}

Unlike $\mathrm{GL}_2(F)$, the group $\mathrm{SL}_2(F)$  no longer acts transitively on
 vertices of the building of $\mathrm{GL}_2$. 
 However, by coloring our tree with alternating colors, 
 the action of $\mathrm{SL}_2$ is color-preserving and transitive
  on the set of vertices of the same color. 
\begin{figure} 
\centering
\begin{tikzpicture}[scale=0.8]

\draw[very thick] (0,0) -- (0:3.33)  -- ++(-60:2) -- ++(0:1.33)   -- ++(-60:0.8)  -- ++(0:0.5) -- ++(-60:0.32);
\draw[very thick] (0,0)    -- (120:3.33)   -- ++(180:2)   -- ++(120:1.33)   -- ++(180:0.8) -- ++(120:0.5) -- ++(180:0.32);
\draw[very thick] (0,0) -- (0:3.33);
\draw[very thick] (0,0) -- (120:3.33);
\draw[very thick] (0,0) -- (240:3.33);
\node[draw,circle,inner sep=3pt,fill=black, thick] at (0,0) {};
%If anyone ever reads this, I don't know how to do recursion in tikz, sorry.
\foreach \i in {0,120,...,240}{
    \begin{scope}[shift=(\i:3.33),rotate=\i]
        \draw[very thick] (0,0) -- (60:2);
        \draw[very thick] (0,0) -- (-60:2);
        \node[draw,circle,inner sep=3pt,fill=white, thick] at (0,0) {};
        \foreach \j in {-60,60,...,60}{
            \begin{scope}[shift=(\j:2),rotate=\j]
                \draw[very thick] (0,0) -- (60:1.33);
                \draw[very thick] (0,0) -- (-60:1.33);
                \node[draw,circle,inner sep=3pt,fill=black, thick] at (0,0) {};
                \foreach \k in {-60,60,...,60}{
                    \begin{scope}[shift=(\k:1.33),rotate=\k]
                        \draw[very thick] (0,0) -- (60:0.8);
                        \draw[very thick] (0,0) -- (-60:0.8);
                        \node[draw,circle,inner sep=3pt,fill=white, thick] at (0,0) {};
                        \foreach \l in {-60,60,...,60}{
                            \begin{scope}[shift=(\l:0.8),rotate=\l]
                                \draw[very thick] (0,0) -- (60:0.5);
                                \draw[very thick] (0,0) -- (-60:0.5);
                                \node[draw,circle,inner sep=3pt,fill=black, thick] at (0,0) {};
                                \foreach \h in {-60,60,...,60}{
                                    \begin{scope}[shift=(\h:0.5),rotate=\h]
                                        \draw[dashed,very thick] (0,0) -- (60:0.32);
                                        \draw[dashed, very thick] (0,0) -- (-60:0.32);
                                    \end{scope}
                                }
                            \end{scope}
                        }
                    \end{scope}
                }
            \end{scope}
        }
    \end{scope}
}

%\foreach \i in {100,90,...,10}{
%	\fill[cyan!\i] (0,0) circle (\i*0.01);
%}

\end{tikzpicture}
\caption{Building of $\mathrm{GL}_2(\mathbb{Q}_2)$ with coloring, preserved by the action of $\mathrm{SL}_2(\mathbb{Q}_2)$}
\end{figure}
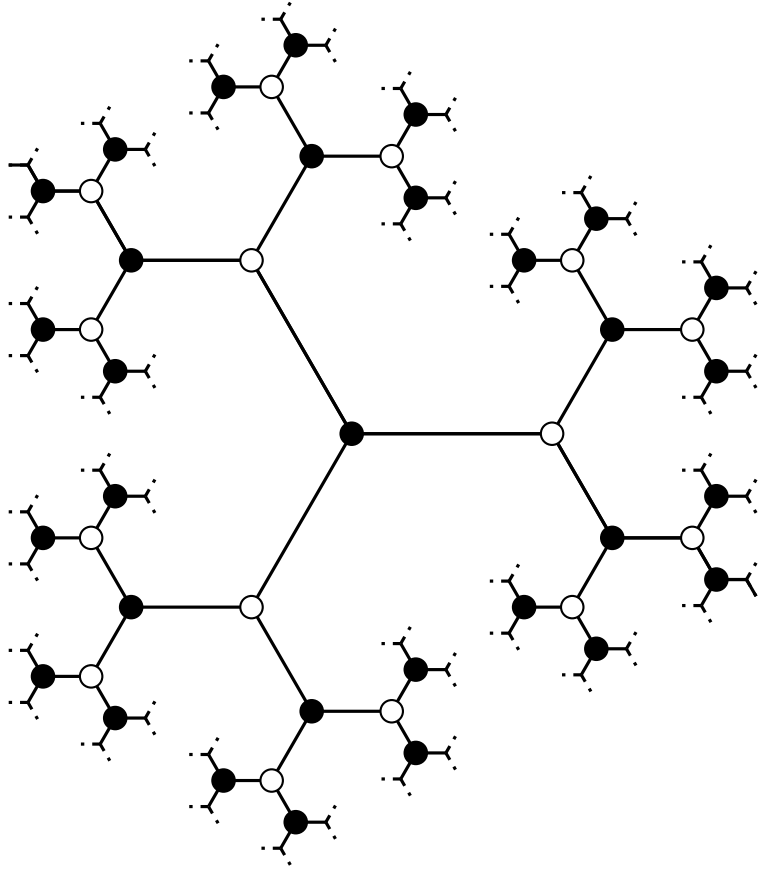

Let $K=\mathrm{SL}_2(\mathcal{O}_F)$. We will want to integrate a spherical function, i.e. bi-$K$-invariant function. Such functions are finite sums of functions of the form $\mathds{1}_{KaK}$ where $a\in \mathrm{SL}_2(F)$ is a diagonal matrix. Therefore, it is sufficient to compute the integrals
\[O_n(\gamma)  :=\mathrm{Orb}_{\mathrm{SL}_2}(\gamma, \mathds{1}_{K\mathrm{diag}(\varpi^n,\varpi^{-n})K}) ,\text{and } O_n^{\mathrm{st}}:=\mathrm{Orb}^{\mathrm{st}}_{\mathrm{SL}_2}(\gamma,\mathds{1}_{K\mathrm{diag}(\varpi^n,\varpi^{-n})K})),\]
where $n\in\mathbb{Z}_{\geq0}$.

Note that in this section we will always pick a Haar measure on $\mathrm{SL}_2(F)$ giving $K$ measure $1$. The measure on $\mathrm{Stab}_{\mathrm{SL}_2(F)}(\gamma)$ will change depending on the case.

Write $t_n = \mathrm{diag}(\varpi^n,\varpi^{-n})$. We have that $g^{-1}\gamma g\in Kt_nK$ if and only if 
\[\mathrm{inv}(gK, \gamma gK) = Kg^{-1}\gamma gK = Kt_nK,\]
in other words, letting $x_g = gx_0$, this means 
\[g^{-1}\gamma g\in Kt_nK \Leftrightarrow\rd(\gamma x_g, x_g) = 2n.\]

\begin{lemma}
    We have $g^{-1}\gamma g \in Kt_nK$ if and only if 
    \[\rd(x_g,X^\gamma) = n.\] 
\end{lemma}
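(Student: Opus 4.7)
The plan is to chain together two facts already established in the excerpt. The first is the equivalence derived immediately above the lemma, namely that $g^{-1}\gamma g \in Kt_nK$ if and only if $\rd(\gamma x_g, x_g) = 2n$. This in turn rests on the fact that $\mathrm{inv}(gK,\gamma gK) = Kg^{-1}\gamma g K$ and that $t_n = \mathrm{diag}(\varpi^n,\varpi^{-n})$ moves the fundamental vertex $x_0$ to the vertex $x_{2n}$: indeed, $t_n\Lambda_0 = \varpi^n\mathcal{O}_F \oplus \varpi^{-n}\mathcal{O}_F$ is equivalent, up to scaling by $\varpi^n$, to $\Lambda_{2n} = \varpi^{2n}\mathcal{O}_F\oplus\mathcal{O}_F$, which sits at distance $2n$ from $x_0$ along the fundamental apartment.

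The second ingredient is Proposition \ref{prop:distgx}, which asserts $\rd(\gamma x,x) = 2\rd(x,X^\gamma)$ for every vertex $x$. Applying this at $x = x_g$ converts the equivalence above into
\[
g^{-1}\gamma g \in K t_n K \iff \rd(\gamma x_g, x_g) = 2n \iff 2\rd(x_g, X^\gamma) = 2n \iff \rd(x_g, X^\gamma) = n,
\]
which is exactly the claim.

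There is essentially no obstacle: the argument is two lines once one has the invariance interpretation of $\mathrm{inv}$ and Proposition \ref{prop:distgx}. The one subtle point worth flagging is that we are working with $\mathrm{SL}_2$ acting on the $\mathrm{GL}_2$-tree, so it is reassuring that $t_n\in\mathrm{SL}_2(F)$ produces an \emph{even} distance $2n$, consistent with the fact that $\mathrm{SL}_2$ preserves the coloring and thus can only connect vertices of the same color. Consequently the $K$-double cosets in $\mathrm{SL}_2(F)$ are parameterized by a single integer $n \geq 0$ corresponding to the displacement $t_n$, which is what makes the reduction to counting points $x_g$ at a fixed distance from the affine Springer fiber $X^\gamma$ possible.
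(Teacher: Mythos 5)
Your proof is correct and follows the same route as the paper: the text immediately preceding the lemma reduces $g^{-1}\gamma g\in Kt_nK$ to $\rd(\gamma x_g,x_g)=2n$, and the paper's proof is simply the one-line invocation of Proposition~\ref{prop:distgx} to convert this to $\rd(x_g,X^\gamma)=n$, exactly as you do. Your additional remark about $t_n$ producing an even displacement consistent with the $\mathrm{SL}_2$-coloring is a nice sanity check but not needed.
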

\begin{proof} This is an immediate application of Proposition \ref{prop:distgx}.
\end{proof}
In particular, when $n=0$ the condition is $x_g\in X^\gamma$.

We color $X$ by picking the fundamental point $x_0$ to be black, and its neighbors white. We can therefore write $X = X_{\scalebox{0.5}{\CIRCLE}}\sqcup X_{\scalebox{0.5}{\Circle}}$ where $X_{\scalebox{0.5}{\CIRCLE}}$ and  $X_{\scalebox{0.5}{\Circle}}$ are the set of vertices of black and white color respectively. Since $\mathrm{SL}_2(F)$ acts transitively on each class of vertices, we can reformulate eq \ref{eq:orbitsum} as 

\[O_n(\gamma) = \sum_{v}\frac{1}{\mathrm{meas}_{\mathrm{Stab}_{\mathrm{SL}_2(F)}(\gamma)}(v)},\]
where $v$ spans over $\mathrm{Stab}_{\mathrm{SL}_2(F)}(\gamma)-$orbits of vertices in $X_{\scalebox{0.5}{\CIRCLE}}$ such that $\rd(v, X^\gamma) = n$.

\subsubsection{Unipotent elements}

Assume that $\gamma\in K$ is unipotent. Write $B = TN$ the standard Borel where $T$ is the diagonal torus, and $N$ the group of upper-triangular unipotent matrices.

Up to conjugation by $\mathrm{SL}_2(F)$, there 5 conjugacy classes of unipotent elements, represented by elements 
\[\begin{pmatrix}1 & 0\\0&1\end{pmatrix},\ \begin{pmatrix}1 & 1\\0&1\end{pmatrix},\ \begin{pmatrix}1 & u\\0&1\end{pmatrix},\ \begin{pmatrix}1 & \varpi\\0&1\end{pmatrix},\ \begin{pmatrix}1 & u\varpi\\0&1\end{pmatrix},\]
where $u\mathcal{O}_F^\times$ is a non-square unit.

Write $\alpha:F\rightarrow N$ the isomorphism $x\mapsto \begin{pmatrix}
    1&x\\ 0&1
\end{pmatrix}$.

\begin{rem}
Note that if we were looking at $\mathrm{GL}_2(F)$-conjugacy, we would only have two conjugacy classes represented by $\alpha(0)$ and $\alpha(1)$ since if $x\in F^\times$ then 
\[\begin{pmatrix}
    1&0\\0&x
\end{pmatrix}^{-1}\alpha(1)\begin{pmatrix}
    1&0\\0&x
\end{pmatrix} = \alpha(x).\]
\end{rem}

\begin{lemma}
    We have 
    \[O_n(\alpha(0)) = \left\lbrace \begin{array}{ll}
        1 & n=0  \\
        0 & n>0 
    \end{array}\right. .\]
\end{lemma}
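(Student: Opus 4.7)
The element $\alpha(0)$ is the identity matrix, so its centralizer in $\mathrm{SL}_2(F)$ is the whole group, and every vertex of the building $X$ is fixed, i.e.\ $X^{\alpha(0)} = X$. The plan is to plug this observation into the point-counting formula
\[O_n(\gamma) = \sum_{v}\frac{\mathrm{meas}_{\mathrm{d}g}(K)}{\mathrm{meas}_{\mathrm{d}g_\gamma}(\mathrm{Stab}_{G_\gamma}(v))},\]
where $v$ runs over $\mathrm{Stab}_{\mathrm{SL}_2(F)}(\gamma)$-orbits of vertices in $X_{\scalebox{0.5}{\CIRCLE}}$ with $\rd(v,X^\gamma)=n$.

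First, I would apply the preceding lemma which identifies $\{g\in\mathrm{SL}_2(F) : g^{-1}\gamma g\in Kt_nK\}$ with the set of $g$ such that $\rd(x_g,X^\gamma)=n$. Since $X^{\alpha(0)}=X$, the distance $\rd(x_g,X^\gamma)$ is identically zero, so the support of the integrand is empty unless $n=0$. This immediately gives $O_n(\alpha(0))=0$ for $n>0$.

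For $n=0$, the sum reduces to counting $\mathrm{SL}_2(F)$-orbits on $X_{\scalebox{0.5}{\CIRCLE}}$, weighted by $1/\mathrm{meas}(\mathrm{Stab})$. Because $\mathrm{SL}_2(F)$ acts transitively on each color class, there is a single orbit, represented by the fundamental vertex $x_0$, whose stabilizer is $K = \mathrm{SL}_2(\mathcal{O}_F)$. Choosing the obvious compatible convention of taking the same Haar measure on $G=G_\gamma=\mathrm{SL}_2(F)$ (which is forced on us if we want rational and stable orbital integrals to coincide on a singleton quotient), both the numerator and denominator equal $\mathrm{meas}(K)$, yielding $O_0(\alpha(0))=1$.

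There is no substantive obstacle here: the only thing to be careful about is bookkeeping the Haar measure on the centralizer, which in this degenerate case must match the measure on the ambient group for the formalism of compatible measures to be consistent; once this is noted, the computation is immediate from the set-up in \S\ref{sec:sl2}.
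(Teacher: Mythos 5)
Your proof is correct, but it takes a more roundabout route than the paper's. The paper simply observes that $\alpha(0)$ lies in the center, so $G_\gamma = \mathrm{SL}_2(F)$ and the orbital integral collapses to an integral over a single point $G_\gamma\backslash G = \{\mathrm{pt}\}$, giving directly $O_n(\alpha(0)) = \mathds{1}_{Kt_nK}(1)$, which equals $1$ when $n=0$ (since $1\in K$) and $0$ otherwise (since $Kt_nK\cap K=\varnothing$ for $n>0$). You instead translate everything through the building-theoretic point count: you observe $X^{\alpha(0)}=X$, conclude the $n>0$ sum is empty, and for $n=0$ count a single $\mathrm{SL}_2(F)$-orbit on $X_{\scalebox{0.5}{\CIRCLE}}$ with stabilizer $K$ and matching measures. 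Both arguments are valid; yours has the virtue of checking that the tree formalism degenerates gracefully to the right answer when $G_\gamma=G$, whereas the paper's argument is a one-liner that sidesteps the building entirely and is cleaner for this trivial case. You correctly flag the one genuine subtlety in your approach — that one must take the measure $\mathrm{d}g_\gamma$ on $G_\gamma=G$ to agree with $\mathrm{d}g$ so the quotient measure on the singleton is the counting measure — which the paper handles implicitly by writing $\mathrm{d}g/\mathrm{d}g$.
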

\begin{proof} Since $\alpha(0)$ is in the center, 
    \[O_n(\alpha(0))  = \int_{\mathrm{SL}_2(F)\backslash \mathrm{SL}_2(F)}\mathds{1}_{Kt_nK}(1)\ \mathrm{d}g/\mathrm{d}g=\mathds{1}_{Kt_nK}(1).\]
\end{proof}

We now deal with the remaining orbits. The centralizer of $\alpha(x)$ where $n\neq 0$ is $\{\pm1\}\times N\subset B$. For $x\in \{1,u,\varpi, u\varpi\}$ we will compute $O_n(\alpha(x))$ by using the Haar measure on $N\cong F$ giving $\alpha(x \mathcal{O}_F)$ measure $2$. The $2$ is explained because conjugating an element of $N\cong F$ by a diagonal element of $\mathrm{SL}_2$ multiplies it by a square, hence the orbits are of the form $x{\mathcal{Q}_F^\times}$ where ${\mathcal{Q}_F}$ is the set of squares in $\mathcal{O}_F$. We therefore want to pick the Haar measure giving $x{\mathcal{Q}_F^\times}$ measure $1$.

We get that 
\[O_n(\alpha(x)) =\sum \frac{1}{\mathrm{meas}(\mathrm{Stab}_N(v))},\]
where $v$ runs over a set of representatives of $N$-orbits in $X_{\scalebox{0.5}{\CIRCLE}}$ such that $\rd(v, X^\gamma) = n$.  As previously, we let $x_i$ where $i\in \mathbb{Z}$ denote vertices  of the fundamental apartment, where $x_i$ is represented by the lattice $\varpi^i\mathcal{O}_F\oplus \mathcal{O}_F$. We get that 
\[\mathrm{meas}(\mathrm{Stab}_N(x_i))= \mathrm{meas}(\alpha(\varpi^i\mathcal{O}_F)),\]
as well as the fact that the set $\{x_{2i}: i\in \mathbb{Z}\}$ is a set of representatives of $N$-orbits in $X_{\scalebox{0.5}{\CIRCLE}}$.
 Moreover, $\alpha(1), \alpha(u)$ will fix $x_i$ as long as $0 = \nu_\varpi(1)= \nu_\varpi(u)\geq i$. Similarly, $\alpha(\varpi)$ and $\alpha(u\varpi)$ fix all $x_i$ for $i\leq 1$.

This gives us the following. 

\begin{prop}
    With our choice of measures, we have
    \[O_0(\alpha(1)) = O_0(\alpha(u)) = \frac{1}{2}\frac{1}{1-q^{-2}},\]
    \[O_0(\alpha(\varpi)) = O_0(\alpha(u\varpi)) = \frac{1}{2}\frac{q^{-1}}{1-q^{-2}},\]
    and if $n>0$ then
    \[O_n(\alpha(1)) = O_n(\alpha(u))=\left\lbrace \begin{array}{ll}q^n/2&n\equiv0\pmod2\\0&n\equiv1\pmod2\end{array}\right. ,\]
    \[O_n(\alpha(\varpi)) = O_n(\alpha(u\varpi)) =\left\lbrace \begin{array}{ll}0&n\equiv0\pmod2\\q^n/2&n\equiv1\pmod2\end{array}\right. .\]
\end{prop}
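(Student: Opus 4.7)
The plan is to apply the point-count-on-the-tree formula stated just above the proposition, namely
\[
O_n(\alpha(x)) \;=\; \sum_{v}\frac{1}{\mathrm{meas}(\mathrm{Stab}_N(v))},
\]
where $v$ runs over $N$-orbit representatives in $X_{\scalebox{0.5}{\CIRCLE}}$ at distance exactly $n$ from $X^{\alpha(x)}$, and the chosen measure gives $\alpha(\varpi^{\nu_\varpi(x)}\mathcal{O}_F)$ measure $2$. By the lemma recalled just before, I may take $\{x_{2j}: j\in\mathbb{Z}\}$ as such a set of representatives, and $\mathrm{Stab}_N(x_{2j})=\alpha(\varpi^{2j}\mathcal{O}_F)$.

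The first step is to identify the fixed-point set $X^{\alpha(x)}$. Since $N$ is abelian, $\alpha(x)$ commutes with $\alpha(y)$ for every $y\in F$, so $\alpha(x)$ fixes $\alpha(y)x_i$ if and only if it fixes $x_i$. Combined with Lemma~\ref{lem:unipfixed} and Iwasawa, this gives
\[
X^{\alpha(x)} \;=\; \bigl\{\alpha(y)x_i : y\in F,\; i\leq v\bigr\},\qquad v:=\nu_\varpi(x).
\]

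The second and key step is a distance computation: I claim
\[
\rd(x_{2j},X^{\alpha(x)}) \;=\; \max(0,\,2j-v).
\]
When $2j\leq v$ the vertex $x_{2j}$ is itself fixed. When $2j>v$, the obvious candidate $x_v$ realizes distance $2j-v$. To see this is optimal, I will use that $N$ fixes the end $\xi$ of the tree represented by $x_{-\infty}$: for any $\alpha(y)x_i\in X^{\alpha(x)}$ with $i\leq v$, the geodesic rays from $x_{2j}$ and from $\alpha(y)x_i$ toward $\xi$ coincide after time $\nu_\varpi(y)\leq i\leq v$, so a straightforward addition of the two legs gives $\rd(x_{2j},\alpha(y)x_i)=2j+i-2\min(i,\nu_\varpi(y))\geq 2j-v$, with equality attained for $\alpha(y)x_v$ whenever $\nu_\varpi(y)\geq v$. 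This is the only delicate point in the argument.

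The third step is to assemble the pieces. With the chosen normalization, $\mathrm{meas}(\alpha(\varpi^{2j}\mathcal{O}_F))=2\,q^{v-2j}$, so each summand contributes $q^{2j-v}/2$. For $n=0$ the contributing indices are $j\leq 0$ (resp.\ $j\leq 0$) in the cases $v=0$ (resp.\ $v=1$), and the geometric series
\[
\sum_{j\leq 0}\frac{q^{2j-v}}{2} \;=\; \frac{q^{-v}}{2}\cdot\frac{1}{1-q^{-2}}
\]
produces $\frac12\cdot\frac{1}{1-q^{-2}}$ for $v=0$ and $\frac12\cdot\frac{q^{-1}}{1-q^{-2}}$ for $v=1$. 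For $n>0$ the condition $2j-v=n$ admits a unique integer solution precisely when $n\equiv v\pmod 2$, and in that case the single contribution is $q^n/2$; otherwise the sum is empty and the integral vanishes. This matches all four stated formulas, and since $\alpha(1)$ and $\alpha(u)$ share the valuation $v=0$, and $\alpha(\varpi)$ and $\alpha(u\varpi)$ share $v=1$, the pairing of cases is automatic.
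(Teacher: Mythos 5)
Your proof is correct and follows the same strategy the paper uses: parameterize the $N$-orbit representatives by $\{x_{2j}\}$, identify which lie at the required distance from $X^{\alpha(x)}$, and sum reciprocals of stabilizer measures. You actually fill in a detail the paper elides — verifying that no off-apartment fixed point $\alpha(y)x_i$ lies closer to $x_{2j}$ than $x_v$ does — though your phrase ``coincide after time $\nu_\varpi(y)\leq i\leq v$'' is slightly off (there is no constraint $\nu_\varpi(y)\leq i$ on the fixed set, only $i\leq v$), but the distance formula $2j+i-2\min(i,\nu_\varpi(y))$ you write down is correct in all cases and the inequality $\geq 2j-v$ goes through regardless.
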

\begin{proof}
    If $x\in \{1,u\}$ then the measure we pick on $F$ is just twice the usual Haar measure.
    \[O_0(\alpha(x)) = \sum_{2i\geq 0}\frac{1}{\mathrm{meas}(\varpi^{-2i}\mathcal{O}_F)} = \sum_{i\geq 0}\frac{1}{2q^{2i}} =\frac{1}{2}\frac{1}{1-q^{-2}}. \]

To compute $O_n$ with $n>0$ we need to find how many $x_i$'s are at distance $n$ from the set of fixed points of $\alpha(1)$, which is exactly $\{x_{i}: i\leq 0\}$. Therefore, $\rd(x_k, \{x_{i}: i\leq 0\}) = k$ if $k>0$ and $0$ else. We get that there is only one such vertex, namely $x_n$. If $n$ is odd then $x_n$ does not have the correct coloring. 

\[\begin{tikzpicture}
    \draw[thick] (0,0) -- (3,0);
    \draw[thick] (4,0) -- (6,0);
    \draw[dashed, thick] (3,0) -- (4,0);
    \draw[dashed, thick] (6,0) -- (7,0);
    \draw[dashed, thick] (0,0) -- (-1,0);
    \draw[pattern=north west lines, pattern color=red] (-1,-.2) rectangle (1,.2);
     \node[draw,circle,inner sep=2pt,fill=black, thick] at (1,0) {};
      \node[draw,circle,inner sep=2pt,fill=white, thick] at (2,0) {};
    \node[draw,circle,inner sep=2pt,fill=black, thick] at (5,0) {};
    \node[below=0.15] at (1,0) {$x_0$};
    \node[below=0.15] at (2,0) {$x_1$};
    \node[below=0.15] at (5,0) {$x_{2n}$};
    \draw[<->, thick] (1,0.5) -- (5,0.5)node[midway, above] {$2n$};
    \node at (0,0.5) {\small $\mathrm{Stab}_{\alpha(x)}(A)$};
\end{tikzpicture}\]
We get 
\[O_n(\alpha(x)) =\left\lbrace \begin{array}{ll}\frac{1}{\mathrm{meas}(\varpi^{n}\mathcal{O}_F)}&n\equiv0\pmod2\\0&n\equiv1\pmod2\end{array}\right.=\left\lbrace \begin{array}{ll}q^n/2&n\equiv0\pmod2\\0&n\equiv1\pmod2\end{array}\right..\]

    Now assume $x\in\{\varpi, u\varpi\}$. This time around, the measure is not exactly twice the Haar measure, now $\mathrm{meas}(\varpi^i \mathcal{O}) = 2q^{-(i-1)}$. We get  
     \[O_0(\alpha(x)) = \sum_{2i\geq 1}\frac{1}{\mathrm{meas}(\varpi^{-2i}\mathcal{O}_F)} = \sum_{i\geq 0}\frac{q^{-2}}{2q^{2i-1}} =\frac{1}{2}\frac{q^{-1}}{1-q^{-2}}. \]

This time the points fixed by $\alpha(x)$ are $x_i$ with $i\leq 1$. We have the following situation.
  \[\begin{tikzpicture}
    \draw[thick] (0,0) -- (3,0);
    \draw[thick] (4,0) -- (6,0);
    \draw[dashed, thick] (3,0) -- (4,0);
    \draw[dashed, thick] (6,0) -- (7,0);
    \draw[dashed, thick] (0,0) -- (-1,0);
    \draw[pattern=north west lines, pattern color=red] (-1,-.2) rectangle (2,.2);
     \node[draw,circle,inner sep=2pt,fill=black, thick] at (1,0) {};
      \node[draw,circle,inner sep=2pt,fill=white, thick] at (2,0) {};
    \node[draw,circle,inner sep=2pt,fill=black, thick] at (5,0) {};
    \node[below=0.15] at (1,0) {$x_0$};
    \node[below=0.15] at (2,0) {$x_1$};
    \node[below=0.15] at (5,0) {$x_{2n}$};
    \draw[<->, thick] (2,0.5) -- (5,0.5)node[midway, above] {$2n-1$};
    \node at (0.5,0.5) {\small $\mathrm{Stab}_{\alpha(x)}(A)$};
\end{tikzpicture}\]
There is one black point at distance $n$ from the set of fixed point if $n$ is odd, and no such point when $n$ is even. If $n = 2k-1$ with $k\geq 1$, we have 
\[O_n(\alpha(x)) = \frac{1}{\mathrm{meas}(\mathrm{Stab}_N(x_{2k}))} = \frac{1}{2 q^{2k-1}} = q^{2k-1}/2.\] We have 
\[O_n(\alpha(x)) =\left\lbrace \begin{array}{ll}0&n\equiv0\pmod2\\q^n/2&n\equiv1\pmod2\end{array}\right. .\]
\end{proof}

\begin{cor}
    Decomposing the $\mathrm{GL}_2(F)$-orbit of $\alpha(1)$ into its four $\mathrm{SL}_2(F)$-orbits we get 
    \[\mathrm{Orb}_{\mathrm{GL}_2}(\alpha(1), \mathds{1}_{Kt_nK}) =\mathrm{Orb}^{\mathrm{st}}_{\mathrm{SL}_2}(\alpha(1), \mathds{1}_{Kt_nK}) \]
    where the left hand-side is computed in \cite[p.414]{kott_bible} and the right hand-side can be written as \[O_n(\alpha(1))+O_n(\alpha(u))+O_n(\alpha(\varpi))+O_n(\alpha(u\varpi)).\]
\end{cor}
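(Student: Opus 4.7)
The plan is to prove the corollary in three steps: first, identify the orbits; second, sum the rational orbital integrals; third, verify agreement with Kottwitz's published value on the $\mathrm{GL}_2$ side.

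To begin, I will verify that the $\mathrm{GL}_2(F)$-orbit of $\alpha(1)$ coincides with the stable $\mathrm{SL}_2(F)$-orbit and decomposes as the disjoint union of the four $\mathrm{SL}_2(F)$-orbits of $\alpha(1),\alpha(u),\alpha(\varpi),\alpha(u\varpi)$. This is essentially the content of the remark preceding Lemma \ref{lem:unipfixed}: the computation $\mathrm{diag}(a,1)^{-1}\alpha(x)\mathrm{diag}(a,1)=\alpha(x/a)$ shows that $\mathrm{GL}_2(F)$ acts transitively on the set $\{\alpha(x):x\in F^\times\}$, while inside $\mathrm{SL}_2(F)$ the diagonal can only rescale $x$ by squares (and the $\pm 1\in C_{\mathrm{SL}_2}(\alpha(x))$ contributes no further freedom). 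Hence the rational orbits within the stable orbit are parameterized by $F^\times/(F^\times)^2$, and for residue characteristic different from $2$ this quotient has the four representatives listed.

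With this geometric picture in hand, the equality
\[\mathrm{Orb}^{\mathrm{st}}_{\mathrm{SL}_2}(\alpha(1),\mathds{1}_{Kt_nK})=O_n(\alpha(1))+O_n(\alpha(u))+O_n(\alpha(\varpi))+O_n(\alpha(u\varpi))\]
is immediate from the definition of a stable orbital integral as the sum of rational orbital integrals over the orbits in the stable class, given that we have already set up the Haar measures on each $C_{\mathrm{SL}_2}(\alpha(x))=\{\pm 1\}\cdot N$ compatibly (namely, so that each square-class coset $\alpha(x\mathcal{Q}_F^\times)$ in $N$ is given measure $1$).

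The final step is a numerical verification. Using the formulas of the preceding proposition, the sum collapses, for $n=0$, to
\[\tfrac{1}{1-q^{-2}}+\tfrac{q^{-1}}{1-q^{-2}}=\tfrac{1+q^{-1}}{1-q^{-2}}=\tfrac{1}{1-q^{-1}},\]
and, for $n\geq 1$, the pairs $(\alpha(1),\alpha(u))$ and $(\alpha(\varpi),\alpha(u\varpi))$ contribute $q^n$ in the even- and odd-parity cases respectively, so the total is $q^n$ in every case. I then compare these values with the explicit formula for $\mathrm{Orb}_{\mathrm{GL}_2}(\alpha(1),\mathds{1}_{Kt_nK})$ given in \cite[p.414]{kott_bible}, where the numbers agree.

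The part I expect to require the most care is matching Haar measure conventions across the two groups: on the $\mathrm{GL}_2$-side the centralizer is $Z\cdot N$ rather than $\{\pm 1\}\cdot N$, and $\mathrm{GL}_2(F)/\mathrm{SL}_2(F)\cong F^\times$ fibers over $F^\times/(F^\times)^2$ via $\det$. One must check that Kottwitz's normalization of Haar measure on $\mathrm{GL}_2(F)$ and on $Z\cdot N$ pushes forward to the factor-of-$2$ weighting we imposed on $N\cong F$. Once this compatibility is checked, nothing further is needed: the first equality of the corollary is the definition of the stable orbital integral, and the second is the sum of four explicit values already computed.
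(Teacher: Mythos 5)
Your proof is correct and follows essentially the same route as the paper: sum the four rational orbital integrals computed in the preceding proposition, simplify, and match the result against Kottwitz's $\mathrm{GL}_2$ values at $n=0$ and $n>0$. The added discussion of the orbit parameterization by $F^\times/(F^\times)^2$ and the Haar measure compatibility between $\mathrm{SL}_2$ and $\mathrm{GL}_2$ is welcome scaffolding, but the paper has already fixed these conventions (the orbit list and the normalization giving $\alpha(x\mathcal{Q}_F^\times)$ measure $1$), so the paper treats the identity as ``immediate from computing'' and goes directly to the arithmetic verification.
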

\begin{proof}
    This is immediate from computing 
    \begin{align}
\mathrm{Orb}^{\mathrm{st}}_{\mathrm{SL}_2}(\alpha(1), \mathds{1}_{K})&=O_0(\alpha(1))+O_0(\alpha(u))+O_0(\alpha(\varpi))+O_0(\alpha(u\varpi))\\
&= \frac{1}{2}\frac{q^{-1}}{1-q^{-1}}+\frac{1}{2}\frac{q^{-1}}{1-q^{-1}}+\frac{1}{2}\frac{q^{-1}}{1-q^{-2}}+\frac{1}{2}\frac{q^{-1}}{1-q^{-2}}\\ 
&= \frac{1+q^{-1}}{1-q^{-2}} = \frac{1}{1-q^{-1}},
    \end{align}
    which corresponds to the $\mathrm{GL}_2$-orbital integral computed in \cite{kott_bible}.\\

    Similarly, if $n>0$ we have 
        \begin{align}
O^{\mathrm{st}}(\mathrm{SL}_2, \alpha(1), \mathds{1}_{Kt_nK})&=O_n(\alpha(1))+O_n(\alpha(u))+O_n(\alpha(\varpi))+O_n(\alpha(u\varpi))\\
&=\left\lbrace \begin{array}{ll}q^n/2+q^n/2&n\equiv 0\pmod2\\q^n/2+q^n/2&n\equiv1\pmod2\end{array}\right.\\ 
&= q^n,
    \end{align}
    which again matches the description of \cite{kott_bible}.
\end{proof}

\subsubsection{Regular semisimple elements: the hyperbolic case}
Consider first the case where $\gamma\in \mathrm{SL}_2(F)$ is a hyperbolic regular semisimple element. We know that $X^\gamma$ is a tube of width $d_\gamma$ around the apartment corresponding to the centralizer of $\gamma$. We can always take a conjugate of $\gamma$ so that this apartment is the fundamental one. In other words, we can assume that $\gamma = \begin{pmatrix}
    a&0\\0&a^{-1}
\end{pmatrix}$.\\

Under this assumption, the centralizer of $\gamma$ is the diagonal torus $T$. Identifying vertices of the fundamental apartment with $\mathbb{Z}$ where $0$ is the fundamental point, $T$ acts by translations by $2\mathbb{Z}$ since $\begin{pmatrix}
    u\varpi^n &0\\0&u^{-1}\varpi^{-n}
\end{pmatrix}x_i = x_{i+2n}$, where $u\in \mathcal{O}_F^\times$. 

\[\begin{tikzpicture}[scale=0.8]
\draw[ultra thick] (-3.5,0) -- (6.5,0);
\draw[dashed, ultra thick] (-4,0)--(-3.5,0);
\draw[dashed, ultra thick] (6.5,0)--(7,0);

\draw[very thick] (0,0) -- ++(90:1.5);
\begin{scope}[shift=(90:1.3)]
    \draw[very thick] (0,0) -- ++(90:1);
    \draw[very thick] (0,0) -- ++(130:1);
    \begin{scope}[shift=(90:1)]
        \draw[very thick] (0,0) -- ++(90:1);
        \draw[very thick] (0,0) -- ++(130:1);
        \node[draw,circle,inner sep=3pt,fill=black, thick] at (0,0) {};
        \begin{scope}[shift=(90:1)]
            \draw[very thick, dashed] (0,0) -- ++(90:.5);
            \draw[very thick, dashed] (0,0) -- ++(130:.5);
            \node[draw,circle,inner sep=3pt,fill=white, thick] at (0,0) {};
        \end{scope}
        \begin{scope}[shift=(130:1), rotate=40]
            \draw[very thick, dashed] (0,0) -- ++(90:.5);
            \draw[very thick, dashed] (0,0) -- ++(130:.5);
            \node[draw,circle,inner sep=3pt,fill=white, thick] at (0,0) {};
        \end{scope}
    \end{scope}
    \begin{scope}[shift=(130:1), rotate=40]
        \draw[very thick] (0,0) -- ++(90:1);
        \draw[very thick] (0,0) -- ++(130:1);
        \node[draw,circle,inner sep=3pt,fill=black, thick] at (0,0) {};
        \begin{scope}[shift=(90:1)]
            \draw[very thick, dashed] (0,0) -- ++(90:.5);
            \draw[very thick, dashed] (0,0) -- ++(130:.5);
            \node[draw,circle,inner sep=3pt,fill=white, thick] at (0,0) {};
        \end{scope}
        \begin{scope}[shift=(130:1), rotate=40]
            \draw[very thick, dashed] (0,0) -- ++(90:.5);
            \draw[very thick, dashed] (0,0) -- ++(130:.5);
            \node[draw,circle,inner sep=3pt,fill=white, thick] at (0,0) {};
        \end{scope}
    \end{scope}
    \node[draw,circle,inner sep=3pt,fill=white, thick] at (0,0) {};
\end{scope}

\begin{scope}[shift=(0:1.5)]
\draw[very thick] (0,0) -- ++(90:1.5);
\begin{scope}[shift=(90:1.3)]
    \draw[very thick] (0,0) -- ++(90:1);
    \draw[very thick] (0,0) -- ++(50:1);
    \begin{scope}[shift=(90:1)]
        \draw[very thick] (0,0) -- ++(90:1);
        \draw[very thick] (0,0) -- ++(50:1);
        \node[draw,circle,inner sep=3pt,fill=white, thick] at (0,0) {};
        \begin{scope}[shift=(90:1)]
            \draw[very thick, dashed] (0,0) -- ++(90:.5);
            \draw[very thick, dashed] (0,0) -- ++(50:.5);
            \node[draw,circle,inner sep=3pt,fill=black, thick] at (0,0) {};
        \end{scope}
        \begin{scope}[shift=(50:1), rotate=-40]
            \draw[very thick, dashed] (0,0) -- ++(90:.5);
            \draw[very thick, dashed] (0,0) -- ++(50:.5);
            \node[draw,circle,inner sep=3pt,fill=black, thick] at (0,0) {};
        \end{scope}
    \end{scope}
    \begin{scope}[shift=(50:1), rotate=-40]
        \draw[very thick] (0,0) -- ++(90:1);
        \draw[very thick] (0,0) -- ++(50:1);
        \node[draw,circle,inner sep=3pt,fill=white, thick] at (0,0) {};
        \begin{scope}[shift=(90:1)]
            \draw[very thick, dashed] (0,0) -- ++(90:.5);
            \draw[very thick, dashed] (0,0) -- ++(50:.5);
            \node[draw,circle,inner sep=3pt,fill=black, thick] at (0,0) {};
        \end{scope}
        \begin{scope}[shift=(50:1), rotate=-40]
            \draw[very thick, dashed] (0,0) -- ++(90:.5);
            \draw[very thick, dashed] (0,0) -- ++(50:.5);
            \node[draw,circle,inner sep=3pt,fill=black, thick] at (0,0) {};
        \end{scope}
    \end{scope}
    \node[draw,circle,inner sep=3pt,fill=black, thick] at (0,0) {};
\end{scope}
\end{scope}

\foreach \i in {-3,-1.5,3,4.5,6}{
\draw[dashed, thick] (\i,0) -- ++(90:1);
}
\foreach \i in {-3,0,...,6}{
\node[draw,circle,inner sep=3pt,fill=black, thick] at (\i,0) {};} 
\foreach \i in {-1.5,1.5,...,4.5}{
\node[draw,circle,inner sep=3pt,fill=white, thick] at (\i,0) {};} 
\draw[->, color=red, very thick] (0,-.3) to[out=-45,in=225] (3,-.3); \node[below, color=red] at (1.5,-.9) {$\begin{pmatrix}
    \varpi&0\\0&\varpi^{-1}
\end{pmatrix}$};

\begin{scope}[shift=(0:3.1)]
    \draw[->, color=red, very thick] (0,-.3) to[out=-45,in=225] (3,-.3); \node[below, color=red] at (1.5,-.9) {$\begin{pmatrix}
    \varpi&0\\0&\varpi^{-1}
\end{pmatrix}$};
\end{scope}
\node[below left] at (0,0) {$x_0$};
\node[below left] at (1.5,0) {$x_1$};
\end{tikzpicture}\]

\[\begin{tikzpicture}[scale=0.8]
\draw[ultra thick] (-1,0) -- (8,0);
\draw[dashed, ultra thick] (-1.5,0)--(-1,0);
\draw[dashed, ultra thick] (8,0)--(8.5,0);

\draw[very thick] (0,0) -- ++(90:1.5);
\begin{scope}[shift=(90:1.3)]
    \draw[very thick] (0,0) -- ++(90:1);
    \draw[very thick] (0,0) -- ++(130:1);
    \begin{scope}[shift=(90:1)]
        \draw[very thick] (0,0) -- ++(90:1);
        \draw[very thick] (0,0) -- ++(130:1);
        \node[draw,circle,inner sep=3pt,fill=black, thick] at (0,0) {};
        \begin{scope}[shift=(90:1)]
            \draw[very thick, dashed] (0,0) -- ++(90:.5);
            \draw[very thick, dashed] (0,0) -- ++(130:.5);
            \node[draw,circle,inner sep=3pt,fill=white, thick] at (0,0) {};
        \end{scope}
        \begin{scope}[shift=(130:1), rotate=40]
            \draw[very thick, dashed] (0,0) -- ++(90:.5);
            \draw[very thick, dashed] (0,0) -- ++(130:.5);
            \node[draw,circle,inner sep=3pt,fill=white, thick] at (0,0) {};
        \end{scope}
    \end{scope}
    \begin{scope}[shift=(130:1), rotate=40]
        \draw[very thick] (0,0) -- ++(90:1);
        \draw[very thick] (0,0) -- ++(130:1);
        \node[draw,circle,inner sep=3pt,fill=black, thick] at (0,0) {};
        \begin{scope}[shift=(90:1)]
            \draw[very thick, dashed] (0,0) -- ++(90:.5);
            \draw[very thick, dashed] (0,0) -- ++(130:.5);
            \node[draw,circle,inner sep=3pt,fill=white, thick] at (0,0) {};
        \end{scope}
        \begin{scope}[shift=(130:1), rotate=40]
            \draw[very thick, dashed] (0,0) -- ++(90:.5);
            \draw[very thick, dashed] (0,0) -- ++(130:.5);
            \node[draw,circle,inner sep=3pt,fill=white, thick] at (0,0) {};
        \end{scope}
    \end{scope}
    \node[draw,circle,inner sep=3pt,fill=white, thick] at (0,0) {};
\end{scope}

\begin{scope}[shift=(0:1.5)]
\draw[very thick] (0,0) -- ++(90:1.5);
\begin{scope}[shift=(90:1.3)]
    \draw[very thick] (0,0) -- ++(90:1);
    \draw[very thick] (0,0) -- ++(50:1);
    \begin{scope}[shift=(90:1)]
        \draw[very thick] (0,0) -- ++(90:1);
        \draw[very thick] (0,0) -- ++(50:1);
        \node[draw,circle,inner sep=3pt,fill=white, thick] at (0,0) {};
        \begin{scope}[shift=(90:1)]
            \draw[very thick, dashed] (0,0) -- ++(90:.5);
            \draw[very thick, dashed] (0,0) -- ++(50:.5);
            \node[draw,circle,inner sep=3pt,fill=black, thick] at (0,0) {};
        \end{scope}
        \begin{scope}[shift=(50:1), rotate=-40]
            \draw[very thick, dashed] (0,0) -- ++(90:.5);
            \draw[very thick, dashed] (0,0) -- ++(50:.5);
            \node[draw,circle,inner sep=3pt,fill=black, thick] at (0,0) {};
        \end{scope}
    \end{scope}
    \begin{scope}[shift=(50:1), rotate=-40]
        \draw[very thick] (0,0) -- ++(90:1);
        \draw[very thick] (0,0) -- ++(50:1);
        \node[draw,circle,inner sep=3pt,fill=white, thick] at (0,0) {};
        \begin{scope}[shift=(90:1)]
            \draw[very thick, dashed] (0,0) -- ++(90:.5);
            \draw[very thick, dashed] (0,0) -- ++(50:.5);
            \node[draw,circle,inner sep=3pt,fill=black, thick] at (0,0) {};
        \end{scope}
        \begin{scope}[shift=(50:1), rotate=-40]
            \draw[very thick, dashed] (0,0) -- ++(90:.5);
            \draw[very thick, dashed] (0,0) -- ++(50:.5);
            \node[draw,circle,inner sep=3pt,fill=black, thick] at (0,0) {};
        \end{scope}
    \end{scope}
    \node[draw,circle,inner sep=3pt,fill=black, thick] at (0,0) {};
\end{scope}
\end{scope}

\foreach \i in {3,4.5,...,7.5}{
\draw[dashed, thick] (\i,0) -- ++(90:1);
}
\foreach \i in {0,3,...,6}{
\node[draw,circle,inner sep=3pt,fill=black, thick] at (\i,0) {};} 
\foreach \i in {1.5,4.5,...,7.5}{
\node[draw,circle,inner sep=3pt,fill=white, thick] at (\i,0) {};} 
\draw[->, color=red, very thick] (0,-.3) to[out=-45,in=225] (3,-.3); \node[below, color=red] at (1.5,-.9) {$\begin{pmatrix}
    \varpi&0\\0&\varpi^{-1}
\end{pmatrix}$};

\begin{scope}[shift=(0:3.1)]
    \draw[->, color=red, very thick] (0,-.3) to[out=-45,in=225] (3,-.3); \node[below, color=red] at (1.5,-.9) {$\begin{pmatrix}
    \varpi&0\\0&\varpi^{-1}
\end{pmatrix}$};
\end{scope}
\node[below left] at (0,0) {$x_0$};
\node[below left] at (1.5,0) {$x_1$};
\end{tikzpicture}\]

We want to count vertices in $X^{\gamma}$ up to action of $T$. Given such a vertex $v$, we can apply a translation so that the closest vertex to $v$ on the fundamental apartment is either $x_0$ or $x_1$.

To compute $O_0(\gamma)$ we therefore need to compute the number of black vertices at distance at most $d_\gamma$ from $x_0$ or $x_1$.

Observe on the picture that counting the white vertices closest to $x_0$ at distance at most $d_\gamma$ correspond to black vertices closest to $x_1$. We reduce the count to counting all vertices at distance $\leq d_\gamma$ whose closest vertex on the apartment is $x_0$. We can conclude using Proposition \ref{prop:vertballoutofapt}
\[O_0(\gamma) = q^{d_\gamma},\]
and for $n>0$ we have
\[O_n(\gamma) = q^{n+d_\gamma}-q^{n+d_\gamma-1},\]
where we count vertices at distance $n$ from $X^\gamma$ as the total vertices at distance $n+d_\gamma$ from $x_0$ and remove the ones at distance $n+d_\gamma-1$.\\

Note that it matches exactly the $\mathrm{GL}_2$-orbital integral of $\gamma$ described in \cite[p.415]{kott_bible}. This is because $T$ is split, hence the orbit of $\gamma$ is equal to its stable orbit.

\begin{rem}Before starting computations for elliptic elements. Let us note that for any $\gamma\in \mathrm{SL}_2(F)$, we have that its centralizer is the norm one torus $R_{F[\gamma]/F}^{(1)}\mathbb{G}_m$. Let us note that for hyperbolic $\gamma$, we get $F[\gamma] = F\times F$ and therefore $R_{F[\gamma]/F}^{(1)}\mathbb{G}_m\cong \mathbb{G}_m$ 
 and $H^1(F, R_{F[\gamma]/F}^{(1)}\mathbb{G}_m) = 1$ by Hilbert Theorem $90$. 

 For elliptic elements, $F[\gamma]/F$ is a quadratic field extension and we get  \[H^1(F, R_{F[\gamma]/F}^{(1)}\mathbb{G}_m) \cong \mathbb{Z}/2\mathbb{Z}\] 
 using Proposition \ref{lem:orbits:GLn}.
 %, either by looking at the cohomology of usual exact sequence 
 %\[1\rightarrow R_{F[\gamma]/F}^{(1)}\mathbb{G}_m(\bar{F})\rightarrow  R_{F[\gamma]/F}\mathbb{G}_m(\bar{F}) \rightarrow\mathbb{G}_m(\bar{F})\rightarrow 1 \]
%and invoking Hilbert Theorem 90, or by using Nakayama duality. 

Therefore, if $\gamma$ is hyperbolic then 
\[O^\mathrm{st}_n(\gamma) = O_n(\gamma).\]
However this will not be the case in the following sections, we expect the stable orbital integrals to split in two terms, one for each rational orbit.
\end{rem}

\subsubsection{Regular semisimple elements: Unramified elliptic case}

\label{sl2unram}

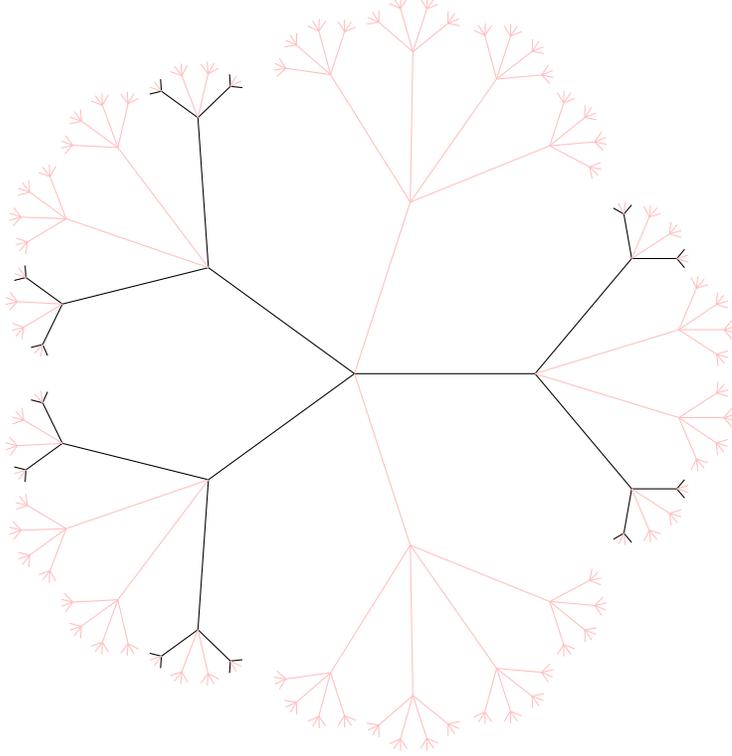
\begin{figure}[H]
\centering
\begin{tikzpicture}[scale=.8]
\draw (0,0) -- (0:3);
\draw[color=pink] (0,0) -- (72:3);
\draw (0,0) -- (144:3);
\draw (0,0) -- (-144:3);
\draw[color=pink] (0,0) -- (-72:3);

%If anyone ever reads this, I don't know how to do recursion in tikz, sorry.
\foreach \i in {72,144,...,360}{
    \begin{scope}[shift=(\i:3),rotate=\i]
        \ifthenelse{\i=72 \OR \i=288}{\def\col{pink}}{\def\col{black}}
        \draw[color=\col] (0,0) -- (50:2.5);
        \draw[color=\col] (0,0) -- (-50:2.5);
        \draw[color=pink] (0,0) -- (17:2.5);
         \draw[color=pink] (0,0) -- (-17:2.5);
        \foreach \j in {-17,17,...,17}{
            \begin{scope}[shift=(\j:2.5),rotate=\j]
                \draw[color=pink] (0,0) -- (50:0.75);
                \draw[color=pink] (0,0) -- (-50:0.75);
                \draw[color=pink] (0,0) -- (17:0.75);
                \draw[color=pink] (0,0) -- (-17:0.75);
                \foreach \k in {-17,17,...,17}{
                    \begin{scope}[shift=(\k:0.75),rotate=\k]
                        \draw[color=pink] (0,0) -- (50:0.2);
                        \draw[color=pink] (0,0) -- (-50:0.2);
                        \draw[color=pink] (0,0) -- (17:0.2);
                        \draw[color=pink] (0,0) -- (-17:0.2);
                    \end{scope}
                }
                \foreach \k in {-50,50,...,50}{
                    \begin{scope}[shift=(\k:0.75),rotate=\k]
                        \draw[color=pink] (0,0) -- (50:0.2);
                        \draw[color=pink] (0,0) -- (-50:0.2);
                        \draw[color=pink] (0,0) -- (17:0.2);
                        \draw[color=pink] (0,0) -- (-17:0.2);
                    \end{scope}
                }
            \end{scope}
        }
        \foreach \j in {-50,50,...,50}{
            \begin{scope}[shift=(\j:2.5),rotate=\j]
                \draw[color=\col] (0,0) -- (50:0.75);
                \draw[color=\col] (0,0) -- (-50:0.75);
                \draw[color=pink] (0,0) -- (17:0.75);
                \draw[color=pink] (0,0) -- (-17:0.75);
                \foreach \k in {-17,17,...,17}{
                    \begin{scope}[shift=(\k:0.75),rotate=\k]
                        \draw[color=pink] (0,0) -- (50:0.2);
                        \draw[color=pink] (0,0) -- (-50:0.2);
                        \draw[color=pink] (0,0) -- (17:0.2);
                        \draw[color=pink] (0,0) -- (-17:0.2);
                    \end{scope}
                }
                \foreach \k in {-50,50,...,50}{
                    \begin{scope}[shift=(\k:0.75),rotate=\k]
                        \draw[color=\col] (0,0) -- (50:0.2);
                        \draw[color=\col] (0,0) -- (-50:0.2);
                        \draw[color=pink] (0,0) -- (17:0.2);
                        \draw[color=pink] (0,0) -- (-17:0.2);
                    \end{scope}
                }
            \end{scope}
        }
    \end{scope}
}
%\draw[color=red, very thick, dashed] (0,0) -- (0:4) -- ++(-17:2) -- ++(-65:0.5);
%\draw[color=red , very thick, dashed] (0,0) -- (144:4) -- ++(127:2) -- ++(110:0.5);
%\foreach \i in {100,90,...,10}{
%	\fill[cyan!\i] (0,0) circle (\i*0.01);
%}

\end{tikzpicture}
\caption{Unramified quadratic extension of the building of  $\mathrm{SL}_2$ over $\mathbb{Q}_2$}
\end{figure}
Assume that  $F[\gamma]$ is an unramified quadratic field extension of $F$.  We have that $X^\gamma$ is a ball centered at a vertex, hence we have two cases: the center of the ball is either black or white. 

\begin{itemize}
    \item Assume that the center is black. The number of black vertices in the ball is 
  \[1+(q+1)\left[q+q^3+\cdots +q^{2\lfloor d_\gamma/2\rfloor-1}\right] = \frac{q^{2\lfloor d_\gamma/2\rfloor+1}-1}{q-1}.\]
  Therefore,
  \[O_0(\gamma) =\left\lbrace\begin{array}{ll}
 \frac{q^{d_\gamma+1}-1}{q-1}    &  d_\gamma\equiv0\pmod2 \\ &\\
   \frac{q^{d_\gamma}-1}{q-1}    & d_\gamma\equiv1\pmod2
\end{array}\right. .\]

We will use Proposition \ref{prop:pointfromconvex} to compute $O_n(\gamma)$ for $n>0$. Note that if $d_\gamma$ is even then vertices at distance $n$ from $X^\gamma$ are all black if $n$ is even or all white if $n$ is odd.  We get that 
\begin{align*}O_n(\gamma) 
&=\left\lbrace\begin{array}{ll}
 q^{n-1}\left((q-1)|X^\gamma|+2\right)    &  d_\gamma +n\equiv 0\pmod2 \\ & \\
  0   &  d_\gamma +n\equiv 1\pmod2 \\ 
\end{array}\right..\end{align*}
We plug in $|X^\gamma| = 1+(q+1)\frac{q^{d_\gamma}-1}{q-1}$ and simplify to get 
\[O_n(\gamma) =\left\lbrace\begin{array}{ll}
q^{n+d_\gamma}(1+q^{-1})    &  d_\gamma +n\equiv 0\pmod2 \\ & \\
  0   &  d_\gamma +n\equiv 1\pmod2 \\ 
\end{array}\right. .\]
\item Now assume the center is white. Instead of doing the same work, we may assume that the center is black but we restrict the count to white vertices. We subtract previous results from the total number of vertices in $X^\gamma$ and we get

\[O_0(\gamma) =\left\lbrace\begin{array}{ll}
\frac{q^{d_\gamma}-1}{q-1}     &  d_\gamma\equiv0\pmod2 \\ &\\
   \frac{q^{d_\gamma+1}-1}{q-1}    & d_\gamma\equiv1\pmod2
\end{array}\right. .\] and
\[O_n(\gamma) =\left\lbrace\begin{array}{ll}
0  &  d_\gamma +n\equiv 0\pmod2 \\ & \\
q^{n+d_\gamma}(1+q^{-1})    &  d_\gamma +n\equiv 1\pmod2 \\ 
\end{array}\right. .\]

Computing the stable orbital integral amounts to disregarding the color (since all the colors are the same after extending the building to a quadratic ramified extension). We get 
\[O^{\mathrm{st}}_0(\gamma) = \frac{q^{d_\gamma}-1}{q-1} + \frac{q^{d_\gamma+1}-1}{q-1}  = 1+(q+1)\frac{q^{d_\gamma}-1}{q-1}\]
and 
\[O^{\mathrm{st}}_n(\gamma) =q^{n+d_\gamma}(1+q^{-1})  \]
as in the $\mathrm{GL}_2$-orbital integral in \cite{kott_bible}.
\end{itemize}

\begin{rem}
    Note that we do not really need the power of Proposition \ref{prop:pointfromconvex} to compute $O_n$ for $n>0$. We have that vertices at distance $n$ from $X^\gamma$ are exactly vertices at distance $n+d_\gamma$ from the center. There are exactly \[(q+1)q^{n+d_\gamma-1} = q^{n+d_\gamma}(1+q^{-1})\] such vertices, and these vertices are all the same color. We get the formulae above directly. 
\end{rem}
\subsubsection{Regular semisimple elements: Ramified elliptic case}
\label{sl2ram}

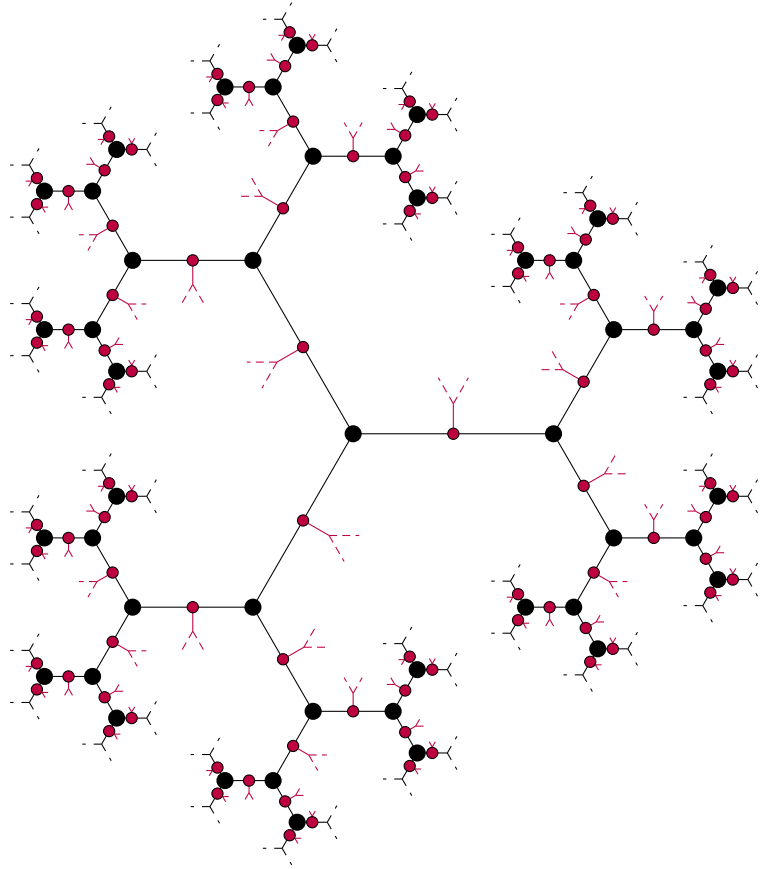
\begin{figure}[H]
\centering
\begin{tikzpicture}[scale=.8]
\begin{scope}[shift=(0:1.665), scale=0.5]
            \draw[color=purple] (0,0)--(0,1);
            \draw[color=purple,densely dashed] (0,1)-- ++(60:1);
            \draw[color=purple,densely dashed] (0,1)-- ++(120:1);
\end{scope}
\begin{scope}[shift=(120:1.665), scale=0.5, rotate=120]
            \draw[color=purple] (0,0)--(0,1);
            \draw[color=purple,densely dashed] (0,1)-- ++(60:1);
            \draw[color=purple,densely dashed] (0,1)-- ++(120:1);
\end{scope}
\begin{scope}[shift=(-120:1.665), scale=0.5, rotate=-120]
            \draw[color=purple] (0,0)--(0,1);
            \draw[color=purple,densely dashed] (0,1)-- ++(60:1);
            \draw[color=purple,densely dashed] (0,1)-- ++(120:1);
\end{scope}
\draw (0,0) --node[draw, circle, inner sep=1.5pt, pos=.5,fill=purple]{} (0:3.33);
\draw (0,0) --node[draw, circle, inner sep=1.5pt, pos=.5,fill=purple]{} (120:3.33);
\draw (0,0) --node[draw, circle, inner sep=1.5pt, pos=.5,fill=purple]{} (240:3.33);
\node[draw,circle,inner sep=2pt,fill=black, thick] at (0,0) {};
%If anyone ever reads this, I don't know how to do recursion in tikz, sorry.
\foreach \i in {0,120,...,240}{
    \begin{scope}[shift=(\i:3.33),rotate=\i]
        \begin{scope}[shift=(60:1), scale=0.4, rotate=60]
            \draw[color=purple] (0,0)--(0,1);
            \draw[color=purple,densely dashed] (0,1)-- ++(60:1);
            \draw[color=purple,densely dashed] (0,1)-- ++(120:1);
        \end{scope}
        \begin{scope}[shift=(-60:1), scale=0.4, rotate=-60]
            \draw[color=purple] (0,0)--(0,1);
            \draw[color=purple,densely dashed] (0,1)-- ++(60:1);
            \draw[color=purple,densely dashed] (0,1)-- ++(120:1);
        \end{scope}
        \draw (0,0) --node[draw, circle, inner sep=1.5pt, pos=.5,fill=purple]{} (60:2);
        \draw (0,0) --node[draw, circle, inner sep=1.5pt, pos=.5,fill=purple]{} (-60:2);
        \node[draw,circle,inner sep=2pt,fill=black, thick] at (0,0) {};
        \foreach \j in {-60,60,...,60}{
            \begin{scope}[shift=(\j:2),rotate=\j]
                \begin{scope}[shift=(60:0.665), scale=0.3, rotate=60]
                    \draw[color=purple] (0,0)--(0,1);
                    \draw[color=purple,densely dashed] (0,1)-- ++(60:1);
                    \draw[color=purple,densely dashed] (0,1)-- ++(120:1);
                \end{scope}
                \begin{scope}[shift=(-60:0.665), scale=0.3, rotate=-60]
                    \draw[color=purple] (0,0)--(0,1);
                    \draw[color=purple,densely dashed] (0,1)-- ++(60:1);
                    \draw[color=purple,densely dashed] (0,1)-- ++(120:1);
                \end{scope}
                \draw (0,0) --node[draw, circle, inner sep=1.5pt, pos=.5,fill=purple]{} (60:1.33);
                \draw (0,0) --node[draw, circle, inner sep=1.5pt, pos=.5,fill=purple]{} (-60:1.33);
                \node[draw,circle,inner sep=2pt,fill=black, thick] at (0,0) {};
                \foreach \k in {-60,60,...,60}{
                    \begin{scope}[shift=(\k:1.33),rotate=\k]
                    \begin{scope}[shift=(60:0.4), scale=0.2, rotate=60]
                    \draw[color=purple] (0,0)--(0,1);
                    \draw[color=purple,densely dashed] (0,1)-- ++(60:1);
                    \draw[color=purple,densely dashed] (0,1)-- ++(120:1);
                \end{scope}
                \begin{scope}[shift=(-60:0.4), scale=0.2, rotate=-60]
                    \draw[color=purple] (0,0)--(0,1);
                    \draw[color=purple,densely dashed] (0,1)-- ++(60:1);
                    \draw[color=purple,densely dashed] (0,1)-- ++(120:1);
                \end{scope}
                        \draw (0,0) --node[draw, circle, inner sep=1.5pt, pos=.5,fill=purple]{} (60:0.8);
                        \draw (0,0) --node[draw, circle, inner sep=1.5pt, pos=.5,fill=purple]{} (-60:0.8);
                        \node[draw,circle,inner sep=2pt,fill=black, thick] at (0,0) {};
                        \foreach \l in {-60,60,...,60}{
                            \begin{scope}[shift=(\l:0.8),rotate=\l]
                            \begin{scope}[shift=(60:0.25), scale=0.1, rotate=60]
                                \draw[color=purple] (0,0)--(0,1);
                                \draw[color=purple] (0,1)-- ++(60:1);
                                \draw[color=purple] (0,1)-- ++(120:1);
                            \end{scope}
                            \begin{scope}[shift=(-60:0.25), scale=0.1, rotate=-60]
                                \draw[color=purple] (0,0)--(0,1);300
                                \draw[color=purple] (0,1)-- ++(60:1);
                                \draw[color=purple] (0,1)-- ++(120:1);
                            \end{scope}
                                \draw (0,0) --node[draw, circle, inner sep=1.5pt, pos=.5,fill=purple]{} (60:0.5);
                                \draw (0,0) --node[draw, circle, inner sep=1.5pt, pos=.5,fill=purple]{} (-60:0.5);
                                \node[draw,circle,inner sep=2pt,fill=black, thick] at (0,0) {};
                                \foreach \h in {-60,60,...,60}{
                                    \begin{scope}[shift=(\h:0.5),rotate=\h]
                                        \draw[dashed] (0,0) -- (60:0.32);
                                        \draw[dashed] (0,0) -- (-60:0.32);
                                    \end{scope}
                                }
                            \end{scope}
                        }
                    \end{scope}
                }
            \end{scope}
        }
    \end{scope}
}
%\draw[color=red, very thick] (0,0) -- (0:3.33) -- ++(-60:2) -- ++(0:1.33)-- ++(-60:0.8) -- ++(0:0.5)-- ++(-60:0.32);
%\draw[color=red, very thick] (0,0) -- (120:3.33) -- ++(180:2) -- ++(120:1.33)-- ++(180:0.8) -- ++(120:0.5)-- ++(180:0.32);
%\foreach \i in {100,90,...,10}{
%	\fill[cyan!\i] (0,0) circle (\i*0.01);
%}

\end{tikzpicture}
\caption{Ramified quadratic extension of the building of $\mathrm{SL}_2$ over $\mathbb{Q}_2$}
\end{figure}

We need to count black vertices contained in a ball around a midpoint $v$. The situation simplifies since if vertices at distance $d$ from $v$ are a white on one side, they are black on the other side.

\begin{figure}[H]
\centering
\scalebox{0.9}{
\begin{tikzpicture}
\draw[very thick] (0,0) -- (90:1);
\draw[very thick] (0,0) -- (270:1);
\node[draw,circle,inner sep=1pt,fill=red, thick] at (0,0) {};
\node[left] at (0,0) {$v$};

\begin{scope}[shift=(90:1),rotate=90]
\foreach \i in{-60,0,60}{
\draw[very thick] (0,0) -- (\i:1.2);

\begin{scope}[shift=(\i:1.2), rotate=\i]
\foreach \j in {-50,0,50}{
\draw[very thick] (0,0) -- (\j:1);
\node[draw,circle,inner sep=2pt,fill=white, thick] at (\j:1) {};
}
\node[draw,circle,inner sep=2pt,fill=black, thick] at (0,0) {};
\end{scope}}
\node[draw,circle,inner sep=2pt,fill=white, thick] at (0,0) {};
\end{scope}

\begin{scope}[shift=(-90:1),rotate=-90]
\foreach \i in{-60,0,60}{
\draw[very thick] (0,0) -- (\i:1.2);

\begin{scope}[shift=(\i:1.2), rotate=\i]
\foreach \j in {-50,0,50}{
\draw[very thick] (0,0) -- (\j:1);
\node[draw,circle,inner sep=2pt,fill=black, thick] at (\j:1) {};
}
\node[draw,circle,inner sep=2pt,fill=white, thick] at (0,0) {};
\end{scope}}
\node[draw,circle,inner sep=2pt,fill=black, thick] at (0,0) {};
\end{scope}
%If anyone ever reads this, I don't know how to do recursion in tikz, sorry.

\draw[ultra thick,->] (3,0)--(5,0);

\begin{scope}[shift=(0:6),rotate=0]
\foreach \i in{-60,0,60}{
\draw[very thick] (0,0) -- (\i:1.2);

\begin{scope}[shift=(\i:1.2), rotate=\i]
\foreach \j in {-50,0,50}{
\draw[very thick] (0,0) -- (\j:1);
\node[draw,circle,inner sep=2pt,fill=black, thick] at (\j:1) {};
}
\node[draw,circle,inner sep=2pt,fill=black, thick] at (0,0) {};
\end{scope}}
\node[draw,circle,inner sep=2pt,fill=black, thick] at (0,0) {};
\end{scope}
\end{tikzpicture}}
\end{figure}

We have that $d_\gamma$ is a half-integer now. We get that the number of black vertices at distance at most $d_\gamma$ is

\[1+q+q^2+\cdots + q^{d_{\gamma}-1/2} = \frac{q^{d_\gamma+1/2}-1}{q-1}.\]

\begin{prop}
    If $\gamma$ is ramified elliptic, we have 
    \[O_0(\gamma) =\frac{q^{d_\gamma+1/2}-1}{q-1},\]
    and  if $n>0$ 
    \[O_n(\gamma) =q^{n+d_\gamma-1/2}. \]
\end{prop}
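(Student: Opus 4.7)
The plan is to reduce $O_0(\gamma)$ and $O_n(\gamma)$ to direct counts of black $F$-vertices, following the template used for the unramified elliptic case. By the tube characterization of the fixed set, $X^\gamma$ is the ball of radius $d_\gamma = m + \tfrac{1}{2}$ (for some integer $m \geq 0$) around the midpoint $v$ of a single $F$-edge. The key geometric observation, already visible in the figure preceding the statement, is that the two $F$-branches emanating from the endpoints of this edge carry opposite colorings with respect to the bipartition $X = X_{\scalebox{0.5}{\CIRCLE}} \sqcup X_{\scalebox{0.5}{\Circle}}$: among the $2q^k$ $F$-vertices at $F$-distance $k + \tfrac{1}{2}$ from $v$, the $q^k$ vertices on the ``black-endpoint'' side have color of parity $k$ while the $q^k$ on the other side have the opposite color, so exactly $q^k$ of the $2q^k$ are black, regardless of the parity of $k$.

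With this in hand, the black vertices in $X^\gamma$ sit at $F$-distances $\tfrac{1}{2}, \tfrac{3}{2}, \dots, m + \tfrac{1}{2}$ from $v$, contributing $q^0 + q^1 + \dots + q^m$, so
\[O_0(\gamma) = \sum_{k=0}^{m} q^k = \frac{q^{m+1} - 1}{q - 1} = \frac{q^{d_\gamma + 1/2} - 1}{q - 1},\]
as already observed in the discussion preceding the proposition. For $O_n(\gamma)$ with $n > 0$, I would use convexity of the ball $X^\gamma$ to argue that a black $F$-vertex $w \notin X^\gamma$ lies at distance exactly $n$ from $X^\gamma$ if and only if its $F$-distance to $v$ equals $n + d_\gamma = n + m + \tfrac{1}{2}$; applying the coloring count at level $k = n + m$ gives $q^{n + m} = q^{n + d_\gamma - 1/2}$ black vertices.

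As in the unramified elliptic subcase, the Haar measure on $G_\gamma = R^{(1)}_{F(\gamma)/F}\mathbb{G}_m(F)$ is normalized so that the sum $\sum_{v} 1/\mathrm{meas}(\mathrm{Stab}_{G_\gamma}(v))$ defining the orbital integral collapses to the raw count of the relevant black $F$-vertices. I expect no real obstacle here beyond carefully translating the half-integer radius $d_\gamma$ into the integer $F$-distance counts above and verifying the opposite-coloring bookkeeping on either side of $v$; in particular, there is no $n$-parity alternation as in the unramified case, precisely because the two branches balance each other at every half-integer level.
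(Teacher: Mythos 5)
Your proposal is correct and takes essentially the same approach as the paper: both hinge on the observation that, because the ball $X^\gamma$ is centered at the midpoint of an edge, vertices at any fixed $F$-distance from the center split evenly between black and white, so the count is simply $1 + q + \cdots + q^{d_\gamma - 1/2}$ for $O_0$ and $q^{n+d_\gamma - 1/2}$ for $O_n$. The one small difference is in the $O_n$ case: the paper plugs $|X^\gamma| = 2\frac{q^{d_\gamma + 1/2}-1}{q-1}$ into the general convex-set formula (Proposition \ref{prop:pointfromconvex}) and then halves the result, while you bypass the formula by noting directly that a black vertex is at distance $n$ from $X^\gamma$ exactly when it is at $F$-distance $n + d_\gamma$ from the center, and counting at that level. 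This is precisely the shortcut the paper itself records in the Remark following the unramified elliptic case, so the two routes are interchangeable and yield the same answer.
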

\begin{proof}
    We get the second part directly from Proposition \ref{prop:pointfromconvex} but we should only count black vertices. We have $|X^\gamma| = 2\frac{q^{d_\gamma+1/2}-1}{q-1}$, however half of the vertices at distance $n$ from $X_{\gamma}$ are black and the other are white, depending on which neighbor of $v$ they are closest to. Therefore we have 
    \begin{align*}
        O_n(\gamma) &= \frac{1}{2}\left(q^{n-1}\left((q-1)|X^\gamma|+2\right)\right)\\
        &= q^{n+d_\gamma-1/2}.
    \end{align*}
\end{proof}

Note that once we extend the building to a ramified extension, all vertices in the base building become the same color (since midpoints will be the new color of vertices). We get that the stable orbital integral makes up for the missed vertices. We get 
\[O^{\mathrm{st}}_n(\gamma) = 2 O_n(\gamma).\]

\subsubsection{Shalika germs}

We saw that there are exactly $5$ conjugacy classes of unipotent elements in $\mathrm{SL}_2(F)$. Not all of them give us independent functions. Conjugacy classes that merge over the unique unramified quadratic extension yield the same orbital integrals. We will therefore group them and only consider $3$ unipotent classes $U_0, U_1, U_{\varpi}$, represented by 
\[\begin{pmatrix}1&0\\0&1\end{pmatrix},\ \begin{pmatrix}1&1\\0&1\end{pmatrix},\ \begin{pmatrix}1&\varpi\\0&1\end{pmatrix}\]
respectively. \\

For $x\in \{0,1,\varpi\}$ we associate the function 
\[L_x: \mathcal{H}_K(\mathrm{SL}_2(F))\rightarrow \mathbb{C},\]
given by $L_x (f) = 2O(\alpha(x), f)$ if $x\neq 0$ and $O(\alpha(0), f)$ if $x=0$ (this takes into account the number of orbits that are represented by $\alpha(x)$ over an unramified extension. Write $f_n = \mathds{1}_{Kt_nK}$. Every function in $\mathcal{H}_K(\mathrm{SL}_2(F))$ is a linear combination of such $f_i$'s. We claim that given a certain torus $\mathcal{T}$ (thought as a centralizer of some element $\gamma$), we can write all orbital integrals for regular elements in $\mathcal{T}(F)\cap K$ as a linear combination of unipotent orbital integrals. This motivates the following theorems.

\begin{theorem}[Elliptic Shalika germs]\label{th:shalikasl2ellip}
    For all elliptic regular semisimple $\gamma\in K$, there are explicit functions $A(\gamma), B(\gamma), C(\gamma)\in \mathbb{Z}(q)$ so that the map $\mathcal{L} : \mathrm{Orb}(\gamma, \underline{\ }): 
\mathcal{H}_K(F)\rightarrow \mathbb{C}$ can be written as a linear combination of unipotent orbital integrals, i.e. 
\begin{equation}\label{th:shalikaellipt}\mathcal{L}  = A(\gamma) L_0+B(\gamma) L_1+ C(\gamma) L_\varpi.\end{equation}
Explicitly, we have 
\[A(\gamma) = \frac{1}{1-q},\ B(\gamma) = q^{-1}\left(q|X^\gamma_{\scalebox{0.5}{\CIRCLE}
}| - |X^\gamma_{\scalebox{0.5}{\Circle}}|+1\right),\ C(\gamma) = q^{-1}\left(q|X^\gamma_{\scalebox{0.5}{\Circle}}| - |X^\gamma_{\scalebox{0.5}{\CIRCLE}}|+1\right).\]
\end{theorem}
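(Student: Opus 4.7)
The plan is to exploit the linearity of all four functionals $\mathcal{L}, L_0, L_1, L_\varpi$ on $\mathcal{H}_K(\mathrm{SL}_2(F))$, together with the fact that this algebra is spanned by the characteristic functions $\{f_n = \mathds{1}_{Kt_nK}\}_{n\geq 0}$. It thus suffices to verify, for every $n \geq 0$, the scalar identity
\[O_n(\gamma) = A(\gamma)\,L_0(f_n) + B(\gamma)\,L_1(f_n) + C(\gamma)\,L_\varpi(f_n).\]

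First I would tabulate the unipotent side from the preceding subsection: $L_0(f_n) = \delta_{n,0}$; the functional $L_1$ is supported on even $n$ with $L_1(f_n) = q^n$ for $n > 0$ and $L_1(f_0) = (1-q^{-2})^{-1}$; and $L_\varpi$ is supported on odd $n$ with the analogous values (and $L_\varpi(f_0) = q^{-1}(1-q^{-2})^{-1}$). The crucial observation is that $L_1$ and $L_\varpi$ have disjoint parity supports for $n \geq 1$, so for each such $n$ at most one of the three unknowns appears, giving a direct reading $B(\gamma) = q^{-n} O_n(\gamma)$ from any even $n \geq 2$, and $C(\gamma) = q^{-n} O_n(\gamma)$ from any odd $n$.

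Next I would check that these readings are independent of the choice of $n$, i.e., that within each parity class $O_n(\gamma)$ is of the form $q^n$ times a constant (possibly zero on one parity). This is immediate from \S\ref{sl2unram} and \S\ref{sl2ram}: in the unramified case $O_n(\gamma) = q^{n+d_\gamma}(1+q^{-1})$ on the parity class determined by $d_\gamma$ and the colour of the centre of $X^\gamma$, and vanishes on the opposite parity; in the ramified case $O_n(\gamma) = q^{n+d_\gamma-1/2}$ for every $n \geq 1$. With $B(\gamma), C(\gamma)$ so determined, the value $A(\gamma)$ is forced by the $n = 0$ equation,
\[A(\gamma) = O_0(\gamma) - \frac{B(\gamma) + q^{-1} C(\gamma)}{1-q^{-2}},\]
and a direct algebraic simplification across the five sub-cases (unramified $\times$ parity of $d_\gamma$ $\times$ colour of the centre, plus the ramified case) should reveal the striking cancellation $A(\gamma) = 1/(1-q)$, independent of $\gamma$.

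Finally, to obtain the stated symmetric form, I would substitute the geometric-sum counts of black and white vertices in $X^\gamma$ (as computed in the case analyses and in the tree-counting propositions used above) into the expressions for $B(\gamma)$ and $C(\gamma)$, using the symmetry that exchanging the colour of the centre swaps the roles of $B$ and $C$, and the equality $|X^\gamma_{\scalebox{0.5}{\CIRCLE}}| = |X^\gamma_{\scalebox{0.5}{\Circle}}|$ in the ramified case (which forces $B = C$, consistent with $O_n(\gamma)$ being supported on both parities). The main obstacle I foresee is bookkeeping: each of the five sub-cases has its own $d_\gamma$-parity behaviour, and a uniform closed form for $B, C$ only emerges after telescoping the geometric sums $(q-1)|X^\gamma_\bullet|$ and $(q-1)|X^\gamma_\circ|$. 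The conceptually surprising $\gamma$-independence of $A(\gamma)$ deserves a remark — it suggests a Plancherel-theoretic reason for the coefficient of the trivial unipotent orbit — but the proof itself is a careful, case-by-case simplification.
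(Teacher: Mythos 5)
Your proposal is correct and follows the same underlying strategy as the paper: pin down $A(\gamma), B(\gamma), C(\gamma)$ by solving the linear system obtained by evaluating both sides against each $f_n$, reading off $B$ and $C$ from the disjoint parity supports of $L_1$ and $L_\varpi$ for $n\geq 1$, and then recovering $A$ from the $n=0$ equation. The paper streamlines the final step by invoking Proposition \ref{prop:coloredawayfromconvex} to express $O_n(\gamma)$ for $n>0$ directly as $q^{n-1}\left(q|X^\gamma_{\scalebox{0.5}{\CIRCLE}}|-|X^\gamma_{\scalebox{0.5}{\Circle}}|+1\right)$ (and its colour-swapped counterpart), which immediately produces the stated symmetric formulas without any case split; your plan reaches the same coefficients but via a five-way verification against the explicit formulas of \S\ref{sl2unram} and \S\ref{sl2ram}, which is the bookkeeping burden you correctly anticipate.
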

\begin{proof}
    We can check it directly, but for simplicity, let us sketch the derivation process. Let us start from the conclusion and look for $A= A(\gamma), B = B(\gamma), C= C(\gamma)$ such that Equation (\ref{th:shalikaellipt}) holds. \\

    \[\mathcal{L}(f_n) = A\left\lbrace\begin{array}{ll}
       1  & n=0  \\
        0 & n>0
    \end{array}\right.\!+B\left\lbrace\begin{array}{ll}
       \frac{1}{1-q^{-2}}  & n=0  \\
        q^n & n>0\text{ even}\\
        0 & n>0\text{ odd}\\
    \end{array}\right.\!+C\left\lbrace\begin{array}{ll}
       \frac{q^{-1}}{1-q^{-2}}  & n=0  \\
        0 & n>0\text{ even} .\\
        q^n & n>0\text{ odd}\\
    \end{array}\right.  \]
    We use the fact that $\mathcal{L}(f_0) = |X^\gamma_{\scalebox{0.5}{\CIRCLE}}|$, and for $n>0$ we have $\mathcal{L}(f_n)$ equals the number of black vertices at distance $n$ from $X^\gamma$, which we can compute using Proposition  \ref{prop:coloredawayfromconvex}. 
    The system becomes 
    \[\left\lbrace\begin{array}{l}
      A + \frac{1}{1-q^{-2}}B+\frac{q^{-1}}{1-q^{-2}}C = |X^\gamma_{\scalebox{0.5}{\CIRCLE}}|    \\ \\ 
        q^n B = q^{n-1}\left(q |X^\gamma_{\scalebox{0.5}{\CIRCLE}}|-|X^\gamma_{\scalebox{0.5}{\Circle}}|+1|\right) \\ \\ 
        q^n C = q^{n-1}\left(q |X^\gamma_{\scalebox{0.5}{\Circle}}|-|X^\gamma_{\scalebox{0.5}{\CIRCLE}}|+1|\right) \\
    \end{array}\right., \]
    which yields the desired identity.
\end{proof}
\begin{rem}
Note that the three unipotent orbit in the Shalika germ expansions can be partitioned into two $\mathrm{GL}_2(F)$-conjugacy classes, namely writing $\mathcal{C}(g)$ for the $\mathrm{SL}_2$-orbit of $g$ 
\[\{\mathcal{C}(\alpha(0))\},\ \{\mathcal{C}(\alpha(1)),\mathcal{C}(\alpha(\varpi))\}.\]
We also know that elliptic elements  have exactly two orbits in the corresponding rational orbit. The orbital integral on the other orbit will have the same expansion, but where the coefficient for $\mathcal{C}(g)$ is switched to the orbit in the previous set. Indeed, notice that summing the coefficients for each orbit we get 
\[A(\gamma)+A(\gamma) = \frac{2}{1-q},\]
\[B(\gamma)+C(\gamma) = q^{-1}((q+1)|X^\gamma|+2),\]
which are exactly the Shalika germ expansion for $\mathrm{GL}_2$ as computed in \cite[p.416]{kott_bible}.
\end{rem}

We have a similar statement for split tori (hyperbolic orbits). 
\begin{theorem}[Hyperbolic Shalika germs]
    For all hyperbolic regular semisimple $\gamma\in K$, there are explicit functions $A(\gamma), B(\gamma), C(\gamma)\in \mathbb{Z}[q^{-1}]$ so that the map $\mathcal{L} : \mathrm{Orb}(\gamma, \underline{\ }): 
\mathcal{H}_K(F)\rightarrow \mathbb{C}$ can be written as a linear combination of unipotent orbital integrals, i.e. 
\begin{equation}\label{th:shalikahyper}\mathcal{L}  = A(\gamma) L_0+B(\gamma) L_1+ C(\gamma) L_\varpi.\end{equation}
Explicitly, we have 
\[A(\gamma) =0,\ B(\gamma) = C(\gamma) 
 = q^{d_\gamma}(1-q^{-1}).\]
\end{theorem}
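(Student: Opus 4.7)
The plan is to mimic the strategy used for the elliptic case (Theorem on elliptic Shalika germs): fix the ansatz $\mathcal{L} = A(\gamma) L_0 + B(\gamma) L_1 + C(\gamma) L_\varpi$, evaluate both sides on the Hecke basis $\{f_n = \mathds{1}_{Kt_nK}\}_{n \geq 0}$, and solve for the coefficients. Since $\{f_n\}$ spans $\mathcal{H}_K(\mathrm{SL}_2(F))$, it suffices that the identity hold for every $n$, and the linear functional $\mathcal{L}$ on the left is then fully determined.

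The input data for the left-hand side comes from the explicit computation for hyperbolic regular semisimple $\gamma$ performed earlier in \S\ref{sec:sl2}: one has $\mathcal{L}(f_0) = O_0(\gamma) = q^{d_\gamma}$ and, for $n > 0$,
\[\mathcal{L}(f_n) = O_n(\gamma) = q^{n+d_\gamma} - q^{n+d_\gamma-1} = q^n \cdot q^{d_\gamma}(1-q^{-1}),\]
independently of the parity of $n$. The right-hand side is read off from the unipotent computations, namely $L_1(f_n) = q^n$ for $n > 0$ even and vanishes for $n > 0$ odd, while $L_\varpi(f_n) = q^n$ for $n > 0$ odd and vanishes for $n > 0$ even (and $L_0(f_n)$ is supported only at $n = 0$).

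Comparing the $n > 0$ even equations forces $B(\gamma) = q^{d_\gamma}(1-q^{-1})$, and comparing the $n > 0$ odd equations forces $C(\gamma) = q^{d_\gamma}(1-q^{-1})$; this already delivers the explicit formulas claimed. It remains to check consistency at $n = 0$, where the identity reads
\[q^{d_\gamma} = A(\gamma) + \frac{B(\gamma)}{1-q^{-2}} + \frac{q^{-1} C(\gamma)}{1-q^{-2}}.\]
Substituting the values of $B(\gamma)$ and $C(\gamma)$, the last two terms combine to $q^{d_\gamma}(1-q^{-1})(1+q^{-1})/(1-q^{-2}) = q^{d_\gamma}$, so indeed $A(\gamma) = 0$.

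There is no substantive obstacle in this argument; the only thing worth flagging is that the parity separation between $L_1$ and $L_\varpi$ — which in the elliptic case encoded the genuine distinction between the two nontrivial rational unipotent $\mathrm{SL}_2$-orbits lying above a single $\mathrm{GL}_2$-orbit — here happens to collapse to the equality $B(\gamma) = C(\gamma)$. This reflects the fact that a split torus has trivial $H^1(F, \mathcal{T})$ so the rational and stable hyperbolic orbital integrals coincide, and is consistent with the remark preceding this theorem in the text; indeed summing the coefficients recovers $B(\gamma) + C(\gamma) = 2 q^{d_\gamma}(1-q^{-1})$, matching the known $\mathrm{GL}_2$-Shalika germ expansion of \cite{kott_bible}.
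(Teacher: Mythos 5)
Your argument is correct and is essentially what the paper intends, since the paper's proof simply declares the hyperbolic case a simpler instance of the elliptic one: evaluate the ansatz on the Hecke basis $\{f_n\}$, read off $B$ and $C$ from the even and odd $n>0$ equations respectively, then check the $n=0$ equation forces $A=0$; all your numerical inputs match the hyperbolic and unipotent computations from \S\ref{sec:sl2}. One small slip in your closing consistency remark: the $\mathrm{GL}_2$ hyperbolic Shalika germ coefficient is $q^{d_\gamma}(1-q^{-1})$, which equals $B(\gamma)=C(\gamma)$ individually, not $B(\gamma)+C(\gamma)$ as you wrote. The sum-of-coefficients comparison is the right heuristic in the elliptic case, where the stable orbit contains two rational orbits and $\mathcal{L}_{\mathrm{GL}_2}=\mathcal{L}_1+\mathcal{L}_2$, but here the hyperbolic rational orbit already equals its stable orbit, so $\mathcal{L}_{\mathrm{GL}_2}=\mathcal{L}$ and the expansions coincide ``on the nose'' (as the paper's remark following this theorem says); compare also the mixed-germ remark identifying the $\mathrm{GL}_2$ hyperbolic coefficient as $(1-q^{-1})q^{d_\gamma}$.
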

\begin{proof}
    This is a simpler version of the previous Theorem. 
\end{proof}

\begin{rem}
    This time, the orbit of hyperbolic elements are equal to their stable orbit, hence the Shalika germ expansion is the same as the one for $\mathrm{GL}_2(F)$ on the nose, as described in \cite[p.415]{kott_bible}.
\end{rem}

\section{Explicit orbital integrals on the distinguished subgroup}\label{sec:exporbit}
In this section, we let $\mathbf{G} = \mathrm{GL}_2\times_{\det}\mathrm{GL}_2$ and $G = \mathbf{G}(F)$. Note that we have 
\[\mathrm{SL}_2(F)\times\mathrm{SL}_2(F) \subset G(F)\subset \mathrm{GL}_2(F)\times \mathrm{GL}_2(F),\]
and orbital integrals of $\mathrm{SL}_2^2$ are just products of the ones computed in \S \ref{sec:sl2} hence we expect orbital integrals to look similar to products of those. We will regularly do the explicit comparison. 

\subsubsection{The building and spherical functions}

We will still use a building-theoretic approach, where the building of $G$ is $\mathcal{B}(\mathrm{GL}_2\times\mathrm{GL}_2,F)\cong\mathcal{B}(\mathrm{GL}_2,F)\times \mathcal{B}(\mathrm{GL}_2,F)$, the product of two colored trees for $\mathrm{GL}_2$. Figure \ref{fig:aptgl22} represents an apartment. 

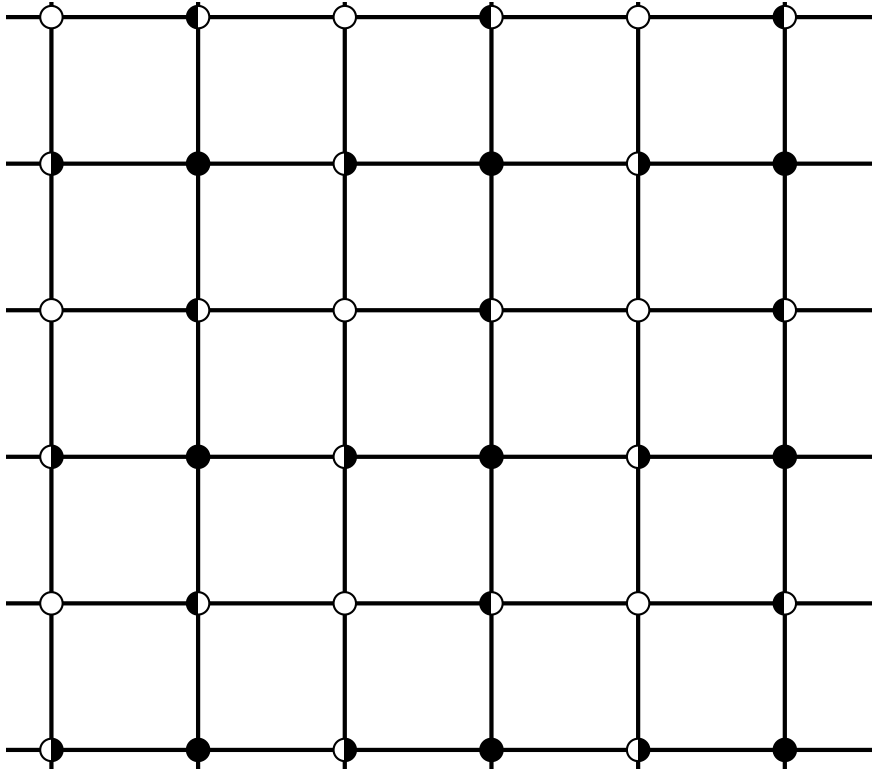
\begin{figure}
\centering
\begin{tikzpicture}[cross/.style={path picture={ 
  \draw[black]
(path picture bounding box.south east) -- (path picture bounding box.north west) (path picture bounding box.south west) -- (path picture bounding box.north east);
}}]
    \coordinate (Origin)   at (0,0);
    \coordinate (XAxisMin) at (-5,0);
    \coordinate (XAxisMax) at (5,0);
    \coordinate (YAxisMin) at (0,-3);
    \coordinate (YAxisMax) at (0,3);

    \clip (-4.5,-2.2) rectangle (7cm,8cm); 
     \pgftransformscale{1.3}

    \coordinate (Bone) at (0,2);
 \pgftransformscale{.75}    \coordinate (Btwo) at (2,-2);
    \draw[style=help lines, color=black ,ultra thick] (-14,-14) grid[step=2cm] (14,14);
    %\draw[style=help lines,dashed] (-14,-14) grid[step=1cm] (14,14);
    %\begin{scope}[rotate=45]
    %\draw[style=help lines,dashed] (-14,-14) grid[step=1.41cm] (14,14);
    %\end{scope}

   % \foreach \x in {-7,-6,...,7}{
    %  \foreach \y in {-7,-6,...,7}{ 
    %    \node[draw,circle,inner sep=1pt, fill=gray, thick] at (1+2*\x,1+2*\y) {};
    %    \node[draw,circle,inner sep=1pt, fill=gray, thick] at (1+2*\x,2*\y) {};
    %    \node[draw,circle,inner sep=1pt, fill=gray, thick] at (2*\x,1+2*\y) {};
    %  }
    %}
    \foreach \x in {-7,-6,...,7}{
      \foreach \y in {-7,-6,...,7}{ 
        \node[draw,circle,inner sep=3pt,split fill=black and white, thick] at (2+4*\x,4*\y) {};
        \node[draw,circle,inner sep=3pt,fill=black, thick] at (2+4*\x,2+4*\y) {};
        \node[draw,circle,inner sep=3pt,,split fill=white and black, thick] at (4*\x,2+4*\y) {};
        \node[draw,circle,inner sep=3pt,fill=white, thick] at (4*\x,4*\y) {};
      }
    }
    
  \end{tikzpicture}
  \caption{Apartment of the building of $G$}
  \label{fig:aptgl22}
\end{figure}

Call the building $Y = X\times X$ where $X$ is the colored tee of $\mathrm{SL}_2(F)$. Note that we have $4$ colors coming from colorings of each building.  

Elements of the split diagonal torus acts on the fundamental apartment, via elements of the form $\begin{pmatrix}
    1&0\\ 0&\varpi^k
\end{pmatrix},\begin{pmatrix}
    \varpi^m&0\\ 0&\varpi^{k-m}
\end{pmatrix}$. If we move a vertex by a distance $k$ horizontally, it needs to be moved by some $k-2m$ vertically, hence the parity must be preserved.

We conclude that
\begin{itemize}
\item $\mathrm{GL}_2(F)\times\mathrm{GL}_2(F)$ acts transitively on vertices of $Y$,
\item $\mathrm{SL}_2(F)\times\mathrm{SL}_2(F)$ preserves the colored patterns and acts transitively on set of vertices of same pattern,
\item  $G$ preserves the two type of vertices: monocolor or bicolor, and acts transitively on each set.
\end{itemize}

\subsubsection{Unipotent orbits}

Since there are $2$ (resp. $5$) unipotent orbits in $\mathrm{GL}_2(F)$ (resp. $\mathrm{SL}_2(F)$), we have 
$4$ (resp. $25$) unipotent orbits in $\mathrm{GL}_2(F)\times\mathrm{GL}_2(F)$ (resp. $\mathrm{SL}_2(F)\times\mathrm{SL}_2(F)$).

Let $\alpha: F\times F\rightarrow G$ defined by 
\[\alpha(x,y) = \begin{pmatrix}
    1&x\\ 0&1
\end{pmatrix},\begin{pmatrix}
    1&y\\ 0&1
\end{pmatrix}.\]

Let $u\in \mathcal{O}_F$ be a nonsquare unit. The unipotent orbits in  $\mathrm{SL}_2(F)\times\mathrm{SL}_2(F)$  are represented by 
\[\alpha(x,y)\ x,y\in \{0,1,u,\varpi, u\varpi\}.\]
We just need to establish which of those orbits are $G$-conjugate.

\begin{prop}
    There are $7$ unipotent orbits in $G$, represented by 
    \[\alpha(0,0),\ \alpha(1,0),\ \alpha(0,1),\ \alpha(1,1),\]
    \[\alpha(1,u), \alpha(1,\varpi),  \alpha(u,\varpi).\]
\end{prop}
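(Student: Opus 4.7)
The plan is to first reduce an arbitrary unipotent element of $G$ to the normal form $\alpha(x_1, x_2)$, and then enumerate $G$-conjugacy classes among these normal forms.

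For the reduction step, given $(u_1, u_2)\in G$ unipotent, each $u_i$ is $\mathrm{GL}_2(F)$-conjugate to some $\alpha(x_i)$ via an element $h_i \in \mathrm{GL}_2(F)$. I would then verify that the determinants $\det(h_1)$ and $\det(h_2)$ can be made equal: for $x_i \neq 0$ the stabilizer $Z\cdot N$ of $\alpha(x_i)$ in $\mathrm{GL}_2(F)$ has determinants filling $(F^\times)^2$, while varying $x_i$ over $F^\times$ (using $\mathrm{diag}(1,\lambda)$ to conjugate $\alpha(1)$ to $\alpha(\lambda)$) shifts the resulting determinant coset through every square class. Hence any pair $(\det(g_1),\det(g_2))\in (F^\times)^2$ with $\det(g_1)=\det(g_2)$ can be realized simultaneously, completing the reduction into normal form.

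Next, the orbits of normal forms split according to the vanishing pattern of $(x_1,x_2)$, which is plainly a $G$-invariant. The cases $(0,0)$, $(0,\neq 0)$, $(\neq 0, 0)$ contribute one orbit each: since the centralizer of $I$ in $\mathrm{GL}_2(F)$ is all of $\mathrm{GL}_2(F)$, in the middle two cases there is no constraint on the matching determinant and the nonzero coordinate can be scaled freely to $1$. For the generic case with both $x_i\neq 0$, I would argue that any $G$-conjugation $\alpha(x_1,x_2)\to\alpha(y_1,y_2)$ must be effected by $(g_1,g_2)$ with each $g_i$ normalizing the upper-triangular unipotent radical $N$ (because $\alpha(x_i),\alpha(y_i)\in N\setminus\{I\}$), hence $g_i\in B$. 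Writing $g_i=t_in_i$ with $t_i=\mathrm{diag}(a_i,b_i)$, the unipotent factor commutes with $\alpha(x_i)$ and the induced action is $(x_1,x_2)\mapsto(\lambda_1 x_1,\lambda_2 x_2)$ with $\lambda_i=b_i/a_i$, under the constraint $a_1b_1=a_2b_2$, equivalently $\lambda_1/\lambda_2\in(F^\times)^2$. The orbit invariant is therefore $x_1/x_2\in F^\times/(F^\times)^2$, giving four orbits indexed by the square classes $\{1,u,\varpi,u\varpi\}$. The four listed representatives realize these classes (in particular $\alpha(u,\varpi)$ corresponds to $u\varpi$ via $u/\varpi\equiv u\varpi\pmod{(F^\times)^2}$), for a total of $1+1+1+4=7$ orbits.

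The key delicate point is the interplay between the determinant constraint $\det(g_1)=\det(g_2)$ and the square-class obstruction coming from the stabilizers of $\alpha(x_i)$; this must be tracked both in the reduction step and in the generic case. Without it one might naively expect a single generic orbit (from $\mathrm{GL}_2\times\mathrm{GL}_2$-conjugacy) or all $25$ orbits (from $\mathrm{SL}_2\times\mathrm{SL}_2$-conjugacy), whereas the correct answer of four in this case is what produces the final count.
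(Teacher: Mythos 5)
Your proof is correct and arrives at the same count by essentially the same underlying mechanism (the square-class obstruction coming from the determinant coupling), but you organize it differently and more completely than the paper does. The paper's proof produces the seven representatives via three explicit conjugation identities --- $\alpha(x,0)\sim\alpha(1,0)$, $\alpha(x,x)\sim\alpha(1,1)$ via the diagonal $\mathrm{GL}_2$, and $\alpha(x,y)\sim\alpha(1,xy)$ --- which shows every unipotent orbit meets the list, but does not explicitly verify that the four ``regular'' representatives $\alpha(1,1),\alpha(1,u),\alpha(1,\varpi),\alpha(u,\varpi)$ are pairwise non-conjugate in $G$. Your argument closes that gap: by observing that any $G$-conjugation between nonzero normal forms must lie in $B\times_{\det}B$, reducing to the torus action $(x_1,x_2)\mapsto(\lambda_1 x_1,\lambda_2 x_2)$ with $\lambda_1/\lambda_2\in(F^\times)^2$, you identify the complete invariant $x_1/x_2\in F^\times/(F^\times)^2$ and get both existence and distinctness in one stroke. (Note that your invariant $x_1/x_2$ and the paper's implicit invariant $x_1x_2$ agree modulo squares since their product is $x_1^2$.) One cosmetic remark: in the reduction step the phrase ``any pair $(\det(g_1),\det(g_2))\in(F^\times)^2$'' should read $(F^\times)^{\times 2}$ or just $F^\times\times F^\times$ to avoid confusion with the group of squares you use elsewhere; and the parenthetical claim that $\mathrm{SL}_2^2$-conjugacy would give ``all $25$ orbits'' in the generic case overcounts (the paper's $25$ is the total including degenerate ones; the regular unipotent $\mathrm{SL}_2^2$-orbits number $16$), though this aside does not affect the argument.
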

\begin{proof}
For $x\neq 0$ we have
\[\left(\begin{pmatrix}
        1&0\\0&x
    \end{pmatrix},\begin{pmatrix}
        1&0\\0&x
    \end{pmatrix}\right)^{-1}\alpha(x,0)\left(\begin{pmatrix}
        1&0\\0&x
    \end{pmatrix},\begin{pmatrix}
        1&0\\0&x
    \end{pmatrix}\right) = \alpha(1,0).\]
Therefore, elements $\alpha(0,0),\alpha(1,0), \alpha(0,1), \alpha(1,1)$ represent distinct orbits in $\mathrm{GL}_2(F)\times \mathrm{GL}_2(F)$ hence they are still distinct under $G$. We reduce the search to elements of the form $\alpha(x,y)$ and $x,y\neq 0$. \\

    Since $\mathrm{GL}_2(F)$ embeds diagonally in $G$, all elements of the form $\alpha(x,x)$ (where $x\neq 0$) are in the orbit of $\alpha(1,1)$.

Lastly, we observe  that $\alpha(x,y)$ is conjugate to $\alpha(1,xy)$ if $x\neq 0$ since 
    \[\left(\begin{pmatrix}
        x&0\\0&1
    \end{pmatrix},\begin{pmatrix}
        1&0\\0&x
    \end{pmatrix}\right)^{-1}\alpha(x,y)\left(\begin{pmatrix}
        x&0\\0&1
    \end{pmatrix},\begin{pmatrix}
        1&0\\0&x
    \end{pmatrix}\right) = \alpha(1,xy).\]
In particular, if both $x,y\neq 0$ then  $\alpha(x,y)$ is conjugate to $\alpha(1,xy)$, which itself is conjugate to $\alpha(y,x)$.
\end{proof}

Note that as seen in $\mathrm{SL}_2$,  unipotent orbits that are in the same orbit over the unique quadratic unramified extension have the same orbital integrals.

\begin{prop} After base change to the unique unramified quadratic extension of $F$ (which is $F[\sqrt{u}]$), we only have  $5$ unipotent orbits left, represented by 
    \[\alpha(0,0),\ \alpha(1,0),\ \alpha(0,1),\ \alpha(1,1),\ \alpha(1,\varpi).\]
    The last two represent the same stable regular unipotent orbit.
\end{prop}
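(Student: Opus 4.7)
The plan is to reuse, verbatim over the extension $E := F[\sqrt{u}]$, the two conjugation relations established in the proof of the previous proposition: namely $\alpha(x,y) \sim_G \alpha(1,xy)$ when $x \neq 0$, and the one-component relation $\alpha(x) \sim \alpha(x\lambda^{-2})$ via conjugation by $\mathrm{diag}(\lambda,\lambda^{-1}) \in \mathrm{SL}_2$. The key input is that $u = (\sqrt{u})^2$ is a square in $E$.

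First I would collapse orbits. Setting $\lambda = \sqrt{u} \in E^\times$, the element $(I, \mathrm{diag}(\sqrt{u}, 1/\sqrt{u}))$ lies in $G(E)$ (both components have determinant $1$) and conjugates $\alpha(1,u)$ to $\alpha(1,1)$. Likewise $(\mathrm{diag}(\sqrt{u}, 1/\sqrt{u}), I)$ sends $\alpha(u, \varpi)$ to $\alpha(1, \varpi)$. This identifies two pairs of the seven original orbits, leaving at most the five representatives in the statement.

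Second I would check these five are pairwise non-conjugate in $G(E)$. The orbits $\alpha(0,0), \alpha(1,0), \alpha(0,1)$ are distinguished from one another and from the regular ones by which components are trivial, an invariant under $\mathrm{GL}_2(E)^2$-conjugation and hence under $G(E)$. For the two regular orbits $\alpha(1,1)$ and $\alpha(1,\varpi)$, the relation $\alpha(x,y) \sim_G \alpha(1,xy)$ shows that regular unipotent $G(E)$-orbits are classified by the image of $xy$ in $E^\times / (E^\times)^2$. Since $E/F$ is unramified, $\varpi$ remains a uniformizer in $E$, in particular $\varpi \notin (E^\times)^2$, so $1$ and $\varpi$ give genuinely different classes.

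For the last sentence I would pass to $\bar F$, where $\varpi = (\sqrt{\varpi})^2$ is now a square: conjugation by $(I, \mathrm{diag}(\sqrt{\varpi}, 1/\sqrt{\varpi})) \in G(\bar F)$ sends $\alpha(1,\varpi)$ to $\alpha(1,1)$, so the two orbits merge stably. The only real obstacle is isolating the right invariant that separates rational orbits from stable ones; once one notices that $\mathcal{O}_F^\times \subset (E^\times)^2$ while $\varpi \notin (E^\times)^2$ (and $\varpi \in (\bar F^\times)^2$), the classification is immediate.
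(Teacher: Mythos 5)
The paper states this proposition without proof (it follows immediately from the preceding proposition's proof together with the remark about unramified extensions), so there is no author proof to compare against. Your argument is correct and complete: the conjugations by $\mathrm{diag}(\sqrt{u},1/\sqrt{u})$ in one factor (paired with the identity in the other so the determinant condition holds) collapse $\alpha(1,u)\sim\alpha(1,1)$ and $\alpha(u,\varpi)\sim\alpha(1,\varpi)$; the classification of regular unipotent $G(E)$-orbits by $E^\times/(E^\times)^2$ via the relation $\alpha(x,y)\sim_G\alpha(1,xy)$ separates $\alpha(1,1)$ from $\alpha(1,\varpi)$ because $\varpi$ still has odd valuation in the unramified extension $E$; and passing to $\bar F$ (the paper's notion of stable conjugacy) merges them. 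The observation that $\mathcal{O}_F^\times\subset(E^\times)^2$ is the right invariant and justifies why exactly the "unit" orbits collapse while the "$\varpi$" orbits stay distinct rationally.
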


Informed by our computations on $\mathrm{SL}_2$, we will only consider these orbits since orbits merging after unramified extensions give the same contributions. We will multiply the Haar measure  on $F$ chosen in the case of $\mathrm{SL}_1$ by the number of rational orbits.

We now let $K = \mathrm{GL}_2\times\mathrm{GL}_2(\mathcal{O})$ and  $f_{n,m} = \mathds{1}_{K(t_n, t_m)K}$ where $t_i = \begin{pmatrix}
    \varpi^i&0\\0&\varpi^{-i}
\end{pmatrix}$. Write $O_{n,m}(\gamma)$ for $\mathrm{Orb}(\gamma, f_{n,m})$.

\[O_{n,m}(\alpha(0,0)) = \left\lbrace \begin{array}{ll}
     1& \text{if }n=m=0 \\
     0 & \text{else}
\end{array}\right. .\]
\[O_{n,m}(\alpha(1,0)) = \left\lbrace \begin{array}{ll}
     0& \text{if }m\neq 0 \\
      \frac{1}{1-q^{-1}}&\text{if }n=m=0\\
     q^n& \text{if }m=0\ n>0
\end{array}\right. .\]
\[O_{n,m}(\alpha(0,1)) = \left\lbrace \begin{array}{ll}
     0& \text{if }n\neq 0 \\
      \frac{1}{1-q^{-1}}&\text{if }n=m=0\\
     q^m& \text{if }n=0\ m>0
\end{array}\right. .\]

While it might seem at a first glance that the orbital integrals at $\alpha(1,0)$ and $\alpha(0,1)$ are identical, the roles of $n$ and $m$ are swapped, so the functions are still linearly independent.

Now we look at $\alpha(1,1)$ and $\alpha(1,\varpi)$.
Putting coordinates $(i,j)$ on the fundamental apartment. The set of fixed points of $\alpha(1,1)$ is the quadrant $i,j\leq 0$ whereas the set of fixed points of $\alpha(1,\varpi)$ is the set $i\leq 0$ and $j\leq 1$.

For $n=m=0$ we compute the sum 
\[\sum_{\underset{i+j\equiv0\pmod2}{i,j\leq 0}}\frac{1}{\mathrm{Stab}_{N\times N}(x_{i},x_{j})} =\sum_{\underset{i+j\equiv0\pmod2}{i,j\leq 0}}\frac{1}{q^{i+j}}= \sum_{k\geq 0}\frac{2k+1}{q^{2k}}=f'(1/q),\]
where $\displaystyle f(x) = \sum_{k\geq 0}x^{2k+1} = \frac{x}{1-x^2}$. We get 
\[O_{0,0}(\alpha(1,1)) = \frac{1+q^{-2}}{(1-q^{-2})^2}.\]

\[O_{0,0}(\alpha(1,\varpi))=\sum_{\underset{i+j\equiv0\pmod2}{i\leq 0,j\leq 1}}\frac{1}{\mathrm{Stab}_{N\times N}(x_{i},x_{j})} =\sum_{\underset{i+j\equiv0\pmod2}{i\leq 0,j\leq 1}}\frac{1}{q^{i}q^{j+1}}= \sum_{k\geq 0}\frac{2k+2}{q^{2k+1}}=g'(1/q),\]
where $\displaystyle g(x) = \sum_{k\geq 0}x^{2k+2} = \frac{x^2}{1-x^2}$. We get 
\[O_{0,0}(\alpha(1,\varpi)) = \frac{2q^{-1}}{(1-q^{-2})^2}.\]

\begin{rem}
    We can compare this result with the corresponding integral over the orbit of $\alpha(1,1)$ for $\mathrm{GL}_2^2$ which yields
    \[\frac{1}{1-q^{-1}}\frac{1}{1-q^{-1}}=\frac{1}{(1-q^{-1})^2}.\]
     Note that we grouped the unipotent integrals for $\mathrm{SL}_2$ giving the same contribution.

The $\mathrm{GL}_2^2$-orbit of $\alpha(1,1)$ is decomposed in the two orbits in $G$ represented by   $\alpha(1,1)$ and $\alpha(1,\varpi)$. Indeed, observe that with our choice of measure, we get 
\[O_{0,0}(\alpha(1,1))+O_{0,0}(\alpha(1,\varpi))= \frac{1+q^{-2}}{(1-q^{-2})^2}+\frac{2q^{-1}}{(1-q^{-2})^2} = \frac{(1+q^{-1})^2}{(1-q^{-2})^2} = \frac{1}{(1-q^{-1})^2}.\]
\end{rem}

\begin{rem}
    Similarly, the $G$-orbit of $\alpha(1,1)$ corresponds to two $\mathrm{SL}_2^2$-orbits (up to unramified extension), represented by $\alpha(1,1)$ and $\alpha(\varpi, \varpi)$. Adding both those integrals we get 
    \[\frac{1}{1-q^{-2}}\frac{1}{1-q^{-2}}+\frac{q^{-1}}{1-q^{-2}}\frac{q^{-1}}{1-q^{-2}} = \frac{1+q^{-2}}{(1-q^{-2})^2} = O_{0,0}(\alpha(1,1)).\]
 The orbit of $\alpha(1,\varpi)$ is also decomposed in two $\mathrm{SL}_2^2$-orbits represented by $\alpha(1,\varpi)$ and $\alpha(\varpi,1)$, and summing them we get 
 \[\frac{q^{-1}}{1-q^{-2}}\frac{1}{1-q^{-2}}+\frac{q^{-1}}{1-q^{-2}}\frac{1}{1-q^{-2}} = \frac{2q^{-1}}{(1-q^{-2})^2} = O_{0,0}(\alpha(1,\varpi)).\]
\end{rem}

For the case $n,m>0$. 
In this case, we are looking for a point at a horizontal distance $n$ and vertical distance $m$ from the quadrant of fixed points. There is only one such point. For $\alpha(1,1)$ it is the point of coordinates $(n,m)$, however, this vertex is only monochrome if $n+m\equiv 0\pmod 2$. We therefore get 
\[O_{n,m}(\alpha(1,1)) = \left\lbrace\begin{array}{ll}
   q^{n+m}  & \text{if }n+m\equiv0\pmod2 \\
    0 & \text{else} 
\end{array}\right..\]

For $\alpha(1,\varpi)$, the only vertex to consider is the one of coordinates $(n,m+1)$. This vertex  is monochrome only if $n+m+1$ is even. We therefore get (remember the modified measure on the second copy of $F$)
\[O_{n,m}(\alpha(1,\varpi)) = \left\lbrace\begin{array}{ll}
   q^{n+m} & \text{if }n+m\equiv1\pmod2 \\
    0 & \text{else} 
\end{array}\right..\]

Now we go to $n=0,m>0$. We look for vertices $(i,j)$ such that $i\leq 0$ and $j = m$. If $m$ is even then we get vertices of the form $(-2i,m)$, but if $m$ is odd, then we get vertices of the form $(-(2i+1),m)$. We get 
\begin{align*}
O_{0,m}(\alpha(1,1)) &=\left\lbrace \begin{array}{ll}
 \sum_{i\leq 0}\frac{1}{q^{2i-m}}    &  m\equiv0\pmod2\\
   \sum_{i\leq 0}\frac{1}{q^{2i+1-m}}   & m\equiv1\pmod2
\end{array}\right. \\&= \frac{q^m}{1-q^{-2}} \left\lbrace\begin{array}{ll}
   1  & m\equiv0\pmod2 \\
    q^{-1} & m\equiv1\pmod2 
\end{array}\right. .\end{align*}

Similarly, for $\alpha(1,\varpi)$ we look at vertices $(i,j)$ where $i\leq 0$ and $j = m+1$.
\begin{align*}
O_{0,m}(\alpha(1,1)) &=\left\lbrace \begin{array}{ll}
 \sum_{i\geq 0}\frac{1}{q^{2i-m}}    &  m\equiv0\pmod2\\
   \sum_{i\geq 0}\frac{1}{q^{2i+1-m}}   & m\equiv1\pmod2
\end{array}\right. \\&= \frac{q^m}{1-q^{-2}} \left\lbrace\begin{array}{ll}
   1  & m\equiv0\pmod2 \\
    q^{-1} & m\equiv1\pmod2 
\end{array}\right. .\end{align*}

Lastly, we treat the case $m=0$ and $n>0$.

The situation for $\alpha(1,1)$ is identical to the previous case, swapping $n$ and $m$ so 

\[O_{n,0}(\alpha(1,1)) = \frac{q^n}{1-q^{-2}} \left\lbrace\begin{array}{ll}
   1  & n\equiv0\pmod2 \\
    q^{-1} & n\equiv1\pmod2 
\end{array}\right. .\]

For $\alpha(1,\varpi)$, however, we will need to count vertices $(i,j)$ with  $j\leq 1$ and $i = n$.

\begin{align*}
O_{n,0}(\alpha(1,\varpi)) &=\left\lbrace \begin{array}{ll}
 \sum_{i\geq 0}\frac{1}{q^{2i-n+1}}    &  n\equiv0\pmod2\\
   \sum_{i\geq 0}\frac{1}{q^{2i-1-n+1}}   & n\equiv1\pmod2
\end{array}\right. \\&= \frac{q^n}{1-q^{-2}} \left\lbrace\begin{array}{ll}
   q^{-1}  & n\equiv0\pmod2 \\
    1 & n\equiv1\pmod2 
\end{array}\right. .\end{align*}

To sum up, we proved. 

\begin{theorem}
    The unipotent orbital integrals are equal to 
    \[O_{n,m}(\alpha(0,0)) = \left\lbrace \begin{array}{ll}
     1& \text{if }n=m=0 \\
     0 & \text{else}
\end{array}\right. ,\]
\[O_{n,m}(\alpha(1,0)) = \left\lbrace \begin{array}{ll}
     0& \text{if }m\neq 0 \\
      \frac{1}{1-q^{-1}}&\text{if }n=m=0\\
     q^n& \text{if }m=0\ n>0
\end{array}\right. =O_{m,n}(\alpha(0,1))  ,\]
\[O_{n,m}(\alpha(1,1))= \left\lbrace\begin{array}{ll}
   \frac{1+q^{-2}}{(1-q^{-2})^2}  &  m=n=0\\&\\
 \left\lbrace\begin{array}{ll}
   q^{n+m}  & \text{if }n+m\equiv0\pmod2 \\
    0 & \text{if }n+m\equiv1\pmod2 
\end{array}\right.    &  m,n>0\\
&\\
 \frac{q^{n+m}}{1-q^{-2}} \left\lbrace\begin{array}{ll}
   1  & n+m\equiv0\pmod2 \\
    q^{-1} & n+m\equiv1\pmod2 
\end{array}\right. & mn=0, m+n>0
\end{array}\right.,\]
\[O_{n,m}(\alpha(1,\varpi))= \left\lbrace\begin{array}{ll}
   \frac{2q^{-1}}{(1-q^{-2})^2}  &  m=n=0\\&\\
 \left\lbrace\begin{array}{ll}
   0 & \text{if }n+m\equiv0\pmod2 \\
    q^{n+m}  & \text{if }n+m\equiv1\pmod2 
\end{array}\right.    &  m,n>0\\
&\\
 \frac{q^{n+m}}{1-q^{-2}} \left\lbrace\begin{array}{ll}
   q^{-1}  & n+m\equiv0\pmod2 \\
    1 & n+m\equiv1\pmod2 
\end{array}\right. & mn=0, m+n>0
\end{array}\right. .\]
\end{theorem}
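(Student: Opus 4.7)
The plan is to assemble the case-by-case computations displayed above into the compact piecewise statement by running through each of the five unipotent representatives that remain distinct after base change to the unramified quadratic extension. For each representative $\gamma = \alpha(x,y)$, the building interpretation reduces $O_{n,m}(\gamma)$ to a weighted sum over apartment vertices $(i,j)\in\mathbb{Z}^2$ of the correct color, at prescribed horizontal and vertical distances from the fixed-point set $X^{\alpha(x,0)}\times X^{\alpha(0,y)}$, weighted by the reciprocal of the $N\times N$-stabilizer measure. The key inputs are: (i) Lemma~\ref{lem:unipfixed} applied to each factor, which pins down each fixed-point set as a ``quadrant'' in the apartment; (ii) the monochromaticity constraint $i+j \equiv 0 \pmod 2$ arising from the fact that $G$ preserves the two types of vertices and acts transitively on each; and (iii) Proposition~\ref{prop:distgx} to convert ``$g^{-1}\gamma g\in Kt_nK$'' into a distance condition.

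For $\alpha(0,0)$, the element is central, so $O_{n,m}(\alpha(0,0)) = f_{n,m}(1) = \delta_{n,0}\delta_{m,0}$. For $\alpha(1,0)$ and $\alpha(0,1)$ the centralizer factors as $N\times \mathrm{GL}_2$ (resp.\ $\mathrm{GL}_2\times N$), so the integral decouples: one factor contributes the unipotent $\mathrm{SL}_2$-integral from \S \ref{sec:sl2} (with the doubling of Haar measure absorbing the unramified-quadratic orbit merging), while the other factor forces the companion index to be zero. The resulting formulas match the statement.

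For $\alpha(1,1)$, the fixed-point set in the apartment is $\{(i,j):i,j\leq 0\}$, with $N\times N$-stabilizer $\alpha(\varpi^i\mathcal{O}_F,\varpi^j\mathcal{O}_F)$. The case $n=m=0$ reduces to the double sum
\[\sum_{\substack{i,j\leq 0\\ i+j\equiv 0\,(2)}} q^{i+j} \;=\; f'(1/q), \qquad f(x)=\frac{x}{1-x^2},\]
yielding $(1+q^{-2})/(1-q^{-2})^2$. The case $n,m>0$ singles out the unique vertex $(n,m)$, which is monochrome precisely when $n+m$ is even. The mixed case $nm=0$, $n+m>0$ produces a one-variable geometric series whose leading exponent toggles with the parity of $n+m$. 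For $\alpha(1,\varpi)$ the same analysis applies but the fixed-point set shifts to $\{(i,j):i\leq 0,\, j\leq 1\}$, so the parities of the monochrome vertices meeting the distance conditions are interchanged, giving the swapped parity constraints in the formula.

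The only genuine bookkeeping subtlety is the mixed case $nm=0$, $n+m>0$ for $\alpha(1,\varpi)$: the extra unit of reach in the second coordinate shifts the relevant index from $m$ to $m+1$, and the corresponding change of Haar measure on the second copy of $F$ (chosen to give $\alpha(\varpi\mathcal{O}_F)$ measure $2$ rather than $2$ on $\alpha(\mathcal{O}_F)$) introduces a compensating factor of $q^{-1}$ in one parity class. Once this is tracked, the four branches of the piecewise formula for $O_{n,m}(\alpha(1,\varpi))$ emerge from the geometric sums exactly as displayed, completing the proof.
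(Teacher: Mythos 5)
Your proposal is correct and follows essentially the same route as the paper: reduce to the five representatives that survive base change, use Lemma~\ref{lem:unipfixed} together with the distance criterion to identify each fixed-point set with a quadrant (or shifted quadrant) in the apartment, and then evaluate the resulting weighted vertex sums with the monochromaticity constraint $i+j\equiv 0\pmod 2$. The quadrant $\{i\leq 0,\,j\leq 1\}$ for $\alpha(1,\varpi)$, the use of $f(x)=x/(1-x^2)$ and $f'(1/q)$ for the $n=m=0$ case, the unique-vertex argument for $n,m>0$, and the parity swap in the mixed case $nm=0$ all coincide with the paper's computations.

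One small caveat, not a gap in the argument but a misstatement: for $\alpha(1,0)$ the contributing factor is not literally ``the unipotent $\mathrm{SL}_2$-integral, doubled.'' What comes out is the $\mathrm{GL}_2$-normalized unipotent integral $1/(1-q^{-1})$ directly, because the second factor is free and forces $m=0$ while the centralizer condition reduces the count in the first factor to the $\mathrm{GL}_2$-situation (equivalently, the sum over \emph{all} merging $\mathrm{SL}_2$-orbits, not just a doubling). Doubling a single $\mathrm{SL}_2$-value would instead produce $1/(1-q^{-2})$, which is not the stated answer. Since your final formula agrees with the theorem, I read this as imprecise phrasing rather than a wrong step, but it is worth tightening: either cite the $\mathrm{GL}_2$-computation directly or sum over the full set of $\mathrm{SL}_2$-orbits representing the $\mathrm{GL}_2$-class.
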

\begin{rem}
    As previously, we can add orbital integrals in the same $\mathrm{GL}_2^2$-orbit to recover the corresponding orbit. For example, for $\alpha(1,1)$ we compute $O_{n,m}(\alpha(1,1))+O_{n,m}(\alpha(1,\varpi))$ and get 
    \[\left\lbrace\begin{array}{ll}
  \frac{1}{(1-q^{-1})^2}  &  m=n=0\\&\\
      q^{n+m} &  m,n>0\\
&\\
 \frac{q^{n+m}}{1-q^{-1}} & mn=0, m+n>0
\end{array}\right..\]
\end{rem}

\subsection{Regular semisimple elements}

Assume $\gamma\in G$ is regular semisimple. We know that $G_\gamma$ is a maximal torus in $\mathrm{GSp}_4$.

\subsection{Hyperbolic orbits}

Assume $\gamma  = (\gamma_1,\gamma_2)$ is hyperbolic, i.e. $G_\gamma \cong (F^\times)^3$. More explicitly, the set of rational points of the torus is identified with 

\[\{(x,y,z,t)\in (F^{\times})^4 \cong F[\gamma]^\times \ :\ xy=zt\}.\]

We proceed as in the $\mathrm{SL}_2$ case. For $O_0(\gamma)$ we look at monochrome vertices $(v_1,v_2)\in Y$ so that $v_1$ is at distance $d_{\gamma_1}$ from the fundamental apartment of $X$ and $v_2$ is at distance $d_{\gamma_2}$ from the fundamental apartment, up to action of $T$, the split torus corresponding to the apartment.

Up to action of $T$, acting transitively on sets of monochrome and bichrome vertices, we can assume that the closest vector on the apartment is either $(x_0,x_0)$ or $(x_0,x_1)$. For each vertex we will split the sum in counting the two combinations independently: $\CIRCLE\CIRCLE$ and $\Circle\Circle$.

\label{chi:notation}Given a vertex $x$ in the fundamental apartment write $\chi_{d}(x)$ the number of vertices in $X$ at distance $d$ from $x$ and whose closest vector in the apartment is $x$. Write $\chi_d^{{\scalebox{0.5}{\CIRCLE}}}(x), \chi_d^{{\scalebox{0.5}{\Circle}}}(x)$ for the counts restricted so vertices of specific colors. 

Note that $\chi_d^{{\scalebox{0.5}{\CIRCLE}}}(x_0)= \chi_d^{{\scalebox{0.5}{\Circle}}}(x_1)$, and similarly  $\chi_d^{{\scalebox{0.5}{\Circle}}}(x_0)=\chi_d^{{\scalebox{0.5}{\CIRCLE}}}(x_1) $.

\begin{align*}
   &\chi_{d_{1}}^{{\scalebox{0.5}{\CIRCLE}}}(x_0)\chi_{d_{2}}^{{\scalebox{0.5}{\CIRCLE}}}(x_0)+\chi_{d_{1}}^{{\scalebox{0.5}{\Circle}}}(x_0)\chi_{d_{2}}^{{\scalebox{0.5}{\Circle}}}(x_0)
    + \chi_{d_{1}}^{{\scalebox{0.5}{\CIRCLE}}}(x_0)\chi_{d_{2}}^{{\scalebox{0.5}{\CIRCLE}}}(x_1)+\chi_{d_{1}}^{{\scalebox{0.5}{\Circle}}}(x_0)\chi_{d_{2}}^{{\scalebox{0.5}{\Circle}}}(x_1)\\
    &= \chi_{d_{1}}^{{\scalebox{0.5}{\CIRCLE}}}(x_0)(\chi_{d_{2}}^{{\scalebox{0.5}{\CIRCLE}}}(x_0)+\chi_{d_{2}}^{{\scalebox{0.5}{\CIRCLE}}}(x_1))
    +\chi_{d_{1}}^{{\scalebox{0.5}{\Circle}}}(x_0)(\chi_{d_{2}}^{{\scalebox{0.5}{\Circle}}}(x_0)+\chi_{d_{2}}^{{\scalebox{0.5}{\Circle}}}(x_1))\\
    &=\chi_{d_{1}}^{{\scalebox{0.5}{\CIRCLE}}}(x_0)(\chi_{d_{2}}^{{\scalebox{0.5}{\CIRCLE}}}(x_0)+\chi_{d_{2}}^{{\scalebox{0.5}{\Circle}}}(x_0))
    +\chi_{d_{1}}^{{\scalebox{0.5}{\Circle}}}(x_0)(\chi_{d_{2}}^{{\scalebox{0.5}{\Circle}}}(x_0)+\chi_{d_{2}}^{{\scalebox{0.5}{\CIRCLE}}}(x_0))\\
    &=\chi_{d_{1}}^{{\scalebox{0.5}{\CIRCLE}}}(x_0)\chi_{d_{2}}(x_0)+\chi_{d_{1}}^{{\scalebox{0.5}{\Circle}}}(x_0)\chi_{d_{2}}(x_0)\\
    &=\chi_{d_{1}}(x_0)\chi_{d_{2}}(x_0) = q^{d_{2}}(1-q^{-1})q^{d_{1}}(1-q^{-1})\\
    &= q^{d_1+d_2}(1-q^{-1})^2.
\end{align*}

We get 
\[O_{0,0}(\gamma) = \sum_{i\leq d_{\gamma_1}}\sum_{j\leq d_{\gamma_2}} q^{i+j}(1-q^{-1})^2 = q^{d_{\gamma_1}+d_{\gamma_2}},\]
if $m>0$ then 
\[O_{0,m}(\gamma) = \sum_{j\leq d_{\gamma_2}} q^{d_{\gamma_1}+m+j}(1-q^{-1})^2 = q^{d_{\gamma_1}+m+d_{\gamma_2}}(1-q^{-1}),\]
similarly  $O_{n,0} =q^{d_{\gamma_1}+d_{\gamma_2}+n}(1-q^{-1})$ when $n>0$ and lastly, when $n,m>0$ we get 
\[O_{n,m}(\gamma) = q^{n+m+d_{\gamma_1}+d_{\gamma_2}}(1-q^{-1})^2 .\]

To sum up, we have 
\[O_{n,m}(\gamma) = \left\lbrace \begin{array}{ll}
   q^{d_{\gamma_1}+d_{\gamma_2}}  & n=m=0 \\
   q^{n+m+d_{\gamma_1}+d_{\gamma_2}}(1-q^{-1})  &nm=0,\ m+n>0\\
   q^{n+m+d_{\gamma_1}+d_{\gamma_2}}(1-q^{-1})^2 & n,m>0\\
\end{array}\right..\]

We get the following.
\begin{theorem}[Hyperbolic Shalika germs]\label{th:shalikhyper}
    Assume $\gamma$ is hyperbolic. Let $\mathcal{L}_{\gamma} =\mathrm{Orb}(\gamma,\underline{\ })$. We can write
    \[\mathrm{\mathcal{L}} = A(\gamma)\mathcal{L}_{\alpha(0,0)}+B(\gamma)\mathcal{L}_{\alpha(1,0)}+C(\gamma)\mathcal{L}_{\alpha(0,1)}\]\[+D(\gamma)\mathcal{L}_{\alpha(1,1)}+E(\gamma)\mathcal{L}_{\alpha(1,\varpi)}, \]
    where 
    \begin{align*}
        A(\gamma) &= B(\gamma)= C(\gamma) = 0,\\
        D(\gamma) &= E(\gamma)=q^{d_{\gamma_1}+d_{\gamma_2}}(1-q^{-1})^2 .
    \end{align*}
\end{theorem}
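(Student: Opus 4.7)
The plan is to mimic the derivation of Theorem \ref{th:shalikasl2ellip}. Since the spherical Hecke algebra $\mathcal{H}_K(G)$ is spanned by the characteristic functions $f_{n,m}=\mathds{1}_{K(t_n,t_m)K}$ for $(n,m)\in\mathbb{Z}_{\geq0}^2$, it suffices to verify the claimed identity of functionals after evaluation on each $f_{n,m}$. This turns the statement into a linear system in the five unknowns $A(\gamma),B(\gamma),C(\gamma),D(\gamma),E(\gamma)$, whose input data are the unipotent integrals tabulated in the Theorem just before the statement and the hyperbolic integrals $O_{n,m}(\gamma)$ computed immediately above. I would split $(n,m)$ into four regions according to whether each coordinate is zero or positive, extracting the unknowns first from the regions where the system is sparsest, then checking the remaining regions for consistency.

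First I would handle $n,m>0$. Here only $\alpha(1,1)$ and $\alpha(1,\varpi)$ contribute, and each contributes only for one parity of $n+m$: $\alpha(1,1)$ for $n+m$ even and $\alpha(1,\varpi)$ for $n+m$ odd. On the other hand $O_{n,m}(\gamma)=q^{n+m+d_{\gamma_1}+d_{\gamma_2}}(1-q^{-1})^2$ is independent of parity, so matching the two parities separately forces
\[D(\gamma)=E(\gamma)=q^{d_{\gamma_1}+d_{\gamma_2}}(1-q^{-1})^2,\]
as claimed. Next I would look at the boundary cases $n=0,m>0$ and (by symmetry) $n>0,m=0$. After substituting the just-found $D$ and $E$, the two parity-dependent pieces coming from $\alpha(1,1)$ and $\alpha(1,\varpi)$ recombine via $1-q^{-2}=(1-q^{-1})(1+q^{-1})$ into a single clean term $q^{d_{\gamma_1}+d_{\gamma_2}+m}(1-q^{-1})$, which already equals $O_{0,m}(\gamma)$. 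The contributions of $\alpha(0,1)$ and $\alpha(1,0)$ are therefore forced to vanish, giving $B(\gamma)=C(\gamma)=0$.

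Finally I would check $n=m=0$ for consistency. With $B=C=0$ and the values of $D,E$ in hand, the identity reads
\[q^{d_{\gamma_1}+d_{\gamma_2}} \;=\; A(\gamma) \;+\; q^{d_{\gamma_1}+d_{\gamma_2}}(1-q^{-1})^2\cdot\frac{1+2q^{-1}+q^{-2}}{(1-q^{-2})^2},\]
which collapses to $A(\gamma)=0$ using $1+2q^{-1}+q^{-2}=(1+q^{-1})^2$. There is no genuine obstacle in this argument: as in the $\mathrm{SL}_2$ case, a finite-dimensional linear system resolves itself once the orbital integrals on both sides have been computed. The only delicate bookkeeping involves the parity cases for $n+m$ and the factor of $2$ that the Haar-measure normalizations on the unipotent orbits introduce, both of which are already accounted for in the tables preceding the theorem.
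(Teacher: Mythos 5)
Your proposal is correct and takes the same route the paper takes implicitly: the paper states the theorem immediately after tabulating $O_{n,m}(\gamma)$ and the unipotent integrals, leaving the verification as a linear system exactly as you set it up. Your region-by-region substitution (first $n,m>0$ to pin down $D=E$, then the boundary $n=0,m>0$ to force $B=C=0$, then $n=m=0$ for $A=0$) is the natural way to carry it out, and the parity bookkeeping and the collapse $1+2q^{-1}+q^{-2}=(1+q^{-1})^2$ against $(1-q^{-2})^2$ are handled correctly.
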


\begin{rem}
    We note that in this case, the conjugacy class is equal to its stable conjugacy class, and we can verify that the Shalika germ expansion matches the one for $\mathrm{GL}_{2}(F)\times  \mathrm{GL}_{2}(F)$.
\end{rem}

\subsection{Elliptic elements}
We can compute them directly. Note that if $\gamma = (\gamma_1,\gamma_2)$  is elliptic, then its stable orbit has $2\times 2 = 4$ $\mathrm{SL}_2^2(F)$-orbits and of course always exactly one $\mathrm{GL}_2^2(F)$-orbit.

Using Proposition \ref{prop:orbitsgl22} we see that there is 
a unique orbit within the stable orbit when $F[\gamma_1]\cong F[\gamma_2]$ and $2$ otherwise.

This suffices to compute the orbital integral, since
 may decompose each orbit into $\mathrm{SL}_2^2(F)$ orbits and add the corresponding 
 contribution using \ref{sec:sl2}.

\begin{ex}
\label{ex:compunram}
If both $F(\gamma_1)$ and $F(\gamma_2)$ are distinct unramified field extensions, then we get $2$ $G$-orbits, each made of two $\mathrm{SL}_2^2(F)$-orbits.

We can group such orbits and sum their corresponding $\mathrm{SL}_2^2(F)$-integrals to compute the integral over $G$ with compatible measures.

This depends on whether the vertex corresponding to $\gamma$ is monochromatic or not. We get: 

\begin{itemize}
    \item Monochrome pair with $n=m=0$:
    \begin{align*}O_{0,0}(\gamma) &=\CIRCLE\times \CIRCLE+ \Circle\times \Circle \\ 
    &=\left\lbrace\begin{array}{ll}
\frac{(q^{d_{\gamma_1}}-1)(q^{d_{\gamma_2}}-1)+(q^{d_{\gamma_1}+1}-1)(q^{d_{\gamma_2}+1}-1)}{(q-1)^2}   & \text{if } d_{\gamma_1} \equiv d_{\gamma_2} \pmod2 \\ & \\
 \frac{(q^{d_{\gamma_1}+1}-1)(q^{d_{\gamma_2}}-1)+(q^{d_{\gamma_1}}-1)(q^{d_{\gamma_2}+1}-1)}{(q-1)^2}   &  \text{else} \\ 
\end{array}\right.\\\end{align*}
    \item Bichrome pair with $n=m=0$:
    \begin{align*}O_{0,0}(\gamma) &=\CIRCLE\times \CIRCLE+ \Circle\times \Circle \\ 
    &=\left\lbrace\begin{array}{ll}
\frac{(q^{d_{\gamma_1}}-1)(q^{d_{\gamma_2}}-1)+(q^{d_{\gamma_1}+1}-1)(q^{d_{\gamma_2}+1}-1)}{(q-1)^2}   & \text{if } d_{\gamma_1} \not\equiv d_{\gamma_2} \pmod2 \\ & \\
 \frac{(q^{d_{\gamma_1}+1}-1)(q^{d_{\gamma_2}}-1)+(q^{d_{\gamma_1}}-1)(q^{d_{\gamma_2}+1}-1)}{(q-1)^2}   &  \text{else} \\ 
\end{array}\right.\\\end{align*}
    \item Monochrome pair with $n,m>0$: 
    \begin{align*}O_{n,m}(\gamma) &=\CIRCLE\times \CIRCLE+ \Circle\times \Circle \\ 
    &=\left\lbrace\begin{array}{ll}
q^{n+m+d_{\gamma_1}+d_{\gamma_2}}(1+q^{-1})^2    & \text{if } d_{\gamma_1} +n\equiv d_{\gamma_2} +m\pmod2 \\ & \\
  0   &  \text{else} \\ 
\end{array}\right.\\\end{align*}
 \item Bichrome pair with $n,m>0$: 
    \begin{align*}O_{n,m}(\gamma) &=\CIRCLE\times \Circle+ \CIRCLE\times \Circle \\ 
    &=\left\lbrace\begin{array}{ll}
q^{n+m+d_{\gamma_1}+d_{\gamma_2}}(1+q^{-1})^2    & \text{if } d_{\gamma_1} +n\not\equiv d_{\gamma_2} +m\pmod2 \\ & \\
  0   &  \text{else} \\ 
\end{array}\right.\\\end{align*}
\end{itemize}

\end{ex}

\begin{ex}
\label{ex:compram}
We now proceed with the same computation when  $F(\gamma_1)$ and $F(\gamma_2)$ are distinct ramified field extensions. We assume odd residue characteristic for convenience.

This time the computations are simpler and we have
\begin{align*}O_{0,0}(\gamma)   &=
\frac{(q^{d_{\gamma_1+1/2}}-1)(q^{d_{\gamma_2+1/2}}-1)}{(q-1)^2},
\end{align*}
and if $m,n>0$ we get
\begin{align*}O_{0,0}(\gamma)   &=
 q^{d_{n+m+\gamma_1+\gamma_2+1}}
.\end{align*}
\end{ex}

\subsubsection{Elliptic Shalika germs}
We can obtain a general formula by describing the set $Y^\gamma$ when $\gamma$ is elliptic. Using $Y= X\times X$ we get
\[Y^\gamma = X^{\gamma_1}\times X^{\gamma_2}= X^{\gamma_1}_{\scalebox{0.5}{\CIRCLE}}\times X^{\gamma_2}_{\scalebox{0.5}{\CIRCLE}}\sqcup X^{\gamma_1}_{\scalebox{0.5}{\Circle}}\times X^{\gamma_2}_{\scalebox{0.5}{\Circle}}\sqcup  X^{\gamma_1}_{\scalebox{0.5}{\CIRCLE}}\times X^{\gamma_2}_{\scalebox{0.5}{\Circle}}\sqcup X^{\gamma_1}_{\scalebox{0.5}{\Circle}}\times X^{\gamma_2}_{\scalebox{0.5}{\CIRCLE}},\]
so the monochrome vertices of $Y^\gamma$ can be described as 
\[Y^\gamma_{\text{mono}} = X^{\gamma_1}_{\scalebox{0.5}{\CIRCLE}}\times X^{\gamma_2}_{\scalebox{0.5}{\CIRCLE}}\sqcup X^{\gamma_1}_{\scalebox{0.5}{\Circle}}\times X^{\gamma_2}_{\scalebox{0.5}{\Circle}}.\]

Let $\mathfrak{Y}^{\gamma}_{{\scalebox{0.5}{\CIRCLE\CIRCLE}}}, \mathfrak{Y}^{\gamma}_{{\scalebox{0.5}{\Circle\Circle}}}, \mathfrak{Y}^{\gamma}_{{\scalebox{0.5}{\Circle\CIRCLE}}}, \mathfrak{Y}^{\gamma}_{{\scalebox{0.5}{\CIRCLE\Circle}}}$ denote $|X^{\gamma_1}_{\scalebox{0.5}{\CIRCLE}}|| X^{\gamma_2}_{\scalebox{0.5}{\CIRCLE}}|$, $|X^{\gamma_1}_{\scalebox{0.5}{\Circle}}|| X^{\gamma_2}_{\scalebox{0.5}{\Circle}}|$,  $|X^{\gamma_1}_{\scalebox{0.5}{\Circle}}|| X^{\gamma_2}_{\scalebox{0.5}{\CIRCLE}}|$, and $|X^{\gamma_1}_{\scalebox{0.5}{\CIRCLE}}|| X^{\gamma_2}_{\scalebox{0.5}{\Circle}}|$ respectively. Moreover, write \[\mathfrak{Y}^{\gamma}_{\mathrm{mono}} =  \mathfrak{Y}^{\gamma}_{{\scalebox{0.5}{\CIRCLE\CIRCLE}}}+\mathfrak{Y}^{\gamma}_{{\scalebox{0.5}{\Circle\Circle}}}, \ \mathfrak{Y}^{\gamma}_{\mathrm{bi}} =  \mathfrak{Y}^{\gamma}_{{\scalebox{0.5}{\CIRCLE\Circle}}}+\mathfrak{Y}^{\gamma}_{{\scalebox{0.5}{\Circle\CIRCLE}}},\ \sigx= |X^{\gamma_1}|+|X^{\gamma_2}|.\] Multiplying counts from  Proposition \ref{prop:coloredawayfromconvex} we get 
\begin{prop}\label{prop:elliporb} We have
\[O_{n,m}(\gamma_1,\gamma_2) = \left\lbrace\begin{array}{ll}
 \ymono &  n=m=0 \\ &\\
    q^{n+m-2}\left(q^{2}\ymono+q(\sigx - 2\ybi)\right. & n,m>0\\ \left.+\mathfrak{Y}^{\gamma}_{\mathrm{mono}}-\sigx+2\right) &n\equiv m\pmod 2\\&\\    q^{n+m-2}\left(q^{2}\ybi+q(\sigx - 2\ymono)\right.  & n,m>0\\\left.+\ybi-\sigx+2\right) &n\not\equiv m\pmod 2\\&\\
  q^{m-1}(q\ymono-\ybi+|X^{\gamma_1}|)    & n=0,\ m>0,\ m\equiv0\pmod2\\
  &\\
   q^{m-1}(q\ybi-\ymono+|X^{\gamma_1}|)    & n=0,\ m>0,\ m\equiv1\pmod2\\&\\
    O_{0,n}(\gamma_2,\gamma_1) & n>0,\ m=0\\ 
\end{array}\right. .\]
\end{prop}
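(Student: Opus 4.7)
My plan is to factor the problem through the product building $Y = X \times X$ and reduce each factor to a colored vertex count on the corresponding $\mathrm{SL}_2$-tree.

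First, I would identify the $G$-orbit of the fundamental vertex $(x_0, x_0)$. Since $G$ is cut out by $\det g_1 = \det g_2$, the valuations $\nu(\det g_1)$ and $\nu(\det g_2)$ share the same parity, so $g_1 x_0$ and $g_2 x_0$ always carry the same $\mathrm{SL}_2$-color in $X$. Hence the $G$-orbit of $(x_0, x_0)$ is exactly the set of monochromatic vertices of $Y$, and the target $(t_n, t_m)(x_0, x_0) = (x_{2n}, x_{2m})$ is monochromatic for every $n, m$. Exactly as in \S\ref{sec:sl2}, the condition $g^{-1}\gamma g \in K(t_n, t_m)K$ is equivalent to $\rd(g_1 x_0, X^{\gamma_1}) = n$ and $\rd(g_2 x_0, X^{\gamma_2}) = m$. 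Because $\gamma$ is elliptic, $G_\gamma$ is compact modulo center and the orbital integral reduces to a pure vertex count; using $Y^\gamma = X^{\gamma_1} \times X^{\gamma_2}$,
\begin{equation*}
O_{n,m}(\gamma) = N_n^{\scalebox{0.5}{\CIRCLE}}(\gamma_1)\, N_m^{\scalebox{0.5}{\CIRCLE}}(\gamma_2) + N_n^{\scalebox{0.5}{\Circle}}(\gamma_1)\, N_m^{\scalebox{0.5}{\Circle}}(\gamma_2),
\end{equation*}
where $N_k^{\bullet}(\gamma_i)$ counts vertices of the indicated color at distance $k$ from $X^{\gamma_i}$.

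Next, I would substitute the formulas of Proposition \ref{prop:coloredawayfromconvex}: one has $N_0^{\bullet}(\gamma_i) = |X^{\gamma_i}_{\bullet}|$ trivially, and for $k > 0$,
\begin{equation*}
N_k^{\bullet}(\gamma_i) = q^{k-1}\bigl(q|X^{\gamma_i}_{\bullet}| - |X^{\gamma_i}_{\bullet'}| + 1\bigr),
\end{equation*}
where the assignment of $(\bullet, \bullet')$ to the color pair swaps according to the parity of $k$. Writing $a = |X^{\gamma_1}_{\scalebox{0.5}{\CIRCLE}}|$, $b = |X^{\gamma_1}_{\scalebox{0.5}{\Circle}}|$, $c = |X^{\gamma_2}_{\scalebox{0.5}{\CIRCLE}}|$, $d = |X^{\gamma_2}_{\scalebox{0.5}{\Circle}}|$, for $n, m > 0$ with $n \equiv m \pmod 2$ the two factors share the same color alignment, yielding
\begin{equation*}
O_{n,m}(\gamma) = q^{n+m-2}\bigl[(qa - b + 1)(qc - d + 1) + (qb - a + 1)(qd - c + 1)\bigr],
\end{equation*}
while for $n \not\equiv m \pmod 2$ one of the two factors is swapped, replacing $(qc - d + 1)$ by $(qd - c + 1)$ and vice versa.

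Finally, expanding and collecting in terms of $\ymono = ac + bd$, $\ybi = ad + bc$, and $\sigx = (a + b) + (c + d)$, the first bilinear form collapses to $q^2 \ymono + q(\sigx - 2\ybi) + \ymono - \sigx + 2$, and the swapped form gives the analogous polynomial with $\ymono$ and $\ybi$ interchanged. The mixed cases $n = 0 < m$ follow from the same substitution with $N_0^{\bullet} = |X^{\gamma_1}_{\bullet}|$: one obtains $q^{m-1}[a(qc - d + 1) + b(qd - c + 1)] = q^{m-1}(q\ymono - \ybi + |X^{\gamma_1}|)$ for even $m$ and its swapped version for odd $m$. The case $(n, m) = (0, 0)$ is immediate from $N_0^{\bullet} = |X^{\gamma_i}_{\bullet}|$. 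The only delicate part is the parity bookkeeping, verifying that each bilinear expansion really collapses to the symmetric polynomial shape in $\ymono, \ybi, \sigx$ advertised in the statement.
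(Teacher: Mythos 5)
Your proposal is correct and follows essentially the same route as the paper: factoring through $Y^\gamma = X^{\gamma_1}\times X^{\gamma_2}$, reducing the integral to a monochromatic vertex count because $G_\gamma$ is compact modulo center, and then multiplying the colored counts from Proposition \ref{prop:coloredawayfromconvex} and collecting terms in $\ymono, \ybi, \sigx$. The paper compresses this to the one-line remark "Multiplying counts from Proposition \ref{prop:coloredawayfromconvex} we get"; you have simply written out that multiplication and the parity bookkeeping explicitly.
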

We get the corresponding Shalika germ expansion. 

\begin{theorem}[Elliptic Shalika germs]\label{th:shalik-elliptic}
    Assume $\gamma$ is elliptic. Let $\mathcal{L}_{\gamma} =\mathrm{Orb}(\gamma,\underline{\ })$. We can write
    \[\mathrm{\mathcal{L}} = A(\gamma)\mathcal{L}_{\alpha(0,0)}+B(\gamma)\mathcal{L}_{\alpha(1,0)}+C(\gamma)\mathcal{L}_{\alpha(0,1)}\]\[+D(\gamma)\mathcal{L}_{\alpha(1,1)}+E(\gamma)\mathcal{L}_{\alpha(1,\varpi)}, \]
    where 
    \begin{align*}
        A(\gamma) &= \frac{2}{(1-q)^2},\\
        B(\gamma) &= \frac{2q^{-1}}{1-q}-q^{-1}|X^{\gamma_1}|,\\
        C(\gamma) &= \frac{2q^{-1}}{1-q}-q^{-1}|X^{\gamma_2}|,\\
        D(\gamma)&=q^{-2}\left[(1+q^2)\ymono + (q-1)\sigx-2q\ybi+2\right],\\
        E(\gamma)&=q^{-2}\left[(1+q^2)\ybi + (q-1)\sigx-2q\ymono+2\right].
    \end{align*}
\end{theorem}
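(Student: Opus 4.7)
The plan is to mirror the derivation in Theorem \ref{th:shalikasl2ellip}. Assume the expansion
\[\mathcal{L}_\gamma = A(\gamma)\mathcal{L}_{\alpha(0,0)}+B(\gamma)\mathcal{L}_{\alpha(1,0)}+C(\gamma)\mathcal{L}_{\alpha(0,1)}+D(\gamma)\mathcal{L}_{\alpha(1,1)}+E(\gamma)\mathcal{L}_{\alpha(1,\varpi)}\]
holds and test it on the basis $\{f_{n,m}\}_{n,m\geq 0}$ of $\mathcal{H}_K(G)$. The left-hand side $\mathcal{L}_\gamma(f_{n,m})=O_{n,m}(\gamma)$ is given by Proposition \ref{prop:elliporb}, and the right-hand side is a linear combination of the unipotent orbital integrals computed in the previous theorem. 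Solving the resulting linear system for $A,B,C,D,E$ and verifying that the answer is independent of $n,m$ (beyond the parity cases already accounted for) will establish the germ expansion.

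The system decouples nicely because of the supports of the five unipotent orbital integrals. First I would take $n,m>0$ with $n+m$ even: only $\alpha(0,0)$, $\alpha(1,1)$ contribute nontrivially (with $\alpha(0,0)$ vanishing), so one gets
\[D(\gamma)\,q^{n+m} = q^{n+m-2}\bigl(q^{2}\ymono + q(\sigx-2\ybi)+\ymono-\sigx+2\bigr),\]
which determines $D(\gamma)$. Next, $n,m>0$ with $n+m$ odd leaves only $\alpha(1,\varpi)$ and gives $E(\gamma)$ by the analogous identity. With $D,E$ in hand, the slices $n>0,\ m=0$ and $n=0,\ m>0$ involve only $\alpha(1,0)$ (resp.\ $\alpha(0,1)$), $\alpha(1,1)$, and $\alpha(1,\varpi)$; using the known values
\[O_{n,0}(\alpha(1,1)),\ O_{n,0}(\alpha(1,\varpi)) = \tfrac{q^n}{1-q^{-2}}\cdot\{1,q^{-1}\}\text{ (by parity)}\]
and $O_{n,0}(\alpha(1,0))=q^n$, one solves for $B(\gamma)$ and $C(\gamma)$. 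Finally the case $n=m=0$, involving all five unipotent contributions, fixes $A(\gamma)$.

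The main obstacle is the bookkeeping: each equation comes in two parity flavors, and one must check that the parity-dependent pieces on the two sides cancel to yield \emph{parity-independent} coefficients $A,B,C,D,E$. Concretely, Proposition \ref{prop:elliporb} gives different formulas for $O_{n,m}(\gamma)$ depending on the parities of $n,m$, while the unipotent integrals $O_{n,m}(\alpha(1,1))$ and $O_{n,m}(\alpha(1,\varpi))$ have complementary parity supports. The parities must align so that, for example, the $n+m$ even formula for $O_{n,m}(\gamma)$ exactly matches $D(\gamma)\cdot q^{n+m}$ while simultaneously the $n+m$ odd formula matches $E(\gamma)\cdot q^{n+m}$. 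The symmetry $\mathfrak{Y}^\gamma_{\mathrm{mono}}\leftrightarrow\mathfrak{Y}^\gamma_{\mathrm{bi}}$ between the two parity regimes is what makes the system consistent, and verifying this symmetry (using $|X^{\gamma_i}| = |X^{\gamma_i}_{\scalebox{0.5}{\CIRCLE}}| + |X^{\gamma_i}_{\scalebox{0.5}{\Circle}}|$ and the factorization of $\ymono,\ybi$) is the substantive step; the remaining simplifications to reach the stated closed forms are routine algebra.
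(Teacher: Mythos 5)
Your proposal takes essentially the same route as the paper: equate $O_{n,m}(\gamma)$ from Proposition~\ref{prop:elliporb} with the linear combination of unipotent orbital integrals, and solve the resulting system by working from the $n,m>0$ equations (which isolate $D$ and $E$ by parity) back through the $n=0$ or $m=0$ slices to $B,C$, and finally the $n=m=0$ equation for $A$. Your added emphasis on verifying that the even and odd parity branches yield the same coefficients is a correct and worthwhile consistency check that the paper leaves implicit, but it does not change the method.
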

\begin{proof}
    We will not carry out the full verification but outline the process. The system of equations of Proposition \ref{prop:elliporb} become
    \begin{align*}
        A + \frac{1}{1-q^{-1}}(B+C) + \frac{1+q^{-2}}{(1-q^{-2})^2}D+\frac{2q^{-1}}{(1-q^{-2})^2}E = \ymono,\\
         q^nB+\frac{q^n}{1-q^{-2}}(D+q^{-1}E)= q^{n-1}(q\ymono-\ybi+|X^{\gamma_2}|),\\
         q^nB+\frac{q^{n}}{1-q^{-2}}(q^{-1}D+E) = q^{n-1}(q\ybi-\ymono+|X^{\gamma_2}|),\\
         q^mC+\frac{q^m}{1-q^{-2}}(D+q^{-1}E) = q^{m-1}(q\ymono-\ybi+|X^{\gamma_1}|),\\
         q^mC+\frac{q^{m}}{1-q^{-2}}(q^{-1}D+E) = q^{m-1}(q\ybi-\ymono+|X^{\gamma_1}|),\\
         q^{n+m}D =  q^{n+m-2}\left(q^{2}\ymono+q(\sigx - 2\ybi)+\mathfrak{Y}^{\gamma}_{\mathrm{mono}}-\sigx+2\right)\\
         q^{n+m}E =  q^{n+m-2}\left(q^{2}\ymono+q(\sigx - 2\ybi)+\mathfrak{Y}^{\gamma}_{\mathrm{mono}}-\sigx+2\right)\\
    \end{align*}

    Starting with the bottom, we solve the system by substitution and get the desired results.
\end{proof}

%\begin{rem}
%    We observe a difference due to the lack of normalization. Indeed, we expect the Shalika germ expansion to match a product of two germ expansions as in Theorem \ref{th:shalikasl2ellip}.
%\end{rem}

\subsection{Mixed setting}

In this last part, we assume that $\gamma = (\gamma_1,\gamma_2)$ where $\gamma_1$ is hyperbolic and $\gamma_2$ is elliptic. In other words, $F[\gamma]\cong F^2\oplus F[\gamma_2]$ and $F[\gamma_2]$ is a field.

\begin{ex}
    An example of such an element over $F = \mathbb{Q}_3$ is 
    \[\gamma = \left(\begin{pmatrix}2&0\\ 0&1/2\end{pmatrix},\begin{pmatrix}0&-1\\ 1&0\end{pmatrix}\right).\]
\end{ex}

The shape of the centralizer here is not as straightforward. The set of  rational points is 
\[\{(x,y,z)\in (F\times F\times F[\gamma_2])^\times \ :\ N_{F[\gamma_2]/F}(z) = xy\}.\]

Note that this torus is isomorphic to $F[\gamma_2]\times F^\times$ via 
\[(a, b)\mapsto (a, N_{F[\gamma_2]/F}(a)b^{-1},b),\]
which inverse morphism is 
\[(x,y,z)\mapsto (x, z).\]

Therefore, as in the elliptic case, we can factor out the volume of $F[\gamma_2]^\times$ modulo center and will determine $O_{0,0}(\gamma)$ counting monochrome vertices of the form $(v,w)\in X\times X$ where $w\in X^{\gamma_2}$ and $v$ spans over a set of representatives of $F^\times$-orbits pf $X^{\gamma_1}$ (identifying $F^\times$ with the diagonal torus of $\mathrm{SL}_2(F)$).

As in the case of $\mathrm{SL}_2(F)$, we see that $X^{\gamma_1}$ is the set of vertices at distance at most $d_{\gamma_1}$ from the fundamental apartment and we can pick a set of representatives for the action of the diagonal torus as the vertices whose closest vertex on the apartment is either $x_0$ or $x_1$. We use the notation $\chi$ as in \S \ref{chi:notation}.

\begin{align*}
    O_{0,0}(\gamma) &= \sum_{i\leq d_{\gamma_1}}\left(\chi_{i}^{\scalebox{0.5}{\CIRCLE}}(x_0)|X^{\gamma_2}_{\scalebox{0.5}{\CIRCLE}}| +\chi_{i}^{\scalebox{0.5}{\Circle}}(x_0)|X^{\gamma_2}_{\scalebox{0.5}{\Circle}}| \right)+ \sum_{i\leq d_{\gamma_1}}\left(\chi_{i}^{\scalebox{0.5}{\CIRCLE}}(x_1)|X^{\gamma_2}_{\scalebox{0.5}{\CIRCLE}}| + \chi_{i}^{\scalebox{0.5}{\Circle}}(x_1)|X^{\gamma_2}_{\scalebox{0.5}{\Circle}}| \right)\\
    &=(|X^{\gamma_2}_{\scalebox{0.5}{\CIRCLE}}|+|X^{\gamma_2}_{\scalebox{0.5}{\Circle}}|) \sum_{i\leq d_{\gamma_1}}\left(\chi_{i}^{\scalebox{0.5}{\CIRCLE}}(x_0)+\chi_{i}^{\scalebox{0.5}{\Circle}}(x_0)\right)\\
    &=|X^{\gamma_2}|\sum_{i\leq d_{\gamma_1}}\chi_i(x_0)\\
    &= |X^{\gamma_2}|q^{d_{\gamma_1}}.
    \end{align*}

The same computation shows (using equation (\ref{prop:pointfromconvex})) 
\begin{align*}O_{n,m}(\gamma) &= \left\lbrace\begin{array}{ll}
 q^{d_{\gamma_1}} |X^{\gamma_2}| & n=m=0 \\
 q^{d_{\gamma_1}} q^{m-1}\left((q-1)|X^{\gamma_2}|+2\right)   &  n=0,\ m>0\\
 q^{n+d_{\gamma_1}}(1+q^{-1})|X^{\gamma_2}| &n>0,\ m=0\\
 q^{n+d_{\gamma_1}}(1+q^{-1})q^{m-1}\left((q-1)|X^{\gamma_2}|+2\right)&n,m>0
\end{array}\right.\\
&=q^{d_{\gamma_1}+n+m}\left\lbrace\begin{array}{ll}
   |X^{\gamma_2}| & n=m=0 \\
   \left((1-q^{-1})|X^{\gamma_2}|+2q^{-1}\right)   &  n=0,\ m>0\\
  (1-q^{-1})|X^{\gamma_2}| &n>0,\ m=0\\
  (1-q^{-1})\left((1-q^{-1})|X^{\gamma_2}|+2q^{-1}\right)&n,m>0
\end{array}\right. .\end{align*}

Note that $|X^{\gamma_2}|$ has been computed explicitly in \S \ref{sl2unram} and \S \ref{sl2ram}. We will however not need it for the computation of Shalika germs.
\begin{theorem}[Mixed Shalika germs]\label{th:mixedshalika}
    Assume $\gamma_1$ is hyperbolic and $\gamma_2$ is elliptic. Let $\mathcal{L}_{\gamma} =\mathrm{Orb}(\gamma,\underline{\ })$. We can write
    \[\mathrm{\mathcal{L}}_\gamma = A(\gamma)\mathcal{L}_{\alpha(0,0)}+B(\gamma)\mathcal{L}_{\alpha(1,0)}+C(\gamma)\mathcal{L}_{\alpha(0,1)}\]\[+D(\gamma)\mathcal{L}_{\alpha(1,1)}+E(\gamma)\mathcal{L}_{\alpha(1,\varpi)}, \]
    where 
    \begin{align*}
        A(\gamma) &=C(\gamma) = 0,\\
        B(\gamma) &= -2q^{d_{\gamma_1}-1},\\
        D(\gamma)&=E(\gamma) = q^{d_{\gamma_1}}(1-q^{-1})\left((1-q^{-1})|X^{\gamma_2}|+2q^{-1}\right).
    \end{align*}
\end{theorem}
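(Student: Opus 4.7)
The plan is to mirror the derivation of Theorems \ref{th:shalikasl2ellip}, \ref{th:shalikhyper} and \ref{th:shalik-elliptic}: we write down the conjectural expansion with unknown coefficients $A,B,C,D,E$ and test it against $f_{n,m}=\mathds{1}_{K(t_n,t_m)K}$ for every pair $(n,m)$, using on one side the explicit values of the unipotent integrals $O_{n,m}(\alpha(\cdot,\cdot))$ tabulated after Theorem~\ref{th:shalikhyper}, and on the other side the four-case formula for $O_{n,m}(\gamma)$ established in the paragraph immediately preceding the theorem. This produces a linear system in $A,B,C,D,E$ whose coefficients depend on the regime of $(n,m)$; solving it in the order $(n,m>0)\rightarrow(n=0,m>0)\rightarrow(n>0,m=0)\rightarrow(n=m=0)$ yields the claimed formulas.

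First I would treat the regime $n,m>0$. In this range $\alpha(0,0),\alpha(1,0),\alpha(0,1)$ all contribute zero, so the identity reduces to $O_{n,m}(\gamma)=D\cdot O_{n,m}(\alpha(1,1))+E\cdot O_{n,m}(\alpha(1,\varpi))$. The key structural observation is that $O_{n,m}(\gamma)$ for mixed $\gamma$ is parity-independent in $n+m$, whereas $O_{n,m}(\alpha(1,1))$ is supported on $n+m\equiv0\pmod 2$ and $O_{n,m}(\alpha(1,\varpi))$ on $n+m\equiv1\pmod 2$. Comparing the two parities forces $D=E$ and pins down their common value $D=E=q^{d_{\gamma_1}}(1-q^{-1})\bigl((1-q^{-1})|X^{\gamma_2}|+2q^{-1}\bigr)$.

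Next I would use the boundary strip $n=0,\,m>0$. Here $\alpha(1,0)$ contributes zero and only $B,D,E$ appear together with $C$ through $O_{0,m}(\alpha(0,1))=q^m$. Because $D=E$, the contribution of $D\cdot O_{0,m}(\alpha(1,1))+E\cdot O_{0,m}(\alpha(1,\varpi))$ collapses to $D\,q^m/(1-q^{-1})$, independent of parity, and a direct comparison with $O_{0,m}(\gamma)=q^{d_{\gamma_1}+m}\bigl((1-q^{-1})|X^{\gamma_2}|+2q^{-1}\bigr)$ forces $C=0$. The symmetric strip $n>0,\,m=0$ gives the analogous identity with $C$ replaced by $B$, and since $O_{n,0}(\gamma)=q^{n+d_{\gamma_1}}(1-q^{-1})|X^{\gamma_2}|$ is strictly smaller than $D\,q^n/(1-q^{-1})$, the residual $-2q^{d_{\gamma_1}+n-1}$ must be absorbed by $B\cdot q^n$, giving $B=-2q^{d_{\gamma_1}-1}$.

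Finally, plug $n=m=0$ into the expansion. Using $C=0$, $D=E$ and the known values of $O_{0,0}(\alpha(\cdot,\cdot))$, the equation becomes
\[
A+\frac{B}{1-q^{-1}}+\frac{D}{(1-q^{-1})^2}=q^{d_{\gamma_1}}|X^{\gamma_2}|,
\]
where the simplification of the $D,E$ contributions uses $(1+q^{-2}+2q^{-1})/(1-q^{-2})^2=1/(1-q^{-1})^2$. Substituting the values of $B$ and $D$, the $2q^{-1}$ terms cancel and the $(1-q^{-1})|X^{\gamma_2}|$ term matches the right-hand side, leaving $A=0$. The only real obstacle is the bookkeeping of the many parity cases; once one exploits the parity-independence of the mixed $O_{n,m}(\gamma)$, the system decouples cleanly and all five coefficients are read off in sequence.
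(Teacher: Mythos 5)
Your proof is correct and follows the same linear-system strategy the paper uses for the hyperbolic and elliptic Shalika germ theorems; the paper itself omits the proof for the mixed case and only verifies it afterwards against the product of two $\mathrm{GL}_2$ germ expansions, so your explicit computation fills that gap in an entirely compatible way. One small slip: in the $n=0,m>0$ strip you write that ``only $B,D,E$ appear together with $C$,'' but since $O_{0,m}(\alpha(1,0))=0$ when $m>0$ the coefficient $B$ does not appear in that equation (only $C,D,E$ do); the ensuing calculation is nevertheless correct and does not use $B$ there.
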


\begin{rem}
    Again, this matches the product of two germ expansions for $\mathrm{GL}_2(F)$. Indeed, the hyperbolic germ expansion of $\mathrm{GL}_2(F)$ is 
    \[(1-q^{-1})q^{d_{\gamma_1}}\mathcal{L}_{\alpha(1)}\]
    whereas the elliptic one is 
    \[-\frac{2}{q-1}\mathcal{L}_{\alpha(0)} + (2q^{-1}+(1-q^{-1})|X^{\gamma_2}|)\mathcal{L}_{\alpha(1)}.\]
Taking the product of those two expansions, we get that the coefficient of $\mathcal{L}_{\alpha(0,1)}$ is 
\[-\frac{2}{q-1}(1-q^{-1})q^{d_{\gamma_1}} = -2q^{d_{\gamma_1}-1} = B(\gamma),\]
and the coefficient of the regular orbit $\mathcal{L}_{\alpha(1,1)}+\mathcal{L}_{\alpha(1,\varpi)}$ is 
\[(2q^{-1}+(1-q^{-1})|X^{\gamma_2}|)(1-q^{-1})q^{d_{\gamma_1}} = D(\gamma).\]

 Using Proposition \ref{prop:orbitsgl22}, the stable conjugacy class of $\gamma$ only contains the one conjugacy class, and the orbital integral matches the one of $\mathrm{GL}_2(F)\times \mathrm{GL}_2(F)$ as expected.\end{rem}

\section{Comparison for non-elliptic elements}\label{sec:parabdescent}
In this section, we show that the comparison is very explicit as long as the centralizer of $\gamma$ is 
not compact modulo center. Moreover, the comparison holds in the distinguished subgroup of $\mathrm{GSp}_{2n}$ 
and not only $\mathrm{GSp}_4$.

Let $\gamma = (\gamma_1,\cdots,\gamma_n)\in G$ be a regular semisimple element. 
We have $F[\gamma] = \bigoplus_{i=1}^n F[\gamma_i]$. 
In this section we will assume that at least one $F[\gamma_i]$ is split, i.e. 
$F[\gamma_i]\cong F\oplus F$. For convenience we will assume that $F[\gamma_1]$ is split. 
 
In that case, let $P = \mathrm{Stab}_{\mathrm{GSp}(V)}(F\cdot e_1)$ and $P_G = P\cap G$.

\begin{prop}
    We have that $P$ (resp. $P_G$) is a parabolic subgroup of $\mathrm{GSp}(V)$ (resp. $G$) and the two Levi subgroups are equal. 
\end{prop}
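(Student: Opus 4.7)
The plan is to verify the three claims in sequence: that $P$ is parabolic in $\mathrm{GSp}(V)$, that $P_G$ is parabolic in $G$, and that the Levi decompositions are compatible in the stated sense. First, I would observe that $F \cdot e_1$ is an isotropic line since $\langle e_1, e_1\rangle = 0$ in the Witt basis, and invoke the standard fact that the stabilizer of any isotropic subspace of a symplectic (similitude) space is parabolic. Concretely, one can extend $F \cdot e_1$ to a full isotropic flag whose stabilizer is a Borel $B \subset P$; this identifies $P$ as the maximal parabolic corresponding to the first node of the Dynkin diagram of type $C_n$ (the Klingen parabolic when $n=2$).

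For $P_G$, I would exhibit a Borel of $G$ contained inside it. The natural Borel $B_G \subset G$ consists of tuples $(g_1, \ldots, g_n) \in G$ in which each $g_i$ is upper triangular in $(e_i, f_i)$. Since $g_1$ upper triangular stabilizes $F \cdot e_1$, we have $B_G \subset P_G = P \cap G$, so $P_G$ is a standard parabolic of $G$.

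For the Levi subgroups, I would choose complementary isotropic lines: pair $F\cdot e_1$ with $F\cdot f_1$. The Levi $M$ of $P$ preserves both lines, hence preserves $V_1 = F e_1 \oplus F f_1$ and its orthogonal complement $W = V_2 \oplus \cdots \oplus V_n$. Explicitly, $M \cong \{(a,b,g) \in F^\times \times F^\times \times \mathrm{GSp}(W) : ab = \lambda(g)\}$, acting on $V_1$ as $\mathrm{diag}(a,b)$ and on $W$ as $g$. The Levi $M_G$ of $P_G$ is obtained by demanding $g_1$ be diagonal, giving $M_G = \{(\mathrm{diag}(a,b), g_2, \ldots, g_n) : ab = \det g_2 = \cdots = \det g_n\}$.

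The comparison step is the one to be careful about: an element of $M$ lies in $G$ exactly when its action on $W$ is block diagonal with respect to $V_2 \oplus \cdots \oplus V_n$ with common determinant, which recovers precisely $M_G$, so $M_G = M \cap G$. In the main case $n=2$ of this paper, $W = V_2$ is two-dimensional symplectic, so $\mathrm{GSp}(W) = \mathrm{GL}(V_2)$ and the block-diagonal condition is automatic; thus $M = M_G$ as subgroups of $\mathrm{GSp}_4$, which is the statement that the two Levi subgroups agree. The main (mild) obstacle is purely bookkeeping: tracking the similitude condition $\lambda(g) = ab$ and seeing how it matches the determinant-equality defining $G$; no genuine computation is required once the geometric picture is in place.
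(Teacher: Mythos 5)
Your proposal is correct and takes a genuinely different route from the paper's. The paper argues root-theoretically: after choosing positive roots so that a wall of the positive Weyl chamber lies along a long root, it asserts that all other long roots are perpendicular to that wall while no short root is perpendicular to a long root, and concludes that $\mathrm{Lie}(M)$ is spanned by long roots, hence is contained in $\mathrm{Lie}(G)$. You instead compute the Levi factors as explicit subgroups: $M$ is the stabilizer of the pair of isotropic lines $(Fe_1, Ff_1)$, giving $M\cong\{(a,b,g)\in F^\times\times F^\times\times\mathrm{GSp}(W) : ab=\lambda(g)\}$, while $M_G=M\cap G$ is cut out by requiring $g$ to be block-diagonal along $V_2\oplus\cdots\oplus V_n$ with every block of determinant $ab$. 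Your version makes the similitude bookkeeping transparent and, more importantly, isolates exactly when $M=M_G$.

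Indeed, your computation shows that $M=M_G$ holds only for $n=2$, and this exposes a gap in the paper's root-theoretic argument for general $n$. The claim that no short root is perpendicular to any long root fails once $n\geq 3$: in type $C_n$ the short root $e_2-e_3$ is perpendicular to the long root $2e_1$. Correspondingly the Levi of $P=\mathrm{Stab}(Fe_1)$ is isomorphic to $\mathbb{G}_m\times\mathrm{GSp}_{2(n-1)}$, which for $n\geq 3$ contains short-root subgroups and is strictly larger than $M_G$. Since the surrounding material (notably Remark \ref{rem:idealworld2}) invokes Proposition \ref{prop:parabdescent} for general $n$, this is a substantive point rather than a cosmetic one. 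You come close to saying it when you observe that the block-diagonal condition is automatic only when $W=V_2$, but it deserves to be stated explicitly: the equality of Levi factors, and hence the proposition as written, holds only for $n=2$; for $n\geq 3$ any descent along $P$ necessarily lands first on $\mathbb{G}_m\times\mathrm{GSp}_{2(n-1)}$ rather than on $M_G$.
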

\begin{proof}
    As a stabilizer of a line, which is automatically isotropic, $P$  is a parabolic group. Same for $P_G$, which is the parabolic consisting of upper-triangular matrices in the first component, and the full group in the others.

    We can always make a choice of positive roots such that a wall of the positive Weyl chamber is along a long root (assume the one whose rootspace spans the first $\mathrm{GL}_2$). All the other long roots are perpendicular to this wall, but none of the short roots are perpendicular to a long root, so the Levi is spanned by a subset of the long roots and hence is contained in $G$.
\end{proof}

We can therefore write the Levi decompositions of these parabolics as $P = MN$ and $P_G = M N_G$ where $N_G = N\cap G$.

Note that to each parabolic subgroup we associate a modular character $\delta^{G}_P: x\mapsto \det(\mathrm{Ad}(x): \mathrm{Lie}(P)/\mathrm{Lie}(M))$ measuring the lack of unimodularity of $P$. Diagonalizing the action of $x$ on root spaces of $\mathrm{Lie}(G)$, we get contribution for all roots $\alpha$ such that the corresponding root space is in $\mathrm{Lie}(P)$ and the root space corresponding to $-\alpha$ does not. We therefore have 
\[\delta_P^G = \prod_{\alpha\in \Phi^+_G\setminus\Phi^+_P}\alpha,\]
where $\Phi^+$ is the set of positive roots corresponding to $N$ and $\Phi^+_P$ the positive roots whose rootspace is contained in $\mathrm{Lie}(P)$. 

\begin{lemma}\label{lem:modulusone}
    If $x\in K$  then $|\delta_P^G(x)| = 1$. The same holds for $\delta_{P_G}(x)$ if $x\in K\cap G$.
\end{lemma}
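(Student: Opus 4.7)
The plan is to prove both statements abstractly by compactness, without computing $\delta_P^G$ root by root. The key observation is that $\delta_P^G$ is by construction a continuous group homomorphism $P\to F^\times$ (the character $p\mapsto \det(\mathrm{Ad}(p)|_{\mathrm{Lie}(N)})$), so that composing with the absolute value gives a continuous homomorphism $|\delta_P^G|: P\to \mathbb{R}_{>0}$.

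First I would note that $K\cap P$ is a compact subgroup of $P$, being the intersection of the compact $K$ with the closed subgroup $P$. Its image under the continuous homomorphism $|\delta_P^G|$ is therefore a compact subgroup of $\mathbb{R}_{>0}$. Since $\mathbb{R}_{>0}$ admits no nontrivial compact subgroups, this image must be $\{1\}$, giving $|\delta_P^G(x)|=1$ for every $x\in K\cap P$. As the statement is vacuous for $x\notin P$, this settles the first half of the lemma. The identical argument, applied to the pair $(K\cap G, P_G)$, gives the second half; here one uses that $P_G = P\cap G$ is closed in $G$, so $(K\cap G)\cap P_G$ is again compact.

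The main obstacle, if any, is only cosmetic: strictly speaking $\delta_P^G$ is defined only on $P$, so the phrasing ``if $x\in K$ then $|\delta_P^G(x)|=1$'' should be read as a statement about $x\in K\cap P$. As a sanity check, one could instead verify the identity by direct computation using the root formula $\delta_P^G=\prod_{\alpha\in \Phi^+_G\setminus \Phi^+_P}\alpha$: on the standard torus, elements of $K$ have eigenvalues in $\mathcal{O}_F^\times$, so each $\alpha(x)$ is a unit of absolute value one; one then extends to all of $K\cap P$ via the Iwahori decomposition, using that $\delta_P^G$ is trivial on the unipotent radical $N$. The abstract compactness argument is preferable as it avoids any choice of coordinates and generalises without change to arbitrary parabolic subgroups.
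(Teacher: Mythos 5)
Your compactness argument is correct and is genuinely different from the paper's. The paper's proof is a direct coordinate computation: it appeals to the root-product formula $\delta_P^G=\prod_{\alpha\in\Phi^+_G\setminus\Phi^+_P}\alpha$ and observes that for $x\in K$ each $\alpha(x)$ is a unit of $\mathcal{O}_F$, so that $\delta_P^G(x)$ is a unit and hence has absolute value one. (This is implicitly using an Iwahori or Iwasawa-type decomposition to reduce to the torus, in the manner you sketch as your sanity check, together with the fact that $\delta_P^G$ is trivial on $N$.) Your argument instead exploits that $\lvert\delta_P^G\rvert$ is a continuous homomorphism $P\to\mathbb{R}_{>0}$ and that the image of the compact group $K\cap P$ must be a compact subgroup of $\mathbb{R}_{>0}$, hence trivial. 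The advantages are what you say: no coordinates, no appeal to the root formula, and it applies verbatim to any parabolic and any positive character of $P$; it is also immune to the minor imprecision you flagged, since the hypothesis $x\in K$ in the lemma can only be meaningful when $x$ lies in the domain $P$ of $\delta_P^G$ (which is how it is used in Proposition~\ref{prop:parabdescent}, where $g$ ranges over $K\cap P$). The cost is that the abstract argument tells you nothing quantitative about $\delta_P^G$ off of $K$, which the root computation does, and which the paper immediately needs in Proposition~\ref{prop:parabdescent2} and Corollary~\ref{cor:specialdescent}; so the coordinate description remains useful in the sequel even though it is not the shortest path to this particular lemma.
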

\begin{proof}
    If $x\in K$ then $\alpha(x)$ is a unit of $\mathcal{O}_F$ for all $\alpha$, hence $\delta_P^G(x)$ is a unit. 
\end{proof}

\begin{rem}
    Alternatively, one has 
    \[|\delta_{P}(x)| = \frac{[x^{-1}Kx : x^{-1}Kx\cap K]}{[K: x^{-1}Kx\cap K]},\]
    which also gives the desired result.
\end{rem}
\begin{prop}[{\cite[Lemma 16.3, p.451]{kott_bible}} ]\label{prop:parabdescent}
   Assume $\gamma\in G$ is regular semisimple with  $\gamma_1$  hyperbolic, picking measures on $\mathrm{GSp}(V)$ (resp. $G$) giving $K$ (resp. $K\cap G$) measure $1$, we have 
    \[\int_{G_\gamma\backslash\mathrm{GSp}(V)}\mathds{1}_K(g^{-1}\gamma g)\ \mathrm{d}g = \left|D(\gamma)\right|^{-1/2}\int_{G_\gamma\backslash G}\mathds{1}_{K}(h^{-1}\gamma h)\ \mathrm{d}h,\]
    where $D(\gamma) = \det\left(1-\mathrm{Ad}_\gamma:\mathfrak{gsp}(V)/\mathrm{Lie}(G)\right)$.
\end{prop}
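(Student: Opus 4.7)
The plan is to apply Harish-Chandra's parabolic descent separately to both orbital integrals, using the parabolics $P \subset \mathrm{GSp}(V)$ and $P_G = P \cap G \subset G$ which share the common Levi $M$. Each descent reduces the integral to an orbital integral on $M$ of the function $\mathds{1}_{K\cap M}$, and the two Jacobians combine to give exactly $|D(\gamma)|^{-1/2}$.

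First, since $\gamma_1$ is hyperbolic, we may conjugate $\gamma$ by an element of $G$ (which preserves both orbital integrals) so that its split first-factor centralizer becomes the diagonal torus. Then $\gamma \in M$, and by Lemma~\ref{lem:maxismax}, $G_\gamma$ is a maximal torus of both $G$ and $\mathrm{GSp}(V)$; since $\gamma$ is regular semisimple in $M$, this torus is contained in $M$, so parabolic descent is legitimate.

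Next, fix an Iwasawa decomposition $\mathrm{GSp}(V) = KMN$ with Haar measures normalized so that $K$, $K\cap M$, and $K\cap N$ all have measure one, and similarly $G = K_G M N_G$ with $K_G = K\cap G$, $K_G \cap N_G$ of measure one. The relation $K\cap P = (K\cap M)(K\cap N)$ for a good maximal compact gives
\[
\mathds{1}_K^{(P)}(m) = \delta_P(m)^{1/2}\int_N \mathds{1}_K(mn)\,dn = \delta_P(m)^{1/2}\,\mathds{1}_{K\cap M}(m),
\]
and analogously $\mathds{1}_{K_G}^{(P_G)}(m) = \delta_{P_G}(m)^{1/2}\,\mathds{1}_{K\cap M}(m)$. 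The standard parabolic descent (cf.\ Kottwitz) then yields
\[
\mathrm{Orb}_{\mathrm{GSp}(V)}(\gamma, \mathds{1}_K) = |D^{\mathrm{GSp}(V)}_M(\gamma)|^{-1/2}\,\mathrm{Orb}_M(\gamma, \mathds{1}_K^{(P)})
\]
and the analogous identity for $G$, where $D^H_M(\gamma) = \det(1-\mathrm{Ad}_\gamma \mid \mathrm{Lie}(H)/\mathrm{Lie}(M))$. Taking the ratio, the common factor $\mathrm{Orb}_M(\gamma, \mathds{1}_{K\cap M})$ cancels, as do the characters $\delta_P(\gamma)^{1/2}$ and $\delta_{P_G}(\gamma)^{1/2}$: we may assume $\gamma \in K\cap M$ (otherwise both integrals vanish), and Lemma~\ref{lem:modulusone} makes them units of absolute value one. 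The ratio of Jacobians simplifies via the short exact sequence
\[
0 \to \mathrm{Lie}(G)/\mathrm{Lie}(M) \to \mathfrak{gsp}(V)/\mathrm{Lie}(M) \to \mathfrak{gsp}(V)/\mathrm{Lie}(G) \to 0,
\]
which by multiplicativity of determinants on $\mathrm{Ad}_\gamma$-stable exact sequences gives $D^{\mathrm{GSp}(V)}_M(\gamma) = D^G_M(\gamma)\,D(\gamma)$. Hence the ratio is $|D^G_M(\gamma)/D^{\mathrm{GSp}(V)}_M(\gamma)|^{1/2} = |D(\gamma)|^{-1/2}$, as claimed.

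The main obstacle is the bookkeeping of Haar measure normalizations on $M$, $N$, $N_G$ so that both constant-term computations produce the same function $\mathds{1}_{K\cap M}$ without spurious volume factors; once the Iwasawa-compatible normalizations are fixed, the Lie-algebra combinatorics behind the short exact sequence follows directly from the root-space description of $\mathrm{Lie}(G) \subset \mathfrak{gsp}(V)$ via long roots recalled in \S\ref{sec:prelim}.
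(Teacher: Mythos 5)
Your proposal is correct and follows the paper's strategy: descend both orbital integrals to the common Levi $M$ via parabolic descent, compare the resulting Jacobians through the short exact sequence $0\to\mathrm{Lie}(G)/\mathrm{Lie}(M)\to\mathfrak{gsp}(V)/\mathrm{Lie}(M)\to\mathfrak{gsp}(V)/\mathrm{Lie}(G)\to0$, and dispose of the modular-character contributions with Lemma~\ref{lem:modulusone}. The only difference is expository — you invoke the constant-term map $f\mapsto f^{(P)}$ as a black box and isolate the Jacobian comparison cleanly, whereas the paper carries out the unipotent change of variables $n\mapsto g^{-1}n^{-1}gn$ and its Jacobian computation explicitly.
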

\begin{proof}
    The notes \cite{kott_bible} states and proves this result in the Lie algebra setting. 
    For the sake of completeness, we write it for integrals over orbits of the elements.

    We can assume $\gamma_1$ is diagonal, and therefore $G_\gamma\subset M$. The idea of the result, is to use Parabolic descent, both from $\mathrm{GSp}(V)$ and $G$ to $M$ and conclude with 
    \[ |D_{\mathrm{GSp}(V)}(\gamma)|^{1/2}\int_{G_\gamma\backslash \mathrm{GSp}(V)} \!\!\!\!\!\!\!\!\!\!\!\!\!\!\!\!\!\!\cdots\qquad  =|D_M(\gamma)|^{1/2}\int_{G_\gamma\backslash M}  \!\!\!\!\!\!\!\!\!\!\!\cdots\quad  = |D_G(\gamma)|^{1/2}\int_{G_\gamma\backslash G}   \!\!\!\!\!\!\!\!\!\!\cdots\ ,\]
    where we use the notation $D_{\mathbf{G}(F)}(\gamma) = \det(1-\mathrm{Ad}_\gamma : \mathrm{Lie}(\mathbf{G}(F)))$. We outline now the general process of parabolic descent for $G$, but $G$ can be replaced with $\mathrm{GSp}(V)$ throughout.

    By Iwasawa decomposition, we can write $G = MN_GK_G$. We give $K_G$ measure $1$ and pick a measure on $N_G$ giving $N_G\cap K_G$ measure $1$. We get 
    \begin{align*}\int_{G_\gamma\backslash G}\mathds{1}_K(h^{-1}\gamma h)\ \mathrm{d}h &= \int_{G_\gamma \backslash M}\int_{N_G}\int_{K_G}\mathds{1}_{K}(k^{-1}n^{-1}m^{-1}\gamma m n k)\ \mathrm{d}k\mathrm{d}n\mathrm{d}m\\
    &= \int_{G_\gamma \backslash M}\int_{N_G} \mathds{1}_{K}(n^{-1}m^{-1}\gamma m n )\  \mathrm{d}n\mathrm{d}m .\end{align*}
We now ``undo'' the conjugation of $n$. Write $g = m^{-1}\gamma m$. We want to write  $n^{-1} g n = g\varphi(n)$, where 
\[\varphi(n) =\underset{\in N_G}{\underbrace{g^{-1}n^{-1} g}} n \in N_G.\]

Note that $\varphi(N_G) = N_G$. Picking $n = 1+\varepsilon$ with $\varepsilon\in \mathrm{Lie}(N_G)$, we have 
\[\varphi(1+\varepsilon) = g^{-1}(1-\varepsilon+O(\varepsilon^2))g(1+\varepsilon) = 1+\varepsilon-\mathrm{Ad}(g)(\varepsilon) + O(\varepsilon^2),\]
hence $\mathrm{d}_1(\varphi)(\varepsilon) = (1-\mathrm{Ad}(g))\varepsilon$. This lets us compute the Jacobian. To further simplify it, first note that the determinant of $1-\mathrm{Ad}(g)$ is the same as the determinant of $1-\mathrm{Ad}(\gamma)$. Secondly, the determinant of $1-\mathrm{Ad}(\gamma)$ on the opposite nilpotent to $\mathrm{Lie}(N_G)$ differs by a factor of $(-1)^{\dim(N_G)}$, hence by the decomposition $\mathrm{Lie}(G) = \mathrm{Lie}(M)\oplus \mathrm{Lie}(N_G)\oplus \mathrm{Lie}(\overline{N_G})$, we have that 
\[|\det(1-\mathrm{Ad}(\gamma): \mathrm{Lie}(N_G))|^2 =|\det(1-\mathrm{Ad}(\gamma): \mathrm{Lie}(G)/\mathrm{Lie}(M))| = \left|\frac{D_G(\gamma)}{D_M(\gamma)}\right|.\]
We can now compute 

\begin{align}\int_{N_G= \varphi(N_G)}\mathds{1}_K(n^{-1}gn)\ \mathrm{d}n
&=  \left|\frac{D_G(\gamma)}{D_M(\gamma)}\right|^{-1/2}\int_{N_G}\mathds{1}_K(gn)\ \mathrm{d}(g^{-1}ng)\\ \label{eq:onekmn}
&= \left|\frac{D_G(\gamma)}{D_M(\gamma)}\right|^{-1/2}\int_{N_G}|\delta_{P_G}(g)|^{1/2}\mathds{1}_K(g)\mathds{1}_K(n)\ \mathrm{d}(n)\\
&= \left|\frac{D_G(\gamma)}{D_M(\gamma)}\right|^{-1/2} |\delta_{P_G}(g)|^{1/2}\mathds{1}_K(g),\end{align}
where $\delta_{P_G}(g) = \det(\mathrm{Ad}(g): \mathrm{Lie}(G)/\mathrm{Lie}(M))$. We also used that an element of $x = P_G = MN_G$ can be decomposed as $x_Mx_{N_G}$ with $x_M\in N$ and $x_{N_G}\in N_G$ in a unique way, and moreover belongs to $K$ if and only if both $x_M$ and $x_{N_G}$ are in $K$. \\

By Lemma \ref{lem:modulusone}, if $g\in K$ then $|\delta_{P_G}(g)| = 1$, therefore we finally have 
\[\int_{G_\gamma\backslash G}\mathds{1}_K(h^{-1}\gamma h)\ \mathrm{d}h =\left|\frac{D_G(\gamma)}{D_M(\gamma)}\right|^{-1/2} \int_{G_\gamma \backslash M} \mathds{1}_K(m^{-1}\gamma m)\ \mathrm{d}(m),\]
as desired.
\end{proof}

\begin{prop}\label{prop:parabdescent2}
    Similarly, letting \[a = \left(\pmat{\varpi^{a_1}&0\\0&1},\pmat{\varpi^{a_{2}}&0\\0&\varpi^{a_1-a_2}},\cdots,\pmat{\varpi^{a_{n}}&0\\0&\varpi^{a_1-a_n}} \right)\in G,\]
    we get
        \[\int_{G_\gamma\backslash\mathrm{GSp}(V)}\mathds{1}_{KaK}(g^{-1}\gamma g)\ \mathrm{d}g = \left|D(\gamma)\right|^{-1/2}\left|\frac{\delta_P^{\mathrm{GSp}(V)}(a)}{\delta_P^G(a)}\right|^{1/2}\int_{G_\gamma\backslash G}\mathds{1}_{KaK}(h^{-1}\gamma h)\ \mathrm{d}h.\]
\end{prop}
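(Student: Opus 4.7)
The plan is to adapt the parabolic descent argument of Proposition \ref{prop:parabdescent} to the bi-$K$-invariant function $\mathds{1}_{KaK}$. Since $a \in M$ is diagonal, its contribution will be captured entirely by the modulus characters $\delta_P^{\mathrm{GSp}(V)}(a)$ and $\delta_P^G(a)$; the rest of the argument runs in parallel to the $\mathds{1}_K$ case already treated.

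First I would Iwasawa-decompose $\mathrm{GSp}(V) = MNK$ and $G = MN_GK_G$, use bi-$K$-invariance of $\mathds{1}_{KaK}$ (automatic since $K_G \subset K$) to eliminate the compact integration, and apply the change of variable $n \mapsto \varphi(n) = g^{-1}n^{-1}gn$ with $g = m^{-1}\gamma m \in M$. The Jacobian factors produce $|D_{\mathrm{GSp}(V)}(\gamma)/D_M(\gamma)|^{-1/2}$ on one side and $|D_G(\gamma)/D_M(\gamma)|^{-1/2}$ on the other; their ratio is $|D(\gamma)|^{-1/2}$ exactly as in Proposition \ref{prop:parabdescent}. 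The remaining task is to compare, for $g \in M$, the inner integrals
\[I^{\mathrm{GSp}(V)}(g) := \int_N \mathds{1}_{KaK}(gn)\,\mathrm{d}n \quad\text{and}\quad I^G(g) := \int_{N_G}\mathds{1}_{KaK}(gn)\,\mathrm{d}n.\]

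The new ingredient is the evaluation of these inner integrals, which are the unnormalized constant terms of $\mathds{1}_{KaK}$ along $P$ and $P_G$ respectively. Using the Iwahori factorization $K = K_{\bar N}K_M K_N$ available for the hyperspecial $K$ (and the corresponding one for $K_G$), together with $K \cap M = K_G \cap M =: K_M$, I expect a direct unwinding of $KaK \cap MN$ to give
\[I^{\mathrm{GSp}(V)}(g) = |\delta_P^{\mathrm{GSp}(V)}(a)|^{1/2}\,\mathds{1}_{K_M a K_M}(g), \quad I^G(g) = |\delta_P^G(a)|^{1/2}\,\mathds{1}_{K_M a K_M}(g),\]
with our measure normalization giving $K \cap N$ and $K \cap N_G$ measure $1$. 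This is essentially the value at $a$ of the unnormalized Satake transform of $\mathds{1}_{KaK}$ on each of the two groups.

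Substituting these identities and integrating over $G_\gamma\backslash M$, the scalar factors $|\delta_P^{\mathrm{GSp}(V)}(a)|^{1/2}$ and $|\delta_P^G(a)|^{1/2}$ pull out, while the remaining integrals of $\mathds{1}_{K_M a K_M}(m^{-1}\gamma m)$ agree on both sides and therefore cancel in the ratio. This yields precisely the factor $|\delta_P^{\mathrm{GSp}(V)}(a)/\delta_P^G(a)|^{1/2}$, which combined with the Jacobian $|D(\gamma)|^{-1/2}$ gives the announced formula. The hardest step will be the constant term computation: one must track carefully how conjugation by $a$ rescales the measure of $K \cap N$ by $|\delta_P^{\mathrm{GSp}(V)}(a)|$, and verify that no cross-term from $K_{\bar N}$ can contribute outside the expected double coset $K_M a K_M$. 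Once this identity is in hand, everything else is bookkeeping already carried out in Proposition \ref{prop:parabdescent}.
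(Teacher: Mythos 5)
Your overall scaffolding (Iwasawa decomposition, eliminating the $K$-integration, the change of variable $n\mapsto\varphi(n)$ producing the Jacobian $|D(\gamma)|^{-1/2}$) runs parallel to the paper's proof of Proposition~\ref{prop:parabdescent}. But the new ingredient you introduce is a genuinely different one. The paper handles $\mathds{1}_{KaK}$ by inserting the pointwise factorization $\mathds{1}_{KaK}(gn) = \mathds{1}_{KaK}(g)\,\mathds{1}_K(n)$ for $g\in M$, $n\in N_G$, justified by a one-line determinant argument, so that the rest of the earlier proof goes through literally; you instead try to identify the unnormalized constant term $\int_N\mathds{1}_{KaK}(gn)\,\mathrm{d}n$ outright as a scalar multiple of $\mathds{1}_{K_M a K_M}$.

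That is where the proposal has a real gap. The identity
\[
\int_N \mathds{1}_{KaK}(gn)\,\mathrm{d}n \ \stackrel{?}{=}\ |\delta_P(a)|^{1/2}\,\mathds{1}_{K_M a K_M}(g)
\]
is false, on two counts. First, the scalar is off: already for $\mathrm{GL}_2$ with $K=\mathrm{GL}_2(\mathcal{O}_F)$, $a=\mathrm{diag}(\varpi^m,1)$, $P=B$ the upper Borel and $\mathrm{vol}(N\cap K)=1$, one finds $\int_N\mathds{1}_{KaK}(an)\,\mathrm{d}n = q^m = |\delta_B(a)|^{-1}$, not $|\delta_B(a)|^{1/2} = q^{-m/2}$. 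Second, and more seriously, the constant term of a spherical function along a parabolic is \emph{not} supported on the single double coset $K_M a K_M$. In the same $\mathrm{GL}_2$ example, $\int_N\mathds{1}_{KaK}(tn)\,\mathrm{d}n$ is nonzero not only for $t$ in $K_T a K_T$, but also at $t=\mathrm{diag}(1,\varpi^m)$ and, when $m\geq 2$, at every $t=\mathrm{diag}(\varpi^{b_1},\varpi^{b_2})$ with $b_1+b_2=m$ and $0\leq b_1,b_2\leq m$. Consequently the step ``the remaining integrals of $\mathds{1}_{K_M a K_M}(m^{-1}\gamma m)$ agree on both sides and therefore cancel in the ratio'' does not follow; what you would actually need to compare are the orbital integrals on $M$ of the two (different, multi-coset) constant terms $c_P(\mathds{1}_{KaK})$ and $c_{P_G}(\mathds{1}_{KaK})$, and there is no evident reason for those to differ by a scalar independent of $\gamma$. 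To repair the argument you would need to exploit the very specific shape of $a$ and of the two nilradicals $N\supset N_G$, precisely the role that the paper's factorization claim is designed to play, rather than appealing to a general constant term formula that does not hold.
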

\begin{proof}
    The proof works the same verbatim, where we replace equation (\ref{eq:onekmn}) by 
    \[\mathds{1}_{KaK}(gn) =\mathds{1}_{KaK}(g)\mathds{1}_K(n).\]
    This is because $\det(gn) = \det(g)$ and so if $gn$ is in  $KaK$ then $|\det(g)| = |\det(a)| = |\varpi|^{na_1}$.
\end{proof} 
Note that $\delta_P^{\mathrm{GSp}(V)}$ and ${\delta_P^G}$ only differ by the short roots of $\mathrm{GSp}_{2n}$, therefore,  is the product of the short roots, so using the notations above we get $\lambda  = a_1$ and therefore 
\[\frac{\delta_P^{\mathrm{GSp}(V)}(a)}{\delta_P^G(a)} = \frac{\prod_{1\leq i\leq j\leq n}\varpi^{2a_i-\lambda}\prod_{1\leq i<j\leq n}\varpi^{a_i-a_j}}{\prod_{1\leq i\leq n}\varpi^{2a_i-\lambda}} = \prod_{1\leq i<j\leq  n} \varpi^{3a_i-a_j-\lambda}.\]
Consequently, we get the following.

\begin{cor}\label{cor:specialdescent} Using the same notation as in Proposition \ref{prop:parabdescent2}, if we have $a = \pmat{\varpi&0\\0&1}^{\oplus n}$ then 
\[\int_{G_\gamma\backslash\mathrm{GSp}(V)}\mathds{1}_{KaK}(g^{-1}\gamma g)\ \mathrm{d}g = \left|D(\gamma)\right|^{-1/2}q^{-n(n-1)/4}\int_{G_\gamma\backslash G}\mathds{1}_{KaK}(h^{-1}\gamma h)\ \mathrm{d}h.\]
\end{cor}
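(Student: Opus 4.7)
The plan is to derive the corollary as a direct specialization of Proposition \ref{prop:parabdescent2}. The first step is to identify the parameters $(a_1,\dots,a_n)$ attached to the element $a = \pmat{\varpi&0\\0&1}^{\oplus n}$ in the parametrization of the proposition. Matching the $i$-th block $\pmat{\varpi^{a_i}&0\\0&\varpi^{a_1-a_i}} = \pmat{\varpi&0\\0&1}$ forces $a_i = 1$ and $a_1 - a_i = 0$ simultaneously, so one takes $a_1 = a_2 = \cdots = a_n = 1$; consequently the similitude exponent is $\lambda = a_1 = 1$.

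With these values in hand, the next step is to evaluate the ratio of modular characters using the closed form stated just above the corollary:
\[
\frac{\delta_P^{\mathrm{GSp}(V)}(a)}{\delta_P^G(a)} \;=\; \prod_{1\leq i<j\leq n}\varpi^{3a_i - a_j - \lambda}.
\]
Substituting $a_i = a_j = \lambda = 1$ collapses each exponent to $3-1-1 = 1$, and the product ranges over the $\binom{n}{2}$ pairs $i<j$, so
\[
\frac{\delta_P^{\mathrm{GSp}(V)}(a)}{\delta_P^G(a)} \;=\; \varpi^{n(n-1)/2}.
\]
Taking absolute value (so $|\varpi| = q^{-1}$) and then a square root converts this into the normalization factor $q^{-n(n-1)/4}$.

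Finally, I would plug this normalization factor back into the conclusion of Proposition \ref{prop:parabdescent2}:
\[
\int_{G_\gamma\backslash\mathrm{GSp}(V)}\mathds{1}_{KaK}(g^{-1}\gamma g)\,\mathrm{d}g \;=\; |D(\gamma)|^{-1/2}\,\left|\frac{\delta_P^{\mathrm{GSp}(V)}(a)}{\delta_P^G(a)}\right|^{1/2}\!\!\int_{G_\gamma\backslash G}\mathds{1}_{KaK}(h^{-1}\gamma h)\,\mathrm{d}h,
\]
and replace the middle factor by $q^{-n(n-1)/4}$, which yields the corollary verbatim. The proof is a pure bookkeeping substitution: no genuine obstacle arises, the only care needed is to check that the identification $a_i = 1$, $\lambda = 1$ matches the normalization conventions set up in the construction of the parabolic $P$ and the explicit formula for $\delta_P^{\mathrm{GSp}(V)}/\delta_P^G$ displayed before the corollary.
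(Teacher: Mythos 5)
Your proof is correct and follows exactly the same route as the paper: substitute $a_1 = \cdots = a_n = 1$ and $\lambda = a_1 = 1$ into the displayed formula $\prod_{1\leq i<j\leq n}\varpi^{3a_i-a_j-\lambda} = \varpi^{n(n-1)/2}$, take $|\cdot|^{1/2}$ to get $q^{-n(n-1)/4}$, and insert into Proposition~\ref{prop:parabdescent2}. Your write-up is in fact slightly more careful than the paper's one-line proof, since it spells out the passage from $\prod_{i<j}\varpi$ to the final power of $q$.
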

\begin{proof}
In this case we have $a_i=a_j = 1$ hence $\displaystyle\frac{\delta_P^{\mathrm{GSp}(V)}(a)}{\delta_P^G(a)}  = \prod_{1\leq i<j\leq n} \varpi^{3a_i-a_j-a_1} =\prod_{1\leq i<j\leq n} \varpi $ as desired. 
\end{proof}

We can make everything explicit by using the root systems which are described in the next section. The value of $D(\gamma)$ is $\prod_{\alpha}(\gamma)$ as $\alpha$ ranges over short roots of $\mathrm{GSp}_{2n}$.

\begin{ex}
When $n=2$, let $\gamma = (\gamma_1,\gamma_2)\in G$ where $t_i, \lambda t_i^{-1}$ are eigenvalues of $\gamma_i$, we get 
\begin{equation}\label{eq:weyldisc} D(\gamma) = \left(1-\frac{t_2}{t_1}\right)\left(1-\frac{t_1}{t_2}\right)\left(1-\frac{\lambda}{t_1t_2}\right)\left(1-\frac{t_1t_2}{\lambda}\right)= \frac{(t_1-t_2)^2(\lambda-t_1t_2)^2}{\lambda t_1^2t_2^2}.\end{equation}

For example, if $\gamma\in \mathrm{Sp}_4$  (equivalently, $\lambda = 1$), such that $t_1 = 1 + a\varpi^k$ and $t_2 = 1+b\varpi^k$. We get 
\begin{align}\left|D(\gamma)\right| &= \left|\frac{(a-b)^2\varpi^{2k}+ (a+b)^2\varpi^{2k}+2ab(a+b)\varpi^{3k}+a^2b^2\varpi^{4k}}{t_1^2t_2^2}\right|\\
& = \left|2(a^2+b^2)\varpi^{2k}+2ab(a+b)\varpi^{3k}+a^2b^2\varpi^{4k}\right|.\end{align}
In particular, when $a=1$, $b$ is a nonsquare residue (in odd residue characteristic), then $|D(\gamma)| = q^{2k}$. 
\end{ex}

\begin{rem}\label{rem:idealworld}
    Let $E_1,E_2$ be two nonisogenous ordinary elliptic curves defined over a finite field $\mathbb{F}_p$, where $p$ is prime. 
    
    In this case, the splitting field of the Frobenius element splits over a CM-biquadratic extension of $\mathbb{Q}$ of Galois group isomorphic to $\mathbb{Z}/2\mathbb{Z}\times \mathbb{Z}/2\mathbb{Z}$.
    
    By Chebotarev Density Theorem we know that exactly $25\%$ of primes will remain inert over their corresponding completion, hence the descent of Proposition \ref{prop:parabdescent} will be correct 
     for only $75\%$ of orbital integrals involved in the Langlands--Kottwitz formula.\\

     However, in Gekeler's work \cite{gekeler}, the heuristic used to compute the size of the isogeny class of an elliptic curve fails
     at infinitely many places, but their product converges to the correct number.

     Let $a,q a^{-1}$ (resp.  $b, p b^{-1}$) be the eigenvalues of the Frobenius element of $E_1$ (resp. $E_2$). Let $I$ be the moduli space corresponding to the isogeny class of $E_1\times E_2$ as product of elliptic curves and $\tilde{I}$ be the moduli space of the isogeny class of $E_1\times E_2$ as principally polarized abelian varieties.

     If we were to use a similar heuristic and assume that we can use Propositions \ref{prop:parabdescent}  and \ref{prop:parabdescent2}
     at all places, we would get  
    
    \begin{equation}\frac{|\tilde{I}|}{|I|}  = \left|\frac{(b-a)(p-ab)}{pab}\right| = \sqrt{\frac{|\mathrm{Ext}^1(E_1,E_2)|}{p}}.\end{equation}
 
The last equality comes from Theorem \ref{th:milne}. 

For the first equality, let $v = \mathrm{vol}(G_\gamma(\mathbb{Q})\backslash G_\gamma(\mathbb{A}))$. Since $E_1,E_2$ are ordinary, we know that their Frobenius is conjugate to the matrix $\pmat{p&&&\\&p&&\\&&1&\\&&&1}$ in $\mathbb{Q}_p$. 
Langlands--Kottwitz formula yields  
\begin{align*}\tilde{I} &=v \mathrm{Orb}(\gamma,\mathds{1}_{\mathrm{GSp}_4(\mathbb{Z}_p)\mathrm{diag}(p,p,1,1)\mathrm{GSp}_4(\mathbb{Z}_p)}) \prod_{\ell\neq p}\mathrm{Orb}(\gamma,\mathds{1}_{\mathrm{GSp}_4(\mathbb{Z}_\ell)})\\
&= v\left|D(\gamma)\right|_p^{-1/2}\left|\frac{\delta_P^{\mathrm{GSp}(V)}(a)}{\delta_P^G(a)}\right|_p^{1/2}\mathrm{Orb}(\gamma,\mathds{1}_{G(\mathbb{Z}_p)\mathrm{diag}(p,p,1,1)G(\mathbb{Z}_p)}) \prod_{\ell\neq p}\left|D(\gamma)\right|_\ell^{-1/2}\mathrm{Orb}(\gamma,\mathds{1}_{G(\mathbb{Z}_\ell)}),\\
&= |D(\gamma)|^{1/2}_{\infty} p^{-1/2}  I.
 \end{align*}
Use  equation (\ref{eq:weyldisc}) to conclude.
\end{rem}

\begin{lemma} \label{lem:manyellcurvesweyldisc}In general, write $\gamma = (\gamma_1,\dots, \gamma_n)$ where $\gamma_i$ has eigenvalues $t_i, \lambda t_i^{-1}$ 
\begin{align*}
D(\gamma) &= \prod_{i<j}\left(1-\frac{t_i}{t_j}\right)\left(1-\frac{t_j}{t_i}\right)\left(1-\frac{t_it_j}{\lambda}\right)\left(1-\frac{\lambda}{t_it_j}\right),\\
&=  \frac{\prod_{1\leq i<j\leq n}(t_i-t_j)^2(\lambda-t_it_j)^2}{\lambda^{n(n-1)/2}\prod_{1\leq i\leq  n}t_i^{2(n-1)}}.
\end{align*}
In particular, if $\gamma\in \mathrm{GSp}_{2n}(\mathcal{O}_F)$ then 
\[|D(\gamma)|^{1/2} = \prod_{1\leq i<j\leq n} |t_i-t_j|\ |\lambda-t_it_j|.\]
\end{lemma}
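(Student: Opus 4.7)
The plan is to recognize $D(\gamma)$ as the product of $(1-\alpha(\gamma))$ over those roots $\alpha$ of $\mathrm{GSp}_{2n}$ whose root space is not contained in $\mathrm{Lie}(G)$. Since $\mathrm{Lie}(G)$ is the span of the Cartan together with the long root spaces of $\mathfrak{gsp}_{2n}$, the quotient $\mathfrak{gsp}_{2n}/\mathrm{Lie}(G)$ is $\mathrm{Ad}_\gamma$-stable and decomposes as the sum of the short root spaces. So the first task is to enumerate the short roots of $\mathrm{GSp}_{2n}$ and evaluate them on $\gamma$.

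Working in the maximal torus whose characters are generated by $e_1,\ldots,e_n$ and the similitude $\lambda$ (with $e_i(\gamma) = t_i$ and $\lambda(\gamma)=\lambda$, since the eigenvalues of $\gamma_i$ are $t_i$ and $\lambda t_i^{-1}$), the long roots are $\pm(2e_i - \lambda)$ and the short roots are $\pm(e_i - e_j)$ for $i\ne j$ and $\pm(e_i + e_j - \lambda)$ for $i<j$. On $\gamma$ they evaluate as $t_i/t_j$ and $t_i t_j/\lambda$ respectively (and their reciprocals for the negatives). This yields
\[
D(\gamma) = \prod_{i\ne j}\left(1 - \tfrac{t_i}{t_j}\right)\prod_{i<j}\left(1 - \tfrac{t_i t_j}{\lambda}\right)\left(1 - \tfrac{\lambda}{t_i t_j}\right) = \prod_{i<j}\left(1-\tfrac{t_i}{t_j}\right)\left(1-\tfrac{t_j}{t_i}\right)\left(1-\tfrac{t_i t_j}{\lambda}\right)\left(1-\tfrac{\lambda}{t_i t_j}\right),
\]
which is the first identity.

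Next I clear denominators pair by pair, using $(1-x)(1-x^{-1}) = -(1-x)^2/x$: the pair $(1-t_i/t_j)(1-t_j/t_i)$ contributes $-(t_i-t_j)^2/(t_i t_j)$ and $(1-t_it_j/\lambda)(1-\lambda/t_it_j)$ contributes $-(\lambda - t_it_j)^2/(\lambda t_i t_j)$; the two minus signs cancel on every pair $i<j$. Collecting the denominators, I get one factor of $\lambda$ per pair, i.e.\ $\lambda^{n(n-1)/2}$, and each $t_i$ appears in exactly $2(n-1)$ pairs (once as $t_i$ in each of the $n-1$ pairs $(i,j)$ with $j\ne i$, and again coming from the ``$\lambda$'' factor), yielding the denominator $\lambda^{n(n-1)/2}\prod_i t_i^{2(n-1)}$ and proving the second identity.

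For the integrality statement, if $\gamma\in \mathrm{GSp}_{2n}(\mathcal{O}_F)$ then $\lambda = \det(\gamma_1)\in \mathcal{O}_F^\times$ and each eigenvalue $t_i$ is also a unit (it divides $\lambda$ in $\mathcal{O}_{F[\gamma]}$), so $|\lambda^{n(n-1)/2}\prod_i t_i^{2(n-1)}|=1$; taking square roots gives the final formula. The only potential ``obstacle'' is to make the identification of short roots and their values on $\gamma$ compatible with a chosen Witt basis for $V$, but once a basis is fixed in which $\gamma$ is diagonal with entries $(t_1,\ldots,t_n,\lambda t_1^{-1},\ldots,\lambda t_n^{-1})$, the evaluations above are immediate from the standard Chevalley presentation of $\mathfrak{gsp}_{2n}$.
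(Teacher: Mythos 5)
Your proof is correct and takes exactly the route the paper suggests: the paper does not write out a proof of this lemma, but the preceding sentence ("The value of $D(\gamma)$ is $\prod_\alpha(\gamma)$ as $\alpha$ ranges over short roots of $\mathrm{GSp}_{2n}$") together with the root system in \S 6.1 is precisely your enumeration of short roots and their evaluations, and the $n=2$ example in the paper performs the same pairwise clearing of denominators that you carry out in general. One small clarity note: your parenthetical explaining the exponent $2(n-1)$ on $t_i$ is worded a bit confusingly; the clean count is that $t_i$ lies in $n-1$ pairs $(i,j)$ and each pair contributes $t_i^2$ to the denominator (one power of $t_i$ from $\bigl(1-\tfrac{t_i}{t_j}\bigr)\bigl(1-\tfrac{t_j}{t_i}\bigr)$ and one from $\bigl(1-\tfrac{t_it_j}{\lambda}\bigr)\bigl(1-\tfrac{\lambda}{t_it_j}\bigr)$), giving $t_i^{2(n-1)}$.
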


\begin{rem}\label{rem:idealworld2}
Continuing Remark \ref{rem:idealworld}, assume we have $n$ pairwise nonisogenous ordinary elliptic curves $E_1,\dots, E_n$ over $\mathbb{F}_p$.  The Galois group of the splitting field of the corresponding 
Frobenius over $\mathbb{Q}$ is of type $(\mathbb{Z}/2\mathbb{Z})^{n}$ and therefore the proportion of places Proposition \ref{prop:parabdescent} can be applied is $\frac{2^{n}-1}{2^n}$. Therefore, as $n$ gets large, this 
descent case is by far the most common one.

Let $a_i,p a_i^{-1}$ be eigenvalues of the elliptic curve $E_i$. In that case, assuming that the descent formula works on average, Lemma \ref{lem:manyellcurvesweyldisc} and Corollary \ref{cor:specialdescent} would imply 
\begin{equation}\left|\frac{\tilde{I}}{I}\right| = |D(\gamma)|_{\infty}^{1/2} p^{-n(n-1)/4} =  \frac{\prod_{1\leq i<j\leq n}|t_i-t_j|\ |p-t_it_j|}{p^{n(n-1)/2}\prod_{1\leq i\leq  n}|t_i|^{n-1}}.\end{equation}
\end{rem}

\begin{rem}
So far we have only studied orbits of elements of $G$ but in the next section we will study orbits of elements  $X\in \mathrm{Lie}(G)$. Everything so far can be however carried out the same way, replacing $d_\gamma$ by $d_X$, the valuation of the difference of eigenvalues of $X$. In that case, the corresponding choice for $D(\gamma)$ is 
$D(X) = \det\left(\mathrm{ad}(X): \mathfrak{gsp}_{2n}(F)/\mathrm{Lie}(G)\right).$ Once again, letting $X = (X_1,\dots, X_n)$ where $X_i\in \mathfrak{gl}_2(F)$ has eigenvalues  $t_i, \lambda-t_i$, we get 
\[D(X) = \prod_{i<j}(t_i-t_j)(t_j-t_i)(\lambda-t_i-t_j)(t_i-t_j-\lambda) = (t_i-t_j)^2(\lambda-(t_i+t_j))^2.\]
\end{rem}
\section{Symplectic orbital integrals for equivalued elliptic elements}\label{sec:equivalued}

This section aims to explicitly compute the orbital integral of the characteristic function of 
$\mathrm{GSp}_4(\mathcal{O}_F)$ under specific conditions. The computations are equivalent if we compute the integrals
over an orbit of the Lie algebra, so we will carry this case for ease of notations. 

We also change notations, namely we will write $G = \mathrm{GSp}_4$, 
 $\mathfrak{g} = \mathrm{Lie}(\mathrm{GSp}_4)$. The distinguished subgroup will be denoted by $H$, with associated 
 Lie algebra $\mathfrak{h}$.

\subsection{Roots and equivalued elements}
Recall that we write matrices in $G$ in the  Witt basis $\{e_1,e_2, f_1,f_2\}$.

Looking at the action of the diagonal torus $T$ of $G$ on $\mathfrak{g}$ by conjugation, 
 we can determine the corresponding root system. 

Indeed, the element  $t = \mathrm{diag}(t_1,t_2,\lambda t_1^{-1},\lambda t_2^{-1})\in T$ acts on $\mathfrak{g}$

by multiplying each entry by the image of $t$ under a root. We can therefore represent the root system as follows.

\begin{figure}[H]
    \centering
  \begin{tikzpicture}
  \fill[teal!10]  (0,0) -- (90:3cm) -- (45:3cm);
    \foreach\ang in {45,135,...,315}{
     \draw[->,blue!80!black,thick] (0,0) -- (\ang:2cm);
    }
    \foreach\ang in {90,180,...,360}{
     \draw[->,magenta!80!black,thick] (0,0) -- (\ang:3cm);
    }
    \draw[dashed,green!80!black,thick] (0,0) -- (-15:3cm);
    \draw[dashed,green!80!black,thick] (0,0) -- (165:3cm);
    \node[anchor=south west] at (2,0) {$\alpha=t_1^2/\lambda$};
    \node[anchor=south west] at (1.3,1.3) {$\alpha+\beta=t_1t_2/\lambda$};
    \node[anchor=south west] at (0,3) {$\alpha+2\beta=t_2^2/\lambda$};
    \node[anchor=south west] at (-3,1.3) {$\beta=t_1^{-1}t_2$};
  \end{tikzpicture}
 
  \[\begin{pmatrix}
      \star&{\color{blue}-\beta}&{\color{magenta}\alpha}&{\color{blue}\alpha+\beta}\\
      {\color{blue}\beta}&\star&{\color{blue}\alpha+\beta}&{\color{magenta}\alpha+2\beta}\\
      {\color{magenta}-\alpha}&{\color{blue}-(\alpha+\beta)}&\star&{\color{blue}\beta}\\
      {\color{blue}-(\alpha+\beta)}&{\color{magenta}-(\alpha+2\beta)}&{\color{blue}-\beta}&\star
  \end{pmatrix}\]
 
  \caption{Root system of $\mathrm{GSp}_4$ with choice of positive roots, 
  the long roots is the root system of $\mathrm{GL}_2\times_{\det}\mathrm{GL}_2$. Shaded is the positive Weyl Chamber of $ \mathrm{GSp}_4$. Below are root spaces} 
  \end{figure}
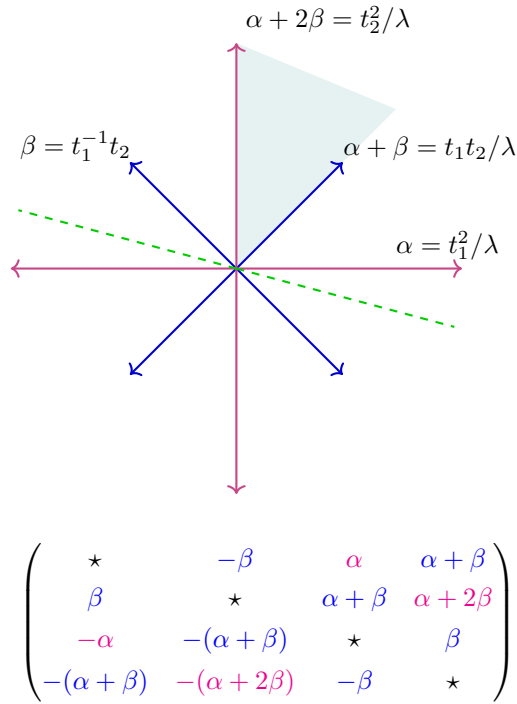

  Similarly, we get the same root system by looking at the action of $\mathrm{Lie}(T)(F)$ on $\mathfrak{g}(F)$, where $\alpha (t_1,t_2,\lambda-t_1,\lambda-t_2) = 2t_1-\lambda$, and 
  $\beta(t_1,t_2,\lambda-t_1,\lambda-t_2)= t_2-t_2$.

 We say that an element  $\gamma \in G(F)$ (resp. $X\in \mathfrak{g}(F)$) is \emph{equivalued} if 
 its images under all roots of $G(\bar{F})$ (resp. of $\mathfrak{g}(\bar{F})$) corresponding 
 to the maximal torus centralizing $\gamma$ (resp. $X$) have equal valuations.

More concretely, $X\in \mathfrak{G}(F)$ is equivalued if there is $r\in \mathbb{Q}$ such that for all $\alpha\in \Phi(G_{\bar{F}}, T_{\bar{F}})$ (where $T = G_X$), we have 
\[\mathrm{val}_{\bar{F}}(\mathrm{d}\alpha(X)) = r.\]
This $r$ is the depth of $X$.

\begin{rem} This informal remark aims to explain the relevance of studying equivalued elements.

Rather paradoxically, comparing orbital integrals for equivalued elements 
could be seen as either the easiest or hardest case, depending on one's optics. 

Indeed, all \emph{interesting} properties of equivalued elements have the same depth and therefore 
all the information needed is present in one single reductive quotient of some compact open subgroup. This 
makes the strategy to compute such volumes explicitly straightforward.
 
However, this means that equivalued elements will have minimal cancelation, and so 
we expect symplectic orbital integrals to contain a lot more information than integrals 
over the much smaller subgroup. So this case is where we expect orbital integrals to look 
the most different. 
\end{rem}

\begin{ex}
Let $X_1 = \varpi^n\pmat{0&\varepsilon\\ 1&0}\in \mathfrak{sl}_2(F)$ where $\varepsilon\in \mathcal{O}_F$ is not a square. Let $\alpha$ be the unique nontrivial root. We have $\mathrm{d}\alpha\pmat{x&0\\0&-x} = 2x$ so 
\[\mathrm{val}_{\bar{F}}(\mathrm{d}\alpha X_1) = \mathrm{val}_{\bar{F}}(2\sqrt{\varepsilon}\varpi^n) = n.\]
So $X_1$ has depth $n$.
\end{ex}
\begin{ex}
Let $X_2 = \varpi^n\pmat{0&\varepsilon\varpi\\ 1&0}\in \mathfrak{sl}_2(F)$ where $\varepsilon\in \mathcal{O}_F$ is not a square.  This time we have
\[\mathrm{val}_{\bar{F}}(\mathrm{d}\alpha X_2) = \mathrm{val}_{\bar{F}}(2\sqrt{\varepsilon}\varpi^{n+1/2}) = n+1/2.\]
The element $X_2$ has depth $n+1/2$.
\end{ex}
 
\subsection{Description of the Moy--Prasad filtration} \label{sec:roots}

We now assume $X = (X,X_2)\in \mathfrak{g}(\mathcal{O}_F)$ is elliptic,
 i.e. its centralizer is compact modulo center. 
Equivalently, $F[X] =F[X_1]\oplus F[X_2]$ where $F[X_1]/F,F[X_2]/F$ 
are quadratic field extensions.\\

Recall that  $\mathcal{B}(G,F)$  denotes the building of $G$ over $F$ and let $G_X$ the stabilizer of $X$ in $G$.

The Moy--Prasad filtration associates subgroups (resp. subalgebras) $G(F)_{v,r}$ (resp. $\mathfrak{g}(F)_{v,r}$) for each building vertex $v$ and $r\in \mathbb{R}_{\geq 0}$. 
Under this filtration, the parahoric subgroup of $G(F)$ stabilizing a vertex $v$ is $G(F)_{v,0}$ and $G(F)_{v,r_1}\subset G(F)_{v,r_2}$ whenever $r_1\geq r_2$.

The depth of $\gamma\in G(F)$ (resp. $X\in \mathfrak{g}(F)$) relative to a vertex $v$, denoted by $d(v,X)$ is the unique value of $r$ so that $\gamma\in G(F)_{v,r}\backslash G(F)_{v,s}$ (resp. $\gamma\in \mathfrak{g}(F)_{v,r}\backslash \mathfrak{g}(F)_{v,s}$) for all $s>r$.

The \emph{depth} of $\gamma$ (resp $X$) is the supremum of its relative depths.

\bigskip
\begin{theorem}[{\cite[Theorem 2.3.1]{kmvertex}}]
Assume $X\in \mathfrak{g}(F)$ is equidistributed with depth $d$, and the eigenvalues of $X$ belong to a tamely ramified extension of $F$. The following are equivalent:
\begin{enumerate}[(i)]
\item $v\in \mathcal{B}(G_X,F)$,
\item $X\in \mathfrak{g}(F)_{v,d}$.
\end{enumerate}
Equivalently, $\displaystyle\mathcal{B}(G_X, F) = \{v\in \mathcal{B}(G,F)\ :\ d(v,X)\leq d \}.$
\end{theorem}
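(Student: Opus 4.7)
The plan is to reduce to a splitting extension $E/F$ of the elliptic torus $T = G_X$. By hypothesis the eigenvalues of $X$ lie in a tame extension, so Bruhat--Tits theory is $\mathrm{Gal}(E/F)$-equivariant: $\mathcal{B}(T,F) = \mathcal{B}(T,E)^{\mathrm{Gal}(E/F)}$, and similarly the Moy--Prasad filtrations over $F$ are obtained from those over $E$ by taking Galois invariants. It therefore suffices to establish the equivalence over $E$, where $T$ is split, and the statement over $F$ follows by descent.

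Over $E$, the key tool is the root-space decomposition of the Moy--Prasad filtration relative to a split maximal torus $T'$ whose apartment contains $v$:
\[
\mathfrak{g}(E)_{v,r} = \mathfrak{t}'(E)_r \oplus \bigoplus_{\alpha \in \Phi(G, T')} \mathfrak{g}_\alpha(E)_{v,r},
\]
where the root-space piece is determined by the affine functional $\alpha - \alpha(v)$. For the direction $(i) \Rightarrow (ii)$, I would take $T' = T$, so that $v \in \mathcal{A}(T,E)$ and $X \in \mathfrak{t}(E)$. Then $X$ has no root-space components, and its depth at $v$ is measured entirely inside $\mathfrak{t}(E)_d$, which is independent of the chosen vertex of $\mathcal{A}(T,E)$. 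Since by hypothesis the depth of $X$ equals $d$, this forces $X \in \mathfrak{t}(E)_d \subset \mathfrak{g}(E)_{v,d}$, as required.

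For the converse $(ii) \Rightarrow (i)$, assume $X \in \mathfrak{g}(E)_{v,d}$ and choose an apartment through $v$ corresponding to some split $T'$. Decompose $X = X_{\mathfrak{t}'} + \sum_\alpha X_\alpha$. Because $X$ is regular semisimple with centralizer $T$, conjugating by an element of $G(E)$ sending $T'$ to $T$ translates the data of the components $X_\alpha$ into the relative position of $v$ with respect to $\mathcal{A}(T,E)$. The equivalued hypothesis $\mathrm{val}(d\alpha(X)) = d$ for every $\alpha \in \Phi(G, T)$, combined with the membership $X \in \mathfrak{g}(E)_{v,d}$, then pins down $v$ inside $\mathcal{A}(T,E) = \mathcal{B}(T,E)$: any displacement off this apartment would force some root-space component to violate the filtration bound, contradicting equivalued depth $d$.

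The main obstacle is this last step, which is precisely the content of the theorem: translating the equivalued equality of all root valuations into a sharp positional statement on $v$. The technical heart is to show that the locus $\{v : d(v,X) = d\}$ in $\mathcal{B}(G,E)$ is exactly a single apartment, which requires the Moy--Prasad formula together with the regularity and equivalued conditions to ensure no slack in any of the affine inequalities. Once this apartment is identified as $\mathcal{A}(T,E)$, Galois descent (legitimate by tameness) yields the equality $\mathcal{B}(G_X,F) = \{v : d(v,X) \le d\}$ over $F$, noting that the supremum in the definition of depth is attained precisely on this apartment, so ``$\le d$'' and ``$= d$'' coincide for points in the fixed-point set.
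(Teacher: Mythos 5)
The paper does not prove this statement; it quotes it as Theorem~2.3.1 of the cited source, so there is no in-paper argument for yours to match. Evaluating your proposal on its own terms, the framing is sound: a tame Galois descent to a splitting field $E$ plus the root-space decomposition of the Moy--Prasad lattices along an apartment is exactly the right toolkit for this kind of statement, and your treatment of $(i)\Rightarrow(ii)$ is essentially correct modulo one unstated lemma (that for equivalued regular semisimple $X$ the common valuation $d$ of the $\mathrm{d}\alpha(X)$ coincides with the torus filtration valuation of $X$; this follows from nondegeneracy of the root/coroot pairing together with tameness, but you should say so, especially since a central component of $X$ is not seen by the roots and one must either work mod center or impose a separate bound on it).

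The gap is in $(ii)\Rightarrow(i)$, and you have named it yourself: the sentence beginning ``any displacement off this apartment would force some root-space component to violate the filtration bound'' is precisely the conclusion, not a proof, and the follow-up paragraph explicitly admits that the ``technical heart'' remains to be shown. What is missing is a quantitative estimate: take $v = g\cdot v_0$ with $v_0 \in \mathcal{A}(T,E)$ the nearest point of the apartment, write $\mathrm{Ad}(g^{-1})X$ in root coordinates for $T$, and show that if $v \ne v_0$ then at least one root-space component of $\mathrm{Ad}(g^{-1})X$ falls below the threshold forced by the affine root value at $v_0$, so that $\mathrm{Ad}(g^{-1})X \notin \mathfrak{g}(E)_{v_0,d}$. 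That computation is where the equivalued hypothesis (equality, not just $\geq$, of all root valuations) and regularity actually enter; without it the proof is circular. There is also a sign slip worth fixing: since the Moy--Prasad lattices shrink as $r$ grows, $X \in \mathfrak{g}(F)_{v,d}$ is the condition $d(v,X) \geq d$, not $d(v,X) \leq d$ (and the paper's displayed ``equivalently'' line has the same typo); your closing remark about ``$\leq d$'' versus ``$=d$'' should be stated in terms of $\geq d$ versus $=d$.
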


In our case, the residue characteristic is odd hence the eigenvalues of $X$ generate a tamely ramified extension, and $X$ is elliptic hence the apartment corresponding to its
 centralizer is not contained in the building over $F$. Therefore, there is a unique Galois-fixed
point (not necessarily a vertex) from that apartment. This point is exactly $\mathcal{B}(G_X,F)$.

Let $x$ be that vertex. Up to conjugation, 
$x$ may only be a vertex or midpoint of an edge. We can see this by considering the building of $\mathrm{GL}_2\times\mathrm{GL}_2(F)$ inside the building of $G$, 
and consider the unique vertex fixed by $X$.  \\

Letting $r$ be the depth of $X$, the Theorem above tells us that $x$ is the unique point of $\mathcal{B}(G,F)$ so that $X\in G(F)_{x,r}$.

Up to conjugation, any elliptic element corresponds to one of  4 types of points in $\mathcal{B}(G,F)$, 
and only 3 of equivalued elements. Below is a table listing all possibilities, with information that will be 
relevant to our computations.

The following pages illustrate the Moy--Prasad filtration for each point of interest. They use the notation $\mathcal{O}$ for $\mathcal{O}_F$ and $\mathfrak{m} = \varpi\mathcal{O}_F$. For aesthetic purposes we decided to write the matrices in these pages in the basis $\{e_1,e_2,f_2,f_1\}$. 
We use a block matrices with integers to describe the set of matrices in $\mathfrak{g}(F)$ where each entry has valuation at least the corresponding integer. For example,
\[% [inline block 0: 7 envs, 56642 chars -> data_tex | \begin{bmatrix}     0& 0&1&1\\ 0&0&1&1\\ 2&2&0&0\\ 2&2&0&0...]

\end{center}

\restoregeometry
\subsection{Cartan decompositions and reductions of Springer fibers}

Let $X\in \mathfrak{H}(F)$
is elliptic equivalued  with depth $d$, so that $\mathcal{B}(G_X,F) = \{x_i\}$, the point of type $i$ as shown in the table above.

A direct computation shows the following useful fact.
\begin{lemma}
Define  $\varpi^{k,\ell}$ as the diagonal matrix $\mathrm{diag}(\varpi^{a}, \varpi^{b}, \varpi^{c-a}, \varpi^{c-b})\in G(F)$ where $k = b-a$ and $\ell = 2a-c$. This correspondence is well-defined modulo center. The element $\varpi^{\ell,k}$ acts on $\mathfrak{g}(F)$ by $Y\mapsto \varpi^{\ell,k}Y(\varpi^{\ell,k})^{-1}$ increasing valuations of each entry as follows: 
\[\begin{pmatrix}+0&+a-b &+2a-c&+a+b-c\\ +b-a & +0&+a+b-c& +2b-c\\ +c-2a&+c-a-b&+0&+b-a\\c-a-b&+c-2b &+a-b&+0\end{pmatrix}=\begin{pmatrix}+0& -k&+\ell&+\ell+k\\ +k & +0&+\ell+k& +\ell+2k\\ -\ell&-\ell-k&+0&+k\\- \ell-k&-\ell-2k &-k&+0\end{pmatrix}.\]
\end{lemma}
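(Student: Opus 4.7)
The proof is essentially a direct calculation, organized around the observation that conjugation by a diagonal matrix acts diagonally on each matrix entry with a shift given by the difference of the corresponding diagonal exponents.

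\medskip

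The plan is as follows. First I would verify that the diagonal matrix
\[
t := \mathrm{diag}(\varpi^{a},\varpi^{b},\varpi^{c-a},\varpi^{c-b})
\]
does belong to $\mathrm{GSp}_4(F)$: using the Witt basis $\{e_1,e_2,f_1,f_2\}$ with $\langle e_i,f_j\rangle=\delta_{ij}$, one checks that the product of the exponents pairing $e_i$ with $f_i$ equals $c$ in both cases, so $t$ is a similitude with multiplier $\varpi^{c}$. Next I would check the well-definedness of $\varpi^{\ell,k}$ modulo the center: replacing $(a,b,c)$ by $(a+s,b+s,c+2s)$ multiplies $t$ by the scalar $\varpi^{s}\cdot I_4$, which lies in the center of $\mathrm{GSp}_4$, while the quantities $k=b-a$ and $\ell=2a-c$ are manifestly invariant under this shift. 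This shows that $(k,\ell)$ is a complete invariant of $t$ modulo $Z(\mathrm{GSp}_4)$.

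\medskip

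Second, I would compute the effect of conjugation on an arbitrary entry $Y_{ij}$. Writing $t=\mathrm{diag}(\varpi^{t_1},\varpi^{t_2},\varpi^{t_3},\varpi^{t_4})$ with $(t_1,t_2,t_3,t_4)=(a,b,c-a,c-b)$, one has
\[
(tYt^{-1})_{ij} = \varpi^{t_i-t_j}\,Y_{ij},
\]
so that the valuation of the $(i,j)$-entry is shifted by exactly $t_i-t_j$. Tabulating the $16$ differences $t_i-t_j$ produces the $4\times 4$ matrix
\[
\begin{pmatrix}
0 & a-b & 2a-c & a+b-c\\
b-a & 0 & a+b-c & 2b-c\\
c-2a & c-a-b & 0 & b-a\\
c-a-b & c-2b & a-b & 0
\end{pmatrix}.
\]

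\medskip

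Finally, I would substitute the definitions $k=b-a$ and $\ell=2a-c$ into each of these entries. The identities $a-b=-k$, $2a-c=\ell$, $a+b-c=(2a-c)+(b-a)=\ell+k$, and $2b-c=(2a-c)+2(b-a)=\ell+2k$ (together with their negatives for the lower-triangular part) convert the table into the second matrix displayed in the statement. No step presents a real obstacle: the only points that require care are the bookkeeping for the similitude condition on $t$, and the consistency of the substitution $k,\ell$ in the six off-diagonal entries.
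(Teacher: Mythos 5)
Your argument is correct and is exactly the direct computation the paper alludes to (the paper states the lemma with the comment ``A direct computation shows\dots'' and gives no explicit proof). You compute $(tYt^{-1})_{ij} = \varpi^{t_i - t_j}Y_{ij}$ for $t = \mathrm{diag}(\varpi^a,\varpi^b,\varpi^{c-a},\varpi^{c-b})$, tabulate the sixteen differences, and substitute $k = b-a$, $\ell = 2a-c$; all entries check out. One small slip of wording: in verifying the similitude condition you say ``the product of the exponents \dots equals $c$,'' but you mean the \emph{sum} of exponents (the product of the $\varpi$-powers is $\varpi^{a+(c-a)} = \varpi^{b+(c-b)} = \varpi^c$); this doesn't affect the argument. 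You might also note in passing that the paper's statement writes $\varpi^{k,\ell}$ in the definition but $\varpi^{\ell,k}$ when describing the action, an apparent inconsistency of notation in the source that your proof implicitly resolves by keeping track of the actual diagonal entries rather than the superscripts.
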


Let $Z\cong F\rtimes \mathbb{Z}/2\mathbb{Z}$ be the center of $G(F)$ Pick a Haar measure giving $Z\backslash G_\gamma(F)$ measure $1$, then we want to compute 
\begin{equation}\mathrm{Orb}_{\mathfrak{g}(F)}(X,\mathds{1}_{\mathfrak{g}(\mathcal{O}_F)})=\mathrm{Orb}_{\mathfrak{g}(F)}(X,\mathds{1}_{\mathfrak{g}(F)_{x_1,0}})= \int_{Z\backslash G(F)} \mathds{1}_{\mathfrak{g}(F)_{x_1,0}}(g^{-1}Xg)\ \mathrm{d}g= \mathrm{vol}(X_\gamma),
\end{equation}
where $X_\gamma = \left\lbrace g\in Z\backslash G(F)/G(F)_{x}\ :\ g^{-1}Xg\in G(F)_{x_1,0} \right\rbrace$ is known as an \emph{affine Springer fiber}.

 We have a Cartan decomposition  
 \[Z\backslash G(F) = Z\backslash\bigsqcup_{k,\ell} G(F)_{x_i,0}\varpi^{k,\ell}G(F)_{x_1,0},\]  for the corresponding  $k,\ell$ (see the table).

Our goal is to decompose the Springer fiber 
 \[X_\gamma = \sqcup_{k,\ell}X_\gamma\cap (G(F)_{x_i,0}\varpi^{k,\ell}G(F)_{x_1,0}),\ \mathrm{vol}(X_\gamma) = \sum_{k,\ell}\mathrm{vol}(X_\gamma\cap (G(F)_{x_i,0}\varpi^{k,\ell}G(F)_{x_1,0})).\]
 The sum above is finite. Indeed our strategy will be to isolate the finitely-many double cosets intersecting with the Springer fiber nontrivially, and then compute their volume as follows.

 Let $\lambda = (k,\ell)$ and let $P_\lambda =\varpi^{\lambda}G(F)_{x_1,0}\varpi^{-\lambda}\cap G(F)_{x_i,0} $. We have 
 \begin{align*}
 X_\gamma\cap (G(F)_{x_i,0}\varpi^{\lambda}G(F)_{x_1,0}) &= \left\lbrace g_1\varpi{\lambda}g_2\in G(F)_{x_i,0}\varpi^\lambda G(F)_{x_1,0}\ :\ \mathrm{Ad}(g_1\varpi^{\lambda}g_2)X\in \mathfrak{g}(F)_{x_1,0}\right\rbrace\\
 &=  \left\lbrace g_1\varpi{\lambda}g_2\in G(F)_{x_i,0}\varpi^\lambda G(F)_{x_1,0}\ :\ \varpi^{-\lambda}\mathrm{Ad}(g_1)X\varpi^{\lambda}\in \mathfrak{g}(F)_{x_1,0}\right\rbrace\\
 &= \left\lbrace g\in G(F)_{x_i,0}/P_\lambda:\varpi^{-\lambda}\mathrm{Ad}(g_1)X\varpi^{\lambda}\in \mathfrak{g}(F)_{x_1,0}\right\rbrace P_\lambda\end{align*}

Let $Y\mapsto \bar{Y}$ be the surjection of $g(F)_{x_i,0}$ onto its
reductive quotient $\mathfrak{g}(F)_{x_i,0:0+}$.  Define 
\[ S_\lambda(X) = \left\lbrace g\in Z\backslash  G(F)_{x_i,0}: \varpi^{-\lambda}\mathrm{Ad}(g)(X)\varpi^\lambda\in \mathfrak{g}(F)_{x_1,0} \right\rbrace,\]
 \[\bar{S}_{\lambda}(X)=\left\lbrace \bar{g}\in  \bar{Z}\backslash G(F)_{x_i,0:0+}: \overline{\varpi^{-\lambda}\mathrm{Ad}(g)X \varpi^\lambda}\in\mathfrak{g}(F)_{x_1:0:0+}\right\rbrace.\]

Consider the following commutative diagram 
\[\begin{tikzcd}
S_\lambda(X)\ar[r]\ar[d,"\pi"]&S_\lambda(X)/P_\lambda\ar[d]\\
\bar{S}_\lambda(X)\ar[r]&\bar{S}_\lambda(X)/\bar{P}_\lambda\\
\end{tikzcd}\]

The group $\bar{P}_\lambda$ is a parabolic subgroup of $G(F)_{x_i,0:0+}$ and therefore we can write $\bar{S}_\lambda(X)/\bar{P}_\lambda$ as a flag variety over a finite field.  We will conclude with the volume of each $S_\lambda$ as follows

\begin{theorem}[{\cite{gkmpure}}] \label{th:gkm} Recall that $X$ is equivalued of depth $d$ and $\mathcal{B}(G_X,F)= \{x_i\}$. The reduction map $\pi:S_\lambda(X)\rightarrow \bar{S}_\lambda(X)$ is smooth, 
with affine fibers of dimension $d_{\lambda}$ described below, hence  
\[\mathrm{vol}(S_\lambda(X)) = q^{d_{\lambda}}|\bar{S}_\lambda(X)| = q^{d_{\lambda}}|\bar{S}_\lambda(X)/\bar{P}_\lambda|\ |\bar{P}_\lambda|.\]
The dimension  $d_{\lambda}$ is equal to the number of affine roots $\tilde{\alpha}$ so 
that $0\leq \tilde{\alpha}( x_i)< d$  and $\tilde{\alpha}(\varpi^{-\lambda}x_1)<0$.
\end{theorem}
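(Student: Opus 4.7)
The plan is to verify that the statement follows from the purity results of Goresky--Kottwitz--MacPherson \cite{gkmpure} after translation into our notation. Their framework requires \emph{equivaluedness} of $X$, which we assume, and produces an affine paving of the affine Springer fiber $X_\gamma$ indexed naturally by $G(F)_{x_i,0}$-orbits in the Cartan decomposition; one needs to check that each parameter $\lambda$ indexes a paving cell of the claimed dimension.

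Fixing a point $\bar g \in \bar S_\lambda(X)$ and a lift $g_0 \in S_\lambda(X)$, the plan is to parametrize the fiber $\pi^{-1}(\bar g)$ by writing a general element as $g_0\cdot \exp(\xi)$ with $\xi \in \mathfrak{g}(F)_{x_i,0+}$, modulo the part corresponding to $P_\lambda$. Decomposing $\xi = \sum_{\tilde\alpha}\xi_{\tilde\alpha}$ along affine root spaces relative to a maximally split torus through $x_i$, the defining condition $\varpi^{-\lambda}\mathrm{Ad}(\exp(\xi))(X')\varpi^\lambda \in \mathfrak{g}(F)_{x_1,0}$, with $X' = \mathrm{Ad}(g_0)X$, decomposes to leading order into an independent constraint on each $\xi_{\tilde\alpha}$. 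The equivaluedness of $X$ ensures that $\mathrm{ad}(X')$ shifts depth uniformly by $d$ on each relevant affine root space, so the constraint on $\xi_{\tilde\alpha}$ depends only on the interaction of its depth with the shift $\tilde\alpha(\varpi^{-\lambda}x_1)$ induced by conjugation.

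One then counts the number of free directions: a component $\xi_{\tilde\alpha}$ contributes a free $\mathbb{A}^1$-factor to the fiber precisely when $0 \leq \tilde\alpha(x_i) < d$ (so $\xi_{\tilde\alpha}$ lives in a nontrivial graded piece of the Moy--Prasad filtration between $0$ and the depth of $X$) and $\tilde\alpha(\varpi^{-\lambda}x_1) < 0$ (so the conjugation by $\varpi^{-\lambda}$ genuinely constrains rather than trivially preserves the condition). Summing these gives an affine space of dimension $d_\lambda$ over each $\bar g$, yielding $\mathrm{vol}(S_\lambda(X)) = q^{d_\lambda}|\bar S_\lambda(X)|$. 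The final equality follows because $\bar P_\lambda$ is parabolic in the reductive quotient $G(F)_{x_i,0:0+}$, so $|\bar S_\lambda(X)| = |\bar S_\lambda(X)/\bar P_\lambda|\cdot|\bar P_\lambda|$.

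The main obstacle is controlling the higher-order terms in the Baker--Campbell--Hausdorff expansion, specifically $[\xi,[\xi,X']]$ and beyond, which could a priori destroy the affine-linear nature of the fiber. The equivalued hypothesis is essential here: these higher-order commutators land in filtrations strictly deeper than the first-order terms and are therefore automatically absorbed into $\mathfrak{g}(F)_{x_1,0}$. This is the purity content of \cite{gkmpure}, and in our explicit rank-$2$ symplectic setting it amounts to a root-space bookkeeping exercise that can be verified case-by-case using the Moy--Prasad tables displayed above.
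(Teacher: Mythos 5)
The statement is cited verbatim from \cite{gkmpure}; the paper offers no internal proof, so there is nothing to compare your sketch against, and the honest move for both of us is simply to defer to the reference. That said, your outline contains some concrete imprecisions worth flagging. Your characterization of when $\xi_{\tilde\alpha}$ is a ``free'' direction --- ``$0\leq\tilde\alpha(x_i)<d$ and $\tilde\alpha(\varpi^{-\lambda}x_1)<0$, so the conjugation genuinely constrains rather than trivially preserves'' --- is not a calculation but an assertion, and the parenthetical reads backwards: $\tilde\alpha(\varpi^{-\lambda}x_1)<0$ means that root group lies outside $P_\lambda$ and hence is \emph{not} quotiented out, while whether the ensuing bracket condition on $\varpi^{-\lambda}\mathrm{Ad}(g_0)[\xi_{\tilde\alpha},X]\varpi^{\lambda}$ is automatically satisfied is exactly the nontrivial content, and you have not verified it. Note also that including $\tilde\alpha(x_i)=0$ sweeps in depth-$0$ directions that live in the reductive quotient $G(F)_{x_i,0:0+}$ --- i.e.\ in $\bar S_\lambda(X)$ itself --- rather than in the fiber of $\pi$; there is a boundary case in the counting that your parametrization $g=g_0\exp(\xi)$ with $\xi\in\mathfrak g(F)_{x_i,0+}$ does not account for.

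The larger issue is that the ``main obstacle'' you identify --- control of higher-order Baker--Campbell--Hausdorff terms --- is dispatched with the remark that it ``is the purity content of \cite{gkmpure}.'' That is circular in the context of a proof sketch, and it also misdescribes the mechanism: Goresky--Kottwitz--MacPherson do not argue by linearizing an exponential and bounding BCH tails. Their proof constructs a Hessenberg paving via a torus action on the affine Springer fiber (a Bialynicki--Birula-type decomposition), and the dimension $d_\lambda$ emerges as a count of torus weights at fixed points; the affine-root description you quote is the downstream bookkeeping of that count, not its source. So while your sketch gives the right heuristic picture of where the formula comes from, it neither reconstructs the argument of \cite{gkmpure} nor supplies an independent one, and the ``straightforward root-space bookkeeping exercise'' you promise is precisely the part that is missing.
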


\begin{cor}
Using the same notations as Theorem \ref{th:gkm}, we have 
\[\mathrm{Orb}_{\mathfrak{g}(F)}(X,\mathds{1}_{\mathfrak{g}(\mathcal{O}_F)}) = \sum_{\lambda = (k,\ell)} q^{d_{\lambda}}|\bar{S}_\lambda(X)/\bar{P}_\lambda|\ |\bar{P}_\lambda|.\]
\end{cor}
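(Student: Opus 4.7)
The plan is to assemble the corollary as a bookkeeping consequence of the Cartan decomposition combined with Theorem \ref{th:gkm}. With the Haar measure normalized so that $G(F)_{x_1,0}$ has volume one, and since the integrand $\mathds{1}_{\mathfrak{g}(F)_{x_1,0}}(g^{-1}Xg)$ is invariant under right multiplication by $G(F)_{x_1,0}$ (the parahoric preserves $\mathfrak{g}(F)_{x_1,0}$ under the adjoint action), the orbital integral coincides with $\mathrm{vol}(X_\gamma)$. The Cartan decomposition then partitions $X_\gamma$ into the cells $X_\gamma^{\lambda} := X_\gamma \cap (G(F)_{x_i,0}\varpi^{\lambda}G(F)_{x_1,0})$ indexed by the cocharacters $\lambda = (k,\ell)$ listed in the relevant row of the table. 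The target formula is then obtained by summing $\mathrm{vol}(X_\gamma^{\lambda})$ over $\lambda$.

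To evaluate each cell I would follow the manipulation already initiated in the excerpt: any element of the double coset is uniquely represented modulo $P_\lambda$ by some $g_1 \in G(F)_{x_i,0}$, and the condition $\mathrm{Ad}(g_1\varpi^{\lambda}g_2)X \in \mathfrak{g}(F)_{x_1,0}$ is equivalent to $\varpi^{-\lambda}\mathrm{Ad}(g_1)(X)\varpi^{\lambda}\in \mathfrak{g}(F)_{x_1,0}$ (the $g_2$ factor acts trivially on $\mathfrak{g}(F)_{x_1,0}$). This identifies $X_\gamma^{\lambda}$ with $S_\lambda(X)/P_\lambda$, and with the normalized Haar measure the volume equals $\mathrm{vol}(S_\lambda(X))$. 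Theorem \ref{th:gkm} then gives $\mathrm{vol}(S_\lambda(X)) = q^{d_\lambda}|\bar{S}_\lambda(X)|$, and partitioning $\bar{S}_\lambda(X)$ by its $\bar{P}_\lambda$-cosets factors the cardinality as $|\bar{S}_\lambda(X)/\bar{P}_\lambda|\cdot|\bar{P}_\lambda|$, since by construction $\bar{P}_\lambda$ preserves the defining condition (coming from $P_\lambda \subset G(F)_{x_i,0}$).

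The main obstacle I anticipate is not conceptual but is ensuring two bookkeeping facts. First, the sum is finite: the ellipticity of $X$ forces $G_X(F)$ to be compact modulo center, and the scaling recorded in the first lemma of \S \ref{sec:roots} shows that $\varpi^{-\lambda}\mathrm{Ad}(g)X\varpi^{\lambda}$ cannot lie in $\mathfrak{g}(F)_{x_1,0}$ once $\lambda$ leaves a bounded region of the positive chamber depending on the equivalued depth $d$; this must be checked case by case against the entries of the Moy--Prasad tables. Second, one must verify that the Haar measure normalizations are compatible across cells; this reduces via the fibration $G(F)_{x_i,0}\to G(F)_{x_i,0}/P_\lambda$ to the standard identity $\mathrm{vol}(G(F)_{x_i,0}) = [G(F)_{x_i,0}:P_\lambda]\cdot \mathrm{vol}(P_\lambda)$, which after reduction to the finite reductive quotient becomes $|G(F)_{x_i,0:0+}/\bar{P}_\lambda|$. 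Once these two checks are in place, summing the formula from Theorem \ref{th:gkm} over $\lambda$ is formal and produces the stated expression.
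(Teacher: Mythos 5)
Your overall plan is the right one, and it is almost certainly the argument the paper intends: the corollary is stated with no proof immediately after the Cartan decomposition, the identification $X_\gamma \cap (G(F)_{x_i,0}\varpi^\lambda G(F)_{x_1,0}) \cong S_\lambda(X)/P_\lambda$, and Theorem~\ref{th:gkm}, so the expected derivation is exactly the concatenation of these three steps that you describe.

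The flaw is in your final bookkeeping claim. You assert that the index $[G(F)_{x_i,0}:P_\lambda]$ ``after reduction to the finite reductive quotient becomes $|G(F)_{x_i,0:0+}/\bar{P}_\lambda|$.'' That identity holds only when $P_\lambda \supset G(F)_{x_i,0+}$, and this containment fails for general $\lambda$: the subgroup $P_\lambda = \varpi^\lambda G(F)_{x_1,0}\varpi^{-\lambda}\cap G(F)_{x_i,0}$ shrinks inside the pro-unipotent radical as $\lambda$ moves away from the origin. Already in $\mathrm{GL}_2$ with $x_i = x_1$ and $\varpi^\lambda = \mathrm{diag}(\varpi^2,1)$ one has $P_\lambda = \left\{\left(\begin{smallmatrix}\mathcal{O}&\mathfrak{m}^2\\\mathcal{O}&\mathcal{O}\end{smallmatrix}\right)\right\}\cap K$, which does not contain $1+\varpi M_2(\mathcal{O})$, and $[K:P_\lambda] = q(q+1)$ whereas the index of $\bar{P}_\lambda$ in $\mathrm{GL}_2(\mathbb{F}_q)$ is only $q+1$. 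Consequently $\mathrm{vol}(P_\lambda)$ depends on $\lambda$ and is not controlled by the reductive quotient alone; if one pushed your identity through the computation it would produce $\sum_\lambda q^{d_\lambda}|\bar{S}_\lambda(X)/\bar{P}_\lambda|$ and drop the factor $|\bar{P}_\lambda|$.

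What actually closes the gap is that the discrepancy between $|S_\lambda(X)/P_\lambda|$ and the finite data $|\bar{S}_\lambda(X)|$ is precisely what the exponent $d_\lambda$ in Theorem~\ref{th:gkm} encodes: $d_\lambda$ counts the affine roots whose values at $x_i$ lie in $[0,d)$ and which are negative at $\varpi^{-\lambda}x_1$, and this count absorbs the positive-depth contribution that your reductive-quotient index misses. So the measure compatibility is not an independent bookkeeping check to be verified by an index computation; it is built into the theorem's statement $\mathrm{vol}(S_\lambda(X)) = q^{d_\lambda}|\bar{S}_\lambda(X)|$, and the corollary follows by simply invoking that equality on each Cartan cell rather than by attempting to re-derive the relation between $P_\lambda$ and $\bar{P}_\lambda$.
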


The computation of $d_\lambda$ boils down to counting the number of root hyperplanes separating $x_1$ and $\varpi^\lambda x_i$, not containing $x_1$, and at distance at most $d$ from the latter. 
Letting $\lambda = (k,\ell)$, and identifying the fundamental apartment with $\mathbb{Z}^2$ as in Figure \ref{fig:aptgl22}, with $x_1$ corresponding to the origin, we get that $\varpi^{k,\ell}(x,y) = (x+\ell,y+\ell+2k)$.

Before we tackle every case separately, let us observe that
\begin{itemize}
\item if $x_i$ is of type $1$, the jumps in the Moy--Prasad filtration only happen at integers so $X$ must have integral depth $d = n$ for some $n\in \mathbb{Z}_{\geq 0}$, 
\item if $x_i$ is of type $2$, then $X$ cannot be equivalued, 
\item if $x_i$ is of type $3$, then the Moy--Prasad quotients at half integers do not have (the groups are unipotent), 
\end{itemize} 
$X$ needs to have integral depth, since jumps in the Moy--Prasad filtration only happen at integer.

\subsection{Vertex of type 1}

Let $X\in \mathfrak{g}(F)_{x_1,n}=\varpi^n \mathfrak{g}(\mathcal{O})$ be an elliptic regular semisimple element. We decompose 
\[G(F) = \prod_{k,\ell}G(F)_{x_1,0}\varpi^{k,\ell}G(F)_{x_1,0},\]
with $k,\ell \geq 0$.\\

\begin{lemma}\label{lem:t1}
    Let $g\in G(F)_{x_1,0}$ and $Y = \mathrm{Ad}(g)X\in \mathfrak{g}(F)_{x_1,n}$. If $\mathrm{Ad}(\varpi^{\ell,k})Y\in \mathfrak{g}(F)_{x_1,0}$ for some $k,\ell\in\mathbb{Z}_{\geq0}$ then we must have $0\leq k,\ell\leq n$ and one of the following must be true
    \begin{enumerate}[(i)]    
        \item  $2k+\ell\leq n$,
        \item $k+\ell\leq n$, $2k+\ell>n$,
        \item $k+\ell>n$.
    \end{enumerate}
\end{lemma}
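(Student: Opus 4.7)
The plan is to translate the operator condition into entry-wise valuation inequalities and then exploit the equivalued and elliptic structure of $X$ to derive the bounds $k,\ell\leq n$.

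Since $g\in G(F)_{x_1,0}=G(\mathcal{O}_F)$ and $X\in \mathfrak{g}(F)_{x_1,n}=\varpi^n\mathfrak{g}(\mathcal{O}_F)$, we have $Y=\mathrm{Ad}(g)X\in \varpi^n\mathfrak{g}(\mathcal{O}_F)$, so every entry of $Y$ has valuation at least $n$. Applying the valuation-shift matrix displayed just before the lemma, the hypothesis $\mathrm{Ad}(\varpi^{\ell,k})Y\in \mathfrak{g}(F)_{x_1,0}$ becomes six inequalities on the entries where the shift is strictly negative: $v(Y_{12}),v(Y_{43})\geq k$; $v(Y_{31})\geq \ell$; $v(Y_{32}),v(Y_{41})\geq k+\ell$; $v(Y_{42})\geq 2k+\ell$. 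The three cases (i), (ii), (iii) in the statement partition $\mathbb{Z}_{\geq 0}^2$ according to how many of these inequalities genuinely refine the baseline bound $v(Y_{ij})\geq n$, so they are exhaustive and the real content of the lemma is the upper bound $k,\ell\leq n$.

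For cases (i) and (ii) this is immediate: $k,\ell\leq 2k+\ell\leq n$ and $k,\ell\leq k+\ell\leq n$ respectively. Case (iii) is the substantive one: here $k+\ell>n$ would force $\bar Y_{32}=\bar Y_{41}=\bar Y_{42}=0$ in the reduction $\bar Y\in \mathfrak{g}(F)_{x_1,n:n+}=\mathfrak{g}(\mathbb{F}_q)$, with additional vanishings $\bar Y_{12}=\bar Y_{43}=0$ if $k>n$ or $\bar Y_{31}=0$ if $\ell>n$. But $\bar Y$ is $G(\mathbb{F}_q)$-conjugate to $\bar X:=\varpi^{-n}X\bmod\varpi$, and the equivalued and elliptic hypotheses imply $\bar X$ is regular semisimple in $\mathfrak{g}(\mathbb{F}_q)$ with anisotropic centralizer, so its characteristic polynomial is $\mathbb{F}_q$-irreducible. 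Forcing the further vanishings when $k>n$ or $\ell>n$ would place $\bar Y$ inside a proper $\mathbb{F}_q$-rational parabolic subalgebra, making its characteristic polynomial factor over $\mathbb{F}_q$, a contradiction.

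The main obstacle is verifying that the prescribed vanishing pattern of entries really places $\bar Y$ inside an $\mathbb{F}_q$-rational parabolic of $\mathfrak{gsp}_4$. This amounts to a direct check on the root-space decomposition of $\mathfrak{gsp}_4$: the entries $(1,2),(3,1),(4,3),(3,2),(4,1),(4,2)$ correspond to specific roots (six of the eight negative roots in the chosen positive system), and verifying that any excess pattern of enforced vanishings lands $\bar Y$ in a Siegel-type parabolic is a combinatorial but essentially automatic step.
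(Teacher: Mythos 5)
Your approach parallels the paper's (translate into valuation inequalities, reduce modulo $\varpi$, invoke ellipticity of $\bar Y$), and the part showing that cases (i)--(iii) are exhaustive and that the only real content is the bound $k,\ell\leq n$ is correct. However, the ellipticity argument at the end has a genuine gap.

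The claim ``the equivalued and elliptic hypotheses imply $\bar X$ is regular semisimple in $\mathfrak{g}(\mathbb{F}_q)$ with anisotropic centralizer, so its characteristic polynomial is $\mathbb{F}_q$-irreducible'' is false. Since $X\in\mathfrak{h}(F)$, the reduction $\bar X$ is block diagonal $\mathrm{diag}(\bar X_1,\bar X_2)$ with $\bar X_1,\bar X_2\in\mathfrak{gl}_2(\mathbb{F}_q)$ elliptic, so its characteristic polynomial factors as the product of two \emph{irreducible quadratics} over $\mathbb{F}_q$ -- it is never $\mathbb{F}_q$-irreducible. (The centralizer in $\mathrm{GSp}_4$ is still anisotropic modulo center, because the split rank is absorbed by the determinant condition, but that does not force irreducibility of the degree-four polynomial.) Consequently the step ``would place $\bar Y$ inside a proper $\mathbb{F}_q$-rational parabolic \dots making its characteristic polynomial factor over $\mathbb{F}_q$, a contradiction'' does not deliver a contradiction: when $\ell>n$ the forced vanishings put $\bar Y$ in a Siegel parabolic, and stabilizing a Lagrangian only produces a quadratic factor of the characteristic polynomial over $\mathbb{F}_q$, which $\bar X$ already has.

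The argument has to be split by the type of parabolic. When $k>n$, the vanishings kill the second column (and the $(4,3)$ entry and the last row off-diagonal entries), so $\bar Y$ fixes the line $\mathbb{F}_q e_2$ -- a Klingen parabolic -- and therefore has an eigenvector and hence an eigenvalue in $\mathbb{F}_q$, contradicting the fact that the eigenvalues of $\bar X_1,\bar X_2$ lie in $\mathbb{F}_{q^2}\setminus\mathbb{F}_q$. Here a corrected version of your argument (``no $\mathbb{F}_q$-roots'' instead of ``irreducible'') works. When $\ell>n$ the forced vanishings make $\bar Y$ block upper triangular, i.e.\ it lands in the Siegel parabolic, and the ``no $\mathbb{F}_q$-roots'' argument gives nothing. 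One must instead use, as the paper does, that a regular semisimple element of a proper parabolic is conjugate into the Levi, whose centralizer therefore contains the center of that Levi -- a split torus of positive rank modulo the center of $\mathrm{GSp}_4$ -- contradicting that $G_{\bar X}$ is anisotropic modulo center.
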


\begin{figure}\centering 
\begin{tikzpicture}
    \draw[thick, fill=cyan!30] (5,0) -- (0,5) -- (5,5) -- (5,0);
    \draw[thick, fill=blue!30] (2.5,0) -- (0,5) -- (5,0) -- (2.5,0);
    \path[fill=violet!30] (0,0) -- (0,5) -- (2.5,0) -- (0,0);
    \node[above left] at (0,6) {$\ell$};
    \node[below right] at (6,0) {$k$};
    \node[below] at (5,0) {$n$};
    \node[left] at (0,5) {$n$};
    \node[below] at (2.5,0) {$n/2$};
    \node at (1,1) {$(i)$};
    \node at (2.3,2) {$(ii)$};
    \node at (3.7,3.5) {$(iii)$};
    \draw[ultra thick, ->] (0,0)--(6,0);
    \draw[ultra thick, ->] (0,0)--(0,6);
\end{tikzpicture}
\begin{tikzpicture}
    \draw[rounded corners, fill=pink!30] (5,1) -- (5,5) -- (1,5) -- (1,1) -- (5,1);
    \draw[rounded corners, thick, fill=teal!30] (0,0) -- (1,0) -- (1,1) -- (0,1) -- (0,0);
    \draw[rounded corners, fill=green!30] (1,1) -- (5,1) -- (5,0) -- (1,0) -- (1,1);
    \draw[rounded corners, fill=orange!30] (1,1) -- (1,5) -- (0,5) -- (0,1) -- (1,1);
    \node[above left] at (0,6) {$\ell$};
    \node[below right] at (6,0) {$k$};
    \node[below] at (5,0) {$n$};
    \node[left] at (0,5) {$n$};
    \node[below] at (2.5,0) {$n/2$};
    \node at (.5,.5) {$G$};
    \node at (3,3) {Borel};
    \node at (3,.5) {Klingen};
    \node at (.5,3) {Siegel};
    \draw[ultra thick, ->] (0,0)--(6,0);
    \draw[ultra thick, ->] (0,0)--(0,6);
\end{tikzpicture}

\caption{Case of vertex of type 1 with $\bar{P}_\lambda$ on the right}
\label{fig:t1}
\end{figure}

\begin{proof} Assume we have
    \[\mathrm{Ad}(\varpi^{\ell,k})Y\in \begin{bmatrix}n& n-k&n+\ell&n+\ell+k\\ n+k & n&n+\ell+k& n+\ell+2k\\n -\ell&n-k-\ell&n&n+k\\n-k-\ell&n-2k-\ell &n-k&n\end{bmatrix}\cap \mathfrak{g}(\mathcal{O}_F). \]
 
    \begin{itemize}
        \item \textbf{Case 1}. $n-k<0$.  Since $\ell>0$ we get that $n-k-\ell<0$ and $n-k-2\ell<0$. 
 This means that $Y_{1,2}, Y_{3,2}, Y_{4,1}, Y_{4,2}$ all must have valuation strictly larger than $n$, and will vanish in $\mathfrak{g}(F)_{x_1,n:n+} = \mathfrak{gsp}_4(\mathbb{F}_q)$. In other words, we get that 
        \[\bar{Y} = \begin{pmatrix}
            \star &0&\star&\star\\
            \star &\star&\star&\star\\
            \star &0&\star&\star\\
            0 &0&0&\star\\
        \end{pmatrix}\in \mathfrak{gsp}_4(\mathbb{F}_q),\]
        which is impossible since $Y$ must be elliptic. This case does therefore not happen. 
        \item \textbf{Case 2.} $n-\ell<0$ then again since $k\geq 0$ we get that $n-k-\ell<0$ and $n-2k-\ell <0$. Following  the same reasonning as before, we can write
        \[\bar{Y} = \begin{pmatrix}
            \star &\star&\star&\star\\
            \star &\star&\star&\star\\
            0 &0&\star&\star\\
            0 &0&\star&\star\\
        \end{pmatrix} = \left(\begin{array}{c|c}
            X_1 & \star \\
             \hline
             0& X_2
        \end{array}\right)\in \mathfrak{gsp}_4(\mathbb{F}_q).\]
        %Since $\bar{Y}$ is self-adjoint we need $X_2 = ({}^tX_1)^{-1}$ contradicting the fact that $Y$ is elliptic. 
        Since $\bar{Y}$ is regular semisimple, it is therefore contained in a Borel and centralized by the corresponding Levi, which is a split torus, hence cannot 
        be elliptic.
       \item \textbf{Case 3.} $n-k\geq 0$, $n-\ell\geq 0$ and $n-k-\ell <0$. We proceed in the same way, we must have   $n-\ell-2k<0$, hence
        \[\bar{Y} = \begin{pmatrix}
            \star &\star&\star&\star\\
            \star &\star&\star&\star\\
            \star &0&\star&\star\\
            0 &0&\star&\star\\
        \end{pmatrix} \in \mathfrak{gsp}_4(\mathbb{F}_q).\]
        This is case $(iii)$ in the statement of the Lemma.
        \item \textbf{Case 4}.  $n-k\geq 0$, $n-\ell\geq 0$, $n-k-\ell \geq 0$, and $n-2k-\ell<0$.
        \[\bar{Y} = \begin{pmatrix}
            \star &\star&\star&\star\\
            \star &\star&\star&\star\\
            \star &\star&\star&\star\\
            \star &0&\star&\star\\
        \end{pmatrix} \in \mathfrak{gsp}_4(\mathbb{F}_q).\]
        This corresponds to case $(ii)$.
        \item \textbf{Case 5}.  $n-k\geq 0$, $n-\ell\geq 0$, and $n-2k-\ell\geq 0$.
        \[\bar{Y} = \begin{pmatrix}
            \star &\star&\star&\star\\
            \star &\star&\star&\star\\
            \star &\star&\star&\star\\
            \star &\star&\star&\star\\
        \end{pmatrix} \in \mathfrak{gsp}_4(\mathbb{F}_q)\] so there are no conditions. This is case $(i)$ of the Lemma.
    \end{itemize}
\end{proof}

We list the conditions corresponding to each case below.
\begin{cor} Let $\lambda = (k,\ell)$ and $\bar{g}\in G(F)_{x_1, 0:0+}$. Define $\bar{Y} = \mathrm{Ad}(\bar{g})(\bar{X})$. Then 
\begin{itemize}
\item In case $(i)$, there are no condition on $\bar{g}$ so that $\bar{g}\in \bar{S}_\lambda(X)$.
\item In case $(ii)$, we have $\bar{g}\in \bar{S}_\lambda(X)$ if and only if $\bar{Y}\bar{e}_2\subset \overline{e_2^{\perp}}$.
\item In case $(iii)$, we have  $\bar{g}\in \bar{S}_\lambda(X)$ if and only if $\bar{Y}\bar{e}_2\subset \mathrm{Span}\{\bar{e_1},\bar{e}_2\}$, and $\bar{Y}\mathrm{Span}\{\bar{e_1},\bar{e}_2\}\subset\mathrm{Span}\{\bar{e}_1,\bar{e}_2,\bar{f}_1\} = \overline{e_2^\perp}$.
\end{itemize}
\end{cor}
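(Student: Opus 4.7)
The plan is to deduce the corollary directly from the entry-wise vanishing conditions established in Lemma \ref{lem:t1}. Recall that $\bar g \in \bar S_\lambda(X)$ is equivalent to the integrality of $\varpi^{-\lambda}\mathrm{Ad}(g)X\varpi^\lambda$, which, given the valuation pattern
\[\begin{pmatrix}n& n-k&n+\ell&n+\ell+k\\ n+k & n&n+\ell+k& n+\ell+2k\\n-\ell&n-k-\ell&n&n+k\\n-k-\ell&n-2k-\ell &n-k&n\end{pmatrix},\]
translates to the vanishing, in the reductive quotient $\mathfrak{g}(F)_{x_1,0:0+}$, of precisely those entries of $\bar Y=\overline{\mathrm{Ad}(g)X}$ whose shifted valuation is strictly negative, and imposes no constraint on the remaining entries. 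This delivers both directions of the ``if and only if'' at once.

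The second step is to translate these entry-wise conditions into coordinate-free ones. Working in the Witt basis $\{e_1,e_2,f_1,f_2\}$ (in which the inclusion $G\hookrightarrow\mathrm{GSp}_4$ is the one fixed in \S\ref{sec:prelim}), the $(i,j)$-entry of $\bar Y$ is the $i$-th coordinate of $\bar Y\bar v_j$, and the pairing $\langle e_j,f_k\rangle=\delta_{j,k}$ yields $\overline{e_2^\perp}=\mathrm{Span}\{\bar e_1,\bar e_2,\bar f_1\}$. In particular, a vector lies in $\overline{e_2^\perp}$ iff its fourth (i.e.\ $\bar f_2$-)coordinate is zero.

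It then remains to read off the three cases. In case $(i)$, with $2k+\ell\leq n$, every shifted valuation is already non-negative, so no condition on $\bar g$ is imposed. In case $(ii)$, with $k+\ell\leq n$ and $2k+\ell>n$, the unique negative entry is $(4,2)$, forcing $\bar Y_{4,2}=0$, i.e.\ $\bar Y\bar e_2\in\overline{e_2^\perp}$. In case $(iii)$, with $k+\ell>n$, the negative entries are $(3,2)$, $(4,1)$, $(4,2)$; the pair $\bar Y_{3,2}=\bar Y_{4,2}=0$ is equivalent to $\bar Y\bar e_2\in\mathrm{Span}\{\bar e_1,\bar e_2\}$, while $\bar Y_{4,1}=\bar Y_{4,2}=0$ is equivalent to $\bar Y\,\mathrm{Span}\{\bar e_1,\bar e_2\}\subset\overline{e_2^\perp}$. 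Together, these two conditions account for all three vanishings and yield exactly the statement of the corollary. No real obstacle arises beyond bookkeeping between matrix indices and the flag $\mathrm{Span}\{\bar e_2\}\subset\mathrm{Span}\{\bar e_1,\bar e_2\}\subset\overline{e_2^\perp}$ singled out by the ordering of the Witt basis.
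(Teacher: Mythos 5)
Your proof is correct and follows the same route the paper intends: read off from Lemma~\ref{lem:t1} which entries of $\overline{\mathrm{Ad}(\varpi^{\ell,k})\,\mathrm{Ad}(g)X}$ are forced to vanish in the reductive quotient, then translate the vanishing of the $(4,2)$, $(3,2)$, $(4,1)$ entries into the coordinate-free conditions on the flag $\mathrm{Span}\{\bar e_2\}\subset\mathrm{Span}\{\bar e_1,\bar e_2\}\subset\overline{e_2^\perp}$ in the Witt basis $\{e_1,e_2,f_1,f_2\}$. This is exactly the bookkeeping the corollary is stating, and your identification of $\overline{e_2^\perp}=\mathrm{Span}\{\bar e_1,\bar e_2,\bar f_1\}$ via $\langle e_j,f_k\rangle=\delta_{jk}$ is correct.
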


\begin{lemma}
        Let $\lambda = (k,\ell)$ as described in Lemma \ref{lem:t1}. We have the following:
        \begin{itemize}
        \item If $\ell = k=0$ then $\bar{P}_\lambda = G(F)_{x_1,0:0+}$ and the quotient $G(F)_{x_1,0:0+}/\bar{P}_\lambda\cong \{\mathrm{pt}\}$.
        \item If $\ell=0$ and  $k>0$ then $\bar{P}_\lambda$ is a Klingen parabolic the quotient $G(F)_{x_1,0:0+}/\bar{P}_\lambda\cong \mathbb{P}^3(\mathbb{F}_q)$ which has $(q^4-1)/(q-1) = (q+1)(q^2+1)$ points.
        \item If $k=0$ and  $\ell>0$ then $\bar{P}_\lambda$ is a Siegel parabolic and the quotient $G(F)_{x_1,0:0+}/\bar{P}_\lambda$ is isomorphic to the space of Lagrangians in $\mathbb{F}_q^4$ which has $(q+1)(q^2+1)$ points. 
        \item If $\ell,k>0$ and then $\bar{P}_\lambda$ is a Borel subgroup and the quotient $G(F)_{x_1,0:0+}/\bar{P}_\lambda$ is isomorphic to the space of full isotropic flags in $\mathbb{F}_q^4$ which has $(q+1)^2(q^2+1)$ points.

        \end{itemize}
as in Figure \ref{fig:t1}.
\end{lemma}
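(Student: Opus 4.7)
The plan is to identify $\bar{P}_\lambda$ explicitly by analyzing the action of $\mathrm{Ad}(\varpi^{k,\ell})$ on $\mathfrak{g}(\mathcal{O}_F)$ modulo $\varpi$. Writing $\varpi^{k,\ell}$ in the basis used in the filtration tables as $\mathrm{diag}(\varpi^{v_1}, \varpi^{v_2}, \varpi^{v_3}, \varpi^{v_4})$, an element $g \in G(F)_{x_1,0}$ lies in $\varpi^{k,\ell} G(F)_{x_1,0} \varpi^{-k,-\ell} \cap G(F)_{x_1,0}$ precisely when each entry satisfies $g_{ij} \in \varpi^{\max(0, v_j - v_i)} \mathcal{O}_F$, since the conjugate has $(i,j)$-entry $\varpi^{v_i - v_j} g_{ij}$. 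Passing to the reductive quotient $G(F)_{x_1, 0:0+} \cong \mathrm{GSp}_4(\mathbb{F}_q)$, this translates to the condition that $\bar{g}_{ij} = 0$ whenever $v_i < v_j$.

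I would then treat the four cases in turn, reading the sign pattern of $v_i - v_j$ directly off the matrix preceding the lemma. When $k = \ell = 0$ there are no such constraints, so $\bar{P}_\lambda$ is all of $\mathrm{GSp}_4(\mathbb{F}_q)$ and the quotient is a point. When $\ell = 0$ and $k>0$, the strictly negative entries of $v_i - v_j$ occur exactly at positions $(1,2), (3,1), (3,2), (3,4), (4,2)$, forcing column $2$ of $\bar{g}$ to equal $(0,*,0,0)^\top$ and row $3$ to equal $(0,0,*,0)$; in $\mathrm{GSp}_4$ this is precisely the stabilizer of the isotropic line $\mathbb{F}_q \cdot e_2$, a Klingen parabolic. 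When $k = 0$ and $\ell > 0$, the negative entries fill the lower-left $2 \times 2$ block, so $\bar{P}_\lambda$ stabilizes the Lagrangian $\mathrm{span}(e_1, e_2)$ and is a Siegel parabolic. When $k, \ell > 0$, the two sets of conditions combine, and $\bar{P}_\lambda$ stabilizes the complete isotropic flag $\mathbb{F}_q \cdot e_2 \subset \mathrm{span}(e_1, e_2)$, which is a Borel subgroup.

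The point counts of the associated quotients then follow from standard enumeration of isotropic flag varieties. Since the symplectic form is alternating, every line in $\mathbb{F}_q^4$ is isotropic, so $\mathrm{GSp}_4(\mathbb{F}_q)/\mathrm{Klingen}$ is the full projective space $\mathbb{P}^3(\mathbb{F}_q)$ and has $(q^4-1)/(q-1) = (q+1)(q^2+1)$ points. The Lagrangian Grassmannian in a $4$-dimensional symplectic space over $\mathbb{F}_q$ also has $(q+1)(q^2+1)$ points, by a direct count parameterizing Lagrangians through a chosen isotropic line. For the Borel, the variety of complete isotropic flags fibres over the space of isotropic lines via $(L \subset \Lambda)\mapsto L$; the fibre over $L$ consists of Lagrangians containing $L$, which biject with lines in the two-dimensional quotient $L^\perp / L$, giving $q+1$ choices and a total of $(q+1)^2(q^2+1)$.

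The only step requiring real care is the matching between each vanishing pattern and the claimed parabolic, but this reduces to checking that stabilizing the indicated isotropic subspace produces exactly the prescribed zeros; this is straightforward linear algebra in dimension $4$, and I do not anticipate any conceptual difficulty.
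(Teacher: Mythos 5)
Your proof is correct and follows essentially the same route as the paper: compute $P_\lambda = \varpi^\lambda G(F)_{x_1,0}\varpi^{-\lambda}\cap G(F)_{x_1,0}$, reduce modulo $\mathfrak{m}$ to read off a vanishing pattern, identify the resulting subgroup as the stabilizer of the appropriate isotropic flag, and count points on the corresponding flag variety. The only difference is a choice of basis ordering: you work in the $\{e_1,e_2,f_2,f_1\}$ ordering from the filtration tables, while the paper's proof reverts to the Witt ordering $\{e_1,e_2,f_1,f_2\}$, which is why your vanishing positions in the Klingen case land in row $3$ rather than row $4$; both patterns describe the stabilizer of the isotropic line $\mathbb{F}_q e_2$ and hence the same parabolic, so the resulting point counts agree.
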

\begin{proof}  
Let $\lambda = (k,\ell)$. We use the notation $\mathcal{O}= \mathcal{O}_F$ and $\mathfrak{m} = \varpi\mathcal{O}_F$. Look at the reduction map 
\begin{align*}
P_\lambda &= \varpi^{\lambda}G(F)_{x_1,0}\varpi^{-\lambda}\cap G(F)_{x_1,0}\rightarrow \bar{P}_\lambda \subset G(F)_{x_1,0:0+} \cong \mathrm{GSp}_4(\mathbb{F}_q)\\
\end{align*}
\[\pmat{ \mathcal{O}&\mathfrak{m}^{k}& \mathfrak{m}^{-\ell}&\mathfrak{m}^{-\ell-k}\\
 \mathfrak{m}^{-k}&\mathcal{O}&\mathfrak{m}^{-\ell-k}&\mathfrak{m}^{-\ell-2k}\\
 \mathfrak{m}^{\ell}&\mathfrak{m}^{k+\ell}&\mathcal{O}&\mathfrak{m}^{-k}\\
 \mathfrak{m}^{k+\ell}&\mathfrak{m}^{\ell+2k}&\mathfrak{m}^{k}&\mathcal{O}}
\cap \pmat{ \mathcal{O}&\mathcal{O}&\mathcal{O}&\mathcal{O}\\
 \mathcal{O}&\mathcal{O}&\mathcal{O}&\mathcal{O}\\
 \mathcal{O}&\mathcal{O}&\mathcal{O}&\mathcal{O}\\
 \mathcal{O}&\mathcal{O}&\mathcal{O}&\mathcal{O}}
\rightarrow
\pmat{\mathcal{O}/\mathfrak{m}&\mathcal{O}/\mathfrak{m}&\mathcal{O}/\mathfrak{m}&\mathcal{O}/\mathfrak{m}\\ \mathcal{O}/\mathfrak{m}&\mathcal{O}/\mathfrak{m}&\mathcal{O}/\mathfrak{m}&\mathcal{O}/\mathfrak{m}\\ \mathcal{O}/\mathfrak{m}&\mathcal{O}/\mathfrak{m}&\mathcal{O}/\mathfrak{m}&\mathcal{O}/\mathfrak{m}\\
\mathcal{O}/\mathfrak{m}&\mathcal{O}/\mathfrak{m}&\mathcal{O}/\mathfrak{m}&\mathcal{O}/\mathfrak{m}}.\]
\begin{itemize}
\item When $\ell=k=0$ then $\bar{P}_\lambda = G(F)_{x_1,0:0+}$.
\item When $\ell=0$ and $0< k\leq n$, we get  
$\bar{P}_\lambda = \left\lbrace
\pmat{\mathcal{O}/\mathfrak{m}&0&\mathcal{O}/\mathfrak{m}&\mathcal{O}/\mathfrak{m}\\ 
\mathcal{O}/\mathfrak{m}&\mathcal{O}/\mathfrak{m}&\mathcal{O}/\mathfrak{m}&\mathcal{O}/\mathfrak{m}\\ 
\mathcal{O}/\mathfrak{m}&0&\mathcal{O}/\mathfrak{m}&\mathcal{O}/\mathfrak{m}\\
0&0&0&\mathcal{O}/\mathfrak{m}}\right\rbrace = \mathrm{Stab}(\mathbb{F}_q e_2)$ is a Klingen parabolic subgroup. Therefore, $G(F)_{x_1,0:0+}/\bar{P}_\lambda$ is 
isomorphic to $\mathbb{P}^3(\mathbb{F}_q)$.
\item When $k=0$  and $\ell >0$ we have
 $\bar{P}_\lambda = \left\lbrace
    \pmat{\mathcal{O}/\mathfrak{m}&\mathcal{O}/\mathfrak{m}&\mathcal{O}/\mathfrak{m}&\mathcal{O}/\mathfrak{m}\\ 
    \mathcal{O}/\mathfrak{m}&\mathcal{O}/\mathfrak{m}&\mathcal{O}/\mathfrak{m}&\mathcal{O}/\mathfrak{m}\\
     0&0&\mathcal{O}/\mathfrak{m}&\mathcal{O}/\mathfrak{m}\\
0&0&\mathcal{O}/\mathfrak{m}&\mathcal{O}/\mathfrak{m}}\right\rbrace$ is the Siegel parabolic subgroup which is the stabilizer 
of the Lagrangian spanned by $\{e_1,e_2\}$. Therefore, $G(F)_{x_1,0:0+}/\bar{P}_\lambda$ is the partial flag variety corresponding to Lagrangians. 
\end{itemize}
\item When $\ell,k>0$ then $\bar{P}_\lambda = \left\lbrace
    \pmat{\mathcal{O}/\mathfrak{m}&0&\mathcal{O}/\mathfrak{m}&\mathcal{O}/\mathfrak{m}\\ 
    \mathcal{O}/\mathfrak{m}&\mathcal{O}/\mathfrak{m}&\mathcal{O}/\mathfrak{m}&\mathcal{O}/\mathfrak{m}\\
     0&0&\mathcal{O}/\mathfrak{m}&\mathcal{O}/\mathfrak{m}\\
0&0&0&\mathcal{O}/\mathfrak{m}}\right\rbrace
$ is a Borel subgroup and the stabilizer of the full flag 
\[0\subset\mathrm{Span}_{\mathbb{F}_q}\{e_2\}\subset \mathrm{Span}_{\mathbb{F}_q}\{e_1,e_2\}\subset \{e_2\}^{\perp} = \mathrm{Span}_{\mathbb{F}_q}\{e_1,e_2,f_1\}\subset \mathbb{F}_q^4.\] We get that 
$G(F)_{x_1,0:0+}/\bar{P}_\lambda$ corresponds to the flag variety of full isotropic flags, $0\subset W\subset L$, where $L$ is a Lagrangian. This in turn induces the full flag 
\[0\subset W\subset L\subset W^{\perp}\subset \mathbb{F}_q^4.\] 
      This concludes the proof.  \end{proof}

%In case $(i)$ we have 
%\[\overline{S}_{\lambda}(X)\cong \left\lbrace \overline{g}\in  \mathrm{GSp}_4/P_\lambda : \bar{Y} \in  \mathfrak{g}(F)_{x_1:0:0+}\right\rbrace  = \mathrm{GSp}_4/P_\lambda  .\]

%In the case $k,\ell>0$ we are reduced to counting $\mathbb{F}_q$ points on an iterated affine bundle over 
%\[\overline{S}_{\lambda}(X)\cong \left\lbrace \overline{g}\in  \mathrm{Sp}_4/B : \bar{Y} \in  \begin{pmatrix}
%            \star &\star&\star&\star\\
%            \star &\star&\star&\star\\
%            \star &0&\star&\star\\
%            0 &0&\star&\star\\
%        \end{pmatrix} \right\rbrace \]
%        where $B$ is the ``upper-diagonal'' Borel. $\overline{S}_{\lambda}(X)$ is a toric variety over the root datum $T = Z_{\mathrm{Sp}_4}(\bar{X})$. 
%        The action of $T$ on $\overline{S}_{\lambda}(X)/\mathbb{F}_q$ has an open orbit and all $\mathbb{F}_q$-points of $\overline{S}_{\lambda}(X)$ are contained in this open orbit, therefore $|\overline{S}_{\lambda}(X)(\mathbb{F}_q)| = |T(\mathbb{F}_q)| = (q+1)^2$ {\color{red}todo: show this, it's the number of solutions of $a^2-\varepsilon b^2+cd\equiv 0\pmod{q}$}.\\

%        \textbf{Expectations.} $P_\lambda\backslash\mathrm{GSp}_4\cong \mathbb{P}^3$ and
%        \[|\overline{S}_{\lambda}(X)(\mathbb{F}_q)| \left\lbrace\begin{array}{cc}
%     (q+1)^2 &\text{if the corresp quad form on $\mathbb{F}_q^4$ defined by $X$ is split} \\
%     q^2+1  &\text{else}
%\end{array}\right.\]

 For each $\lambda = (k,\ell)$ let $\mathcal{P}_{\lambda} = G(F)_{x_1,0:0+}/\bar{P}_\lambda$ we have

\begin{itemize}
\item Case $(i)$. There is no condition so $\bar{S}_\lambda(X) = G(F)_{x_1,0:0+}$.
\item Case ($(ii)$-Klingen). This is when $n/2< k\leq n$. 
We have $\mathcal{P}_\lambda\cong \mathbb{P}^3(\mathbb{F}_q)$ and an element is determined by the 
image of $e_2$. Let $g\in G(F)_{x_1,0:0+}$. The condition $g^{-1}\bar{X}g \in \pmat{\star&\star&\star&\star\\\star&\star&\star&\star\\\star&\star&\star&\star\\\star&0&\star&\star} = \mathrm{Stab}(e_2^{\perp})$ 
if and only if $\langle \bar{X}ge_2,ge_2\rangle = 0$. 
So 
\[\bar{S}_\lambda(X)/\bar{P}_\lambda \cong \{\text{line }\mathcal{L}\in\mathbb{P}^3(\mathbb{F}_q)\ :\ \bar{X}\mathcal{L}\subset \mathcal{L}^\perp\}.\]
\item Case ($(ii)$-Borel). As in the previous case, the condition on $g^{-1}\bar{X}g$ is that $\langle \bar{X}ge_2,g e_2\rangle$. We get
\[\bar{S}_\lambda(X)/\bar{P}_\lambda \cong \{(\mathcal{L},L)\in \mathcal{P}_\lambda\ :\ \bar{X}\mathcal{L}\subset \mathcal{L}^\perp\}.\]
\item Case ($(iii)$-Borel). Let $Y = g^{-1}\bar{X}g$. The condition for $(iii)$  is that 
\[Ye_2\subset \mathrm{Span}\{e_1,e_2\},\ Y\mathrm{Span}\{e_1,e_2\}\subset e_2^{\perp}.\]
We get 
\[\bar{S}_\lambda(X)/\bar{P}_\lambda \cong \{(\mathcal{L},L)\in \mathcal{P}_\lambda\ :\ \bar{X}\mathcal{L}\subset L,\ \bar{X}L\subset \mathcal{L}^\perp\}.\]
\end{itemize}

\begin{lemma} The number of lines $\mathcal{L}\in \mathbb{P}^3(\mathbb{F}_q)$ so that $\bar{X}\mathcal{L} \subset \mathcal{L}^\perp$ is equal to $(q+1)^2$. 
\end{lemma}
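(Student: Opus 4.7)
The plan is to rephrase the condition $\bar X\mathcal{L}\subset\mathcal{L}^\perp$ for $\mathcal{L}=\mathbb{F}_q v$ as the single polynomial equation $\langle\bar Xv,v\rangle=0$, identify the quadratic form $Q(v):=\langle\bar Xv,v\rangle$, and then count the $\mathbb{F}_q$-points of the projective quadric it defines. Because $\bar X$ lies in $\bar{\mathfrak h}(\mathbb{F}_q)$, in the Witt basis $(e_1,e_2,f_1,f_2)$ it is block-interleaved with $2\times 2$ blocks $\bar X_i=\bigl(\begin{smallmatrix}a_i&b_i\\ c_i&d_i\end{smallmatrix}\bigr)$ satisfying $a_1+d_1=a_2+d_2$. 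Writing $v=(x_1,x_2,y_1,y_2)$ and computing directly gives
\[
Q(v)=Q_1(x_1,y_1)+Q_2(x_2,y_2),\qquad Q_i(x,y)=-c_ix^2+(a_i-d_i)xy+b_iy^2,
\]
and one checks $\mathrm{disc}(Q_i)=(a_i-d_i)^2+4b_ic_i=\mathrm{tr}(\bar X_i)^2-4\det(\bar X_i)$, i.e.\ the discriminant of the characteristic polynomial of $\bar X_i$. So $Q=Q_1\oplus Q_2$ is an orthogonal sum of two binary quadratic forms on the $(e_i,f_i)$-planes.

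The second and central step is to prove that each $Q_i$ is anisotropic, which amounts to showing that $\mathrm{disc}(Q_i)$ is a nonsquare in $\mathbb{F}_q^\times$. For this I would argue that the quadratic field extension $F[X_i]/F$ is forced to be unramified: if it were ramified with uniformizer $\pi$, then $t_i-t_i'$ (the difference of the eigenvalues of $X_i$) would be of the form $b\pi$ with $b\in F^\times$, so $v_{F[X_i]}(t_i-t_i')=2v_F(b)+1$ would be odd, forcing the depth $n=v_{F[X_i]}(t_i-t_i')/2$ to be a half-integer, contradicting the fact that depths at $x_1$ are integers. In the unramified case, $t_i-t_i'=\varpi^n u_i\tau$ with $\tau^2\in\mathcal{O}_F^\times$ reducing to a nonsquare in $\mathbb{F}_q$ (it generates the residue extension $\mathbb{F}_{q^2}$); hence $\mathrm{disc}(\bar X_i)$ is the reduction of $(t_i-t_i')^2/\varpi^{2n}=u_i^2\tau^2$, a nonsquare in $\mathbb{F}_q^\times$. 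Over a finite field of odd characteristic, a binary quadratic form with nonsquare discriminant is anisotropic.

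Finally, the orthogonal sum $Q=Q_1\oplus Q_2$ of two anisotropic binary forms has $\mathrm{disc}(Q)=\mathrm{disc}(Q_1)\mathrm{disc}(Q_2)$ equal to $(\text{nonsquare})\cdot(\text{nonsquare})=\text{square}$; by Chevalley--Warning any rank-$4$ form over $\mathbb{F}_q$ is isotropic, and having a square discriminant then forces Witt index $2$, i.e.\ $Q$ is hyperbolic. The corresponding smooth quadric in $\mathbb{P}^3(\mathbb{F}_q)$ is isomorphic to $\mathbb{P}^1\times\mathbb{P}^1$, which has exactly $(q+1)^2$ rational points. The delicate part of the argument is the depth-versus-valuation bookkeeping of the middle paragraph, needed to rule out ramified eigenvalue extensions and thereby place each $Q_i$ in the anisotropic class; once this is done, the count is a standard consequence of the classification of quadratic forms over $\mathbb{F}_q$.
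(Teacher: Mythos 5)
Your proof is correct, and it reaches the same structural decomposition as the paper — writing $Q(v)=\langle\bar X v,v\rangle=Q_1\oplus Q_2$ as an orthogonal sum of binary forms on the two symplectic planes and reducing the count to the fact that each $Q_i$ is anisotropic — but the two sub-steps are handled differently. To establish anisotropy, the paper observes directly that $\langle\bar X_i u,u\rangle=0$ would force $u$ to be an eigenvector of $\bar X_i$, impossible since $\bar X$ is elliptic in the reductive quotient (a standing hypothesis of the section); you instead recompute the discriminant of $\bar X_i$ and run a valuation-versus-depth argument to rule out ramified $F[X_i]$ and show the discriminant reduces to a nonsquare. Your route is more self-contained but longer: the ramification detour re-derives the ellipticity of $\bar X_i$ over $\mathbb{F}_q$ rather than using it, and this is exactly the input the paper treats as already available (and rather cleaner to invoke). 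For the final count, the paper parametrizes the solutions by fibers of the norm map $R_{\mathbb{F}_{q^2}/\mathbb{F}_q}\mathbb{G}_m\times R_{\mathbb{F}_{q^2}/\mathbb{F}_q}\mathbb{G}_m\to\mathbb{G}_m$ and uses the isogeny $\mathbb{G}_m\times R^{(1)}\mathbb{G}_m\to R\mathbb{G}_m$, whereas you classify $Q$ as the hyperbolic form (square determinant plus Chevalley–Warning), hence a smooth split quadric $\cong\mathbb{P}^1\times\mathbb{P}^1$ with $(q+1)^2$ rational points; your counting step is arguably the more standard and economical of the two. (One small wording note: the "forcing Witt index $2$" step could be replaced by the even simpler observation that over $\mathbb{F}_q$ a nondegenerate quadratic form of fixed rank is determined up to isomorphism by $\det\pmod{(\mathbb{F}_q^\times)^2}$, so square determinant already gives $H\oplus H$ without appealing to Chevalley–Warning.)
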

\begin{proof}
Write $X = X_1\oplus X_2\in \mathfrak{h}(F)$. Write $\mathcal{L} = \mathrm{Span}(u\oplus v)$. We cannot have $\langle X_1u, u\rangle=0$ because $u$ spans a Lagrangian of $\mathrm{Span}\{e_1,f_1\}$
and so  $\bar{X}_1u\in u^\perp = \mathrm{Span}\{u\}$ only if $u$ is an eigenvector of $\bar{X}_1$, which isn't possible since $\bar{X}$ is elliptic. Similarly, we cannot have $\langle\bar{X}_2v,v\rangle =0$.

Both $\bar{X}_1,\bar{X}_2$ split over the unique quadratic extension of $\mathbb{F}_q$ generated by a nonsquare residue, so the number of desired lines can be obtained by computing 
\[\frac{|\{(a,b,c,d)\in \mathbb{F}_q^4\backslash \{0\}\ :\ a^2+b^2+c^2\varepsilon_1+d^2\varepsilon_2=0\}|}{q-1},\]
where $\varepsilon_1,\varepsilon_2$ are distinct nonsquare residues of $\mathbb{F}_q$.

This is equivalent to counting the number of solutions of $(x,y)\in\mathbb{F}_{q^2}^\times$ so that $N_{\mathbb{F}_{q^2}/\mathbb{F}_{q}}(x/y) = -1$.
 Consider the map $R_{\mathbb{F}_{q^2}/\mathbb{F}_{q}}\mathbb{G}_m\times R_{\mathbb{F}_{q^2}/\mathbb{F}_{q}}\mathbb{G}_m\rightarrow \mathbb{G}_m$. We look for the size of the fiber over $-1$ which is equal to the 
 number of solutions to $N_{\mathbb{F}_{q^2}/\mathbb{F}_{q}}(x) =N_{\mathbb{F}_{q^2}/\mathbb{F}_{q}}(y)$. Fixing $x$, any two solutions for $y$ differ by a norm $1$ element, and using the isogeny of $\mathbb{G}_m\times R^{(1)}_{\mathbb{F}_{q^2}/\mathbb{F}_{q}}\mathbb{G}_m$
 with $R_{{\mathbb{F}_{q^2}/\mathbb{F}_{q}}}\mathbb{G}_m$, we get that there are $(q^2-1)/(q-1)=q+1$ possible $y$'s. And there are $q^2-1$ possible $x$'s, therefore we end up with the count 
 \[\frac{(q^2-1)(q+1)}{(q-1)} = (q+1)^2.\]
\end{proof}
\begin{rem} Alternatively, we may note that 
 $\overline{S}_{\lambda}(X)/\bar{P}_\lambda$ is a toric variety over the root datum $T = Z_{\mathrm{Sp}_4}(\bar{X})$ (the centralizer in $\mathrm{Sp}_4$ of $X$). 
        The action of $T$ on $\overline{S}_{\lambda}(X)/\mathbb{F}_q$ has an open orbit and all $\mathbb{F}_q$-points of $\overline{S}_{\lambda}(X)$ are contained in this open orbit, 
        therefore $|\overline{S}_{\lambda}(X)(\mathbb{F}_q)| = |T(\mathbb{F}_q)| = (q+1)^2$.
\end{rem}
\begin{cor}
The number of images of full isotropic flags $0\subset \mathcal{L}\subset L$ in $\mathbb{F}_q^4$ so that $\bar{X}\bar{\mathcal{L}}\subset \bar{\mathcal{L}}^\perp$ is equal to $(q+1)^3$.   
\end{cor}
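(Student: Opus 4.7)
The plan is to reduce to the previous lemma by fibering the flag count over the line. Each isotropic flag in question is a pair $(\mathcal{L}, L)$ where $\mathcal{L}$ is a line and $L$ a Lagrangian of $\mathbb{F}_q^4$ with $\mathcal{L} \subset L$; the only constraint involving $\bar{X}$ is on $\mathcal{L}$. So the projection $(\mathcal{L},L) \mapsto \mathcal{L}$ sends the set in question onto the set of lines counted by the preceding lemma, and we need only establish that every fiber of this projection has the same cardinality.

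First I would invoke the previous lemma to get exactly $(q+1)^2$ lines $\mathcal{L} \in \mathbb{P}^3(\mathbb{F}_q)$ satisfying $\bar{X}\mathcal{L} \subset \mathcal{L}^\perp$. Next, I would fix such a line $\mathcal{L}$ and count Lagrangians $L$ containing it. Since $\mathcal{L}$ is one-dimensional, it is automatically isotropic, and $\mathcal{L}^\perp/\mathcal{L}$ inherits a non-degenerate symplectic form of rank $2$. Lagrangians $L \subset \mathbb{F}_q^4$ containing $\mathcal{L}$ correspond bijectively to lines in the quotient $\mathcal{L}^\perp/\mathcal{L}$ (via $L \mapsto L/\mathcal{L}$), and since the quotient is two-dimensional symplectic, every line in it is automatically isotropic. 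Hence the number of Lagrangians containing $\mathcal{L}$ equals the number of lines in $\mathbb{F}_q^2$, which is $(q^2-1)/(q-1) = q+1$.

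Combining these two counts, which are independent of the specific line $\mathcal{L}$, gives
\[
(q+1)^2 \cdot (q+1) = (q+1)^3,
\]
as required. There is no serious obstacle here since the fiber count over each line is uniform and does not involve $\bar{X}$ at all; the only subtlety is ensuring that when one lifts the partial flag from $(\mathcal{L}, L/\mathcal{L})$ back to $(\mathcal{L}, L)$ in $\mathbb{F}_q^4$, the resulting $L$ is indeed a Lagrangian, but this is immediate from the fact that $L$ is a two-dimensional subspace of $\mathcal{L}^\perp$ on which the form restricted from $\mathbb{F}_q^4$ vanishes (a quick check using $\mathcal{L} \subset L$ and isotropicity of $L/\mathcal{L}$ in $\mathcal{L}^\perp/\mathcal{L}$).
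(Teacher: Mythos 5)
Your argument is correct and is essentially the paper's own: both proofs fiber over the $(q+1)^2$ admissible lines $\mathcal{L}$ from the preceding lemma and observe that Lagrangians containing $\mathcal{L}$ correspond to lines in the $2$-dimensional quotient $\mathcal{L}^\perp/\mathcal{L}$, giving a uniform fiber of size $q+1 = |\mathbb{P}^1(\mathbb{F}_q)|$. Your additional remark verifying that the lift of a line in $\mathcal{L}^\perp/\mathcal{L}$ to a $2$-plane in $\mathcal{L}^\perp$ is Lagrangian is a fine detail the paper leaves implicit.
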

\begin{proof}
There are $(q+1)^2$ choices for $\bar{\mathcal{L}}$ as in the previous Lemma. For each line $\mathcal{L}$, the space $\mathcal{L}^\perp$ has dimension $3$,
hence the  Lagrangians containing $\mathcal{L}$ are in correspondence with $\mathbb{P}(\mathcal{L}^\perp/L)\cong \mathbb{P}^1$.
Therefore, the total count is $(q+1)^2|\mathbb{P}^1(\mathbb{F}_q)| = (q+1)^3$. 
\end{proof}

\begin{lemma}The number of images of full isotropic flags $0\subset \mathcal{L}\subset L\subset \mathcal{L}^\perp\subsetneq \mathcal{O}_F^4$ in $\mathbb{F}_q^4$ so that $\bar{X}\bar{\mathcal{L}}\subset \bar{L}$ and $X\bar{L}\subset \bar{\mathcal{L}}^\perp$ is equal to  $(q+1)^2$.
\end{lemma}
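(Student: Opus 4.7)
The approach is to show that the second flag condition, $\bar{X}\bar{L}\subset\bar{\mathcal{L}}^\perp$, is redundant given the first condition $\bar{X}\bar{\mathcal{L}}\subset\bar{L}$ together with $\bar{L}$ being Lagrangian. This is a direct application of the $\mathfrak{gsp}_4$-adjunction $\langle\bar{X}u,v\rangle+\langle u,\bar{X}v\rangle=\lambda(\bar{X})\langle u,v\rangle$: for $u$ spanning $\bar{\mathcal{L}}$ and $v\in\bar{L}$, the relation $\langle\bar{X}v,u\rangle=\lambda(\bar{X})\langle v,u\rangle-\langle v,\bar{X}u\rangle$ vanishes because $u, \bar{X}u\in\bar{L}$ and $\bar{L}$ is Lagrangian. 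So I only need to count pairs $(\bar{\mathcal{L}},\bar{L})$ with $\bar{L}$ Lagrangian containing both $\bar{\mathcal{L}}$ and $\bar{X}\bar{\mathcal{L}}$.

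Next, I would verify that $\bar{X}$ has no $\mathbb{F}_q$-invariant line in $\mathbb{F}_q^4$. Writing $\bar{X}=\bar{X}_1\oplus\bar{X}_2$ in the block decomposition coming from $\mathrm{GL}_2\times_{\det}\mathrm{GL}_2\hookrightarrow\mathrm{GSp}_4$, each $\bar{X}_i$ is elliptic regular semisimple over $\mathbb{F}_q$ and hence has no eigenvector in $\mathbb{F}_q^2$; a short case analysis rules out common eigenvectors of the direct sum. Consequently $\bar{\mathcal{L}}+\bar{X}\bar{\mathcal{L}}$ is always two-dimensional, and since any two-dimensional isotropic subspace of a four-dimensional symplectic space is automatically Lagrangian, the required $\bar{L}$ exists (and then equals $\bar{\mathcal{L}}+\bar{X}\bar{\mathcal{L}}$ uniquely) if and only if $\bar{\mathcal{L}}+\bar{X}\bar{\mathcal{L}}$ is itself isotropic.

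The isotropy of this two-plane amounts to $\langle u,\bar{X}u\rangle=0$ for $u$ spanning $\bar{\mathcal{L}}$, which is exactly the condition $\bar{X}\bar{\mathcal{L}}\subset\bar{\mathcal{L}}^\perp$ that defines the Klingen quadric in the previous lemma. Since that quadric was shown to have $(q+1)^2$ rational points, this count is also $(q+1)^2$. The main obstacle is really the first step, namely checking that the skew-adjunction truly collapses the two conditions into one; once this is granted the rest is bookkeeping plus an appeal to the preceding lemma. As a sanity check, one can also invoke the toric-variety description from the remark following that lemma: $\overline{S}_\lambda(X)/\bar{P}_\lambda$ is a toric variety for $T=Z_{\mathrm{Sp}_4}(\bar{X})$ with all $\mathbb{F}_q$-points lying in the open $T$-orbit, yielding directly $|T(\mathbb{F}_q)|=(q+1)^2$.
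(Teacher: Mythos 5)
Your proof is correct and follows essentially the same route as the paper's: both arguments reduce the flag conditions to the single Klingen condition $\bar{X}\bar{\mathcal{L}}\subset\bar{\mathcal{L}}^\perp$ and invoke the preceding $(q+1)^2$ count. The paper phrases the reduction in the reverse direction (start from $\bar{X}\bar{\mathcal{L}}\subset\bar{\mathcal{L}}^\perp$ and check the Lagrangian condition and $\bar{X}\bar{L}\subset\bar{\mathcal{L}}^\perp$ follow, the latter via $X^2\mathcal{L}\subset\mathcal{L}^\perp$), but the two arguments use the same facts — skew-adjointness of $\bar{X}$, absence of $\mathbb{F}_q$-invariant lines for elliptic $\bar{X}$, and the forced uniqueness of $\bar{L}=\bar{\mathcal{L}}+\bar{X}\bar{\mathcal{L}}$.
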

\begin{proof}
Let us show that the condition is equivalent to $X\mathcal{L}\subset \mathcal{L}^\perp$. Pick a line $\mathcal{L}$ so that $X\mathcal{L}\subset \mathcal{L}^{\perp}$ Since $X$ is elliptic, we may not have $X\mathcal{L}\subset \mathcal{L}$, therefore the space spanned by $\mathcal{L}$ and $X\mathcal{L}$
is the unique Lagrangian containing $X\mathcal{L}$. Call it $L$.

The last condition is $XL \subset \mathcal{L}^{\perp}$, but $X\mathcal{L}\subset \mathcal{L}^{\perp}$ so the only condition left is $X^2\mathcal{L}\subset \mathcal{L}^{\perp}$ which is vacuously true, since $X$ is 
symplectic, it is equivalent to $X\mathcal{L}\subset (X\mathcal{L})^{\perp}$, but any line is isotropic.
\end{proof}

First count all $k,\ell$ of type $(i)$, which corresponds to no condition hence $|\bar{S}_{\lambda}(X)| = |G(F)_{x_1,0:0+}|$. Note that when $n=0$ we get $1$ vertex, and when increasing from $n-1$
to $n$ we add integer points on the line $2x+y = n$, which is equal to $\lfloor n/2\rfloor +1$. 

Let $r = \lfloor n/2\rfloor$ and $\varepsilon = n-2r$. The number of vertices of type $(i)$ is therefore 
\[n+1+r(r+1)-(1-\varepsilon) r =(n+1-r)(r+1)=\left\lfloor\frac{(n+2)^2}{4}\right\rfloor = \left\lbrace\begin{array}{ll}(n+2)^2/4& n\text{ even}\\
(n+1)(n+3)/4& n\text{ odd}\end{array}\right. .\]

It is also straightforward to count the vertices of type $(iii)$, there are exactly $n(n+1)/2$ and by subtraction we get 
\[(n+1)^2-\left\lfloor\frac{(n+2)^2}{4}\right\rfloor - n(n+1)/2 =\left\lfloor \frac{(n+1)^2}{4}\right\rfloor= \left\lbrace\begin{array}{ll}n(n+2)/4& n\text{ even}\\
(n+1)^2/4& n\text{ odd}\end{array}\right.\] vertices of type $(ii)$, of which $\lceil n/2\rceil$
correspond to Klingen parabolic subgroups $(\ell = 0)$. 

To sum up we get 
\begin{itemize}
\item $\left\lfloor\frac{(n+2)^2}{4}\right\rfloor$ $\lambda$'s with $|\bar{S}_\lambda(X)| = |Z\backslash G(F)_{x_1,0:0+}| = |\mathrm{Sp}_4(\mathbb{F}_q)| = q^4(q^2-1)(q^4-1)$ (type $(i)$).
\item $n(n+1)/2$ $\lambda$'s where $|\bar{S}_\lambda(X)/\bar{P}_\lambda| = (q+1)^2$ and $|\bar{Z}\backslash\bar{P}_\lambda| = q^4(q-1)^2$ (type $(iii)$-Borel).
\item $\lceil n/2\rceil$ $\lambda$'s where $|\bar{S}_\lambda(X)/\bar{P}_\lambda| = (q+1)^2$ and $|\bar{Z}\backslash\bar{P}_\lambda| =\frac{|\mathrm{Sp}_4(\mathbb{F}_q)|}{|\mathbb{P}^3(\mathbb{F}_q)|}= q^4(q^2-1)(q-1)$ (type $(ii)$-Klingen).
\item $\lfloor (n+1)^2/4 \rfloor-\lceil n/2\rceil = \lfloor n^2/4\rfloor$ $\lambda$'s where $|\bar{S}_\lambda(X)/\bar{P}_\lambda| = (q+1)^3$ and $|\bar{Z}\backslash \bar{P}_\lambda| = q^4(q-1)^2$ (type $(ii)$-Borel).
\end{itemize} 
Note that in case $(ii)$, regardless of the type of parabolic, we get that 
\[|\bar{S}_\lambda(X)| = |\bar{S}_\lambda(X)/\bar{P}_\lambda|\times |\bar{Z}\backslash \bar{P}_\lambda| = (q+1)^2\times q^4(q^2-1)(q-1) = (q+1)^3\times q^4(q-1)^2.\]

Let $\lambda = (k,\ell)$. The dimension of the fiber in Theorem \ref{th:gkm} is given by  
\begin{align*}
d_{\lambda} &= \max(0,\min(n,k))+\max(0,\min(n,\ell))+\max(0,\min(n,\ell+k))+\max(0,\min(n,\ell+2k))\\
&= k+\ell + \min(n,\ell+k) + \min(n,\ell+2k).
\end{align*}
This implies 
\begin{equation}\label{eq:dimt1}
d_{\lambda} = \left\lbrace \begin{array}{ll}3\ell+4k&\text{ in case }(i)\\2\ell+2k+n&\text{ in case }(ii)\\ \ell+k+2n&\text{ in case }(iii)\end{array}\right. .
\end{equation}

We therefore have 
\begin{equation}
\mathrm{Orb}_{\mathfrak{g}(F)}(X,\mathds{1}_{\mathfrak{g}(\mathcal{O}_F)})= q^4(q^2-1)^2\left((q^2+1)\!\!\!\sum_{\underset{\ell+2k\leq n}{k,\ell\geq 0}}q^{3\ell+4k} +(q+1)\!\!\!\!\!\!\!\!\!\sum_{\underset{\ell+2k> n, \ell+k\leq n}{k,\ell\geq 0}}q^{2\ell+2k+n}+ \sum_{\underset{\ell+k> n}{0\leq k,\ell\leq n }}q^{\ell+k+2n}\right).
\end{equation}
\begin{theorem}
In the case of vertices of type  $1$, we have 
\[\mathrm{Orb}_{\mathfrak{g}(F)}(X,\mathds{1}_{\mathfrak{g}(\mathcal{O}_F)}) =\frac{q^4(q^2-1)}{q^3-1} \left(A(q) +B(q)q^{2n}+(C_1(q)+nC_2(q))q^{3n}+D(q)q^{4n}\right),\]
Where 
\begin{align*}A &= 1,\\
B &= \left\lbrace\begin{array}{ll}
q^2(2-q)(1+q+q^2)& \text{if }n\text{ is even}\\
q(1-q+2q^2-q^3)(1+q+q^2)& \text{if }n\text{ is odd}
\end{array}\right.,\\
C_1 &= q^2(q^5-4q^2-4q-3),\\
C_2 &= q(q^2-1)(q^3-1),\text{ and}\\
D &=q^2+2q^3+2q^4+q^5=q^2(1+q)(1+q+q^2).
\end{align*}

%\[\mathrm{Orb}_{\mathfrak{g}(F)}(X,\mathds{1}_{\mathfrak{g}(\mathcal{O}_F)}) =q^4(q^2-1)^2 \frac{A(q) +B(q)q^{2n}+(C_1(q)+nC_2(q))q^{3n}+D(q)q^{4n}}{1-q^3-q^4+q^7},\]
%Where 
%\begin{align*}A &= (1+q)^2,\\
%B &= \left\lbrace\begin{array}{ll}
%q^2(1-q^3)(1+\frac{1-q^3}{1-q})& \text{if }n\text{ is even}\\
%q^{-1}(1-q^3)(-1-q+q^2+2q^4+2q^5+q^6)& \text{if }n\text{ is odd}
%\end{array}\right.,\\
%C_1 &= q^2(1+q^2)(-3-4q-4q^2+2q^4+q^5),\\
%C_2 &= q(1-q^3)(1-q^4),\text{ and}\\
%D &=q^2(1+q)(1+q^2)(1+q+q^2)=\frac{q^2(1+q)(1+q^2)(1-q^3)}{1-q}=\frac{q^2(1-q^4)(1-q^3)}{(1-q)^2}.
%\end{align*}
%\[\mathrm{Orb}_{\mathfrak{g}(F)}(X,\mathds{1}_{\mathfrak{g}(\mathcal{O}_F)}) =q^4(q^2-1)^2\left((q^2+1)\left\lfloor\frac{(n+2)^2}{4}\right\rfloor   +(q+1)\left\lfloor\frac{(n+1)^2}{4}\right\rfloor + \frac{n(n+1)}{2}\right).\]
\end{theorem}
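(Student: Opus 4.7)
The plan is to evaluate explicitly the three geometric sums
\[
S_{(i)}=\!\!\!\sum_{\substack{k,\ell\geq 0\\\ell+2k\leq n}}\!\!\! q^{3\ell+4k},\qquad
S_{(ii)}=\!\!\!\!\!\!\!\sum_{\substack{k,\ell\geq 0\\\ell+k\leq n,\ \ell+2k>n}}\!\!\!\!\!\!\! q^{2\ell+2k+n},\qquad
S_{(iii)}=\!\!\!\sum_{\substack{0\leq k,\ell\leq n\\\ell+k>n}}\!\! q^{\ell+k+2n},
\]
feed them into
\[
\mathrm{Orb}_{\mathfrak{g}(F)}(X,\mathds{1}_{\mathfrak{g}(\mathcal{O}_F)})=q^4(q^2-1)^2\Bigl((q^2+1)S_{(i)}+(q+1)S_{(ii)}+S_{(iii)}\Bigr),
\]
and collect the answer in powers of $q^n$. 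The factor $\tfrac{q^2-1}{q^3-1}$ appearing in the final formula will emerge from the inner sums over $\ell$ (each a geometric series in $q^3$ or $q^2$), so the key algebraic manipulation is to always perform the $\ell$-sum first.

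I would first dispatch $S_{(iii)}$, which is the cleanest. Writing
\[
S_{(iii)}=q^{2n}\sum_{k=1}^{n}q^{k}\sum_{\ell=n-k+1}^{n}q^{\ell}=\frac{q^{2n+1}}{q-1}\sum_{k=1}^{n}q^{n-k+1}(q^{k}-1)\cdot q^{k-1}\cdot(\ldots),
\]
and simplifying yields a closed form of the shape $\alpha q^{4n}+\beta q^{3n}+\gamma n q^{3n}$; the linear-in-$n$ term will supply $C_2(q)n q^{3n}$ in the statement. Next $S_{(ii)}$ is handled by splitting the range of $k$ at $\lfloor n/2\rfloor$: for $k\leq n/2$ one has $n-2k<\ell\leq n-k$, and for $k>n/2$ one has $0\leq\ell\leq n-k$. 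Each piece is a double geometric sum, contributing to the $q^{2n}$, $q^{3n}$, and $q^{4n}$ terms. These two computations are routine.

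The last sum $S_{(i)}$ is where the parity of $n$ enters. Performing the $\ell$-sum gives
\[
S_{(i)}=\frac{1}{q^3-1}\sum_{k=0}^{\lfloor n/2\rfloor}\Bigl(q^{3n+3-2k}-q^{4k}\Bigr),
\]
and the geometric series in $q^{-2}$ (respectively $q^{4}$) has to be evaluated at the upper limit $\lfloor n/2\rfloor$, which produces the $q^{2n}$-coefficient $B(q)$ with two different values according to the parity of $n$. I expect this parity bifurcation to be the main technical annoyance: one has to verify that the $q^{0}$, $q^{3n}$, and $q^{4n}$ coefficients (governed by $A$, $C_1$, $D$) actually coincide across both parities, and only the $q^{2n}$-coefficient differs.

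Finally, one assembles the total. Multiplying the three sums by $(q^2+1)$, $(q+1)$, and $1$ respectively and by the outer factor $q^4(q^2-1)^2$, one collects the coefficients of $1$, $q^{2n}$, $q^{3n}$, $nq^{3n}$, and $q^{4n}$, and factors $\tfrac{q^4(q^2-1)}{q^3-1}$. The announced polynomials $A$, $B$, $C_1$, $C_2$, $D$ drop out of this bookkeeping. The main obstacles are therefore purely combinatorial: correctly enumerating the lattice points of the three regions of Lemma~\ref{lem:t1} (and the two Klingen/Borel sub-cases inside type (ii)), and keeping track of parity in $S_{(i)}$ so that the final answer factors cleanly through $q^3-1$ and $q^2-1$ as predicted.
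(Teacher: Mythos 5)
Your plan is correct and is essentially the paper's proof: both start from the displayed decomposition $\mathrm{Orb} = q^4(q^2-1)^2\bigl((q^2+1)S_{(i)}+(q+1)S_{(ii)}+S_{(iii)}\bigr)$ and evaluate each region of Lemma~\ref{lem:t1} as an iterated geometric sum, with the parity dependence of $B(q)$ entering through the $\lfloor n/2\rfloor$ cutoff in $S_{(i)}$. The only cosmetic difference is the choice of inner summation variable (you sum $\ell$ first, the paper changes variables to $i=\ell+2k$ resp.\ $i=n-(\ell+k)$), which does not alter the argument.
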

\begin{proof}
  The third sum is the easier one to compute since 
  \[\sum_{\underset{\ell+k> n}{k,\ell\geq 0}}q^{\ell+k+2n} = \sum_{i = 1}^ni q^{4n+1-i} = q^{4n+1}\sum_{i=1}^niq^{-i}  = q^{3n+1}\frac{q(q^n-1)-n(q-1)}{(q-1)^2}.\]

For the first sum, we sum over $i = \ell+2k$ and $q^{3\ell+4k} = q^{3i}q^{-2k}$. Such a line has integer points with $k$ values from $0$ to $\lfloor i/2\rfloor.$ Threfore, we get 
\[\sum_{\underset{\ell+2k\leq n}{k,\ell\geq 0}}q^{3\ell+4k} = \sum_{i=0}^nq^{3i}\sum_{k=0}^{\lfloor i/2\rfloor}q^{-2k} =\sum_{i=0}^nq^{3i} \frac{1-q^{-2\left\lfloor\frac{i}{2}\right\rfloor-2}}{1-q^{-2}} = \frac{1}{1-q^{-2}}\sum_{i=0}^n q^{3i} - \frac{q^{-2}}{1-q^{-2}}\sum_{i=0}^n q^{3i-2\lfloor i/2\rfloor}, \]
which simplifies to 
\begin{align*}&\frac{1-q^{3n+3}}{(1-q^3)(1-q^{-2})} - \frac{q^{-2}}{1-q^{-2}}\left(\sum_{\underset{i\text{ even}}{i=0}}^nq^{2i}+\sum_{\underset{i\text{ odd}}{i=0}}^nq^{2i+1}\right),\\
&=\frac{1-q^{3n+3}}{(1-q^3)(1-q^{-2})} -\frac{q^{-2}}{1-q^{-2}}\left(\frac{1-q^{4\lfloor n/2\rfloor +4}}{1-q^{4}}+q^3\frac{1-q^{4\lceil n/2\rceil}}{1-q^4}\right),\\
&= \left\lbrace\begin{array}{ll} \frac{1-q^{3n+3}}{(1-q^3)(1-q^{-2})} -\frac{q^{-2}}{1-q^{-2}}\left(\frac{1-q^{2n +4}}{1-q^{4}}+q^3\frac{1-q^{2n}}{1-q^4}\right)& n\text{ even}\\
\frac{1-q^{3n+3}}{(1-q^3)(1-q^{-2})} -\frac{q^{-2}}{1-q^{-2}}\left(\frac{1-q^{2n+2}}{1-q^{4}}+q^3\frac{1-q^{2n+2}}{1-q^4}\right)& n\text{ odd}\end{array}\right.,\\
%&= \left\lbrace\begin{array}{ll} \frac{1}{1-q^3}\left(\frac{1-q^{2n+4}}{1-q^4}-q^{2n+3}\frac{1-q^{n+2}}{1-q^2}\right)& n\text{ even}\\
% \frac{1}{1-q^3}\left(\frac{1-q^{2n+2}}{1-q^4}-q^{2n+4}\frac{1-q^{n+1}}{1-q^2}\right)& n\text{ odd}\end{array}\right.
&= \frac{1}{(q^3-1)(q^4-1)}\left\lbrace\begin{array}{ll} {(q^{n+2}-1)(q^{2n+5}+q^{2n+3}-q^{n+2}-1)}& n\text{ even}\\
 (q^{n+1}-1)(q^{n+3}-1)(q^{n+3}+q^{n+1}+1)& n\text{ odd}\end{array}\right..\\
\end{align*}
Write the middle sum in term of $n-i=\ell+k$. When $i = 0$ we get $n$ points on the line, $i=1$ we get $n-2$ points, then $n-4$ points for $i=3$ and so on, we end up with 

\begin{align*}
\sum_{\underset{\ell+2k> n, \ell+k\leq n}{k,\ell\geq 0}}q^{2\ell+2k+n}&=\sum_{i=0}^{\lceil n/2\rceil-1} (n-2i)q^{3n-2i},\\
& =\frac{q^{2n+1}}{(q^2-1)^2}\left(nq^{n+3}-(2+n)q^{n+1} + 
\left\lbrace\begin{array}{ll}
2q& n\text{ even}\\
1+q^2&n\text{ odd}
\end{array}\right.\right) .
\end{align*}
Summing everything together we get the desired result.
\end{proof}

\begin{rem}
Note that we could simplify the notation slightly, most terms have a factor $(q^2+q+1)$ which can be simplified 
with the denominator $(q-1)(q^2+q+1)$. 
\end{rem}

One can compute the Shalika germ expansion on the distinguished subgroup using Theorem \ref{th:shalik-elliptic}.

%Using the notations of the Theorem, we have $d_{\gamma_1} = d_{\gamma_2} = n$, as well as 
%\[X^{\gamma_i}_{\scalebox{0.5}{\CIRCLE}} = (q-1)^{-1}\left\lbrace \begin{array}{ll}q^{n+1}-1&n\text{ even}\\ q^n-1&n\text{ odd}\end{array}\right.,\ X^{\gamma_i}_{\scalebox{0.5}{\Circle}} = (q-1)^{-1}\left\lbrace \begin{array}{ll}q^{n}-1&n\text{ even}\\ q^{n+1}-1&n\text{ odd}\end{array}\right.\]
%\[\mathfrak{Y}^{\gamma}_{\mathrm{mono}}  = \left(\frac{q^n-1}{q-1}\right)^2+\left(\frac{q^{n+1}-1}{q-1}\right)^2,\ \mathfrak{Y}^{\gamma}_{\mathrm{bi}}  = 2\left(\frac{q^n-1}{q-1}\right)\left(\frac{q^{n+1}-1}{q-1}\right)\]

%We get that 
%\[\mathrm{Orb}_{\mathfrak{h}(F)}(X,\mathds{1}_{\mathfrak{g}(\mathcal{O}_F)})= \tilde{A}+(\tilde{B}+\tilde{C})q^n+(\tilde{D}+\tilde{E})q^{2n},\]
%where 
%\begin{align*}
%\tilde{A}&= \frac{2}{(1-q)^2}\\
%\tilde{B}&= \tilde{C} = \frac{2q^{-1}}{1-q}-q^{-1}\left(1+(q+1)\frac{q^n-1}{q-1}\right)  = \frac{q^{n-1}(1+q)}{1-q}\\
%\tilde{D}&= q^{-2}\left((1+q)^2\mathfrak{Y}^{\gamma}_{\mathrm{mono}}+(q-1)X^{\Sigma}-2q\mathfrak{Y}^{\gamma}_{\mathrm{bi}}+2\right)\\
%&= \frac{4q - 4(1+q^2)q^{n}+(1+2q-2q^2+2q^3+q^4)q^{2n} }{q^2(1-q)^2}\\
%\tilde{E}&= q^{-2}\left((1+q)^2\mathfrak{Y}^{\gamma}_{\mathrm{bi}}+(q-1)X^{\Sigma}-2q\mathfrak{Y}^{\gamma}_{\mathrm{mono}}+2\right)\\
%&= \frac{4(q^n-1)(q^{n+1}-1) }{q^2(1-q)^2}.
%\end{align*}
%Therefore, we can rewrite the orbital integral as 
%\[\frac{q^{-2}}{(q-1)^2}\left(2q+ (10q-2q^3)q^{2n}+(-8q-8q^2)q^{3n}+(1+2q+2q^2+2q^3+q^4)q^{4n}\right),\]

\subsection{Vertex of type 3}
Let $X\in \mathfrak{g}(F)_{x_3,n}=\varpi^n \mathfrak{g}(F)_{x_3,0}$ be an elliptic regular semisimple element. We decompose 
\[G(F) = \prod_{k,\ell}G(F)_{x_3,0}\varpi^{k,\ell}G(F)_{x_1,0},\]
with $\ell, \ell+2k \geq 0$.\\

\begin{lemma}\label{lem:t3}
     Let $g\in G(F)_{x_3,0}$ and $Y = \mathrm{Ad}(g)X\in \mathfrak{g}(F)_{x_3,n}$.
      If $\mathrm{Ad}(\varpi^{\ell,k})Y\in \mathfrak{g}(F)_{x_1,0}$ for
       some $k,\ell\in\mathbb{Z}$ so that $\ell, \ell+2k\geq 0$.  
       We must have
    \[0\leq \ell\leq n-1, \text{ and }  -\ell\leq 2k\leq n-\ell.\]
\end{lemma}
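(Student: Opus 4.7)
The plan is to follow the same template as Lemma \ref{lem:t1}, adapted to the vertex $x_3$. I would translate the requirement $\mathrm{Ad}(\varpi^{\ell,k})Y \in \mathfrak{g}(F)_{x_1,0}$ into a system of entrywise linear inequalities on $(k,\ell)$, and then use the ellipticity of $\bar X$ (inherited by $\bar Y$) to handle the boundary cases the inequalities alone do not exclude.

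First, I would read off from the type-$3$ column of the Moy--Prasad table the minimum-valuation matrix of $\mathfrak{g}(F)_{x_3,n}$ at integer depth $n$,
\[
b_n \;=\; \begin{bmatrix}
n & n+1 & n+1 & n+1\\
n & n & n & n+1\\
n & n & n & n+1\\
n-1 & n & n & n
\end{bmatrix},
\]
and add the shift matrix of $\mathrm{Ad}(\varpi^{\ell,k})$ entrywise. Requiring every entry to be $\geq 0$ (which is exactly $\mathrm{Ad}(\varpi^{\ell,k})Y \in \mathfrak{g}(\mathcal{O}_F)$) produces a system of inequalities. Under the hypothesis $\ell,\ell+2k \geq 0$, all lower-bound constraints are automatic, and the nontrivial upper bounds come from three entries:
\[
\ell \leq n \ (\text{from }(3,1)),\qquad \ell+k \leq n-1\ (\text{from }(4,1)),\qquad \ell+2k \leq n\ (\text{from }(4,2)).
\]
The second and third, combined with the integrality of $k$ and $\ell \geq 0$, directly yield $-\ell \leq 2k \leq n-\ell$.

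The strict bound $\ell \leq n-1$ is the delicate part: the inequalities alone allow $\ell = n$ when $k$ sits in a narrow negative range, so an ellipticity argument in the reductive quotient $\mathfrak{g}(F)_{x_3,n:n+} \cong \mathrm{Lie}(\mathrm{GL}_2\times_{\det}\mathrm{GL}_2)(\mathbb{F}_q)$ must be invoked. This quotient realizes $\bar Y$ via eight block entries: the ``corner'' $\mathfrak{gl}_2$-block at positions $(1,1),(1,4),(4,1),(4,4)$ and the ``middle'' block at $(2,2),(2,3),(3,2),(3,3)$. In the boundary case $\ell = n$, one tracks which of the shifted valuations become strictly larger than the baseline, which forces an off-diagonal entry of one of these two blocks to die in $\bar Y$. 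The surviving matrix then lies in a proper parabolic subalgebra of one of the $\mathfrak{gl}_2$ factors, contradicting the ellipticity of $\bar X$.

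The main obstacle is the case-by-case bookkeeping of this final step: one must verify, for each admissible value of $k$ along the boundary $\ell = n$, precisely which block entries vanish and that the configuration that survives is non-elliptic (e.g., upper or lower triangular, or scalar plus nilpotent, in one of the two $\mathfrak{gl}_2$ factors). Once this verification is complete, the inequalities close up to the stated range and the lemma follows.
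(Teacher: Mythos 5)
There are two gaps here, one computational and one conceptual.

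The computational gap is a basis mismatch. Your $b_n$ is read from the Moy--Prasad table, which the paper writes in the basis $\{e_1,e_2,f_2,f_1\}$ (this switch is stated explicitly, ``for aesthetic purposes''), while the shift matrix for $\mathrm{Ad}(\varpi^{\ell,k})$ is given in the Witt basis $\{e_1,e_2,f_1,f_2\}$. Adding them entrywise without first permuting one to the other's basis produces the wrong numbers. In the Witt basis the $(3,1)$ entry has baseline $n-1$ and shift $-\ell$, so the displayed value is $n-1-\ell$, not $n-\ell$; there is no $\ell\le n$ inequality that later needs sharpening by ellipticity. And no entry produces your $\ell+k\le n-1$: the $(4,1)$ entry in the Witt basis (baseline $n$, shift $-\ell-k$) is not one of the $\mathfrak h(\mathbb F_q)$-entries of the reductive quotient and contributes no constraint (more below).

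The conceptual gap is the larger issue. You treat ``baseline $+$ shift $\ge 0$'' as equivalent to $\mathrm{Ad}(\varpi^{\ell,k})Y\in\mathfrak g(\mathcal O_F)$, concluding that most of the bounds on $(k,\ell)$ fall out immediately and that ellipticity is only needed at the endgame. But the baseline valuations are merely \emph{lower} bounds on the entries of $Y$; if an entry of $Y$ has higher valuation than its baseline, the shifted entry can still land in $\mathcal O_F$ even when baseline $+$ shift is negative, so no constraint on $(k,\ell)$ is forced a priori. Ellipticity is what produces the constraints, and it is needed for \emph{both} conclusions: since $\bar Y\in\mathfrak h(\mathbb F_q)$ is regular semisimple and elliptic, neither of its $\mathfrak{gl}_2$-blocks can be upper triangular over $\mathbb F_q$, so (in the Witt basis) $\bar Y_{3,1}\ne 0$ and $\bar Y_{4,2}\ne 0$, i.e.\ $\nu(Y_{3,1})=n-1$ and $\nu(Y_{4,2})=n$ \emph{exactly}. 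Only then do $n-1-\ell\ge 0$ and $n-\ell-2k\ge 0$ become constraints; this is exactly the paper's two-case argument. Entries such as $(4,1)$ or $(3,2)$ (Witt basis) are automatically zero in $\bar Y$, so a negative shifted baseline there carries no information and yields no bound. There is also no narrow-$k$ case analysis to do: each of the two entries gives its constraint in one line once you invoke ellipticity correctly.
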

 %       \[\begin{tikzpicture}
 %   \draw[thick, fill=violet!30] (0,5) -- (2.5,0) -- (0,0) -- (-2.5,5) -- (0,5);
 %   \node[above left] at (0,6) {$\ell$};
 %   \node[below right] at (6,0) {$k$};
 %   \node[below] at (2.5,0) {$n/2$};
 %   \node[right] at (0,5) {$n$};
 %   \draw[ultra thick, ->] (-3,0)--(3,0);
 %   \draw[ultra thick, ->] (0,0)--(0,6);
%\end{tikzpicture}\]
\begin{proof}  
We have 
    \[\mathrm{Ad}(\varpi^{\ell,k})Y\in 
    \begin{bmatrix}n& n+1-k&n+1+\ell&n+1+\ell+k\\ n+k & n&n+1+\ell+k& n+\ell+2k\\n-1 -\ell&n-k-\ell&n&n+k\\n-k-\ell&n-2k-\ell &n+1-k&n\end{bmatrix}\cap \mathfrak{g}(F)_{x_1,0}. \]
        and 
        \[\bar{Y} \in \mathfrak{g}(F)_{x_3,n:n+} = \mathfrak{h}(\mathbb{F}_q) = \begin{bmatrix}
            \star&0&\star&0\\
            0&\star&0&\star\\
            \star&0&\star&0\\
            0&\star&0&\star
        \end{bmatrix}.\]

      \begin{itemize}
      \item {\bf Case 1.} If $n-1-\ell <0$ then the valuation of $Y_{3,1}$ must be strictly greater than $n$, hence  
      \[\bar{Y} \in  \begin{bmatrix}
            \star&0&\star&0\\
            0&\star&0&\star\\
            0&0&\star&0\\
            0&\star&0&\star
        \end{bmatrix},\]
        which cannot happen since $\bar{Y}$ is elliptic. 
      \item {\bf Case 2.} If $n-2k-\ell<0$, the same reasonning for $Y_{4,2}$ yields
 \[\bar{Y} \in   \begin{bmatrix}
            \star&0&\star&0\\
            0&\star&0&\star\\
            \star&0&\star&0\\
            0&0&0&\star
        \end{bmatrix}, \]
        which is impossible for elliptic $\bar{Y}$.
      \end{itemize} 
      This conclude the proof.
\end{proof}

\begin{lemma}
        Let $\lambda=(k,\ell)$ as described in Lemma \ref{lem:t3} and let $B$ denote the upper-triangular Borel of $\mathrm{GL}_2(\mathbb{F}_q)$. We have the following:
        \begin{itemize}
        \item If $\ell+2k=0$ then $\bar{P}_\lambda\cong (B\times_{\det} \mathrm{GL}_2)(\mathbb{F}_q)$ and therefore  $G(F)_{x_3,0:0+}/\bar{P}_\lambda \cong \mathbb{P}^1(\mathbb{F}_q)$,
        \item Otherwise, $\bar{P}_\lambda\cong (B\times_{\det} B)(\mathbb{F}_q)$, and therefore  $G(F)_{x_3,0:0+}/\bar{P}_\lambda \cong  (\mathbb{P}^1\times \mathbb{P}^1)(\mathbb{F}_q)$,
        \end{itemize}
as in Figure \ref{fig:t3}.
\end{lemma}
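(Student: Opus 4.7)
The plan is to carry out exactly the same kind of explicit computation as in the type~1 case, with the only change being that the ambient lattice $G(F)_{x_3,0}$ has a shifted valuation pattern (in particular its bottom‑left entry sits in $\mathfrak{m}^{-1}$ rather than $\mathcal{O}$). Start from the conjugation shift table for $\varpi^{k,\ell}(\cdot)\varpi^{-k,\ell}$ recorded in the preceding lemma, apply it to the entry‑wise condition $\mathcal{O}$ defining $G(F)_{x_1,0}$, and intersect the resulting matrix of valuation bounds with the matrix describing $G(F)_{x_3,0}$ given in the table of \S\ref{sec:roots}. This produces $\varpi^{\lambda}G(F)_{x_1,0}\varpi^{-\lambda}\cap G(F)_{x_3,0}$ as a concrete block of valuation constraints.

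Next, I would pass to the reductive quotient. An entry of $\bar{P}_\lambda$ survives in $G(F)_{x_3,0:0+}\cong(\mathrm{GL}_2\times_{\det}\mathrm{GL}_2)(\mathbb{F}_q)$ precisely when the prescribed minimum valuation on that entry is $0$; an entry forced into $\mathfrak{m}$ gets killed. Under the embedding shown in the table, the first $\mathrm{GL}_2$ factor occupies the middle $2\times 2$ block in rows/columns $\{2,3\}$ and the second $\mathrm{GL}_2$ factor occupies the four corner positions in rows/columns $\{1,4\}$. So I just need to read off, for each admissible $(k,\ell)$, which off‑diagonal entry of each $\mathrm{GL}_2$ block is pushed into $\mathfrak{m}$.

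The case split then falls out of the boundary geometry of the region in Lemma~\ref{lem:t3}. When $\ell+2k=0$, the entry of the outer (``$f_1,e_1$'') factor is still allowed at valuation $0$ while the other entry of that factor is forced into $\mathfrak{m}$, and the middle factor is untouched; this gives $\bar{P}_\lambda=(B\times_{\det}\mathrm{GL}_2)(\mathbb{F}_q)$, so the quotient is $\mathrm{GL}_2/B\cong\mathbb{P}^1(\mathbb{F}_q)$. When $\ell+2k>0$, both factors acquire a $\varpi$‑divisibility condition on their lower‑left entry, yielding $\bar{P}_\lambda=(B\times_{\det}B)(\mathbb{F}_q)$ and quotient $\mathbb{P}^1\times\mathbb{P}^1(\mathbb{F}_q)$. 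The remaining strict‑inequality constraints $\ell\leq n-1$ and $2k\leq n-\ell$ only cut the region of summation and do not affect the shape of $\bar{P}_\lambda$ itself.

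The main obstacle I foresee is purely bookkeeping: one has to be careful that the table of \S\ref{sec:roots} is written in the basis $\{e_1,e_2,f_2,f_1\}$ whereas the diagonal matrices $\varpi^{k,\ell}$ and the symplectic form were set up in $\{e_1,e_2,f_1,f_2\}$, so one must permute rows/columns $3$ and $4$ in either the shift table or the filtration pattern before taking the intersection. Once the bases are aligned, the computation is mechanical and the case split is exactly $\ell+2k=0$ vs.\ $\ell+2k>0$ as claimed.
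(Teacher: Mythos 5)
Your proposal is correct in outcome and follows the same route as the paper: explicitly form $\varpi^{\lambda}G(F)_{x_1,0}\varpi^{-\lambda}\cap G(F)_{x_3,0}$, reduce modulo the depth-$0+$ piece, and read off which off-diagonal entries die; the $\ell+2k=0$ versus $\ell+2k>0$ dichotomy then determines whether the inner factor contributes $\mathrm{GL}_2$ or a Borel. You are also right to flag the $3\leftrightarrow 4$ basis permutation between the shift table and the filtration table; the paper silently performs this permutation, which is why its matrices display the orange block at positions $(1,3),(3,1)$ rather than $(1,4),(4,1)$.

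One caveat: the criterion you state --- ``an entry survives precisely when the prescribed minimum valuation is $0$; an entry forced into $\mathfrak{m}$ gets killed'' --- is only literally correct for the middle factor, where both off-diagonal slots of the reductive quotient are $\mathcal{O}/\mathfrak{m}$. For the corner factor at $x_3$ the filtration piece is $\left(\begin{smallmatrix}\mathcal{O}&\mathfrak{m}\\\mathfrak{m}^{-1}&\mathcal{O}\end{smallmatrix}\right)$, so the reductive quotient in those slots is $\mathfrak{m}/\mathfrak{m}^2$ and $\mathfrak{m}^{-1}/\mathcal{O}$; the correct test is therefore whether the intersection achieves the \emph{lower bound of that filtration step}, not valuation $0$. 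With your literal test one would conclude that the $(f_1,e_1)$-entry at valuation $0$ ``survives'' when $\ell=0$, whereas in fact valuation $0$ already lies in $\mathcal{O}$ and hence maps to $0$ in $\mathfrak{m}^{-1}/\mathcal{O}$; conversely the $(e_1,f_1)$-entry at valuation exactly $1$ is \emph{not} killed. This happens not to change the abstract isomorphism type of $\bar{P}_\lambda$ (one gets a lower-triangular instead of upper-triangular Borel in the corner factor, but still a Borel), so your final case split and point counts are right; nevertheless, if you want to identify the \emph{specific} parabolic as the paper does, you should adjust the survival criterion to reflect the shifted presentation of $\mathrm{GL}_2$ in the corner block.
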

\begin{proof}
Let $\lambda = (k,\ell)$. We use the notation $\mathcal{O}= \mathcal{O}_F$ and $\mathfrak{m} = \varpi\mathcal{O}_F$. Look at the reduction map 
\begin{align*}
P_\lambda &= \varpi^{\lambda}G(F)_{x_1,0}\varpi^{-\lambda}\cap G(F)_{x_3,0}\rightarrow \bar{P}_\lambda \subset G(F)_{x_3,0:0+} \cong ({\color{orange}\mathrm{GL}_2}\times_{\det}{\color{purple}\mathrm{GL}_2})(\mathbb{F}_q)\\
\end{align*}
\[\pmat{{\color{orange}\mathcal{O}}&\star &{\color{orange}\mathfrak{m}^{-\ell}}&\star \\ 
\star &{\color{purple}\mathcal{O}}&\star &{\color{purple}\mathfrak{m}^{-\ell-2k}}\\
{\color{orange}\mathfrak{m}^{\ell}}&\star& {\color{orange}\mathcal{O}}&\star\\
 \star &{\color{purple}\mathfrak{m}^{\ell+2k}}&\star &{\color{purple}\mathcal{O}}}
\cap \pmat{{\color{orange}\mathcal{O}}&\star &{\color{orange}\mathfrak{m}}&\star \\ 
\star &{\color{purple}\mathcal{O}}&\star &{\color{purple}\mathcal{O}}\\
{\color{orange}\mathfrak{m}^{-1}}&\star& {\color{orange}\mathcal{O}}&\star\\ 
\star &{\color{purple}\mathcal{O}}&\star &{\color{purple}\mathcal{O}}}
\rightarrow
\underset{\left(
\pmat{{\color{orange}\mathcal{O}/\mathfrak{m}}&{\color{orange}\mathfrak{m}/\mathfrak{m}^2}\\
 {\color{orange} \mathfrak{m}^{-1}/\mathcal{O}}&{\color{orange}\mathcal{O}/\mathfrak{m}}},
 \pmat{{\color{purple}\mathcal{O}/\mathfrak{m}}&{\color{purple}\mathcal{O}/\mathfrak{m}}\\
{\color{purple}\mathcal{O}/\mathfrak{m}}&{\color{purple}\mathcal{O}/\mathfrak{m}}}\right)} {\underbrace{\pmat{{\color{orange}\mathcal{O}/\mathfrak{m}}&0&{\color{orange}\mathfrak{m}/\mathfrak{m}^2}&0
 \\0&{\color{purple}\mathcal{O}/\mathfrak{m}}&0&{\color{purple}\mathcal{O}/\mathfrak{m}}\\
{\color{orange} \mathfrak{m}^{-1}/\mathcal{O}}&0&{\color{orange}\mathcal{O}/\mathfrak{m}}&0\\
 0&{\color{purple}\mathcal{O}/\mathfrak{m}}&0&{\color{purple}\mathcal{O}/\mathfrak{m}}}}}.\]
Since $\ell\geq 0$, the image of the left matrix is the Borel $B = \pmat{{\color{orange}\mathcal{O}/\mathfrak{m}}&{\color{orange}\mathfrak{m}/\mathfrak{m}^2}\\
 {\color{orange}0}&{\color{orange}\mathcal{O}/\mathfrak{m}}}$. The image on the second 
 matrix is either $\mathrm{GL}_2(\mathbb{F}_q)$ when $\ell+2k = 0$, or the upper-triangular Borel $B$ if $\ell+2k>0$. 
        \end{proof}

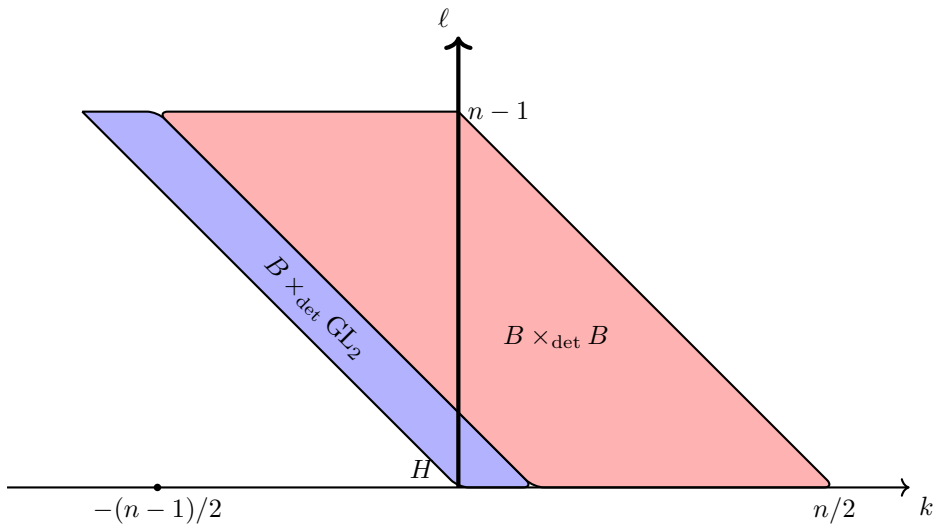
\begin{figure}[ht]
\centering
\begin{tikzpicture}
    %\draw[rounded corners, thick, fill=violet!30] (0,5) -- (2.5,0) -- (0,0) -- (-2.5,5) -- (0,5);
   % \draw[rounded corners, thick, fill=green!30] (0,0) -- (-0.5,0.5) -- (0.5,0.5) -- (1,0) -- (0,0);
    \draw[rounded corners, thick, fill=blue!30] (-5,5) -- (0,0) -- (1,0) -- (-4,5) -- (-5,5);
    \draw[rounded corners, thick, fill=red!30] (0,5) -- (-4,5) -- (1,0) -- (5,0) -- (0,5);
    \node[above left] at (0,6) {$\ell$};
    \node[below right] at (6,0) {$k$};
    \node[below] at (5,0) {$n/2$};
    \node[right] at (0,5) {$n-1$};
        \node[below] at (-4,0) {$-(n-1)/2$};
    \node[left] at (-.2,.25) {$H$};
    %\node[right] at (5,0.25) {$\mathrm{GL}_2\times_{\det} B_2$};
    %\node[right] at (3,3) {$B\times_{\det} B$};
    \node at (1.3,2) {$B\times_{\det} B$};
    %\node at (2.5,0.25) {$\mathbb{P}^1$};
    \node[rotate=-45] at (-1.9,2.4) {$B\times_{\det} \mathrm{GL}_2$};
    %\node at (0.35,0.25) {$\{\text{pt}\}$};
    \draw[thick, ->] (-6,0)--(6,0);
    \draw[ultra thick, ->] (0,0)--(0,6);
    \node[circle,inner sep=1pt,fill=black] at (-4,0) {};
\end{tikzpicture}
\caption{Case of vertex of type 3 with the description of $\bar{P}_{\lambda}$}
\label{fig:t3}
\end{figure}

%\begin{figure}[ht]
%\label{fig:t3}
%\centering
%\begin{tikzpicture}
%    \draw[rounded corners, thick, fill=violet!30] (0,5) -- (2.5,0) -- (0,0) -- (-2.5,5) -- (0,5);
%    \draw[rounded corners, thick, fill=green!30] (0,0) -- (-0.5,0.5) -- (0.5,0.5) -- (1,0) -- (0,0);
%   \draw[rounded corners, thick, fill=blue!30] (-5,5) -- (-0.5,0.5) -- (0.5,0.5) -- (-4,5) -- (-5,5);
%  \draw[rounded corners, thick, fill=cyan!30] (5,0) -- (4.5,0.5) -- (0.5,0.5) -- (1,0) -- (5,0);
%    \draw[rounded corners, thick, fill=red!30] (0,5) -- (-4,5) -- (0.5,0.5) -- (4.5,0.5) -- (0,5);
%    \node[above left] at (0,6) {$\ell$};
%   \node[below right] at (6,0) {$k$};
%  \node[below] at (5,0) {$n/2$};
%    \node[right] at (0,5) {$n-1$};
%        \node[below] at (-4,0) {$-(n-1)/2$};
%    \node[left] at (-.2,.25) {$H$};
%    \node[left] at (-2.5,2.5) {$B_2\times_{\det} \mathrm{GL}_2$};
%    \node[right] at (5,0.25) {$\mathrm{GL}_2\times_{\det} B_2$};
%    \node[right] at (3,3) {$B_2\times_{\det} B_2$};
%    \node at (1.3,2) {$\mathbb{P}^1\times \mathbb{P}^1$};
%    \node at (2.5,0.25) {$\mathbb{P}^1$};
%    \node at (-1.9,2.5) {$\mathbb{P}^1$};
%    \node at (0.35,0.25) {$\{\text{pt}\}$};
%    \draw[thick, ->] (-6,0)--(6,0);
%    \draw[ultra thick, ->] (0,0)--(0,6);
%\end{tikzpicture}
%\caption{Case of vertex of type 3}
%\end{figure}

\begin{prop} For each $\lambda = (k,\ell)$ let $\mathcal{P}_{\lambda} = G(F)_{x_3,0:0+}/\bar{P}_\lambda$. We have
\[S_\lambda(X) = Z\backslash G(F)_{x_3,0:0+}\cong(\bar{Z}\backslash\mathrm{GL}_2\times_{\det}\mathrm{GL}_2)(\mathbb{F}_q),\ |S_\lambda| = q^2(q^2-1)^2.\]

\end{prop}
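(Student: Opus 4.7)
The plan is to show that once $\lambda = (k,\ell)$ satisfies the inequalities from Lemma \ref{lem:t3}, the condition defining membership in $\bar{S}_\lambda(X)$ becomes vacuous: every $\bar{g} \in G(F)_{x_3,0:0+}$ comes from some $g \in S_\lambda(X)$. This contrasts sharply with the type-$1$ analysis because the reductive quotient at $x_3$ is $\mathrm{GL}_2 \times_{\det} \mathrm{GL}_2$ itself, so $\bar{X}$ already sits in $\mathfrak{h}(\mathbb{F}_q)$ with no extra isotropy conditions to impose.

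First I would note that for any $g \in G(F)_{x_3,0}$, the element $Y := \mathrm{Ad}(g)X$ still lies in $\mathfrak{g}(F)_{x_3,n}$, because the parahoric preserves its own Moy--Prasad filtration. The valuation matrix for $\mathrm{Ad}(\varpi^{\ell,k})Y$ computed inside the proof of Lemma \ref{lem:t3} depends only on the filtration level of $Y$ and on the shifts induced by $\varpi^{\ell,k}$, so the same bounds
\[\begin{bmatrix}n& n{+}1{-}k&n{+}1{+}\ell&n{+}1{+}\ell{+}k\\ n{+}k & n&n{+}1{+}\ell{+}k& n{+}\ell{+}2k\\n{-}1{-}\ell&n{-}k{-}\ell&n&n{+}k\\n{-}k{-}\ell&n{-}2k{-}\ell &n{+}1{-}k&n\end{bmatrix}\]
apply uniformly across $g$. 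Membership in $\mathfrak{g}(F)_{x_1,0}$ amounts to checking all these entries are non-negative.

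The next step is a short case check under the constraints $0 \leq \ell \leq n-1$ and $-\ell \leq 2k \leq n-\ell$. The bounds $2k \leq n-\ell$ and $\ell \geq 0$ give $k \leq n/2$, and $2k \geq -\ell$ with $\ell \leq n-1$ gives $k \geq -(n-1)/2$, so $|k| \leq n+1$ handles the entries $n+1-k$ and $n+k$. For $n-k-\ell$, splitting on the sign of $k$ reduces to $k+\ell \leq 2k+\ell \leq n$ when $k\geq 0$, and $k+\ell < \ell \leq n-1$ when $k<0$. The remaining entries $n-1-\ell$ and $n-2k-\ell$ are directly the hypotheses. Thus $\mathrm{Ad}(\varpi^{-\lambda})\mathrm{Ad}(g)X \in \mathfrak{g}(F)_{x_1,0}$ for every $g$, giving $\bar{S}_\lambda(X) = \bar{Z}\backslash G(F)_{x_3,0:0+}$.

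Finally, the identification $\bar{Z}\backslash G(F)_{x_3,0:0+} \cong \bar{Z}\backslash (\mathrm{GL}_2 \times_{\det} \mathrm{GL}_2)(\mathbb{F}_q)$ is exactly the description of the reductive quotient at a type-$3$ vertex from the table in \S\ref{sec:roots}. The counting is then elementary: $|\mathrm{GL}_2(\mathbb{F}_q)| = q(q-1)^2(q+1)$, so $|(\mathrm{GL}_2 \times_{\det} \mathrm{GL}_2)(\mathbb{F}_q)| = |\mathrm{GL}_2(\mathbb{F}_q)|^2/(q-1) = q^2(q-1)^3(q+1)^2$, and dividing by the order $q-1$ of the scalar center gives $q^2(q^2-1)^2$. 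The only real obstacle is the bookkeeping in the case split on the sign of $k$; the absence of any flag-variety constraint (seen in the type-$1$ case) is precisely what makes $\bar{S}_\lambda$ exhaust the whole quotient here.
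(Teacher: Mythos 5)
Your argument is correct and follows essentially the same strategy as the paper: the constraints from Lemma~\ref{lem:t3} make the membership condition in $\bar{S}_\lambda(X)$ vacuous, so $\bar{S}_\lambda(X)$ is all of $\bar{Z}\backslash G(F)_{x_3,0:0+}$, and the count $q^2(q^2-1)^2$ follows; the paper simply asserts ``no condition on $g$'' where you explicitly verify the valuation bounds, which is a useful unpacking. One small imprecision worth fixing: the restatement ``$|k|\leq n+1$'' is too weak to handle the entry $n+k$, since it only yields $n+k\geq -1$; you should instead invoke the sharper bound $k\geq -(n-1)/2\geq -n$ you derived (valid whenever the $\lambda$-range is nonempty, i.e.\ $n\geq 1$), which does give $n+k\geq 0$.
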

\begin{proof}
We have no condition on  $g\in \mathcal{P}_{\lambda}$ so that 
$g^{-1}Xg \in \mathfrak{g}(F)_{x_3,0:0+}$, so we always have 
$S_\lambda = G(F)_{x_3,0:0+}.$  We then have 
\[|\bar{Z}\backslash (\mathrm{GL}_2\times_{\det}\mathrm{GL}_2)(\mathbb{F}_q)| = \frac{|\mathrm{GL}_2(\mathbb{F}_q)|^2}{2(q-1)^2} = \frac{(q^2-1)^2(q^2-q)^2}{(q-1)^2} = q^2(q^2-1)^2.\]
\end{proof}

\begin{rem} Shouldn't we have a $2$ at the denominator? Since $|\bar{Z}| = 2(q-1)$ and not $q-1$. We have a slight abuse of notation, what we really want is to count the points on the image of 
 $Z\backslash P_\lambda$ over the residue field. We use the isogeny between $G$ and $\mathbb{G}_m\times G^{\mathrm{der}}$ to reduce this to the point count on a parabolic of the derived subgroup.

 One could make the choice to include the $2$, which would correspond to a slightly different choice of measure, differing from ours by a factor of $2$.
\end{rem}

Now we compute the dimension $d_\lambda$. And using our conditions, we can write it as
\begin{align*}
d_\lambda &= \min(\ell+1,n)+\min(|k-1|,n)+\min(|\ell+2k|,n)+\min(|\ell+k|,n)\end{align*}

This case is easier than the previous case, we can split the computations depending on whether $k\geq 0$ or $k\leq 0$, and we get 
\[\sum_\lambda q^{\lambda} = \frac{q^2(1-q^n)(1-q^{n+1})(1+q^n+q^{n+1})}{(1-q^2)(1-q^3)}.\]

We therefore have 
\[\mathrm{Orb}_{\mathfrak{g}(F)}(X,\mathds{1}_{\mathfrak{g}(\mathcal{O}_F)}) =q^2(q^2-1)^2\sum_\lambda q^{\lambda} = \frac{q^4(q+1)}{(q^2+q+1)}(q^n-1)(q^{n+1}-1)(1+q^n+q^{n+1}),\]

which we rewrite in its Shalika germ expansion below.
\begin{theorem}
In the case of vertices of type  $3$, we have 
\[\mathrm{Orb}_{\mathfrak{g}(F)}(X,\mathds{1}_{\mathfrak{g}(\mathcal{O}_F)}) =
\frac{q^4(q+1)}{(q^2+q+1)}\left(A(q) -B(q)q^{2n} +C(q)q^{3n}\right),\]
Where 
\begin{align*}A &= 1,\\
B &= 1+q+q^2,\\
C &=q(1+q).\\
\end{align*}
%\[\mathrm{Orb}_{\mathfrak{g}(F)}(X,\mathds{1}_{\mathfrak{g}(\mathcal{O}_F)}) =q^4(q^2-1)^2\left((q^2+1)\left\lfloor\frac{(n+2)^2}{4}\right\rfloor   +(q+1)\left\lfloor\frac{(n+1)^2}{4}\right\rfloor + \frac{n(n+1)}{2}\right).\]
\end{theorem}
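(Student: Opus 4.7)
My plan is that essentially all the substantive work for this theorem has already been done in the paragraph immediately preceding the statement, where the integral is computed as
\[
\mathrm{Orb}_{\mathfrak{g}(F)}(X,\mathds{1}_{\mathfrak{g}(\mathcal{O}_F)})
= \frac{q^4(q+1)}{q^2+q+1}\,(q^n-1)(q^{n+1}-1)(1+q^n+q^{n+1}).
\]
So my first step would be to simply take this identity as the starting point and verify by expansion that the right-hand side matches the proposed Shalika germ expansion with the explicit $A,B,C$.

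The main (and only) computation is to expand $(q^n-1)(q^{n+1}-1)(1+q^n+q^{n+1})$ as a polynomial in $q^n$ with coefficients in $\mathbb{Z}[q]$. I would first write $(q^n-1)(q^{n+1}-1) = q^{2n+1}-(1+q)q^n+1$, and then multiply by $(1+(1+q)q^n)$ to obtain
\[
1 + q^{2n+1} - (1+q)^2 q^{2n} + (1+q)q^{3n+1}
= 1 - (1+q+q^2)\,q^{2n} + q(1+q)\,q^{3n},
\]
using $q-(1+q)^2 = -(1+q+q^2)$. Reading off coefficients gives $A(q)=1$, $B(q)=1+q+q^2$, $C(q)=q(1+q)$, which is exactly the claimed expansion.

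I do not expect any genuine obstacle here: the prior computation of $\sum_\lambda q^{d_\lambda}$ via the Moy--Prasad filtration and the Goresky--Kottwitz--MacPherson dimension formula already encodes all the geometric content (the enumeration of double cosets via Lemma~\ref{lem:t3}, the computation of $|\bar S_\lambda(X)| = q^2(q^2-1)^2$ for every $\lambda$, and the explicit dimension $d_\lambda = \min(\ell+1,n)+\min(|k-1|,n)+\min(|\ell+2k|,n)+\min(|\ell+k|,n)$). The theorem as stated is just the repackaging of that closed form into the three-term shape suitable for comparison with a Shalika germ expansion on $\mathbf{G}(F)$ via Theorem~\ref{th:shalik-elliptic}, which is exactly the form used in the analogous theorem for type~$1$ vertices. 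So the only task left is the algebraic rearrangement above.
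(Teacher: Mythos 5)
Your proposal is correct and is exactly what the paper does: the paragraph preceding the theorem already computes the orbital integral as $\frac{q^4(q+1)}{q^2+q+1}(q^n-1)(q^{n+1}-1)(1+q^n+q^{n+1})$, and the theorem is just the algebraic repackaging of that product into the stated three-term form, which your expansion verifies correctly.
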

We can use Example \ref{ex:compunram} to see that in that case we get 
\[\mathrm{Orb}_{\mathfrak{h}(F)}(X,\mathds{1}_{\mathfrak{h}(\mathcal{O}_F)}) = 2 \frac{(q^n-1)(q^{n+1}-1)}{(q-1)^2}.\]
\begin{cor}
If $X$ has type $3$ then we have 
\[\mathrm{Orb}_{\mathfrak{g}(F)}(X,\mathds{1}_{\mathfrak{g}(\mathcal{O}_F)}) =(1+q^n+q^{n+1})\frac{(q^2-1)(q-1)q^4}{2(1+q+q^2)} \mathrm{Orb}_{\mathfrak{h}(F)}(X,\mathds{1}_{\mathfrak{h}(\mathcal{O}_F)}).\]
In particular, we can always pick measures so that 
\[\mathrm{Orb}_{\mathfrak{g}(F)}(X,\mathds{1}_{\mathfrak{g}(\mathcal{O}_F)}) =(1+q^n+q^{n+1}) \mathrm{Orb}_{\mathfrak{h}(F)}(X,\mathds{1}_{\mathfrak{h}(\mathcal{O}_F)}).\]
\end{cor}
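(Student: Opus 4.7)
The plan is to combine the explicit formula for $\mathrm{Orb}_{\mathfrak{g}(F)}(X,\mathds{1}_{\mathfrak{g}(\mathcal{O}_F)})$ just obtained in the theorem preceding this corollary with the formula for $\mathrm{Orb}_{\mathfrak{h}(F)}(X,\mathds{1}_{\mathfrak{h}(\mathcal{O}_F)})$ already recorded from Example \ref{ex:compunram}, and take their ratio. Since everything is explicit, this is purely a matter of algebraic simplification.

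First, I would rewrite the theorem above in the unfactored form
\[\mathrm{Orb}_{\mathfrak{g}(F)}(X,\mathds{1}_{\mathfrak{g}(\mathcal{O}_F)}) = \frac{q^{4}(q+1)}{q^{2}+q+1}\,(q^{n}-1)(q^{n+1}-1)(1+q^{n}+q^{n+1}),\]
which one reads off from the Shalika germ expansion by grouping the three terms $A(q)$, $-B(q)q^{2n}$, $C(q)q^{3n}$ of the theorem. Here one has to check the routine identity
\[1 - (1+q+q^{2})q^{2n} + q(1+q)q^{3n} = (q^{n}-1)(q^{n+1}-1)(1+q^{n}+q^{n+1}),\]
which is immediate after expansion.

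Next, from Example \ref{ex:compunram} applied to $X$ of type $3$ (so that the vertex fixed by $X$ is bichromatic and both $F[\gamma_{i}]/F$ are unramified), the depths satisfy $d_{\gamma_{1}}+d_{\gamma_{2}}+1$ terms combine into the formula already quoted,
\[\mathrm{Orb}_{\mathfrak{h}(F)}(X,\mathds{1}_{\mathfrak{h}(\mathcal{O}_F)}) = \frac{2\,(q^{n}-1)(q^{n+1}-1)}{(q-1)^{2}}.\]

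Now I would just take the quotient. The factors $(q^{n}-1)(q^{n+1}-1)$ cancel, giving
\[\frac{\mathrm{Orb}_{\mathfrak{g}(F)}(X,\mathds{1}_{\mathfrak{g}(\mathcal{O}_F)})}{\mathrm{Orb}_{\mathfrak{h}(F)}(X,\mathds{1}_{\mathfrak{h}(\mathcal{O}_F)})} = \frac{q^{4}(q+1)(q-1)^{2}(1+q^{n}+q^{n+1})}{2(q^{2}+q+1)} = (1+q^{n}+q^{n+1})\,\frac{(q^{2}-1)(q-1)q^{4}}{2(1+q+q^{2})},\]
which is the first claimed identity. The ``in particular'' statement then follows by absorbing the constant $\frac{(q^{2}-1)(q-1)q^{4}}{2(1+q+q^{2})}$ into the choice of Haar measure on either $\mathfrak{h}(F)$ or its centralizer, since orbital integrals scale linearly with such a choice and the constant depends only on $q$, not on $n$.

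There is no real obstacle here: the only point requiring care is to make sure that the two formulas being divided correspond to \emph{compatible} measure normalizations, in the sense used earlier in \S\ref{sec:prelim}. Both computations were carried out with the Haar measure giving the relevant parahoric (respectively $G(F)_{x_{3},0}$ and the product of two copies of $\mathrm{GL}_{2}(\mathcal{O}_{F})$) volume one, so the resulting scalar is genuinely a constant in $q$, as asserted.
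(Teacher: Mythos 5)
Your proof is correct and takes the same route as the paper: the corollary is obtained by dividing the explicit formula from the preceding theorem by the formula for $\mathrm{Orb}_{\mathfrak{h}(F)}(X,\mathds{1}_{\mathfrak{h}(\mathcal{O}_F)})$ coming from Example \ref{ex:compunram} (applied with $d_{\gamma_1}=d_{\gamma_2}=n$ and a bichrome vertex), and the $(q^n-1)(q^{n+1}-1)$ factors cancel. The paper itself gives no argument beyond pointing to these two formulas, so your added algebraic verification and the remark about absorbing the $n$-independent constant into the measure normalization are faithful to, and slightly more explicit than, what the paper asserts.
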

\subsection{Vertex of type 4}

Let $X\in \mathfrak{g}(F)_{x_4,n+1/2}=\varpi^n \mathfrak{g}(F)_{x_4,1/2}$ be an elliptic regular semisimple element. We decompose 
\[G(F) = \prod_{k,\ell}G(F)_{x_4,0}\varpi^{k,\ell}G(F)_{x_1,0},\]
with $ k \geq 0$.\\ 

\begin{lemma}\label{lem:t4}
     Let $g\in G(F)_{x_4,0}$ and $Y = \mathrm{Ad}(g)X\in \mathfrak{g}(F)_{x_4,n+1/2}$, where $X\in \mathfrak{h}(F)\cap \mathfrak{g}(F)_{x_4,n+1/2}$.
      If $\mathrm{Ad}(\varpi^{\ell,k})Y\in \mathfrak{g}(F)_{x_1,0}$ for
       some $k\in\mathbb{Z}_{\geq 0}$.  
       One of the following must hold
       \begin{enumerate}[(i)]
        \item $n-\ell-2k\geq 0$ and $n+1+\ell\geq 0$,
        \item $n-\ell-2k<0$, $n-\ell-k\geq 0$, and $n+1+\ell\geq 0$,
        \item $n+1+\ell<0$, $n+1+\ell+k\geq 0$, and $n-\ell-2k\geq 0$.
    \end{enumerate}
\end{lemma}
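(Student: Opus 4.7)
The plan is to adapt the method of Lemmas \ref{lem:t1} and \ref{lem:t3}. Using the Moy--Prasad filtration at $x_4$ together with the shift matrix for $\mathrm{Ad}(\varpi^{\ell,k})$ recalled above, I would first write
\[
\mathrm{Ad}(\varpi^{\ell,k})Y\in \begin{bmatrix}n+1& n+1-k& n+1+\ell& n+1+\ell+k\\
n+1+k& n+1& n+1+\ell+k& n+1+\ell+2k\\
n-\ell& n-\ell-k& n+1& n+1+k\\
n-\ell-k& n-\ell-2k& n+1-k& n+1\end{bmatrix}\cap \mathfrak{g}(F)_{x_1,0},
\]
where each entry records a lower bound on the valuation of the corresponding matrix coefficient. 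The reductive quotient $\mathfrak{g}(F)_{x_4,n+1/2:(n+1/2)+}$ is supported only on the two off-diagonal $2\times 2$ blocks, so a negative entry above at a position $(i,j)$ inside one of those blocks forces $\bar Y_{i,j}=0$ in the quotient, whereas a negative entry on a diagonal block imposes no condition visible in $\bar Y$ itself.

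Next I would observe that equivalued ellipticity of $X$ forces both off-diagonal $2\times 2$ blocks of $\bar Y$, call them $B$ (top-right) and $C$ (bottom-left), to be invertible over $\mathbb{F}_q$. Indeed, the product $BC$ is the top-left $2\times 2$ block of $\bar Y^2$ and so is conjugate in $\mathrm{GL}_2(\mathbb{F}_q)$ to the corresponding block of $\bar X^2=\mathrm{diag}(B_0C_0,C_0B_0)$; but for equivalued elliptic $X\in\mathfrak{h}(F)$, the diagonal matrix $B_0C_0=\mathrm{diag}(\bar b_1\bar c_1,\bar b_2\bar c_2)$ has both entries nonzero. Hence $BC$, and therefore $B$ and $C$ individually, are invertible.

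Finally I would enumerate which pairs $(\ell,k)$ are compatible with this invertibility constraint. Case $(i)$ imposes no vanishing at all; case $(ii)$ forces only $\bar Y_{4,2}=0$, which is consistent with $C$ invertible; and case $(iii)$ is the mirror image, forcing only $\bar Y_{1,3}=0$. In every other regime, a second entry in one of the off-diagonal blocks is forced to vanish: for instance if $n-\ell-k<0$ then $\bar Y_{3,2}=\bar Y_{4,1}=0$ and, since $k\geq 0$, also $\bar Y_{4,2}=0$, so $C$ has rank at most one; mirror arguments rule out the failures $n-\ell<0$, $n+1+\ell+k<0$, and $n+1+\ell+2k<0$. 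The main obstacle is the routine but careful bookkeeping needed to verify that these four scenarios exhaust all ways in which the invertibility of $B$ or $C$ can fail under the standing assumption $k\geq 0$, so that cases $(i)$, $(ii)$, $(iii)$ describe exactly their complement.
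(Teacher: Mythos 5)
Your reduction to the off-diagonal blocks $B$ and $C$ of $\bar Y$, and your observation that $BC$ is $\mathrm{GL}_2(\mathbb{F}_q)$-conjugate to the diagonal matrix $B_0C_0$, are both correct and close in spirit to what the paper does. However, your final enumeration has a genuine gap: the constraint ``$B$ and $C$ are each invertible'' does not by itself rule out the regime where $n-\ell-2k<0$ \emph{and} $n+1+\ell<0$ hold simultaneously. In that regime only $\bar Y_{4,2}$ and $\bar Y_{1,3}$ are forced to vanish (one entry from each block), and since a symmetric $2\times2$ matrix with one zero diagonal entry is still invertible provided the off-diagonal entry is nonzero, both $B$ and $C$ can remain invertible there. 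Concretely, $n=0$, $k=2$, $\ell=-2$ satisfies $n-\ell-k\geq 0$, $n-\ell\geq 0$, $n+1+\ell+k\geq 0$, $n+1+\ell+2k\geq 0$ (so none of your four ``failure modes'' occur) yet $n-\ell-2k=-2<0$ and $n+1+\ell=-1<0$, which places it outside all three cases $(i)$, $(ii)$, $(iii)$. Thus cases $(i)$--$(iii)$ are strictly smaller than the complement of your four forbidden inequalities, and the conclusion ``describe exactly their complement'' fails.

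The paper closes precisely this gap with Lemma \ref{lem:quadsres}: if $\bar Y_{4,2}=0$, so that $e_2$ is isotropic for one form, then regularity of $\bar X$ forbids the dual vector $e_1$ from being isotropic for the other, forcing $\bar Y_{1,3}\neq 0$ and hence $n+1+\ell\geq 0$. Your own conjugacy observation actually contains the raw material for an alternative fix: you only use that the diagonal entries of $B_0C_0$ are nonzero, but regularity of $\bar X$ gives the stronger fact that they are \emph{distinct}. If $\bar Y_{1,3}=\bar Y_{4,2}=0$ with the off-diagonal entries $b_{12}$, $c_{12}$ nonzero, a direct multiplication shows $BC$ is triangular with both diagonal entries equal to $b_{12}c_{12}$, hence has a repeated eigenvalue, contradicting its conjugacy to a regular semisimple diagonal matrix. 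Either route supplies the missing step; as written, your proof does not.
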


We will need an intermediate result to prove the Lemma.  \\

\begin{lemma}\label{lem:quadsres}
    Let $V$ be a vector space over $\mathbb{F}_q$ of dimension $2$ and let $a: V\rightarrow V^*$, $b:V^*\rightarrow V$ are two self-adjoint isomorphisms such that $ba$ has distinct eigenvalues. Then there is no nonzero vector $v\in V$ so that $v$ is isotropic for $a$ and $av$ is isotropic for $b$, i.e.
    \[\langle v, av\rangle = 0 = \langle bav, av \rangle.\]
\end{lemma}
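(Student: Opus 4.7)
The plan is to translate the two vanishing conditions into the isotropy of $v$ for a pair of symmetric bilinear forms on $V$, and then exploit simultaneous diagonalization. Set $Q_a(u,w) = \langle u, aw\rangle$ and $Q_{aba}(u,w) = \langle u, abaw\rangle$; both are symmetric (since $a$ and $b$ are self-adjoint) and nondegenerate (since $a,b$ are isomorphisms). The conditions $\langle v, av\rangle = 0$ and $\langle bav, av\rangle = 0$ read
\[ Q_a(v,v) = 0, \qquad Q_{aba}(v,v) = 0, \]
the second because $\langle bav, av\rangle = \langle av, bav\rangle = \langle v, abav\rangle = Q_{aba}(v,v)$.

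Next I would observe that the endomorphism $a^{-1}\circ(aba) = ba$ on $V$ is self-adjoint for both $Q_a$ and $Q_{aba}$. Indeed, $Q_a(u, bav) = \langle u, abav\rangle$ and $Q_a(bau, v) = \langle v, abau\rangle$, which agree because $aba$ is a symmetric matrix ($(aba)^\top = aba$). The analogous check for $Q_{aba}$ uses that $(ab)^2 a$ is symmetric. Since $ba$ has two distinct eigenvalues $\lambda_1 \ne \lambda_2$ (in $\overline{\mathbb{F}_q}$), its eigenvectors $e_1, e_2 \in V \otimes \overline{\mathbb{F}_q}$ are orthogonal for \emph{both} forms by the standard argument $\lambda_2 Q_?(e_1,e_2) = Q_?(e_1, bae_2) = Q_?(bae_1, e_2) = \lambda_1 Q_?(e_1,e_2)$.

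In the basis $\{e_1, e_2\}$ write $Q_a = \mathrm{diag}(\alpha_1, \alpha_2)$ and $Q_{aba} = \mathrm{diag}(\beta_1, \beta_2)$; nondegeneracy forces $\alpha_i, \beta_i \ne 0$, and by construction $\beta_i/\alpha_i = \lambda_i$. The two isotropy conditions on $v = x e_1 + y e_2$ become the linear system in $(x^2, y^2)$
\[ \alpha_1 x^2 + \alpha_2 y^2 = 0, \qquad \beta_1 x^2 + \beta_2 y^2 = 0, \]
whose determinant is $\alpha_1\beta_2 - \alpha_2\beta_1 = \alpha_1\alpha_2(\lambda_2 - \lambda_1) \ne 0$. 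Hence $x = y = 0$, so $v = 0$ even after extending scalars to $\overline{\mathbb{F}_q}$, which suffices to conclude that no nonzero $v \in V$ can exist.

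The only nontrivial step is verifying that $ba$ is self-adjoint with respect to both forms; once that is in hand the simultaneous diagonalization and the two-variable linear-algebra finish are entirely routine. Note that passing to $\overline{\mathbb{F}_q}$ is harmless because the existence of a nonzero common solution over $\mathbb{F}_q$ would in particular give one over $\overline{\mathbb{F}_q}$, and we will have ruled the latter out.
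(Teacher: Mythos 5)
Your proof is correct, and it takes a genuinely different route from the paper's. Both arguments rest on the same underlying algebraic fact — that $ba$ is self-adjoint with respect to $Q_a$ — but they exploit it differently. The paper works only with the single form $\langle\,,\,\rangle_a$ and stays over $\mathbb{F}_q$: it observes that the isotropy $\langle v, v\rangle_a = 0$ forces $v^{\perp_a} = \mathrm{Span}(v)$, so the second condition $bav \perp_a v$ forces $v$ to be an eigenvector of $ba$; then the orthogonality of eigenvectors for a self-adjoint operator with distinct eigenvalues produces an immediate contradiction, since the other eigenvector would have to lie in $v^{\perp_a} = \mathrm{Span}(v)$. Your argument instead introduces a second form $Q_{aba}$, diagonalizes $ba$ over $\overline{\mathbb{F}_q}$ up front, makes both forms diagonal in the eigenbasis, and finishes with a $2\times 2$ determinant calculation $\alpha_1\alpha_2(\lambda_2-\lambda_1)\neq 0$. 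What the paper's route buys is a proof with fewer moving parts and no base change; what yours buys is an entirely explicit computation that makes the role of the eigenvalue separation visible as a literal nonvanishing determinant, which is arguably closer in spirit to the ``direct'' coordinate proof the paper alludes to but declines to write out. Both are fine; all the verifications you flag as ``routine'' (symmetry of $Q_{aba}$, self-adjointness of $ba$ for both forms, $\beta_i = \lambda_i\alpha_i$, nondegeneracy) do indeed check out.
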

\begin{proof} 
One can show the lemma directly by writing quadratic forms corresponding to $a$ and $b$ in coordinates corresponding to a basis with first element $v$ We give another proof below.

    Define the pairing inner product $\langle \ ,\ \rangle_a : V\times V\rightarrow \mathbb{F}_q$ defined by $\langle x, y\rangle_a = \langle x, ay\rangle$. Assume there is $v\in V$ satisfying the conditions of the Lemma. 
    
    The condition becomes  $v\perp_a v$ and  $ba v \perp_a v$. Since $a$ is an isomorphism, we get that $ba\in v^{\perp_a} =  \mathrm{Span}_{\mathbb{F}_q}(v)$ and therefore $v$ is an eigenvector of $ba$.
    Note that $ba$ has distinct eigenvalues hence it is diagonalizable, moreover $\langle bax,y\rangle_a = \langle x, ba y \rangle_a$ hence $ba$'s eigenvectors form an orthogonal basis of $(V, \langle\ ,\ \rangle_a)$. If $w\neq 0$ an eigenvector of $ba$ so that $\{v,w\}$ is an orthogonal
     basis of $(V, \langle\ ,\ \rangle_a)$, then $ba w \in \mathrm{Span}_{\mathbb{F}_q}(w)\cap v^{\perp_a} = \{0\}$ so $ab$ has $0$ as an eigenvalue hence is not invertible, which is impossible.
\end{proof}

\begin{proof}[Proof of Lemma \ref{lem:t4}]
First observe that since $g\in G(F)_{x_4,0}$, we have $\bar{g}\in G(F)_{x_4,0:0+}\cong(\mathrm{GL}_3\times \mathrm{GL}_1)(\mathbb{F}_q)$ which can be written 
\[\bar{g} = \pmat{M&0\\0&\lambda({}^tM)^{-1} },\]
where $\lambda\in \mathrm{GL}_1(\mathbb{F}_q)$ and $M\in \mathrm{GL}_2(\mathbb{F}_q)$. Since $X\in \mathfrak{h}(F)$, we have 
\[X = \pmat{\varpi^{n+1} D_1&\varpi^{n+1}D_2\\ \varpi^n D_3&\varpi^{n+1} D_4},\]
for diagonal matrices $D_1,D_2,D_3,D_4$. Therefore 
\[\overline{{g}^{-1}Xg} =\left(\begin{array}{c|c}
0&\overline{\lambda M^{-1}D_2 ({}^tM)^{-1}}\\ \hline \overline{\lambda^{-1} ({}^tM)D_3M}&0
\end{array}\right) \in \left(\begin{array}{c|c}
0&\mathrm{Sym}_2(\mathbb{F}_q)\\ \hline \mathrm{Sym}_2(\mathbb{F}_q)&0
\end{array}\right) .\]
Letting $L$ be the Lagrangian generated by $e_1,e_2$, we can rewrite
\begin{equation}\label{eq:redquad}\overline{\mathrm{Ad}(g){X}} = \left(\begin{array}{c|c}
0&a\\ \hline b&0
\end{array}\right),\end{equation}
where $a: \bar{L}\rightarrow \bar{L}^*$ and $a: \bar{L}^*\rightarrow\bar{L}$ are two self-adjoint isomorphisms. Since $\bar{X}$ is regular, we need $ba$ to have distinct eigenvalues.

We know that $Y\in \mathfrak{g}(F)_{x_4:n+1/2}$ and so 
\[\mathrm{Ad}(\varpi^{\ell,k})Y\in \begin{bmatrix}n+1& n+1-k&n+1+\ell&n+1+\ell+k\\ n+1+k & n+1&n+1+\ell+k& n+1+\ell+2k\\n -\ell&n-k-\ell&n+1&n+1+k\\n-k-\ell&n-2k-\ell &n+1-k&n+1\end{bmatrix}\cap \mathfrak{g}(F)_{x_1,0}. \]
If $(i)$ holds, then there are no conditions and we have  
\[\bar{Y} \in \begin{bmatrix} 0&0&\star&\star\\0&0&\star&\star\\\star&\star&0&0\\\star&\star&0&0 \end{bmatrix}.\]

Assume we are not in case $(i)$. 
\begin{itemize}
    \item {\bf Case $1$.} $n-\ell-2k<0$. This means that $\nu(Y_{4,2})>n$ hence 
    \[\bar{Y}\in \begin{bmatrix} 0&0&\star&\star\\0&0&\star&\star\\\star&\star&0&0\\\star&0&0&0 \end{bmatrix}\]
    and $e_2$ is an isotropic vector for $b$ (using the notation of equation (\ref{eq:redquad}). Since $b$ is nondegenerate we get that $\nu(Y_{4,1})= n$ therefore $n-k-\ell\geq 0$. Lemma \ref{lem:quadsres} can be applied since $X$ is regular, and it tells us that $e_2^*=e_1$ cannot be an isotropic vector for $a$, therefore $\nu(Y_{1,3}) = n+1$ and we need $n+1+\ell\geq 0$.

    \item {\bf Case $2$.} $n+1+\ell<0$. This means that $\nu(Y_{2,4})>n+1$. We follow the same proof. Since $b$ is nondegenerate we have $\nu(Y_{1,4}) = n+1$ hence we need $n+1+\ell+k\geq 0$. Using Lemma \ref{lem:quadsres} with isotropic vector $e_1$ we find that $\nu(Y_{4,2})=n$ and so $n-2k-\ell\geq 0$.

    This corresponds to 
    \[\bar{Y} \in \begin{bmatrix} 0&0&0&\star\\0&0&\star&\star\\\star&\star&0&0\\\star&\star&0&0 \end{bmatrix}.\]
\end{itemize}
This concludes the proof.
\end{proof}

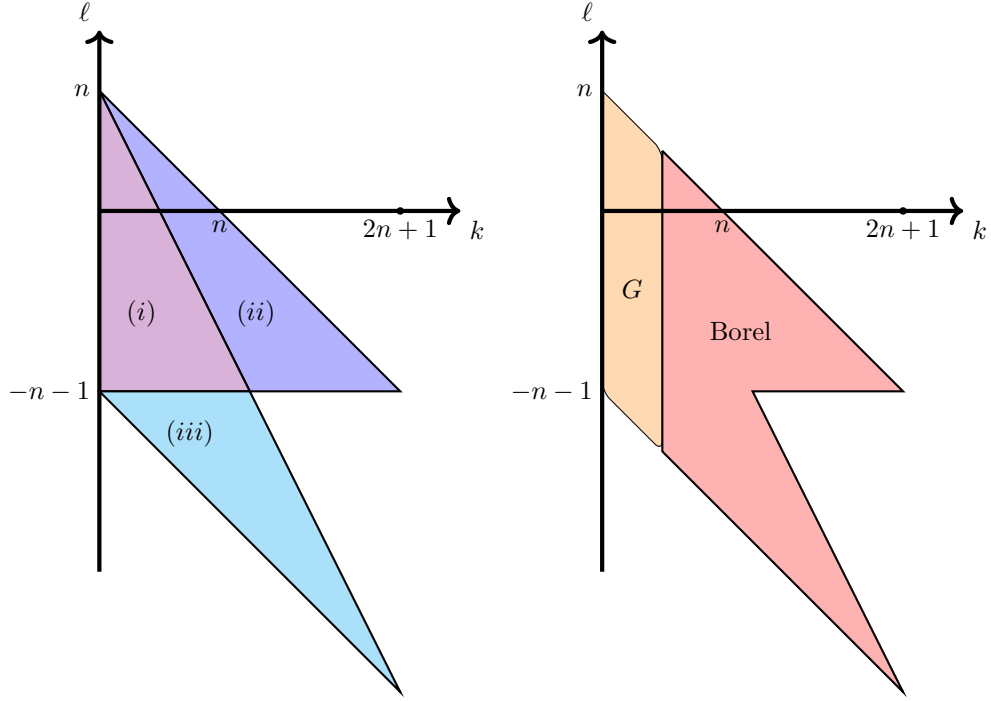
\begin{figure}[ht]
\centering 
\begin{tikzpicture}[scale=.8]
    \draw[thick, fill=blue!30] (0,2) -- (5,-3) -- (2.5,-3) -- (0,2);
    \draw[thick, fill=violet!30] (0,2) -- (2.5,-3) -- (0,-3) -- (0,2);
    \draw[thick, fill=cyan!30] (2.5,-3) -- (0,-3) -- (5,-8) -- (2.5,-3);
    \node[above left] at (0,3) {$\ell$};
    \node[below right] at (6,0) {$k$};
    \node[below] at (2,0) {$n$};
    \node[below] at (5,0) {$2n+1$};
    \node[left] at (0,-3) {$-n-1$};
    \node[circle,inner sep=1pt,fill=black] at (5,0) {};
    \node[left] at (0,2) {$n$};
    %\node[below] at (2.5,0) {$n/2$};
    \node at (0.7,-1.7) {$(i)$};
    \node at (2.6,-1.7) {$(ii)$};
    \node at (1.5,-3.7) {$(iii)$};
    \draw[ultra thick, ->] (0,0)--(6,0);
    \draw[ultra thick, ->] (0,-6)--(0,3);
\end{tikzpicture}
\begin{tikzpicture}[scale=.8]
    \draw[rounded corners, fill=orange!30] (0,2) -- (1,1) -- (1,-4) -- (0,-3) -- (0,2);
    \draw[thick, fill=red!30] (1,1) -- (5,-3) -- (2.5,-3) -- (5,-8)--(1,-4)--(1,1);
    %\draw[thick, fill=cyan!30] (2.5,-3) -- (0,-3) -- (5,-8) -- (2.5,-3);
    \node[above left] at (0,3) {$\ell$};
    \node[below right] at (6,0) {$k$};
    \node[below] at (2,0) {$n$};
    \node[below] at (5,0) {$2n+1$};
    \node[left] at (0,-3) {$-n-1$};
    \node[circle,inner sep=1pt,fill=black] at (5,0) {};
    \node[left] at (0,2) {$n$};
    %\node[below] at (2.5,0) {$n/2$};
    \node at (0.5,-1.3) {$G$};
    \node at (2.3,-2) {Borel};
    \draw[ultra thick, ->] (0,0)--(6,0);
    \draw[ultra thick, ->] (0,-6)--(0,3);
\end{tikzpicture}
\caption{Case of vertex of type 4 with $\bar{P}_\lambda$ on the right}
\label{figt4}
\end{figure}

\begin{lemma}
        Let $\lambda = (k,\ell)$ as described in Lemma \ref{lem:t4} and let $B$ denote the upper-triangular Borel of $\mathrm{GL}_2(\mathbb{F}_q)$. We have the following:
        \begin{itemize}
        \item If $k=0$ then $\bar{P}_\lambda\cong (\mathrm{GL}_2\times\mathrm{GL}_1)(\mathbb{F}_q)$ and therefore  $G(F)_{x_4,0:0+}/\bar{P}_\lambda \cong \{\mathrm{pt}\})$,
        \item Otherwise, $\bar{P}_\lambda\cong (B\times \mathrm{GL}_1)(\mathbb{F}_q)$, and therefore  $G(F)_{x_4,0:0+}/\bar{P}_\lambda \cong  \mathbb{P}^1(\mathbb{F}_q)$,
        \end{itemize}
as in Figure \ref{figt4}.
\end{lemma}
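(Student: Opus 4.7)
The plan is to follow exactly the template of the type~$1$ and type~$3$ cases. The key input is the explicit description of the reductive quotient $G(F)_{x_4,0:0+} \cong (\mathrm{GL}_2 \times \mathrm{GL}_1)(\mathbb{F}_q)$ recorded in the table, where the $\mathrm{GL}_2$ factor corresponds to the image of the top-left $2\times 2$ block (acting on the Lagrangian $\mathrm{Span}(e_1,e_2)$) and the $\mathrm{GL}_1$ is the similitude multiplier. Since the jumps in the Moy--Prasad filtration at $x_4$ happen at half-integers, only the top-left and bottom-right blocks contribute to $\mathfrak{g}(F)_{x_4,0:0+}$; moreover, the bottom-right block is determined by the top-left block together with the multiplier via the symplectic similitude condition. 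So all the nontrivial data is carried by the top-left block and the multiplier.

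Next, I would compute $P_\lambda = \varpi^{\ell,k} G(F)_{x_1,0}\varpi^{-\ell,-k}\cap G(F)_{x_4,0}$ entry-by-entry by intersecting the valuation conditions from each side. Using the shift matrix of conjugation by $\varpi^{\ell,k}$ together with the hypothesis $k\geq 0$ (from Lemma~\ref{lem:t4}), the top-left block of the intersection is
\[\begin{pmatrix} \mathcal{O}\cap\mathcal{O} & \mathcal{O}\cap \mathfrak{m}^{-k} \\ \mathcal{O}\cap\mathfrak{m}^{k} & \mathcal{O}\cap \mathcal{O} \end{pmatrix} = \begin{pmatrix} \mathcal{O} & \mathcal{O} \\ \mathfrak{m}^{k} & \mathcal{O} \end{pmatrix},\]
which reduces modulo $\mathfrak{m}$ to all of $\mathrm{GL}_2(\mathbb{F}_q)$ when $k=0$ and to the upper-triangular Borel $B(\mathbb{F}_q)$ when $k>0$. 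The similitude multiplier is always a unit in both $G(F)_{x_1,0}$ and $G(F)_{x_4,0}$, so the $\mathrm{GL}_1$ factor is unconstrained.

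Combining gives $\bar{P}_\lambda \cong (\mathrm{GL}_2 \times \mathrm{GL}_1)(\mathbb{F}_q) = G(F)_{x_4,0:0+}$ when $k=0$, so the quotient $G(F)_{x_4,0:0+}/\bar{P}_\lambda$ is a point; and $\bar{P}_\lambda \cong (B \times \mathrm{GL}_1)(\mathbb{F}_q)$ when $k>0$, so the quotient identifies with $\mathrm{GL}_2/B \cong \mathbb{P}^1(\mathbb{F}_q)$. The only step requiring care is the entry-by-entry bookkeeping; there is no real obstacle here, and in particular the $\ell$-direction plays no role in the reductive quotient because it only affects the off-diagonal blocks, which collapse in $G(F)_{x_4,0:0+}$.
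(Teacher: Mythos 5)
Your proposal is correct and takes essentially the same approach as the paper: compute $P_\lambda = \varpi^\lambda G(F)_{x_1,0}\varpi^{-\lambda}\cap G(F)_{x_4,0}$ entry-by-entry, note that the off-diagonal blocks collapse in $\mathfrak{g}(F)_{x_4,0:0+}$ so that only the top-left block and the multiplier survive, and conclude that the reduction is all of $\mathrm{GL}_2$ when $k=0$ and a Borel when $k>0$. (Your sign bookkeeping actually matches the shift lemma more faithfully than the paper's displayed intersection, which appears to have the $(1,2)$ and $(2,1)$ entries transposed, but both give a Borel and hence the same quotient $\mathbb{P}^1(\mathbb{F}_q)$.)
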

\begin{proof}
Let $\lambda = (k,\ell)$. We again use the notation $\mathcal{O}= \mathcal{O}_F$ and $\mathfrak{m} = \varpi\mathcal{O}_F$. Look at the reduction map 
\begin{align*}
P_\lambda &= \varpi^{\lambda}G(F)_{x_1,0}\varpi^{-\lambda}\cap G(F)_{x_4,0}\rightarrow \bar{P}_\lambda \subset G(F)_{x_4,0:0+} \cong ({\mathrm{GL}_2}\times{\mathrm{GL}_1})(\mathbb{F}_q)\\
\end{align*}
\[\pmat{ \mathcal{O}&\mathfrak{m}^{k}&\star&\star\\
 \mathfrak{m}^{-k}&\mathcal{O}&\star&\star\\
 \star&\star&\mathcal{O}&\mathfrak{m}^{-k}\\
 \star&\star&\mathfrak{m}^{k}&\mathcal{O}}
\cap \pmat{ \mathcal{O}&\mathcal{O}&\star&\star\\
 \mathcal{O}&\mathcal{O}&\star&\star\\
 \star&\star&\mathcal{O}&\mathcal{O}\\
 \star&\star&\mathcal{O}&\mathcal{O}}
\rightarrow
\pmat{\mathcal{O}/\mathfrak{m}&\mathcal{O}/\mathfrak{m}&0&0\\ \mathcal{O}/\mathfrak{m}&\mathcal{O}/\mathfrak{m}&0&0\\ 0&0&\mathcal{O}/\mathfrak{m}&\mathcal{O}/\mathfrak{m}\\
0&0&\mathcal{O}/\mathfrak{m}&\mathcal{O}/\mathfrak{m}}.\]
We see that the image is determine by whether $k=0$ or $k>0$ as desired.
        \end{proof}
        
\begin{prop} For each $\lambda = (k,\ell)$ we have
\[|\bar{S}_\lambda(X)| =\left\lbrace \begin{array}{ll}
q(q+1)(q-1)^2& \text{ in case }(i)\\
2q(q-1)^2&\text{ else}\end{array}\right..\]
\end{prop}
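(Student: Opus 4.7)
The plan is to parametrize $\bar g \in G(F)_{x_4,0:0+}$ via the pair $(M,\lambda) \in (\mathrm{GL}_2\times\mathrm{GL}_1)(\mathbb{F}_q)$ from the proof of Lemma \ref{lem:t4}, where $\bar g$ is identified with $\mathrm{diag}(M,\lambda({}^tM)^{-1})$, and then to translate each case of that lemma into an explicit count on the flag variety $\mathcal{P}_\lambda$. Under this embedding, the image of the center of $\mathrm{GSp}_4$ is the subgroup $\{(\mu I_2,\mu^2):\mu\in\mathbb{F}_q^\times\}$, which has order $q-1$.

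In case $(i)$, Lemma \ref{lem:t4} imposes no constraint on $\bar g$, so directly
\[|\bar S_\lambda(X)|\;=\;|\bar Z\backslash(\mathrm{GL}_2\times\mathrm{GL}_1)(\mathbb{F}_q)|\;=\;\frac{(q^2-1)(q^2-q)(q-1)}{q-1}\;=\;q(q+1)(q-1)^2.\]
For cases $(ii)$ and $(iii)$, the proof of Lemma \ref{lem:t4} singles out a single scalar condition on $\bar Y = \bar g^{-1}\bar X\bar g$, namely the vanishing of $\bar Y_{4,2}$ and $\bar Y_{1,3}$ respectively. Using that $X\in\mathfrak h$ has UR and LL blocks of $\bar X$ given by the diagonal matrices $\bar B=\mathrm{diag}(\bar b_1,\bar b_2)$ and $\bar C=\mathrm{diag}(\bar c_1,\bar c_2)$, a direct matrix computation of $M^{-1}\bar B({}^tM)^{-1}$ and ${}^tM\bar CM$ gives, with $Me_2=(\beta,\delta)^\top$,
\[\bar Y_{4,2}\;=\;\lambda^{-1}\bigl(\beta^2\bar c_1+\delta^2\bar c_2\bigr)\;=\;\lambda^{-1}q_{\bar C}(Me_2),\]
and the condition in case $(ii)$ becomes that $[Me_2]\in\mathbb{P}^1(L)$ be isotropic for the nondegenerate binary form $q_{\bar C}$; case $(iii)$ is symmetric with $q_{\bar B}$ on $L^*$. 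Since this condition depends only on the class $[Me_2]$, it descends cleanly to $\mathcal{P}_\lambda\cong\mathbb{P}^1$, as is required for a condition invariant under right translation by $\bar P_\lambda$.

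The form $q_{\bar C}$ has exactly two zeros in $\mathbb{P}^1_{\bar{\mathbb{F}}_q}$, so to conclude $|\bar S_\lambda(X)/\bar P_\lambda| = 2$ one must show both zeros are $\mathbb{F}_q$-rational; this is what I expect to be the main hurdle. The argument should use the ellipticity of $X$ together with the rigidity coming from $\mathcal B(G_X,F)=\{x_4\}$, which forces $F[X_1]$ and $F[X_2]$ to be the two ramified quadratic extensions of $F$ and hence constrains $\bar b_i\bar c_i$ (and symmetrically the residual discriminants $-\bar c_1\bar c_2$ and $-\bar b_1\bar b_2$) compatibly, so that both binary forms split over $\mathbb{F}_q$. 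Granting this, and using the explicit description $\bar P_\lambda\cong B\times\mathrm{GL}_1$ with $|\bar P_\lambda/\bar Z|=|B\times\mathrm{GL}_1|/(q-1)=q(q-1)^2$, one obtains
\[|\bar S_\lambda(X)|\;=\;|\bar S_\lambda(X)/\bar P_\lambda|\cdot|\bar P_\lambda/\bar Z|\;=\;2\cdot q(q-1)^2,\]
as claimed.
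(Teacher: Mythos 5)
Your strategy is the same as the paper's: parametrize $\bar g$ by $(M,\lambda)$, note that case $(i)$ imposes no condition, and reduce cases $(ii),(iii)$ to a single quadratic condition on the line $[Me_2]\in\mathbb{P}^1$. Case $(i)$ and the identification of the condition in case $(ii)$ as $q_{\bar C}(Me_2)=0$ for the binary form $q_{\bar C}=\bar c_1x^2+\bar c_2y^2$ (the LL block of $\bar X$, diagonal because $X\in\mathfrak h$) are both correct, and the reduction to $\mathcal{P}_\lambda\cong\mathbb{P}^1$ is exactly right.

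You are also right to single out the isotropy of $q_{\bar C}$ (equivalently, $\chi(-\bar c_1\bar c_2)=+1$) as the ``main hurdle.'' The paper's own justification --- ``$\bar Y$ correspond to a degenerate quadratic form\dots Since $\bar Y$ is nonzero, we have $|\bar S_\lambda(X)/\bar P_\lambda|=2$'' --- does not actually establish this: a nonzero nondegenerate binary form over $\mathbb{F}_q$ has $1+\chi(-\mathrm{disc})\in\{0,2\}$ isotropic points, so nonvanishing alone is not enough. Your heuristic for closing the gap, however, is not yet a proof and I do not think it goes through as written. The hypothesis that $F[X_1]$, $F[X_2]$ are the two distinct ramified quadratic extensions constrains the residues of $\bar b_i\bar c_i$ (so $\bar b_1\bar c_1\bar b_2\bar c_2$ is a nonsquare), but it does not by itself pin down $\chi(-\bar c_1\bar c_2)$ or $\chi(-\bar b_1\bar b_2)$ separately: these transform under conjugation by $(1,\mathrm{diag}(u,1))$-type elements, which change $F[X_i]$ not at all but rescale $\bar c_i$ and $\bar b_i$ independently. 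What is invariant is only their product, and the needed square class of $-\bar c_1\bar c_2$ has to come from the sharper fact that $\mathcal B(G_X,F)$ is $x_4$ itself (so $X\in\mathfrak g(F)_{x_4,n+1/2}$) rather than merely conjugate to $x_4$. One would want an argument in the spirit of the paper's remark after the type-$1$ case: identify $\bar S_\lambda(X)/\bar P_\lambda$ as the $\mathbb{F}_q$-points of an open orbit of the reduced centralizer torus $\bar T=Z_{\mathrm{Sp}_4}(\bar X)$ acting on $\mathbb{P}^1$, and argue that the open orbit is the trivial torsor. As it stands this is a genuine gap in both your argument and, arguably, in the paper's.
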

\begin{proof}Let us first describe the conditions on $g\in G(F)_{x_4,0:0+}$ so that  $Y = g^{-1}Xg$ is in the desired subalgebra of $\mathfrak{g}(F)_{x_1,0:0+}$ as described in Lemma \ref{lem:t4}.
\begin{itemize}
\item Case $(i)$ has no conditions so $|\bar{S}_\lambda(X)| = |\bar{Z}\backslash G(F)_{x_4,0:0+}| = \frac{(q^2-1)(q^2-q)(q-1)}{(q-1)}$.
\item Case $(ii)$ $Y\in \pmat{0&0&\star&\star\\0&0&\star&\star\\ \star&\star&0&0\\\star&0&0&0}$. We get that $\bar{Y}$ correspond to a degenerate quadratic form, and $\bar{S}_\lambda(X)/\bar{P}_\lambda$ 
corresponds to the subset of $\mathbb{P}^1$ of isotropic lines. Since $\bar{Y}$ is nonzero, we have $|\bar{S}_\lambda(X)/\bar{P}_\lambda| = 2$ hence $|\bar{S}_\lambda(X)| = 2|Z\backslash \bar{P}_\lambda| = \frac{2q(q-1)^3}{(q-1)}$.
\item Case $(iii)$ is identical to case $(ii)$.
\end{itemize}
\end{proof}

We compute the dimension as 
\begin{align*}
d_\lambda &= \min(\max(\ell,-\ell-1),n)+\min(k,n+1)\\
&+\min(\max(\ell+2k,-\ell-2k-1),n)+\min(\max(\ell+k,-l-k-1),n)\end{align*}

The decomposition is slightly technical, we divide each region in three parts, noting that 
the contribution of case $(ii)$ and $(iii)$ are equal.  We get

\[\sum_{\lambda\ \text{type }(i)} q^{d_\lambda} = 
2\frac{1-q^{2+2n}-q^{3+2n}+q^{4+2n}+q^{4+3n}+q^{5+3n}}{(1-q^2)(1-q^3)},\]
and
\[\sum_{\lambda\ \text{type }(ii)} q^{d_\lambda}= \sum_{\lambda\ \text{type }(iii)} q^{d_\lambda} = 
\frac{q^{1+2n}(1-q^{1+n})^2}{(1-q)^2}.\]

Summing all together, weighing each term by the appropriate point count, we get

%\[\mathrm{Orb}_{\mathfrak{g}(F)}(X,\mathds{1}_{\mathfrak{g}(\mathcal{O}_F)}) = 
%2\frac{q(q^{n+1}-1)(-1-q^{1+n}+2q^{1+2n}+q^{3+2n}-q^{4+2n}-2q^{2+3n}+2q^{5+3n})}{q^3-1},\]
\[\mathrm{Orb}_{\mathfrak{g}(F)}(X,\mathds{1}_{\mathfrak{g}(\mathcal{O}_F)}) = 
2\frac{q(q^{n+1}-1)(-1-q^{1+n}-2q^{1+2n}-q^{3+2n}+q^{4+2n}+2q^{2+3n}+2q^{3+3n}+2q^{4+3n})}{q^2+q+1},\]

which we rewrite in its Shalika germ expansion below.
\begin{theorem}
In the case of vertices of type  $4$, we have 
\[\mathrm{Orb}_{\mathfrak{g}(F)}(X,\mathds{1}_{\mathfrak{g}(\mathcal{O}_F)}) =
\frac{2q}{q^3-1}\left(A(q)-B(q)q^{2n}+ C(q)q^{3n}+D(q)q^{4n}\right),\]
Where 
\begin{align*}A &= 1 ,\\
B &= q(q-2)(q^2+q+1),\\
C &= q^2(q^3-3q^2-4q-4)\\
D&=  2q^3(1+q+q^2).\\
\end{align*}

%\[\mathrm{Orb}_{\mathfrak{g}(F)}(X,\mathds{1}_{\mathfrak{g}(\mathcal{O}_F)}) =
%\frac{2q}{q^3-1}\left(A(q)-B(q)q^{2n}+ C(q)q^{3n}+D(q)q^{4n}\right),\]
%Where 
%\begin{align*}A &= 1 ,\\
%B &=  q(q-2)(1+q+q^2) = \frac{q(q-2)(q^3-1)}{q-1},\\
%C &= q^2(3q^3-q^2-4),\\
%D&= -2q^3+2q^6 = 2q^3(q^3-1).\\
%\end{align*}
%\[\mathrm{Orb}_{\mathfrak{g}(F)}(X,\mathds{1}_{\mathfrak{g}(\mathcal{O}_F)}) =q^4(q^2-1)^2\left((q^2+1)\left\lfloor\frac{(n+2)^2}{4}\right\rfloor   +(q+1)\left\lfloor\frac{(n+1)^2}{4}\right\rfloor + \frac{n(n+1)}{2}\right).\]
\end{theorem}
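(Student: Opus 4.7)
The plan is to follow the same GKM-based strategy used for the type-$1$ and type-$3$ vertices, namely: decompose the Cartan double cosets, apply Theorem \ref{th:gkm} to compute the contribution of each double coset as $q^{d_\lambda}\,|\bar S_\lambda(X)/\bar P_\lambda|\cdot|\bar P_\lambda|$, and then simplify the resulting finite sum. The work has already been reduced to three concrete pieces: the range of $\lambda=(k,\ell)$ with $k\ge 0$ (split into cases $(i),(ii),(iii)$ by Lemma~\ref{lem:t4}), the point counts $|\bar S_\lambda(X)|$ on each piece (the proposition just proven), and the dimension formula for $d_\lambda$.

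First I would multiply the weights and partial sums that the preceding lemmas have already produced. In case $(i)$ each double coset contributes $|\bar S_\lambda(X)|=q(q+1)(q-1)^2$, and in cases $(ii)$ and $(iii)$ it contributes $2q(q-1)^2$. The three sums of $q^{d_\lambda}$ over the respective regions have been written down as
\[\sum_{\lambda\in(i)} q^{d_\lambda}=2\,\frac{1-q^{2+2n}-q^{3+2n}+q^{4+2n}+q^{4+3n}+q^{5+3n}}{(1-q^2)(1-q^3)},\]
and
\[\sum_{\lambda\in(ii)} q^{d_\lambda}=\sum_{\lambda\in(iii)} q^{d_\lambda}=\frac{q^{1+2n}(1-q^{1+n})^2}{(1-q)^2}.\]
So the orbital integral is
\[q(q+1)(q-1)^2\!\sum_{(i)}q^{d_\lambda}+2\cdot 2q(q-1)^2\!\sum_{(ii)}q^{d_\lambda},\]
which is already summarized by the author in a single rational expression whose factor of $q^{n+1}-1$ makes the combinatorial boundary terms collapse.

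Second I would regroup by powers of $q^n$. Each of the three partial sums is a polynomial in $q$ and $q^n$ of degree at most three in $q^n$, so after clearing the denominator $(1-q^2)(1-q^3)/(q^2+q+1)=(1-q)^2(1+q)$ and bringing everything over the common denominator $q^3-1$, the constant, $q^{2n}$, $q^{3n}$, and $q^{4n}$ coefficients can be collected independently. The absence of a $q^n$ term is a small consistency check: it must disappear because the GKM formula has no contributions that would produce it, and indeed one sees this already from the fact that the leading linear terms in $q^n$ cancel between the case $(i)$ numerator and twice the case $(ii)$ numerator.

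Finally I would match the coefficients with the claimed $A,B,C,D$. The only arithmetic step that is at all delicate is the verification that after combining $q(q+1)(q-1)^2\cdot\frac{1}{(1-q^2)(1-q^3)}$ with $4q(q-1)^2\cdot\frac{1}{(1-q)^2}$ and simplifying by the common factor $q^2+q+1$, one obtains the prefactor $2q/(q^3-1)$. The hard part, as with the type-$1$ case, is bookkeeping rather than any conceptual obstacle: tracking the signs in the boundary of case $(ii)$ versus case $(iii)$ and making sure that their identical contribution is counted exactly twice (not four times) because $\bar S_\lambda(X)/\bar P_\lambda$ already consists of the two isotropic lines of the degenerate quadric on $\bar L$. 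Once the coefficients are read off, the expressions $A=1$, $B=q(q-2)(q^2+q+1)$, $C=q^2(q^3-3q^2-4q-4)$, and $D=2q^3(1+q+q^2)$ fall out directly.
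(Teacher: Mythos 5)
Your proposal follows the same strategy as the paper: decompose over the Cartan double cosets (split into cases $(i)$, $(ii)$, $(iii)$ by Lemma~\ref{lem:t4}), weight each region's sum of $q^{d_\lambda}$ by the corresponding $|\bar S_\lambda(X)|$, and then regroup the resulting rational function by powers of $q^n$. The combination formula $q(q+1)(q-1)^2\sum_{(i)}q^{d_\lambda}+2\cdot 2q(q-1)^2\sum_{(ii)}q^{d_\lambda}$ is correct, and the observation that the $q^n$ coefficient must cancel is a valid sanity check.

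Two points of imprecision worth flagging. First, the claim that combining $q(q+1)(q-1)^2/[(1-q^2)(1-q^3)]$ with $4q(q-1)^2/(1-q)^2$ and ``simplifying by the common factor $q^2+q+1$'' directly yields the prefactor $2q/(q^3-1)$ is not how the algebra goes: after cancelling $(q-1)^2$ the two prefactors are $\frac{2q}{q^2+q+1}$ and $4q$ respectively, so one must first bring the full expression over the single denominator $q^2+q+1$, collect the $q^0,q^{2n},q^{3n},q^{4n}$ coefficients, and only then rewrite with $q^3-1=(q-1)(q^2+q+1)$; the prefactor does not fall out before that bookkeeping is done. Second, the explanation that the $(ii)$ and $(iii)$ contribution is ``counted exactly twice (not four times) because $\bar S_\lambda(X)/\bar P_\lambda$ already consists of the two isotropic lines'' conflates two distinct doublings: the factor $2$ from the two isotropic lines is already baked into $|\bar S_\lambda(X)|=2q(q-1)^2$, whereas the other factor $2$ comes from the regions $(ii)$ and $(iii)$ being disjoint sets of $\lambda$ whose contributions happen to coincide. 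Your displayed formula is nevertheless correct, so these are issues of explanation rather than genuine gaps, and the level of detail matches what the paper itself provides for this final step.
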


We can use Example \ref{ex:compram} to see that in that case we get 
\[\mathrm{Orb}_{\mathfrak{h}(F)}(X,\mathds{1}_{\mathfrak{h}(\mathcal{O}_F)}) =  \frac{(q^{n+1}-1)^2}{(q-1)^2}.\]

The comparison is not as straightforward as in Case 3, which makes sense since the reductive quotient for vertices of type 3 matches the 
desired distinguished subgroup.

\appendix

\section{Combinatorics on the regular tree}
\label{sec:combin-tre}

In this section, we consider $X$, and infinite $q+1$ regular tree, for some integer $q$, equipped with the combinatorial metric. The following 
results describe various subgraphs and count their vertices, with applications to integrals computed in \S \ref{sec:kottmethod}, \S \ref{sec:exporbit}, as well as Appendix \ref{sec:relative}.

\subsection{Number of vertices in a ball.}
For any vertex $v$ of $X$ and any $r\geq 0$ let  $B_r(v)$ be set of vertices in the 
ball centered at $v$ of radius $r$. 
\subsubsection{Ball centered at a vertex}
First we look at balls centered at a vertex.

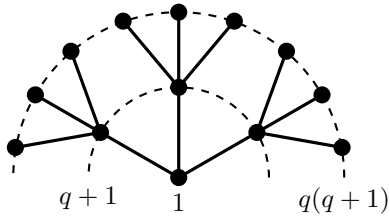
\begin{figure}[H]
\centering
\begin{tikzpicture}
\begin{scope}[rotate=90]
\foreach \i in{-60,0,60}{
\draw[very thick] (0,0) -- (\i:1.2);

\begin{scope}[shift=(\i:1.2), rotate=\i]

\draw[very thick] (0,0) -- (0:1);
\node[draw,circle,inner sep=2pt,fill=black, thick] at (0:1) {};

\draw[very thick] (0,0) -- (-40:1.15);
\node[draw,circle,inner sep=2pt,fill=black, thick] at (-40:1.15) {};

\draw[very thick] (0,0) -- (40:1.15);
\node[draw,circle,inner sep=2pt,fill=black, thick] at (40:1.15) {};

\node[draw,circle,inner sep=2pt,fill=black, thick] at (0,0) {};
\end{scope}}
\node[draw,circle,inner sep=2pt,fill=black, thick] at (0,0) {};
\end{scope}
\node[below] at (0,-0.1) {$1$};
\node[below] at (-1.2,0) {$q+1$};
\node[below] at (2.2,0) {$q(q+1)$};
 \clip (-3,0) rectangle (3,3);
    \draw[thick, dashed] (0,0) circle(1.2);
    \draw[thick, dashed] (0,0) circle(2.2); 
    \end{tikzpicture}
\caption{Vertices in a ball of radius 2, with $q=2$}\end{figure}

\begin{prop} Let $v$ be a vertex. The number of vertices $B_{i}(v)$ is equal to
    \[1+(q+1)\frac{q^i-1}{q-1}.\]
\end{prop}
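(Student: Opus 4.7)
The plan is to count vertices by shells according to their distance from $v$, and then sum a geometric series. The key observation is that in a $(q+1)$-regular tree, once one moves away from the root $v$, at each step one has exactly $q$ choices for the next vertex (the one edge leading back toward $v$ is forbidden), which yields an exponential growth of the shells.

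First I would let $S_d(v)$ denote the set of vertices at distance exactly $d$ from $v$ and establish by induction on $d \geq 1$ that $|S_d(v)| = (q+1)q^{d-1}$. The base case $d=1$ is immediate since $v$ has $q+1$ neighbors. For the inductive step, each vertex $w\in S_d(v)$ has $q+1$ neighbors, exactly one of which lies on the unique shortest path from $w$ back to $v$ (and hence in $S_{d-1}(v)$), while the remaining $q$ neighbors lie in $S_{d+1}(v)$. Moreover, since $X$ is a tree, two distinct vertices of $S_d(v)$ cannot share a common neighbor in $S_{d+1}(v)$ (such a shared neighbor would create a cycle). This gives $|S_{d+1}(v)| = q |S_d(v)|$, completing the induction.

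Finally, I would decompose the ball as the disjoint union $B_i(v) = \{v\} \sqcup \bigsqcup_{d=1}^i S_d(v)$, so that
\[|B_i(v)| = 1 + \sum_{d=1}^i (q+1)q^{d-1} = 1 + (q+1)\frac{q^i - 1}{q-1},\]
using the standard geometric sum. There is no substantive obstacle here; the only point requiring care is the use of the tree property (no cycles) to ensure the shell count grows by exactly a factor of $q$ at each level.
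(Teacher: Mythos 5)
Your argument is correct and follows essentially the same route as the paper: count vertices by distance shells, observe that the shell at distance $j$ has $(q+1)q^{j-1}$ vertices, and sum the geometric series. The paper states the shell count without justification, whereas you supply the (straightforward) inductive verification using the tree property; this is the same proof, with the implicit step made explicit.
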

\begin{proof}
The number of vertices at distance $j>0$ is equal to $(q+1)q^{j-1}$. The total number of vertices in the ball  
\[1+ (q+1)(1+\cdots q^{i-1}) =1+(q+1)\frac{q^i-1}{q-1}.\]
\end{proof}

We will also need to fix a geodesic on the tree and only count the vertices out of the apartment.  
\begin{prop}\label{prop:vertballoutofapt} Let $v$ be a vertex and $A$ be an apartment with $v\in A$. The number of vertices $w$ in $B_i(v)$ so that $\mathrm{d}(w,A) = \mathrm{d}(w,v)$ is equal to 
    \[1+(q-1)\frac{q^i-1}{q-1} = q^i.\]
\end{prop}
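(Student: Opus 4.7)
My plan is to identify the set being counted as a natural subtree emanating from $v$, then compute a geometric sum.

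First I would observe that a vertex $w \in B_i(v)$ satisfies $\rd(w,A) = \rd(w,v)$ precisely when the unique geodesic from $w$ to $A$ terminates at $v$; equivalently, $w$ belongs to the connected component of $v$ in the graph obtained from $X$ by deleting the two edges of $A$ incident to $v$. Call this subtree $T_v$. Inside $T_v$, the vertex $v$ has degree $q-1$ (its $q+1$ neighbors in $X$, minus the two lying on $A$), while every other vertex of $T_v$ retains all its $q+1$ neighbors from $X$ except for the one closer to $v$, and hence has exactly $q$ children in the rooted tree $(T_v,v)$.

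Next I would count level by level. By the description above, the number of vertices of $T_v$ at distance exactly $j$ from $v$ is $1$ if $j=0$, and $(q-1)q^{j-1}$ if $j \geq 1$ (namely $q-1$ choices at the first step, then $q$ choices at each subsequent step). Summing,
\[
|B_i(v) \cap T_v| \;=\; 1 + \sum_{j=1}^{i}(q-1)q^{j-1} \;=\; 1 + (q-1)\cdot \frac{q^i-1}{q-1} \;=\; q^i,
\]
which is the claimed formula.

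There is essentially no obstacle here: the only point requiring care is the initial combinatorial description of $T_v$ (noting the reduced degree of the root), and everything else is a telescoping geometric sum. The statement can be viewed as a direct refinement of the preceding proposition, where removing the two branches along the apartment replaces the factor $q+1$ by $q-1$ and cleanly collapses the expression to $q^i$.
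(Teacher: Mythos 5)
Your proof is correct and follows the same route as the paper: count the vertices whose closest point on $A$ is $v$, which gives $q-1$ choices at the first step and $q$ at each subsequent step, summing to $1+(q-1)\frac{q^i-1}{q-1}=q^i$. The paper states this in one line ("As in the previous proposition..."); you simply make the rooted-subtree picture explicit, which is a fine way to justify the same computation.
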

\begin{proof}
    As in the previous proposition the number we are computing is 
    \[1+(q-1)(1+\cdots+q^{i-1}) = 1+(q-1)\frac{q^{i}-1}{q-1} = q^i.\]
\end{proof}

\subsubsection{Ball centered at a midpoint}

Some cases have us computing the number of vertices in a ball centered at the midpoint of a vertex. To this extent, we prove the following Proposition. 

\begin{figure}[H]
\centering
\begin{tikzpicture}
\draw[very thick] (-2.5,0) -- (2.5,0);
\foreach \i in {-2.5,-1.5,...,2.5}{
\node[draw,circle,inner sep=2pt,fill=black, thick] at (\i,0) {};}
\node[draw,circle,inner sep=1pt,fill=blue, thick] at (0,0) {};
\node[below] at (0,0) {$x$};

\begin{scope}[shift=(0:1.5)]
\draw[very thick] (0,0) -- (-45:1.2);
\node[draw,circle,inner sep=2pt,fill=black, thick] at (-45:1.2) {};
\end{scope}
\begin{scope}[shift=(180:1.5)]
\draw[very thick] (0,0) -- (225:1.2);
\node[draw,circle,inner sep=2pt,fill=black, thick] at (225:1.2) {};
\end{scope}
\begin{scope}[shift=(0:.5)]
    \draw[very thick] (0,0) -- (-90:1.4);
    \node[draw,circle,inner sep=2pt,fill=black, thick] at (-90:1.4) {};
    \begin{scope}[shift=((-90:1.4))]
        \draw[very thick] (0,0) -- (-90:1.05);
        \node[draw,circle,inner sep=2pt,fill=black, thick] at (-90:1.05) {};
        \draw[very thick] (0,0) -- (-45:1.1);
        \node[draw,circle,inner sep=2pt,fill=black, thick] at (-45:1.1) {};
    \end{scope}
\end{scope}
\begin{scope}[shift=(180:.5)]
    \draw[very thick] (0,0) -- (-90:1.4);
    \node[draw,circle,inner sep=2pt,fill=black, thick] at (-90:1.4) {};
    \begin{scope}[shift=((-90:1.4))]
        \draw[very thick] (0,0) -- (-90:1.05);
        \node[draw,circle,inner sep=2pt,fill=black, thick] at (-90:1.05) {};
        \draw[very thick] (0,0) -- (225:1.1);
        \node[draw,circle,inner sep=2pt,fill=black, thick] at (225:1.1) {};
    \end{scope}
\end{scope}
\node[above] at (.5,0) {$2$};
\node[above] at (1.5,0) {$2q$};
\node[above] at (2.5,0) {$2q^2$};
\clip (-3,0) rectangle (3,-3);
    \draw[thick, dashed] (0,0) circle(.5);
    \draw[thick, dashed] (0,0) circle(1.5);
    \draw[thick, dashed] (0,0) circle(2.5);

%\begin{scope}[rotate=90]
%\foreach \i in{-60,0,60}{
%\draw[very thick] (0,0) -- (\i:1.2);

%\begin{scope}[shift=(\i:1.2), rotate=\i]

%\draw[very thick] (0,0) -- (-40:1.15);
%\node[draw,circle,inner sep=2pt,fill=black, thick] at (-40:1.15) {};

%\draw[very thick] (0,0) -- (40:1.15);
%\node[draw,circle,inner sep=2pt,fill=black, thick] at (40:1.15) {};

%\node[draw,circle,inner sep=2pt,fill=black, thick] at (0,0) {};
%\end{scope}}
%\node[draw,circle,inner sep=2pt,fill=black, thick] at (0,0) {};
%\end{scope}
%\node[below] at (0,-0.1) {$1$};
%\node[below] at (-1.2,0) {$q+1$};
%\node[below] at (2.2,0) {$q(q+1)$};
% \clip (-3,0) rectangle (3,3);
%    \draw[thick, dashed] (0,0) circle(1.2);
%    \draw[thick, dashed] (0,0) circle(2.2); 
    \end{tikzpicture}
\caption{Vertices in a ball of radius 2, with $q=2$}\end{figure}
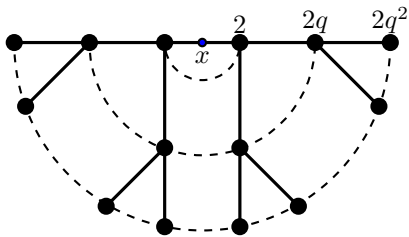

\begin{prop}
    Let $x$ be the midpoint of a vertex and let $i\in \mathbb{Z}_{>0}$. The number of vertices at distance at most $i$ from $x$ is equal to 
    \[2\frac{q^{i}-1}{q-1}.\]
\end{prop}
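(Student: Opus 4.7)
The plan is straightforward combinatorics on the regular tree. Let $x$ be the midpoint of an edge $\{v_1, v_2\}$ of $X$. The first step is to observe that every vertex $w \in X$ lies in the subtree hanging off exactly one of $v_1$ or $v_2$ (where we declare $v_j$ itself to lie in its own subtree), and the unique geodesic from $x$ to $w$ passes through that corresponding endpoint. Consequently, if $w$ is in the subtree of $v_j$, then $\rd(x, w) = \rd(v_j, w) + \tfrac{1}{2}$.

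I would then stratify the count by distance from $x$. Since every vertex sits at a half-integer distance from $x$, the vertices at distance at most $i$ are precisely those at distance $k - \tfrac{1}{2}$ for some $k \in \{1, 2, \ldots, i\}$. For a fixed $k$, the vertices at distance $k - \tfrac{1}{2}$ from $x$ split into two disjoint groups: those in the subtree of $v_1$ (at combinatorial distance $k-1$ from $v_1$, with the geodesic to $v_1$ not using the edge to $v_2$) and symmetrically those in the subtree of $v_2$. Since $X$ is $(q+1)$-regular, each $v_j$ has exactly $q$ neighbors besides the other endpoint, and more generally the number of vertices at distance $k-1$ from $v_j$ within its subtree is $q^{k-1}$ (for $k \geq 1$, with the convention $q^0 = 1$ accounting for $v_j$ itself at $k = 1$).

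Summing across the two subtrees and over the allowed shells gives
\[
|B_i(x)| = \sum_{k=1}^{i} 2\, q^{k-1} = 2 \cdot \frac{q^i - 1}{q - 1},
\]
which is the claimed formula. There is no real obstacle here; the only point requiring a modicum of care is the half-integer distance bookkeeping and the observation that the two subtrees at $v_1$ and $v_2$ partition $X$, so no vertex is double-counted. This also matches the picture in the figure: $2, 2q, 2q^2$ new vertices appear at each successive shell.
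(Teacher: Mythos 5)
Your proof is correct and takes essentially the same approach as the paper: partition the vertices according to which endpoint of the edge containing $x$ they are nearest, count $1+q+\cdots+q^{i-1}$ on each side, and double. The only cosmetic difference is that you stratify explicitly by the half-integer distance shells before summing, whereas the paper writes the geometric sum for one side directly and then multiplies by two.
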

\begin{proof}
    We count the number of vertices in $B_i(x)$ closest to a fixed side of the edge containing $x$ as 
    \[1+q+\cdots+q^{i-1} = \frac{q^i-1}{q-1}.\]
    The edge has two endpoints so we multiply the result by $2$.
\end{proof}

\subsection{Intersection of balls}

\begin{lemma}\label{lem:interballs}
Let $x,y$ be two points (not necessarily vertices) in a $q+1$-regular tree at distance $\delta$. Let $\alpha, \beta\in \mathbb{Z}_{\geq 0}$ be two nonnegative integers.  Then 
\begin{enumerate}
\item if $\alpha+\beta< \delta$ then $B_\alpha(x)\cap B_\beta(y) = \emptyset$.
\item if $|\alpha-\beta|\leq \delta\leq \alpha+\beta$  then $B_\alpha(x)\cap B_\beta(y) = B_{\frac{1}{2}(\alpha+\beta-\delta)}(c)$ where $c$ a point on the geodesic $[x,y]$ at distance $\frac{1}{2}(\alpha-\beta+r)$ from $x$ (and therefore at distance $\frac{1}{2}(\beta-\alpha+r)$ from $y$. 
\item if $\delta \leq |\alpha- \beta|$ then $B_\alpha(x)\cap B_\beta(y) = B_\beta(y)$ if $\alpha\geq \beta$ or $B_\alpha(x)$ if $\alpha\leq \beta$.
\end{enumerate}
\end{lemma}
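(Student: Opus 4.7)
The plan is to dispatch each of the three cases separately, using only the triangle inequality together with the \emph{0-hyperbolicity} of trees (that is, the existence of a unique median for any three points $x,y,z$, namely the unique point at which the three geodesics $[x,y],[y,z],[z,x]$ meet).

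Cases (1) and (3) are immediate. For case (1), if $z\in B_\alpha(x)\cap B_\beta(y)$ then $\delta=d(x,y)\le d(x,z)+d(z,y)\le \alpha+\beta$, contradicting $\alpha+\beta<\delta$. For case (3), assume without loss of generality $\alpha\ge\beta$ and $\delta\le\alpha-\beta$; for any $z\in B_\beta(y)$, $d(x,z)\le d(x,y)+d(y,z)\le\delta+\beta\le\alpha$, so $B_\beta(y)\subseteq B_\alpha(x)$. The reverse inclusion being trivially false in general, this is the asserted equality.

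The main case is (2). First I would verify that the point $c$ on $[x,y]$ at distance $\tfrac12(\alpha-\beta+\delta)$ from $x$ is well-defined: the hypothesis $|\alpha-\beta|\le\delta$ guarantees that this value lies in $[0,\delta]$ (here $c$ may be an interior point of an edge, and the radius $\tfrac12(\alpha+\beta-\delta)$ may be a half-integer, which is harmless). The inclusion $B_{(\alpha+\beta-\delta)/2}(c)\subseteq B_\alpha(x)\cap B_\beta(y)$ is then a direct triangle inequality: for $z$ in the left-hand side,
\[d(x,z)\le d(x,c)+d(c,z)\le \tfrac12(\alpha-\beta+\delta)+\tfrac12(\alpha+\beta-\delta)=\alpha,\]
and analogously $d(y,z)\le\beta$.

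For the reverse inclusion, let $z\in B_\alpha(x)\cap B_\beta(y)$ and let $m$ be the median of $\{x,y,z\}$. The classical identities give
\[d(x,m)=\tfrac12(\delta+d(x,z)-d(y,z)),\quad d(m,z)=\tfrac12(d(x,z)+d(y,z)-\delta).\]
Since both $m$ and $c$ lie on $[x,y]$, the geodesic $[c,z]$ passes through $m$, so $d(c,z)=d(c,m)+d(m,z)$ with $d(c,m)=|d(x,c)-d(x,m)|$. Setting $a=\alpha-d(x,z)\ge0$ and $b=\beta-d(y,z)\ge0$, a short computation yields
\[d(c,z)=\tfrac12|a-b|+\tfrac12\bigl((\alpha+\beta-\delta)-(a+b)\bigr)\le \tfrac12(\alpha+\beta-\delta),\]
where the final inequality is just $|a-b|\le a+b$. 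Hence $z\in B_{(\alpha+\beta-\delta)/2}(c)$, completing the proof.

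The only real obstacle is careful bookkeeping in case (2): one must track the definition of $c$, correctly identify $m$ with the tree median, and verify that $d(c,z)=d(c,m)+d(m,z)$ regardless of the relative position of $c$ and $m$ on $[x,y]$. Once the median identities are in hand, the inequality is routine.
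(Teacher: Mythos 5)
Your proof is correct and follows essentially the same route as the paper's: the point the paper calls $w$ (the vertex of $[x,y]$ closest to $v$) is exactly your median $m$, and both proofs then bound $d(c,z)$ by decomposing the geodesic $[c,z]$ through $m$. The only cosmetic difference is that the paper splits into the two cases $w\in[x,c]$ and $w\in[c,y]$, while your Gromov-product bookkeeping handles both at once via $|a-b|\le a+b$, which is slightly slicker but the underlying argument is identical.
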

\begin{proof}
We get Case 1 by the triangle inequality. Similarly, Case 3 is immediate, since in that case, if $\alpha\geq \beta$ then $B_\beta(y)\subset B_\alpha(x)$, and if $\beta\geq\alpha$ then the reverse inclusion is true.

Assume that  $\alpha-\beta\leq \delta\leq \alpha+\beta$. Define $c$ as in the Lemma and let $r= \frac{1}{2}(\alpha+\beta-r)\geq 0$.
\begin{itemize}
\item Let us show that $B_{r}(c)\subset B_{\alpha}(x)\cap B_\beta(y)$. Let $v\in B_r(c)$. Then by the triangle inequality we have 
\[\mathrm{d}(v,x) \leq \mathrm{d}(v,c)+\rd(c,x)\leq \frac{1}{2}(\alpha+\beta-\delta)+\frac{1}{2}(\alpha-\beta+\delta) = \alpha.\]
Similarly $\rd(v,y)\leq \beta$ hence $v\in B_\alpha(x)\cap B_\beta(y)$.
\item We now prove the reverse inclusion. Let $v\in B_\alpha(x)\cap B_\beta(y)$. Let $w$ be the point on $[x,y]$ the closest to $v$. 
\begin{itemize}
\item If $w\in[x,c]$ then 
\[\rd(v,c) = \rd(v,w)+\setbrace{=\rd(x,c)-\rd(x,w)}{\rd(w,c)} = {\color{red}\rd(x,c)-\rd(x,w)}+\rd(x,c).\]
But also notice that ${\color{red}\rd(x,c)-\rd(x,w)}+\delta = \rd(v,y)\leq \beta$, hence 
\[\rd(v,c) \leq (\beta-\delta) + \rd(x,c)= (\beta-\delta) + \frac{1}{2}(\alpha-\beta+r) = r.\]
\item If $w\in [y,x]$ then the same reasoning swapping $x$ and $y$ yields $\rd(v,c)\leq r$ again. 
\end{itemize}
Therefore,  $v\in B_r(c)$ so we are done. 
\end{itemize}
\end{proof}

\begin{lemma}\label{lem:intergeoball} Let $A$ be an infinite geodesic in a $q+1$-regular tree. Let $b$ be a point and $\alpha,\beta\in \mathbb{R}_{\geq 0}$. Let $\delta$ denote the distance between $A$ and $v$ and let $a\in A$ such that $\rd(a,b)=\delta$. 
Let $X$ be the set of points at distance at most $\alpha$ from $A$ and at most $\beta$ from $b$. 
\begin{enumerate}
\item The inclusion $B_\alpha(a)\cap B_\beta(b)\subset X$ always holds.
\item If $\delta \geq \beta-\alpha$ then the reverse inclusion holds.
\item If $\delta < \beta-\alpha$ then $B_\alpha(a)\subset X\subset B_\beta(b)$ and each inclusion is strict.
\end{enumerate}
\end{lemma}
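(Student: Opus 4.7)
The plan is to reduce all three statements to one tree--geometry identity: for any point $v$, if $a'\in A$ denotes the unique projection of $v$ on $A$ (nearest point), then the geodesic $[v,b]$ decomposes as $[v,a']\cup[a',a]\cup[a,b]$, where the middle segment lies in $A$. This is forced because every geodesic from $v$ to a point off $A$ at projection $a$ must pass through $v$'s projection $a'$ first, then travel along $A$ from $a'$ to $a$, then off $A$ to $b$. As a consequence,
\[
\rd(v,b)=\rd(v,a')+\rd(a',a)+\delta \quad\text{and}\quad \rd(v,a)=\rd(v,a')+\rd(a',a),
\]
so $\rd(v,b)=\rd(v,a)+\delta$ always. (When $a'=a$, both identities collapse to $\rd(v,b)=\rd(v,a)+\delta$ with $\rd(v,a)=\rd(v,a')=\rd(v,A)$.) Checking this identity carefully is the one step that requires genuine tree reasoning; everything else is bookkeeping.

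For part $(1)$: if $v\in B_\alpha(a)\cap B_\beta(b)$, then $\rd(v,A)\leq \rd(v,a)\leq\alpha$ since $a\in A$, and $\rd(v,b)\leq\beta$ by hypothesis, so $v\in X$.

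For part $(2)$: let $v\in X$. Then $\rd(v,A)\leq\alpha$ and $\rd(v,b)\leq\beta$. By the identity above, $\rd(v,a)=\rd(v,b)-\delta\leq\beta-\delta$, and the hypothesis $\delta\geq\beta-\alpha$ yields $\rd(v,a)\leq\alpha$. Combined with $\rd(v,b)\leq\beta$, this gives $v\in B_\alpha(a)\cap B_\beta(b)$.

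For part $(3)$: the inclusion $X\subset B_\beta(b)$ is by definition. For $B_\alpha(a)\subset X$, if $v\in B_\alpha(a)$ then $\rd(v,A)\leq\rd(v,a)\leq\alpha$ and $\rd(v,b)\leq\rd(v,a)+\delta\leq\alpha+\delta<\beta$ (using $\delta<\beta-\alpha$). To witness strictness, first observe that any real $r$ with $\alpha<r\leq\beta-\delta$ exists (since $\beta-\delta>\alpha$), and picking a point $v\in A$ at distance $r$ from $a$ gives $\rd(v,A)=0\leq\alpha$ and $\rd(v,b)=r+\delta\leq\beta$, hence $v\in X\setminus B_\alpha(a)$. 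Conversely, extending a geodesic from $b$ away from $A$ by $\beta$ units produces a point $w$ with $\rd(w,b)=\beta$ and $\rd(w,A)=\delta+\beta>\alpha$ (again using $\beta>\alpha$), so $w\in B_\beta(b)\setminus X$. The hard part is getting the tree--geometry identity right once and for all; after that, the three cases reduce to elementary triangle--inequality arithmetic.
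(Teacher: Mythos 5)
Your proof of parts (1) and (3) is correct, and your explicit verification of strictness in part (3) is actually more complete than the paper's treatment. However, the ``one tree--geometry identity'' that you build everything on is false, and this breaks your argument for part (2).

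The claim that the geodesic $[v,b]$ always decomposes as $[v,a']\cup[a',a]\cup[a,b]$, and hence that $\rd(v,b)=\rd(v,a)+\delta$ for every $v$, fails exactly when $a'=a$ and $v,b$ hang off $A$ at $a$ on the \emph{same} branch. For a concrete counterexample, take $v$ to be an interior point of the segment $[a,b]$ at distance $1$ from $a$ with $\delta=3$: then $a'=a$, $\rd(v,a)=1$, and $\rd(v,b)=2$, whereas your identity predicts $\rd(v,b)=1+3=4$. Your parenthetical remark that when $a'=a$ ``both identities collapse to $\rd(v,b)=\rd(v,a)+\delta$'' is precisely the step that is wrong: the concatenated path $[v,a]\cup[a,b]$ need not be a geodesic when $v$ sits between $a$ and $b$, so its length can exceed $\rd(v,b)$. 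The correct statement is only the inequality $\rd(v,b)\le\rd(v,a)+\delta$, with equality guaranteed when $a'\neq a$ (because then $v$ and $b$ live in subtrees hanging off $A$ at distinct vertices, so $[v,b]$ must pass through both $a'$ and $a$).

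This gap genuinely damages part (2). Your chain $\rd(v,a)=\rd(v,b)-\delta\le\beta-\delta\le\alpha$ is incorrect when $a'=a$; indeed $\beta-\delta$ can be negative there, producing an absurd upper bound. The conclusion $v\in B_\alpha(a)$ is still true in that case, but for a different reason: $\rd(v,a)=\rd(v,A)\le\alpha$ because $v\in X$. The paper's proof handles this by explicitly splitting into the cases $w=a$ and $w\neq a$ (where $w$ is the projection of $v$ on $A$): the first case uses $\rd(v,a)=\rd(v,A)\le\alpha$ directly, and only the second case invokes the decomposition of $[v,b]$ through $a$. You need that same case distinction; the universal identity you tried to use to avoid casework does not hold.
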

\begin{proof}
Vertices in $B_\alpha(a)\cap B_\beta(b)$ are at distance at most $\alpha$ from $a$ hence at distance at most $\alpha$ from $A$, and also immediately at distance at most $\beta$ from $b$. This proves the first point. \\

For the third point, we have $\beta>\delta+\alpha$ which means that $B_{\alpha}(a)\subset B_{\beta}(b)$ strictly and so $B_\alpha(a) = B_{\alpha}(a)\cap B_{\beta}(b) \subset X$. \\

Now assume $\delta \geq \beta-\alpha$ i.e. $\beta-\delta\leq \alpha$. Let $v\in X$. Since $X\subset B_\beta(b)$ we have $v\in B_\beta(b)$. We need to show $v\in B_\alpha(a)$. Let $w\in A$ the vertex closest to $v$.

If $w = a$ then we immediately have $\rd(v,a)= \rd(v,A)\leq \alpha$. \\

If $w\neq a$ then 
\[\rd(v,b) = \rd(v,b)+\rd(a,b) = \rd(v,a)+\delta\leq \beta,\]
so 
\[\rd(v,a)\leq \beta-\delta\leq \alpha\]
by assumption. We get the desired result. \end{proof}

\subsection{Elements at a fixed distance from a convex set}

In this section, for any subgraph $S\subset X$,  we let $V_S$ denote its set of vertices and $E_S$  its set of edges.

\begin{prop}\label{prop:coloredawayfromconvex}
    Assume that the $(q+1)$-regular tree is colored with alternating black and white colors. Let $S$ be a convex set, let $V_S^{\scalebox{0.5}{\CIRCLE}}$ the set of black vertices of $S$, and $V_S^{\scalebox{0.5}{\Circle}}$ the set of white vertices. If $n>0$, the number of black vertices at distance $n$ from $S$ is equal to 
    \[\left\lbrace\begin{array}{ll}
        q^{n-1}\left(q|V_S^{\scalebox{0.5}{\CIRCLE}}|-|V_S^{\scalebox{0.5}{\Circle}}|+1\right) &\text{if } n\equiv 0\pmod2\\ &\\
          q^{n-1}\left(q|V_S^{\scalebox{0.5}{\Circle}}|-|V_S^{\scalebox{0.5}{\CIRCLE}}|+1\right) &\text{if } n\equiv 1\pmod2
    \end{array}\right.\]
\end{prop}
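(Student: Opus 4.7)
The plan is to count, for each vertex $v\in V_S$, the number of vertices at distance exactly $n$ from $S$ whose closest point in $S$ is $v$, and then use the bipartite (alternating-color) structure of the tree to separate the contribution of black and white vertices.

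First I would observe that since $S$ is convex, every vertex $w\notin S$ has a unique closest vertex $\pi(w)\in V_S$, and the geodesic from $w$ to $\pi(w)$ meets $S$ only at $\pi(w)$. Hence the vertices at distance $n>0$ from $S$ partition as $\bigsqcup_{v\in V_S}\pi^{-1}(v)_n$, where $\pi^{-1}(v)_n$ is the set of vertices at distance $n$ from $v$ whose geodesic to $v$ leaves $S$ immediately. Writing $d_S(v)$ for the degree of $v$ in $S$, the vertex $v$ has $q+1-d_S(v)$ neighbors outside $S$, and each such neighbor starts a subtree in which the number of vertices at distance exactly $n-1$ from it (equivalently, at distance $n$ from $v$) is $q^{n-1}$. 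Therefore
\[
|\pi^{-1}(v)_n| \;=\; (q+1-d_S(v))\,q^{n-1}.
\]

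Next I would bring in the coloring. Because the tree is bipartite with the alternating coloring, a vertex at distance $n$ from $v$ has the same color as $v$ when $n$ is even and the opposite color when $n$ is odd. So the number of black vertices at distance $n$ from $S$ is
\[
q^{n-1}\sum_{v\in V_S^{\varepsilon}}\bigl(q+1-d_S(v)\bigr),
\]
where $\varepsilon=\scalebox{0.5}{\CIRCLE}$ if $n$ is even and $\varepsilon=\scalebox{0.5}{\Circle}$ if $n$ is odd.

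To finish, I would compute $\sum_{v\in V_S^{\varepsilon}}d_S(v)$. Since $S$ is a connected subtree and every edge of $S$ has one black and one white endpoint, we have
\[
\sum_{v\in V_S^{\scalebox{0.5}{\CIRCLE}}} d_S(v) \;=\; \sum_{v\in V_S^{\scalebox{0.5}{\Circle}}} d_S(v) \;=\; |E_S| \;=\; |V_S^{\scalebox{0.5}{\CIRCLE}}|+|V_S^{\scalebox{0.5}{\Circle}}|-1.
\]
Substituting gives, for $n$ even,
\[
\sum_{v\in V_S^{\scalebox{0.5}{\CIRCLE}}}(q+1-d_S(v))
=(q+1)|V_S^{\scalebox{0.5}{\CIRCLE}}|-\bigl(|V_S^{\scalebox{0.5}{\CIRCLE}}|+|V_S^{\scalebox{0.5}{\Circle}}|-1\bigr)
= q|V_S^{\scalebox{0.5}{\CIRCLE}}|-|V_S^{\scalebox{0.5}{\Circle}}|+1,
\]
and the analogous identity with the colors swapped when $n$ is odd. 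Multiplying by $q^{n-1}$ yields exactly the two cases in the statement. The only nonroutine step is justifying the clean splitting $|\pi^{-1}(v)_n|=(q+1-d_S(v))q^{n-1}$, which rests entirely on the convexity of $S$; once that is in place everything reduces to the edge-counting identity above.
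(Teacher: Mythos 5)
Your proof is correct and takes essentially the same route as the paper's: partition the vertices at distance $n$ from $S$ according to their nearest point in $S$, count $(q+1-d_S(v))q^{n-1}$ for each $v$, use the alternating coloring to select the right color class, and close with the edge-count identity $\sum_{v\in V_S^{\scalebox{0.5}{\CIRCLE}}} d_S(v)=|E_S|=|V_S|-1$ coming from $S$ being a tree with properly 2-colored edges. The only cosmetic difference is notation (your $d_S(v)$ is the paper's $C_v$).
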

\begin{proof} Call $V_m^{\scalebox{0.5}{\CIRCLE}}$ such a set.
  Let $v\in V_S$. Let $C_v$ be the number of neighbors of $v$ contained in $S$.
  
  We get that $(q+1)-C_v$ neighbors are outside of $S$. 
  Each of those vectors has a different color than $v$, and has $q$ neighbors that are still outside of $S$ and are the same color as 
  $v$,   and so on since the tree has no cycle. 
  We get that there are $q^{n-1}((q+1)-C_v)$ vertices at distance $n$ from $V_S$ 
  whose closest vertex in $S$ is $v$. Their color depends on the parity of $n$.

  Therefore, 
    \begin{align*}
    |V_m^{\scalebox{0.5}{\CIRCLE}}|&=\left\lbrace\begin{array}{ll}
        \sum_{v\in V_S^{\scalebox{0.5}{\CIRCLE}}}q^{n-1}((q+1)-C_v) & n\equiv 0\pmod2\\ &\\
         \sum_{v\in V_S^{\scalebox{0.5}{\Circle}}}q^{n-1}((q+1)-C_v)& n\equiv 1\pmod2
    \end{array}\right.\\
    &= \left\lbrace\begin{array}{ll}
        q^{n-1}\left( (q+1)|V_S^{\scalebox{0.5}{\CIRCLE}}| -\sum_{v\in V_S^{\scalebox{0.5}{\CIRCLE}}}C_v\right)& n\equiv 0\pmod2\\ &\\
         q^{n-1}\left( (q+1)|V_S^{\scalebox{0.5}{\Circle}}| -\sum_{v\in V_S^{\scalebox{0.5}{\Circle}}}C_v\right)& n\equiv 1\pmod2
    \end{array}\right.\\
    &=\left\lbrace\begin{array}{ll}
        q^{n-1}\left( (q+1)|V_S^{\scalebox{0.5}{\CIRCLE}}| -|E_S|\right)& n\equiv 0\pmod2\\ &\\
         q^{n-1}\left( (q+1)|V_S^{\scalebox{0.5}{\Circle}}| -|E_S|\right)& n\equiv 1\pmod2
    \end{array}\right..\\
    \end{align*}
    The last equality is due to the fact that each edge in $S$ has exactly two vertices, each of them having a different color. 
    
    Since $S$ is a tree, its Euler characteristic is $1$, therefore $|V_S| -|E_S|= 1$, hence 
    \[|V_m^{\scalebox{0.5}{\CIRCLE}}|=\left\lbrace\begin{array}{ll}
        q^{n-1}\left( (q+1)|V_S^{\scalebox{0.5}{\CIRCLE}}| -|V_S|+1\right)& n\equiv 0\pmod2\\ &\\
         q^{n-1}\left( (q+1)|V_S^{\scalebox{0.5}{\Circle}}| -|V_S|+1\right)& n\equiv 1\pmod2
    \end{array}\right.\]

    We can conclude by writing $|V_S| = |V_S^{\scalebox{0.5}{\CIRCLE}}|+|V_S^{\scalebox{0.5}{\Circle}}|$. 
\end{proof}

Note that the convex condition is equivalent to $S$ being connected since geodesics are unique in the tree. 

Moreover, summing both formulas, and using $V_S = V_S^{\scalebox{0.5}{\Circle}}\sqcup V_S^{\scalebox{0.5}{\CIRCLE}}$, we recover the result {\cite[p.416]{kott_bible}} stating that 
the total number of vertices at distance $n$ from $S$ is 
\begin{equation}\label{prop:pointfromconvex}q^{n-1}((q-1)|V_S|+2).\end{equation}

\subsection{Intersection of a tube and a ball.}\label{sec:tuball}

Consider a point $v$ at distance $\delta$ from an infinite geodesic $A$ (called an appartment). We want to compute the number of vertices at distance at most $\alpha$ from $A$ and $\beta$ from $v$. If $\delta>0$, let $a_0\in A$ be the closest vertex to $v$ and then label vertices of $A$ by $a_i$ where $i\in\mathbb{Z}$. This section will prove the following:

\begin{figure}[H]
\centering
\begin{tikzpicture}[scale=0.85]

\draw[ultra thick] (0,0) -- ++(10:2) -- ++(-10:2) -- ++(10:2);
\draw[ultra thick, dashed] (5.9,0.35) -- ++(0:2);
\draw[ultra thick, dashed] (0,0) -- ++(180:2);
\draw[very thick] (0,0) -- ++(130:1.5);
\draw[very thick] (0,0) -- ++(-130:1.5);
\node[draw,circle,inner sep=2pt,fill=black, thick] at (0,0) {};
\begin{scope}[shift=(130:1.5),rotate=130]
\draw[very thick] (0,0) -- (1.2,0);
\draw[very thick] (0,0) -- (40:1.2);
\draw[very thick] (0,0) -- (-40:1.2);
\foreach \i in {-40,0,...,40}{
\begin{scope}[shift=(\i:1.2),rotate=\i]
\draw[thick] (0,0) -- (1,0);
\draw[thick] (0,0) -- (30:1);
\draw[thick] (0,0) -- (-30:1);
\foreach \j in {-30,0,...,30}{
\begin{scope}[shift=(\j:1),rotate=\j]
\draw[dashed] (0,0) -- (0.4,0);
\draw[dashed] (0,0) -- (20:0.4);
\draw[dashed] (0,0) -- (-20:0.4);
\node[draw,circle,inner sep=2pt,fill=white, thick] at (0,0) {};
\end{scope}
\node[draw,circle,inner sep=2pt,fill=black, thick] at (0,0) {};
}
\end{scope}
}
\node[draw,circle,inner sep=2pt,fill=black, thick] at (0,0) {};
\end{scope}

\begin{scope}[shift=(-130:1.5),rotate=-130]
\draw[very thick] (0,0) -- (1.2,0);
\draw[very thick] (0,0) -- (40:1.2);
\draw[very thick] (0,0) -- (-40:1.2);
\foreach \i in {-40,0,...,40}{
\begin{scope}[shift=(\i:1.2),rotate=\i]
\draw[thick] (0,0) -- (1,0);
\draw[thick] (0,0) -- (30:1);
\draw[thick] (0,0) -- (-30:1);
\foreach \j in {-30,0,...,30}{
\begin{scope}[shift=(\j:1),rotate=\j]
\draw[dashed] (0,0) -- (0.4,0);
\draw[dashed] (0,0) -- (20:0.4);
\draw[dashed] (0,0) -- (-20:0.4);
\node[draw,circle,inner sep=2pt,fill=white, thick] at (0,0) {};
\end{scope}
\node[draw,circle,inner sep=2pt,fill=black, thick] at (0,0) {};
}
\end{scope}
}
\node[draw,circle,inner sep=2pt,fill=black, thick] at (0,0) {};
\end{scope}

\draw[very thick] (10:2) -- ++(110:1.2);
\draw[very thick] (10:2) -- ++(-110:1.2);
\node[draw,circle,inner sep=2pt,fill=black, thick] at (10:2) {};
\begin{scope}[shift=(10:2)]
\begin{scope}[shift=(110:1.2),rotate=110, scale=0.7]
\draw[very thick] (0,0) -- (1.2,0);
\draw[very thick] (0,0) -- (40:1.2);
\draw[very thick] (0,0) -- (-40:1.2);
\foreach \i in {-40,0,...,40}{
\begin{scope}[shift=(\i:1.2),rotate=\i]
\draw[thick] (0,0) -- (1,0);
\draw[thick] (0,0) -- (30:1);
\draw[thick] (0,0) -- (-30:1);
\foreach \j in {-30,0,...,30}{
\begin{scope}[shift=(\j:1),rotate=\j]
\draw[dashed] (0,0) -- (0.4,0);
\draw[dashed] (0,0) -- (20:0.4);
\draw[dashed] (0,0) -- (-20:0.4);
\node[draw,circle,inner sep=2pt,fill=white, thick] at (0,0) {};
\end{scope}
\node[draw,circle,inner sep=2pt,fill=black, thick] at (0,0) {};
}
\end{scope}
}
\node[draw,circle,inner sep=2pt,fill=black, thick] at (0,0) {};
\end{scope}
\begin{scope}[shift=(-110:1.2),rotate=-110, scale=0.7]
\draw[very thick] (0,0) -- (1.2,0);
\draw[very thick] (0,0) -- (40:1.2);
\draw[very thick] (0,0) -- (-40:1.2);
\foreach \i in {-40,0,...,40}{
\begin{scope}[shift=(\i:1.2),rotate=\i]
\draw[thick] (0,0) -- (1,0);
\draw[thick] (0,0) -- (30:1);
\draw[thick] (0,0) -- (-30:1);
\foreach \j in {-30,0,...,30}{
\begin{scope}[shift=(\j:1),rotate=\j]
\draw[dashed] (0,0) -- (0.4,0);
\draw[dashed] (0,0) -- (20:0.4);
\draw[dashed] (0,0) -- (-20:0.4);
\node[draw,circle,inner sep=2pt,fill=white, thick] at (0,0) {};
\end{scope}
\node[draw,circle,inner sep=2pt,fill=black, thick] at (0,0) {};
}
\end{scope}
}
\node[draw,circle,inner sep=2pt,fill=black, thick] at (0,0) {};
\end{scope}
\end{scope}

\draw[very thick] (0:3.94) -- ++(80:1.2);
\draw[very thick] (0:3.94) -- ++(-80:1.2);
\node[draw,circle,inner sep=2pt,fill=black, thick] at (0:3.94) {};

\begin{scope}[shift=(0:3.94)]
\begin{scope}[shift=(80:1.2),rotate=80, scale=0.7]
\draw[very thick] (0,0) -- (1.2,0);
\draw[very thick] (0,0) -- (40:1.2);
\draw[very thick] (0,0) -- (-40:1.2);
\foreach \i in {-40,0,...,40}{
\begin{scope}[shift=(\i:1.2),rotate=\i]
\draw[thick] (0,0) -- (1,0);
\draw[thick] (0,0) -- (30:1);
\draw[thick] (0,0) -- (-30:1);
\foreach \j in {-30,0,...,30}{
\begin{scope}[shift=(\j:1),rotate=\j]
\draw[dashed] (0,0) -- (0.4,0);
\draw[dashed] (0,0) -- (20:0.4);
\draw[dashed] (0,0) -- (-20:0.4);
\node[draw,circle,inner sep=2pt,fill=white, thick] at (0,0) {};
\end{scope}
\node[draw,circle,inner sep=2pt,fill=black, thick] at (0,0) {};
}
\end{scope}
}
\node[draw,circle,inner sep=2pt,fill=black, thick] at (0,0) {};
\end{scope}
\begin{scope}[shift=(-80:1.2),rotate=-80, scale=0.7]
\draw[very thick] (0,0) -- (1.2,0);
\draw[very thick] (0,0) -- (40:1.2);
\draw[very thick] (0,0) -- (-40:1.2);
\foreach \i in {-40,0,...,40}{
\begin{scope}[shift=(\i:1.2),rotate=\i]
\draw[thick] (0,0) -- (1,0);
\draw[thick] (0,0) -- (30:1);
\draw[thick] (0,0) -- (-30:1);
\foreach \j in {-30,0,...,30}{
\begin{scope}[shift=(\j:1),rotate=\j]
\draw[dashed] (0,0) -- (0.4,0);
\draw[dashed] (0,0) -- (20:0.4);
\draw[dashed] (0,0) -- (-20:0.4);
\node[draw,circle,inner sep=2pt,fill=white, thick] at (0,0) {};
\end{scope}
\node[draw,circle,inner sep=2pt,fill=black, thick] at (0,0) {};
}
\end{scope}
}
\node[draw,circle,inner sep=2pt,fill=black, thick] at (0,0) {};
\end{scope}
\end{scope}

\draw[very thick] (5.9,0.35) -- ++(50:1.5);
\draw[very thick] (5.9,0.35) -- ++(-50:1.5);

\node[draw,circle,inner sep=2pt,fill=black, thick] at (5.9,0.35) {};

\begin{scope}[shift=(3.4:5.91)]
\begin{scope}[shift=(50:1.2),rotate=50]
\draw[very thick] (0,0) -- (1.2,0);
\draw[very thick] (0,0) -- (40:1.2);
\draw[very thick] (0,0) -- (-40:1.2);
\foreach \i in {-40,0,...,40}{
\begin{scope}[shift=(\i:1.2),rotate=\i]
\draw[thick] (0,0) -- (1,0);
\draw[thick] (0,0) -- (30:1);
\draw[thick] (0,0) -- (-30:1);
\foreach \j in {-30,0,...,30}{
\begin{scope}[shift=(\j:1),rotate=\j]
\draw[dashed] (0,0) -- (0.4,0);
\draw[dashed] (0,0) -- (20:0.4);
\draw[dashed] (0,0) -- (-20:0.4);
\node[draw,circle,inner sep=2pt,fill=white, thick] at (0,0) {};
\end{scope}
\node[draw,circle,inner sep=2pt,fill=black, thick] at (0,0) {};
}
\end{scope}
}
\node[draw,circle,inner sep=2pt,fill=black, thick] at (0,0) {};
\end{scope}
\begin{scope}[shift=(-50:1.2),rotate=-50]
\draw[very thick] (0,0) -- (1.2,0);
\draw[very thick] (0,0) -- (40:1.2);
\draw[very thick] (0,0) -- (-40:1.2);
\foreach \i in {-40,0,...,40}{
\begin{scope}[shift=(\i:1.2),rotate=\i]
\draw[thick] (0,0) -- (1,0);
\draw[thick] (0,0) -- (30:1);
\draw[thick] (0,0) -- (-30:1);
\foreach \j in {-30,0,...,30}{
\begin{scope}[shift=(\j:1),rotate=\j]
\draw[dashed] (0,0) -- (0.4,0);
\draw[dashed] (0,0) -- (20:0.4);
\draw[dashed] (0,0) -- (-20:0.4);
\node[draw,circle,inner sep=2pt,fill=white, thick] at (0,0) {};
\end{scope}
\node[draw,circle,inner sep=2pt,fill=black, thick] at (0,0) {};
}
\end{scope}
}
\node[draw,circle,inner sep=2pt,fill=black, thick] at (0,0) {};
\end{scope}
\end{scope}

\end{tikzpicture}
\caption{Tube of radius 2 around an apartment}
\end{figure}
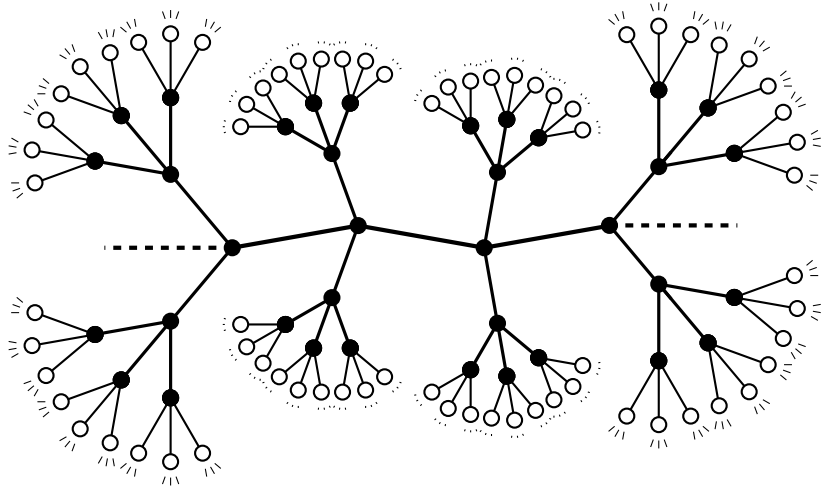

\begin{theorem}\label{thm:balltube}
    Assume that $\alpha\in \mathbb{Z}_{\geq 0}$ and $\beta$ is an integer if $v$ is a vertex, and a half-integer if $\beta$ is a midpoint of an edge. Then the number of vertices in $T_{\alpha}(A)\cap B_\beta(v)$ is 
    \[\left\lbrace \begin{array}{ll}
         0& \text{if }\alpha+\beta<\delta  \\
         | B_{\alpha}(a_0)\cap B_\beta(v)|& \text{if }\beta-\alpha\leq \delta\leq \alpha+\beta \\
         {[}1+2(\beta-\alpha-\delta)]q^\alpha + 2\frac{q^\alpha-1}{q-1}&\text{if }\delta\leq\beta-\alpha
    \end{array}\right. .\]
\end{theorem}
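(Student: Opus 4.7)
The plan is to dispatch the three cases separately. Case (1), where $\alpha+\beta<\delta$, is an immediate consequence of the triangle inequality: any vertex $w\in T_\alpha(A)\cap B_\beta(v)$ would yield some $a\in A$ with $d(w,a)\leq\alpha$, forcing $\delta\leq d(v,a)\leq d(v,w)+d(w,a)\leq\beta+\alpha$, a contradiction. Case (2), with $\beta-\alpha\leq\delta\leq\alpha+\beta$, is exactly the content of Lemma \ref{lem:intergeoball}(2), which asserts the equality $T_\alpha(A)\cap B_\beta(v)=B_\alpha(a_0)\cap B_\beta(v)$ under this hypothesis; nothing more needs to be counted at this stage.

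The real work is Case (3), where $\delta\leq\beta-\alpha$. The strategy is to decompose $T_\alpha(A)$ into disjoint fibers indexed by the vertices of $A$: for each $a\in V_A$, let $F_a$ denote the set of vertices $w$ with $d(w,A)\leq\alpha$ whose unique closest point in $A$ is $a$. Since $A$ is a convex subtree, every vertex of $T_\alpha(A)$ lies in exactly one such fiber. The key geometric observation is that for $w\in F_a$, the unique geodesic from $w$ to $v$ first reaches $A$ at $a$ and then travels through $a_0$, so $d(w,v)=d(w,a)+d(a,v)$. Consequently $w\in F_a\cap B_\beta(v)$ if and only if $d(w,a)\leq r_a:=\min(\alpha,\,\beta-d(a,v))$, and Proposition \ref{prop:vertballoutofapt} (applied to the branch at $a$) gives $|F_a\cap B_\beta(v)|=q^{r_a}$ whenever $r_a\geq 0$, and $0$ otherwise.

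It remains to carry out the sum $\sum_{a\in V_A}q^{r_a}$. The plan is to parametrize $V_A$ as $\{a_i\}$, with $i$ ranging over $\mathbb{Z}$ when $a_0$ is a vertex and over half-integers $\tfrac12+\mathbb{Z}$ when $a_0$ is the midpoint of an edge of $A$ (so that in the latter case $\delta=0$ and $|i|$ is a positive half-integer); in all situations $d(a_i,v)=\delta+|i|$ and $\beta-\delta$ is an integer. The Case (3) hypothesis $\delta\leq\beta-\alpha$ guarantees the nonempty central range $|i|\leq\beta-\delta-\alpha$ on which $r_{a_i}=\alpha$; counting these indices uniformly yields $[1+2(\beta-\alpha-\delta)]q^\alpha$. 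The remaining indices with $|i|\in(\beta-\delta-\alpha,\,\beta-\delta]$ produce contributions $q^{\beta-\delta-|i|}$ which, after pairing $\pm i$, sum to $2\sum_{k=1}^{\alpha}q^{\alpha-k}=2(q^\alpha-1)/(q-1)$. Adding the two pieces gives the claimed formula. The only delicate point is verifying that the index counting gives the same arithmetic in each of the four sub-cases (vertex/midpoint, on/off $A$); this is a routine check since $\beta-\delta$ and $\beta-\delta-\alpha$ end up being integers uniformly, and the parity of $i$ matches $\beta$.
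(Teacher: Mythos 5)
Your proof is correct and follows the same fiber-decomposition strategy as the paper: partition $T_\alpha(A)$ into the sets $F_a$ ($a\in V_A$), count $|F_a\cap B_\beta(v)|=q^{r_a}$ via Proposition~\ref{prop:vertballoutofapt}, and sum. The only real difference is bookkeeping -- you group all indices with $r_a=\alpha$ together and then add the two tails, while the paper peels off $a_0$ and recombines it with the tails to recognize a full ball of radius $\alpha$; the two arithmetics are identical. Your treatment of the four sub-cases (vertex vs.\ midpoint, $v$ on vs.\ off $A$) in one pass is a mild streamlining of the paper's three separate lemmas.

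Two small imprecisions are worth repairing. The stated ``key geometric observation,'' that the geodesic $[w,v]$ passes through $a$ and then $a_0$ so that $d(w,v)=d(w,a)+d(a,v)$, is false for $a=a_0$ when $v\notin A$: if $w$ leaves $a_0$ in the same branch that leads toward $v$, then $[w,v]$ shortcuts and never returns to $a_0$. The conclusion $|F_{a_0}\cap T_\alpha(A)\cap B_\beta(v)|=q^{\alpha}$ is still right in Case~(3), but for a different reason: the hypothesis $\delta\leq\beta-\alpha$ gives $r_{a_0}=\alpha$, so the constraint from $T_\alpha(A)$ is the binding one, and the crude triangle inequality $d(w,v)\leq d(w,a_0)+\delta\leq\alpha+\delta\leq\beta$ puts the entire fiber inside $B_\beta(v)$. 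You should argue the $a_0$ fiber separately rather than appealing to the false uniform claim. Second, the remark that ``$\beta-\delta$ and $\beta-\delta-\alpha$ end up being integers uniformly'' is not right when $v$ is the midpoint of an edge of $A$ (there $\delta=0$ and $\beta-\delta=\beta$ is a half-integer); what actually makes the count uniform is that $|i|-(\beta-\delta)$ is an integer for every admissible index label $i$, which is the content of your parity observation. Neither slip affects the arithmetic, which is correct.
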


We will consider several cases.  As before, if $\delta>\alpha+\beta$ then $T_{\alpha}(A)\cap B_\beta(v) = \varnothing$.

\begin{lemma}\label{lem:balltubevert}
 If $\beta-\alpha\leq \delta\leq \alpha+\beta$ then 
 \[ T_{\alpha}(A)\cap B_\beta(v)=  B_{\alpha}(a_0)\cap B_\beta(v).\]
\end{lemma}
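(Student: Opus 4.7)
The plan is to recognize Lemma \ref{lem:balltubevert} as a direct repackaging of case~(2) of Lemma \ref{lem:intergeoball}: the set $X$ appearing there is literally $T_{\alpha}(A) \cap B_{\beta}(v)$, and its distinguished point $a$ is our $a_0$. One containment, namely $B_{\alpha}(a_0) \cap B_{\beta}(v) \subseteq T_{\alpha}(A) \cap B_{\beta}(v)$, is free: any point within distance $\alpha$ of $a_0 \in A$ is automatically within distance $\alpha$ of $A$.

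The substantive direction is the reverse inclusion. Given a vertex $w \in T_{\alpha}(A) \cap B_{\beta}(v)$, let $a \in A$ be the unique closest point of $A$ to $w$, so $d(w,a) \leq \alpha$. I would conclude $d(w, a_0) \leq \alpha$ using the standard tree-geometry fact: because the geodesic from $v$ to $A$ meets $A$ only at $a_0$ and the geodesic from $w$ to $A$ meets $A$ only at $a$, the unique $v$--$w$ geodesic factors as $v \to a_0 \to a \to w$ (reducing to the trivial case when $a = a_0$, which includes $\delta = 0$). Hence $d(v,w) = \delta + d(a_0,a) + d(a,w)$. Since $d(v,w) \leq \beta$, this forces $d(a_0,a) + d(a,w) \leq \beta - \delta$, and then the triangle inequality gives $d(w,a_0) \leq d(w,a) + d(a,a_0) \leq \beta - \delta$. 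The hypothesis $\delta \geq \beta - \alpha$ delivers $d(w,a_0) \leq \alpha$, placing $w$ in $B_{\alpha}(a_0) \cap B_{\beta}(v)$.

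The upper bound $\delta \leq \alpha + \beta$ in the statement plays no role in the argument; it only guarantees both sides are nonempty, since otherwise $B_{\beta}(v)$ is already disjoint from $A$'s $\alpha$-neighborhood by Lemma \ref{lem:interballs}(1). I do not anticipate any real obstacle: the key geometric input (the factoring of the $v$--$w$ geodesic through $a_0$ and $a$) is exactly what makes tree tubes convex, and it is already isolated inside Lemma \ref{lem:intergeoball}. The cleanest write-up is therefore a one-line invocation of that lemma, optionally followed by the short geodesic-factoring argument for completeness.
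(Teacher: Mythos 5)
Your proof is correct, and your central observation — that the lemma is literally a restatement of Lemma~\ref{lem:intergeoball}(1)--(2) with $b = v$ and $a = a_0$ — is the cleanest route; the paper instead re-derives the same tree-geometry argument from scratch. Your fallback argument also works and is essentially the paper's, just with a different case split: you project $w$ onto $A$ and split on whether the foot $a$ equals $a_0$, whereas the paper projects onto the segment $[v,a_0]$ and splits on whether that foot $c$ equals $a_0$. These two partitions of the tree differ (your trivial case $a = a_0$ absorbs both the paper's case $c \neq a_0$ and part of $c = a_0$), but either split cleanly reduces the nontrivial situation to the identity $d(v,w) = \delta + d(a_0, w)$, after which $d(v,w) \leq \beta$ and $\delta \geq \beta - \alpha$ finish it. One small note: your phrasing ``the unique $v$--$w$ geodesic factors as $v \to a_0 \to a \to w$'' is only guaranteed once you are in the case $a \neq a_0$; you do flag this (``reducing to the trivial case when $a = a_0$''), but it's worth stating explicitly that the factoring is derived \emph{after} ruling out $a = a_0$, since both $v$ and $w$ could lie on the same branch off $a_0$ and the geodesic would then miss $a_0$ entirely. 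Your remark that the upper bound $\delta \leq \alpha + \beta$ plays no role in this containment is also correct — it only ensures nonemptiness, consistent with Lemma~\ref{lem:interballs}(1).
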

\begin{proof} 
Let $x\in T_{\alpha}(A)\cap B_\beta(v)$. 
    Let $c$ be vertex of $[v,a_0]$ closest to $x$.
    \begin{itemize}
    \item If $c \neq a_0$ then the vector the closest to $x$ on $A$ is $a_0$ and therefore $x\in B_\alpha(a_0)$. 
    \item If $c = a_0$ then 
    \[\mathrm{d}(x,v)=\mathrm{d}(x,a_0)+\mathrm{d}(a_0,v) = \mathrm{d}(x,a_0)+\delta\leq \beta,\]
    hence 
    \[\mathrm{d}(x,a_0)\leq \beta-\delta \leq \alpha\]
    hence $x\in B_\alpha(a_0)$.
\end{itemize}
\end{proof}

We can therefore compute this case based on the previous section.

\begin{lemma}[Ball centered at a vertex]
    If $v$ is a vertex, $\alpha,\beta\in \mathbb{Z}_{\geq 0}$ , and $\delta<\beta-\alpha$ then there are a total of 
    \[[1+2(\beta-\alpha-\delta)]q^\alpha + 2\frac{q^\alpha-1}{q-1}\]
    vertices in $ B_{\beta}(v)\cap T_\alpha(A)$.
\end{lemma}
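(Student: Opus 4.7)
The plan is to stratify the vertices $x\in T_\alpha(A)\cap B_\beta(v)$ according to the unique vertex $a_i\in A$ which is closest to $x$. Since $v$ is a vertex whose unique closest point on $A$ is $a_0$, any shortest path from $x$ to $v$ passes through $a_i$ and then along $A$ through $a_0$, so
\[
d(x,v)\;=\;d(x,a_i)+|i|+\delta.
\]
Consequently, the simultaneous conditions $x\in T_\alpha(A)$ (with closest $A$-vertex $a_i$) and $d(x,v)\leq\beta$ translate into the single inequality
\[
d(x,a_i)\;\leq\;\min\bigl(\alpha,\ \beta-\delta-|i|\bigr),
\]
with the understanding that $a_i$ contributes only when $\beta-\delta-|i|\geq 0$, i.e.\ $|i|\leq\beta-\delta$.

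Next I will count, for each $r\geq 0$, the vertices at distance $\leq r$ from $a_i$ whose unique closest $A$-vertex is $a_i$. Since $a_i$ has $q-1$ neighbors off $A$, and each vertex at distance $k\geq 1$ from $a_i$ off $A$ has exactly $q$ further neighbors continuing away from $A$, the count at distance exactly $k\geq 1$ is $(q-1)q^{k-1}$. Summing,
\[
1+\sum_{k=1}^{r}(q-1)q^{k-1}\;=\;1+(q-1)\cdot\frac{q^{r}-1}{q-1}\;=\;q^{r}.
\]
In particular the case $r=0$ gives $1=q^0$, so the formula holds uniformly for all $r\geq 0$.

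Now I split the sum over $i$ into two ranges according to which of the two bounds in the minimum is active. The hypothesis $\delta<\beta-\alpha$ ensures $\beta-\delta-\alpha\geq 1$ (all quantities being integers), so the range $|i|\leq\beta-\delta-\alpha$ is nonempty and yields $1+2(\beta-\delta-\alpha)$ indices each contributing $q^\alpha$. The complementary range $\beta-\delta-\alpha<|i|\leq\beta-\delta$ contains $\alpha$ positive and $\alpha$ negative values of $i$; writing $j=|i|$ these contribute
\[
2\sum_{j=\beta-\delta-\alpha+1}^{\beta-\delta}q^{\beta-\delta-j}
\;=\;2\sum_{k=0}^{\alpha-1}q^{k}
\;=\;2\cdot\frac{q^{\alpha}-1}{q-1}.
\]
Adding the two contributions produces the stated total
\[
[1+2(\beta-\alpha-\delta)]\,q^{\alpha}+2\cdot\frac{q^{\alpha}-1}{q-1}.
\]

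No step here is deep: the only thing to watch is the bookkeeping at the two boundaries $|i|=\beta-\delta-\alpha$ and $|i|=\beta-\delta$, where one should verify that the formula $q^r$ with $r=\min(\alpha,\beta-\delta-|i|)$ correctly handles the transition (both bounds coincide at $|i|=\beta-\delta-\alpha$, and the outer boundary gives $r=0$ contributing a single vertex $a_{\pm(\beta-\delta)}$ per side). Granting this, the formula follows directly.
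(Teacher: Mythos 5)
Your proposal is correct and takes essentially the same approach as the paper: stratify the vertices of $T_\alpha(A)\cap B_\beta(v)$ according to the closest apartment vertex $a_i$, count $q^{r_i}$ vertices in the fan above $a_i$ with $r_i=\min(\alpha,\beta-\delta-|i|)$, and sum; the paper performs the same computation by visually regrouping the tapering contributions into a single ball of radius $\alpha$ plus $2(\beta-\alpha-\delta)$ copies of $q^\alpha$, which your direct evaluation reproduces. One small imprecision worth noting: the identity $d(x,v)=d(x,a_i)+|i|+\delta$ is only valid for $i\neq 0$ (when $i=0$ the geodesic from $x$ to $v$ need not pass through $a_0$), but this does not affect the final count, since under the hypothesis $\delta<\beta-\alpha$ the binding constraint at $i=0$ is the tube condition $d(x,a_0)\le\alpha$, whence $d(x,v)\le d(x,a_0)+\delta\le\alpha+\delta<\beta$ holds automatically.
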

\begin{proof}
    First, since $\beta > \delta+\alpha$, we get that $B_{\alpha}(a_0)\subset B_{\beta}(v)\cap T_\alpha(A)$. Then we look at $a_i$ and determine the maximal radius $r_i$ such that $B_{\alpha}(a_i)\subset B_{\beta}(v)\cap T_\alpha(A)$. We have 
    \[\mathrm{d}(a_i,v) = |i|+\delta,\]
    so we have $r_i = \min(\alpha,\beta-\mathrm{d}(a_i,v))= \min(\alpha,\beta-|i|-\delta)$ as shown in the following figure.

\begin{figure}[H]
\centering
\scalebox{.7}{
\begin{tikzpicture}
\node[draw,circle,inner sep=2pt,fill=black, thick] at (0,0) {};
\node[below=0.15] at (0,0) {$v$};
\draw[very thick, color=blue, <->] (0,0) -- ++(130:7.2) node[midway, left] {$\beta$};

\draw [blue,very thick,domain=-10:140] plot ({7.2*cos(\x)}, {7.2*sin(\x)});
\begin{scope}[shift=(90:5),rotate=-25]
\draw[very thick] (-6.2,0) -- (8.2,0);
\foreach \i in{-6,-5,...,8}{
\node[draw,circle,inner sep=2pt,fill=black, thick] at (\i,0) {};
}
\draw[very thick, color=red, <->] (2,0) -- ++(130:1.7) node[midway, right] {$\alpha$};
\node[above right] at (2,0) {$a_0$};
\node[above right] at (3,0) {$a_1$};  
\foreach \i in {1,2,...,3}{
 \draw[fill=none, very thick, color=red](\i,0) circle (1.7);
 }
  \draw[fill=none, very thick, color=red](6,0) circle (0.7);
  \draw[very thick, color=red, <->] (6,0) -- ++(-60:0.7) node[below] {$\min(\alpha, \beta-\delta-|i|)$};  
  \draw[very thick, color=green!40!gray, <->] (2,0) -- (-65:5) node[midway, right] {$\delta$};
  \node[above right] at (6,0) {$a_{i}$};
\end{scope}
\end{tikzpicture}}
\end{figure}
We have $\beta-|i|-\delta<\alpha$ when $|i|>\beta-\alpha-\delta$. 
Below we list the vertices of $A$ contained in $B_\beta(v)$ and represent the corresponding $r_i$ above. 

\begin{figure}[H]
\centering
\scalebox{.7}{
\begin{tikzpicture}
\draw[very thick] (-5,0) -- (-3,0);
\draw[very thick] (5,0) -- (3,0);
\draw[very thick] (-1,0) -- (1,0);
\draw[very thick, dashed] (-7,0) -- (-5,0);
\draw[very thick, dashed] (-3,0) -- (-1,0);
\draw[very thick, dashed] (7,0) -- (5,0);
\draw[very thick, dashed] (3,0) -- (1,0);
\foreach \i in {-7,-5,-4,-3,-1,0,1,3,4,5,7}{
\node[draw,circle,inner sep=3pt,fill=black, thick] at (\i,0) {};
}
\draw[very thick, color=green!40!gray, <->] (0,0.2) -- (0,1.5)  node [above] {$\alpha$};
\draw[very thick, color=red, <->] (1,0.2) -- (1,1.5)node [above] {$\alpha$};
\draw[very thick, color=red, <->] (-1,0.2) -- (-1,1.5)node [above] {$\alpha$};
\draw[very thick, color=red, <->] (-3,0.2) -- (-3,1.5)node [above] {$\alpha$};
\draw[very thick, color=red, <->] (3,0.2) -- (3,1.5)node [above] {$\alpha$};
\draw[very thick, color=green!40!gray, <->] (4,0.2) -- (4,1.3)node [above] {$\alpha-1$};
\draw[very thick, color=green!40!gray, <->] (5,0.2) -- (5,1.1)node [above] {$\alpha-2$};
\draw[very thick, color=green!40!gray, <->] (7,0.2) -- (7,0.7)node [above] {$0$};
\draw[very thick, color=green!40!gray, <->] (-4,0.2) -- (-4,1.3)node [above] {$\alpha-1$};
\draw[very thick, color=green!40!gray, <->] (-5,0.2) -- (-5,1.1)node [above] {$\alpha-2$};
\draw[very thick, color=green!40!gray, <->] (-7,0.2) -- (-7,0.7)node [above] {$0$};
\node[below=0.15] at (0,0) {$a_0$}; 
\node[below=0.15] at (3,0) {$a_{\beta-\alpha-\delta}$}; 
\node[below=0.15] at (7,0) {$a_{\beta-\delta}$}; 
\node[below=0.15] at (-3,0) {$a_{-(\beta-\alpha-\delta)}$};
\node[below=0.15] at (-7,0) {$a_{-(\beta-\delta)}$}; 
\end{tikzpicture}}
\end{figure}

Notice that we can group the green vertices to make a single ball of radius $\alpha$, and we are left with the red part. For each of the $2(\beta-\alpha-\delta)$ vertices with a red arrow, we include all vertices at distance at most $\alpha$ going out of the $q-1$ neighbors going out of $A$. We get a total of  
\[1+(q+1)\frac{q^{\alpha}-1}{q-1} + 2 (\beta-\alpha-\delta)q^\alpha,\]
where we used that for each red vertex we include 
\[1 + (q-1)[1+q+\cdots +q^{\alpha-1}] =  1+ (q-1)\frac{q^\alpha-1}{q-1} = q^\alpha \]
vertices.
We could also write $1+(q+1)\frac{q^{\alpha}-1}{q-1} = q^\alpha + 2 \frac{q^\alpha-1}{q-1}$ which gives us 
\[[1+2(\beta-\alpha-\delta)]q^\alpha + 2\frac{q^\alpha-1}{q-1}\]
total vertices.
\end{proof}

For the purpose computing orbital integrals, we will need the same result when $v$ is a midpoint of an edge, and $\beta$ is a half-integer.

\begin{lemma}[Ball centered at a midpoint]
Assume $v$ is a midpoint of an edge and $\beta$ is a half-integer. We still assume that  $\alpha$ is an integer. If $0<\delta<\beta-\alpha$ then there are a total of 
 \[[1+2(\beta-\alpha-\delta)]q^\alpha + 2\frac{q^\alpha-1}{q-1}\]
vertices in $B_{\beta}(v)\cap T_{\alpha}(A)$, as in Lemma \ref{lem:balltubevert}.
\end{lemma}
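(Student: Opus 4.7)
The plan is to follow the proof of the preceding lemma (the vertex-centered case) essentially verbatim, after verifying that all the arithmetic still goes through in the midpoint setting. The only things that change are parity/integrality conventions: now $\delta$ is the distance from the midpoint $v$ to its nearest vertex $a_0 \in A$, which is a half-integer, and $\beta$ is also a half-integer, while $\alpha$ remains an integer. Crucially, the three quantities $\beta - \delta$, $\beta - \alpha - \delta$, and $\beta - |i| - \delta$ are all \emph{integers}, which is exactly what is needed for the counting step to carry through unchanged.

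The first step is to index the vertices of $A$ as $\{a_i\}_{i\in\mathbb{Z}}$ with $a_0$ the closest vertex of $A$ to $v$, and to observe that $\mathrm{d}(v,a_i) = |i| + \delta$. Since $\delta < \beta - \alpha$, we have $B_\alpha(a_0) \subset B_\beta(v) \cap T_\alpha(A)$, exactly as before. For a vertex $x \in T_\alpha(A) \cap B_\beta(v)$, let $a_i$ denote the closest vertex of $A$ to $x$; then the largest allowed distance from $a_i$ is
\[ r_i = \min\bigl(\alpha,\, \beta - |i| - \delta\bigr). \]
Because $\beta - |i| - \delta$ is an integer, $r_i = \alpha$ precisely when $|i| \leq \beta - \alpha - \delta$, and $r_i = \beta - |i| - \delta$ decreases by $1$ as $|i|$ grows past this threshold, vanishing at $|i| = \beta - \delta$.

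The second step is the bookkeeping, which is the heart of the argument. For each of the $1 + 2(\beta - \alpha - \delta)$ ``red'' indices $i$ with $|i| \leq \beta - \alpha - \delta$, the set of vertices at distance at most $\alpha$ from $a_i$ whose nearest point on $A$ is $a_i$ contributes
\[ 1 + (q-1)\frac{q^\alpha - 1}{q-1} = q^\alpha \]
vertices (one for $a_i$ itself, and then the $q-1$ off-apartment branches). For each of the two ``green'' tails (indices $\beta-\alpha-\delta < |i| \leq \beta - \delta$), the contributions are $q^{\alpha-1}, q^{\alpha-2},\ldots, q^0$, summing to $\frac{q^\alpha - 1}{q - 1}$ per side. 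Adding these yields
\[ [1 + 2(\beta-\alpha-\delta)]\, q^\alpha + 2\,\frac{q^\alpha - 1}{q - 1}, \]
as claimed.

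I do not anticipate any real obstacle: the midpoint hypothesis only affects the parity of $\delta$ and $\beta$, and the proof structure is independent of this parity so long as $\beta - \delta \in \mathbb{Z}$, which is automatic here. The only mild subtlety worth spelling out carefully is that when $v$ is a midpoint, one might worry that the ``closest vertex on $A$'' is not unique; however $\delta > 0$ forces $v$ to lie outside $A$ on a branch rooted at a single vertex $a_0$, so uniqueness holds and the labeling $\{a_i\}$ is well-defined. Everything else is identical to the proof of the vertex-centered version.
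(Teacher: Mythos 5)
Your proof is correct, but it takes a genuinely different route from the paper's. The paper reduces the midpoint case to the already-proved vertex case: writing $v'$ for the endpoint of $v$'s edge closer to $A$ and $v''$ for the other endpoint, it observes $B_\beta(v) = B_{\beta-1/2}(v') \cap B_{\beta-1/2}(v'')$, then proves the set-theoretic identity $B_\beta(v) \cap T_\alpha(A) = B_{\beta-1/2}(v') \cap T_\alpha(A)$ by showing $B_{\beta-1/2}(v'') \cap T_\alpha(A) \subset B_{\beta-1/2}(v')$, and finally invokes the vertex lemma with $\beta \mapsto \beta - 1/2$ and $\delta \mapsto \delta - 1/2$ (the halves cancel). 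You instead re-run the counting argument of the vertex lemma directly, observing that the parity bookkeeping goes through because $\beta - \delta$, $\beta - \alpha - \delta$, and $\beta - |i| - \delta$ are all integers. Both approaches are valid and roughly the same length; the paper's is more modular (it reuses the previously-proved lemma as a black box), while yours is more self-contained and makes the arithmetic mechanism explicit. Your remark at the end about uniqueness of the projection to $A$ is well placed: indeed, the non-uniqueness concern only arises at $\delta=0$, which the paper treats separately in the subsequent lemma by a two-sided labeling $a_i, a_i'$.

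One small point worth making precise if you write this up fully: when $i=0$, a vertex $x$ with nearest apartment vertex $a_0$ could lie on the same branch as $v$, in which case $\mathrm{d}(x,v) < \mathrm{d}(x,a_0) + \delta$ and the bound $r_0 = \min(\alpha, \beta - \delta)$ is not obviously the exact cutoff. But precisely because you are in the regime $\delta < \beta - \alpha$, the ball constraint $\mathrm{d}(x,a_0) \leq \beta - \delta$ is slack (it exceeds $\alpha$), so the tube constraint $\mathrm{d}(x,a_0) \leq \alpha$ governs in all directions and $r_0 = \alpha$ regardless; hence no over- or under-counting. This is implicit in your argument but deserves a sentence.
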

\begin{proof}
    Let $v',v''$ be the two neighbors of $v$ so that $v'$ is the closest one to $A$. 

    First note that 
    \[B_{\beta}(v) = B_{\beta-1/2}(v')\cap B_{\beta-1/2}(v'').\]
    If we can show that $B_{\beta-1/2}(v'')\cap T_\alpha(A)\subset B_{\beta-1/2}(v')$ then we can conclude that
    \[B_{\beta}(v) \cap T_\alpha(A) = B_{\beta-1/2}(v')\cap B_{\beta-1/2}(v'')\cap T_\alpha(A)=B_{\beta-1/2}(v')\cap T_\alpha(A),\] and we can apply the previous Lemma. Now we prove the claim. 

    Let $x\in B_{\beta-1/2}(v'')\cap T_\alpha(A)$. If $x$ is closer to $v'$ than $v''$ then by default we have 
    \[\mathrm{d}(x,v') = \mathrm{d}(x,v'')-1\leq \beta-3/2.\]
    On the other hand, if $x$ is closer to $v''$, then 
    \[d(x,A) = \mathrm{d}(x,v')+\underset{=\delta-1/2}{\underbrace{\mathrm{d}(v',A)}}\leq \alpha,\]
    hence 
    \[\mathrm{d}(x,v')\leq \alpha-\delta+1/2<\beta-2\delta+1/2.\]
    And since $\delta>0$ then $2\delta\geq 1$ and we can conclude $\mathrm{d}(x,v')\leq \beta-1/2$ as desired.\\

    We can use the formula of Lemma \ref{lem:balltubevert} replacing $\beta$ by $\beta-1/2$ and $\delta$ by $\delta-1/2$, the $1/2$s cancel out.
\end{proof}

We are only left to consider the case $\delta = 0$ and $v$ is a midpoint of an edge of $A$.

\begin{lemma}
    Let $v\in A$ be the midpoint of an edge and let $\beta\geq 0$ be a half-integer.Let $\alpha\geq 0$ be an integer. The number of vertices in $B_\beta(v)\cap T_\alpha(A)$ is equal to 
   \[ [1+2(\beta-\alpha))q^\alpha + 2\frac{q^\alpha-1}{q-1}.\]
\end{lemma}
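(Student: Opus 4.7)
The plan is to partition the vertices of $T_\alpha(A)\cap B_\beta(v)$ by their nearest vertex in $A$, just as in the proof of the previous lemma, but adapted to the fact that $v$ now sits at a midpoint rather than at a vertex of $A$. First I set up notation: label the vertices of $A$ as $\{a_i\}_{i\in\mathbb{Z}}$ so that the edge carrying $v$ is $[a_0,a_1]$; then $\mathrm{d}(a_i,v)=|i-\tfrac12|$, which is always a positive half-integer. Every vertex $w\in T_\alpha(A)$ has a unique closest vertex on the geodesic $A$; if that vertex is $a_i$, then $\mathrm{d}(w,A)=\mathrm{d}(w,a_i)$ and the distance to $v$ decomposes as $\mathrm{d}(w,v)=\mathrm{d}(w,a_i)+|i-\tfrac12|$.

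Hence $w\in T_\alpha(A)\cap B_\beta(v)$ with closest vertex $a_i$ on $A$ if and only if
\[
\mathrm{d}(w,a_i)\;\le\; r_i := \min\!\bigl(\alpha,\ \beta-|i-\tfrac12|\bigr).
\]
By Proposition~\ref{prop:vertballoutofapt} applied at the vertex $a_i$, the number of such $w$ (including $a_i$ itself when $r_i\ge 0$) is exactly $q^{r_i}$, and there are none when $r_i<0$. So the total count we want is
\[
\sum_{i:\,r_i\ge 0} q^{r_i}.
\]

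Next I split this sum by the case analysis defining $r_i$. One has $r_i=\alpha$ precisely when $|i-\tfrac12|\le \beta-\alpha$; since $\beta-\alpha$ is a half-integer and $i-\tfrac12$ ranges over half-integers, this gives the $2(\beta-\alpha)+1$ indices $i$ with $\tfrac12-(\beta-\alpha)\le i\le \tfrac12+(\beta-\alpha)$. For the remaining contributing indices, $r_i=\beta-|i-\tfrac12|$ runs through $\alpha-1,\alpha-2,\dots,0$ as $|i-\tfrac12|$ runs through $\beta-\alpha+1,\dots,\beta$, and each value is attained by exactly two $i$'s (one positive side, one negative side of $\{0,1\}$). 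Summing,
\[
[2(\beta-\alpha)+1]\,q^\alpha \;+\; 2\sum_{j=0}^{\alpha-1} q^{j} \;=\; [1+2(\beta-\alpha)]\,q^\alpha + 2\,\frac{q^\alpha-1}{q-1},
\]
which is the claimed formula.

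There is no real obstacle here beyond the bookkeeping: the only subtlety is ensuring half-integer/integer parity matches so that the range of $i$ is counted correctly, and that we avoid double-counting when a ball around $a_i$ already contains $a_{i\pm 1}$. This second concern is handled automatically because we classified contributions by the \emph{unique} nearest vertex on $A$, so each $w$ is counted exactly once; the factor $q^{r_i}$ from Proposition~\ref{prop:vertballoutofapt} is precisely the count of vertices whose projection onto $A$ equals $a_i$ and which lie within distance $r_i$ of $a_i$.
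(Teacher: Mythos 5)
Your proof is correct and follows essentially the same route as the paper's: partition $T_\alpha(A)\cap B_\beta(v)$ by the unique nearest vertex $a_i$ on $A$, observe that the slab contribution over $a_i$ is $q^{r_i}$ with $r_i=\min(\alpha,\beta-\mathrm{d}(a_i,v))$ via Proposition~\ref{prop:vertballoutofapt}, and sum. The only difference is bookkeeping: you index $a_i$ over all of $\mathbb{Z}$ and evaluate $\sum_i q^{r_i}$ directly (splitting into $2(\beta-\alpha)+1$ terms equal to $q^\alpha$ and twice a geometric series), whereas the paper labels the two half-rays separately as $a_i, a_i'$ and obtains the geometric-series part by a ``complete to a ball of radius $\alpha$ and subtract the phantom $q^\alpha$'' observation; the two computations are equivalent, and your direct summation is arguably cleaner.
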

\begin{proof}
    There is no longer a unique  vertex of $A$ closest to $v$. Instead we label the vertices of $A$ as $a_i, a_i'$ where $i\geq 0$. Let us use the same diagram and $r_i$'s from the proof of Lemma \ref{lem:balltubevert}.

    \begin{figure}[H]
\centering
\scalebox{.7}{
\begin{tikzpicture}
\draw[very thick] (-5,0) -- (-3,0);
\draw[very thick] (5,0) -- (3,0);
\draw[very thick] (-1,0) -- (1,0);
\draw[very thick, dashed] (-7,0) -- (-5,0);
\draw[very thick, dashed] (-3,0) -- (-1,0);
\draw[very thick, dashed] (7,0) -- (5,0);
\draw[very thick, dashed] (3,0) -- (1,0);
\foreach \i in {-7,-5,-4,-3,-1,1,3,4,5,7}{
\node[draw,circle,inner sep=3pt,fill=black, thick] at (\i,0) {};
}
\node[draw,circle,inner sep=2pt,fill=blue, thick] at (0,0) {};
%\draw[very thick, color=green!40!gray, <->] (0,0.2) -- (0,1.5)  node [above] {$\alpha$};
\draw[very thick, color=red, <->] (1,0.2) -- (1,1.5)node [above] {$\alpha$};
\draw[very thick, color=red, <->] (-1,0.2) -- (-1,1.5)node [above] {$\alpha$};
\draw[very thick, color=red, <->] (-3,0.2) -- (-3,1.5)node [above] {$\alpha$};
\draw[very thick, color=red, <->] (3,0.2) -- (3,1.5)node [above] {$\alpha$};
\draw[very thick, color=green!40!gray, <->] (4,0.2) -- (4,1.3)node [above] {$\alpha-1$};
\draw[very thick, color=green!40!gray, <->] (5,0.2) -- (5,1.1)node [above] {$\alpha-2$};
\draw[very thick, color=green!40!gray, <->] (7,0.2) -- (7,0.7)node [above] {$0$};
\draw[very thick, color=green!40!gray, <->] (-4,0.2) -- (-4,1.3)node [above] {$\alpha-1$};
\draw[very thick, color=green!40!gray, <->] (-5,0.2) -- (-5,1.1)node [above] {$\alpha-2$};
\draw[very thick, color=green!40!gray, <->] (-7,0.2) -- (-7,0.7)node [above] {$0$};
\node[below=0.15] at (0,0) {$v$}; 
\node[below=0.15] at (1,0) {$a_0$};
\node[below=0.15] at (-1,0) {$a_0'$}; 
\node[below=0.15] at (3,0) {$a_{\beta-\alpha-1/2}$}; 
\node[below=0.15] at (7,0) {$a_{\beta-1/2}$}; 
\node[below=0.15] at (-3,0) {$a_{\beta-\alpha-1/2}'$};
\node[below=0.15] at (-7,0) {$a_{\beta-1/2}'$}; 
\end{tikzpicture}}
\end{figure}
We have $r_i = \min(\alpha, \beta-(1/2+i))$. Note that the if we added $q^{\alpha}$ vertices connected to $v$ going out of $A$ to the green vertices, we would get exactly a ball of radius $\alpha$. Therefore the number of green vertices is exactly 
\[1+(1+q)\frac{q^{\alpha}-1}{q-1}-q^{\alpha} = 2\frac{q^{\alpha}-1}{q-1}.\] Adding the $2(\beta-\alpha-1/2+1)$ red parts, we get the total amount of vertices:
\[[1+2(\beta-\alpha)]q^\alpha + 2\frac{q^\alpha-1}{q-1},\]
which again matches the previous formulas with $\delta=0$.

\end{proof}

\subsection{Intersection of two tubes.}\label{sec:tubtub}

This last case is simpler than the previous section, mostly because we will not have to deal with a vertex being a midpoint of an edge. The distance between two apartments will always be minimized at vertices. However, we need to distinguish cases when the apartments intersect or not. 

This section will prove the following theorem. 

\begin{theorem}
    Let $A,B$ be two distinct infinite geodesics at distance $\delta \geq 0$ from each other in a $q+1$-regular tree. Let $\alpha,\beta\in \mathbb{Z}_{\geq 0}$ and let  $r\geq 0$ be the number of vertices in $A\cap B$. The number of vertices in $T_\alpha(A)\cap T_\beta(B)$ is equal to 
    \[\left\lbrace  \begin{array}{ll}
      0  &\text{if }\alpha+\beta<\delta  \\ &\\
     1+(q+1)\frac{q^{(\alpha+\beta-\delta)/2}-1}{q-1}    & \text{if }\begin{array}{l}|\alpha-\beta|\leq \delta\leq \alpha+\beta\\ \alpha+\beta+\delta\equiv0\pmod2\end{array}\\&\\
      2\frac{q^{(\alpha+\beta-\delta+1)/2}-1}{q-1}            & \text{if }\begin{array}{l}|\alpha-\beta|\leq \delta\leq \alpha+\beta\\ \alpha+\beta+\delta\equiv1\pmod2\end{array}\\&\\
       {[}1+2(|\alpha-\beta|-\delta)]q^{\min(\alpha,\beta)} + 2\frac{q^{\min(\alpha,\beta)}-1}{q-1}  &\text{if }0<\delta\leq |\alpha-\beta|\\ &\\
     {[}r+2|\alpha-\beta|]q^{\min(\alpha,\beta)}+2\frac{q^{\min(\alpha,\beta)}-1}{q-1}  &  \delta = 0    \end{array} \right.\]
\end{theorem}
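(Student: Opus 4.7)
The plan is to reduce the intersection count to a sum over vertices of the \emph{spine} $S := A \cup P \cup B$, where $P$ denotes the shortest geodesic joining $A$ and $B$ (so $P$ is trivial when $\delta = 0$, while $A \cap B$ contains $r$ vertices). Since $X$ is a tree, every vertex $v$ has a unique closest point $\pi(v) \in S$ through which all geodesics from $v$ to $S$ pass; in particular
\[d(v, A) = d(v, \pi(v)) + d_A(\pi(v)), \qquad d(v, B) = d(v, \pi(v)) + d_B(\pi(v)),\]
where $d_A(u), d_B(u)$ denote the spine distances from $u \in S$ to $A$ and $B$. Hence $v \in T_\alpha(A) \cap T_\beta(B)$ if and only if the perpendicular distance $d(v, \pi(v))$ is at most $r(u) := \min\bigl(\alpha - d_A(u),\ \beta - d_B(u)\bigr)$, with $u = \pi(v)$. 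Writing $m_u$ for the number of spine-neighbors of a spine vertex $u$ and counting vertices in the $(q + 1 - m_u)$ outgoing branches, the fiber $\pi^{-1}(u)$ contributes
\[f(u, r) = 1 + (q + 1 - m_u)\tfrac{q^r - 1}{q - 1}\]
vertices at perpendicular distance $\leq r$. Thus the sought quantity equals $\sum_{u \in S,\ r(u) \geq 0} f(u, r(u))$.

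I would then proceed by case analysis. The empty case $\alpha + \beta < \delta$ is immediate, since every $u \in S$ satisfies $d_A(u) + d_B(u) \geq \delta$, so $r(u) < 0$. For the ``middle'' range $|\alpha - \beta| \leq \delta \leq \alpha + \beta$, I would first prove the identity
\[T_\alpha(A) \cap T_\beta(B) = B_\alpha(a_0) \cap B_\beta(b_0),\]
where $a_0, b_0$ are the endpoints of $P$. The inclusion $\supseteq$ is trivial; for $\subseteq$, one checks case-by-case on the type of $\pi(v) \in \{a_i, p_k, b_j\}$ that the hypotheses $\alpha \geq \beta - \delta$ and $\beta \geq \alpha - \delta$ force $d(v, a_0) \leq \alpha$ and $d(v, b_0) \leq \beta$. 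Then Lemma \ref{lem:interballs} identifies the intersection with the ball $B_r(c)$, centered on $P$ at distance $(\alpha - \beta + \delta)/2$ from $a_0$ with radius $r = (\alpha + \beta - \delta)/2$. The center $c$ is a vertex or midpoint of an edge according to the parity of $\alpha + \beta + \delta$, and the two earlier formulas for the vertex-count of such balls (see \S \ref{sec:combin-tre}) yield the two stated expressions.

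For the ``one-tube-swallows-the-other'' regime $0 < \delta \leq |\alpha - \beta|$ (WLOG $\alpha \geq \beta$), and for the regime $\delta = 0$, I would compute the sum over $S$ directly via the formula for $f(u, r(u))$. The allowed spine stratum consists of $a_i$ for $|i| \leq \beta - \delta$ (when this is nonnegative), the interior of $P$, and $b_j$ for $|j| \leq \alpha - \delta$; on the $B$-side, $r(b_j)$ is constant at $\beta$ for $|j| \leq \alpha - \beta - \delta$ and decays linearly thereafter. Almost all spine vertices have $m_u = 2$ and contribute $f(u, r) = q^r$; the exceptions are the ``junction'' vertices $a_0, b_0$ with $m_u = 3$, and, when $\delta = 0$, the $r$ vertices of $A \cap B$, which have $m_u = 2$ (interior of $A \cap B$), $m_u = 3$ (endpoint of $A \cap B$ with $r \geq 2$), or $m_u = 4$ (when $r = 1$). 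Telescoping the geometric sums and collecting terms produces the leading coefficient $1 + 2(|\alpha - \beta| - \delta)$ (respectively $r + 2|\alpha - \beta|$) of $q^{\min(\alpha, \beta)}$ together with the uniform correction $2(q^{\min(\alpha, \beta)} - 1)/(q - 1)$.

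The main obstacle is the careful accounting at the junction vertices of the spine, where $m_u \neq 2$ modifies the coefficient of the geometric series; these corrections must be correctly aligned with the boundary contributions so that they precisely absorb the ``$+1$'' or ``$+r$'' in the leading coefficient. A secondary subtlety is that the regime $0 < \delta \leq |\alpha - \beta|$ subdivides according to whether $\beta \geq \delta$ or $\beta < \delta$, with different nonempty strata of $S$; verifying that both subcases collapse to the same closed form requires a uniform algebraic manipulation of the geometric contributions from the $A$-side, $P$-interior, and $B$-side.
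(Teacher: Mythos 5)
Your proposal is correct, and its overall structure is a mild reorganization of the paper's argument rather than a fundamentally new one. For the middle regime $|\alpha-\beta|\leq\delta\leq\alpha+\beta$ you reproduce exactly the paper's reduction: first show $T_\alpha(A)\cap T_\beta(B) = B_\alpha(a_0)\cap B_\beta(b_0)$, then invoke Lemma~\ref{lem:interballs} and the two ball-counting formulas, so this case is identical. Where you diverge is in the remaining regimes: for $0<\delta\leq|\alpha-\beta|$ the paper shortcuts by observing $T_\alpha(A)\cap T_\beta(B)=T_{\max}(A)\cap B_{\min}(\cdot)$ and outsourcing the count to Theorem~\ref{thm:balltube}, whereas you re-derive the count from scratch by summing $f(u,r(u))=1+(q+1-m_u)\tfrac{q^{r(u)}-1}{q-1}$ over the spine $S=A\cup P\cup B$. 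For $\delta=0$ the paper's $P_1,\dots,P_4$ partition is precisely your spine decomposition for $S=A\cup B$, with the interior/endpoint/off-overlap strata and the junction corrections $m_u\in\{2,3,4\}$, so the two are the same computation phrased differently. The spine sum has the virtue of presenting all non-middle cases in one uniform framework (the $A\cap B$ endpoints with $m_u=3$ or $m_u=4$, and the bridge endpoints $a_0,b_0$ with $m_u=3$, all fall under the same $f(u,r)$ formula), which nicely explains why the ``$+1$'' or ``$+r$'' appears in the leading coefficient; the cost, as you note, is carrying the $\beta\gtrless\delta$ subcase split by hand instead of letting Theorem~\ref{thm:balltube} absorb it. I verified the algebra in the $\delta=0$ cases $r=1$ and $r\geq 2$ and in both subcases of $0<\delta\leq\alpha-\beta$; the junction corrections do telescope to the stated closed forms.
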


\begin{lemma}
Let $A,B$ be two infinite geodesics and $\alpha,\beta \geq 0$. If $A\cap B$ contains infinitely many vertices then $T_\alpha(A)\cap T_\beta(B)$ contains infinitely many vertices.  If $A\cap B$ contains finitely-many vertices, then $A\cap B$ is always a closed segment $[a,b]$ where $a,b$ are vertices. 
\end{lemma}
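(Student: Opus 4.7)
The plan is to exploit the convexity of geodesics in trees. The first step is to establish that $A \cap B$ is a convex subgraph: given any two vertices $x, y \in A \cap B$, the unique geodesic segment $[x,y]$ in the tree is contained in $A$ (since $A$ is itself a geodesic containing $x, y$) and symmetrically in $B$, whence $[x,y] \subset A \cap B$.

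Since $A \cap B$ is a convex subgraph of the tree sitting inside the geodesic $A$, it must be a connected subpath of $A$. A connected subset of a (doubly infinite) path is either empty, a single vertex, a finite closed segment $[a,b]$ with $a \neq b$, an infinite ray, or all of $A$. If $A \cap B$ has only finitely many vertices, then only the first three possibilities occur, and in each of them $A \cap B$ is a closed segment with vertex endpoints (allowing the degenerate case $a=b$, and formally treating the empty case separately). This handles the second assertion.

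For the first assertion, suppose $A \cap B$ contains infinitely many vertices. By the classification above, it must be either an infinite ray or all of $A$, and in either case contains infinitely many vertices. Each such vertex $v$ satisfies $\mathrm{d}(v,A) = \mathrm{d}(v,B) = 0$, so $v \in T_0(A) \cap T_0(B) \subset T_\alpha(A) \cap T_\beta(B)$. Therefore $T_\alpha(A) \cap T_\beta(B)$ contains infinitely many vertices, as required.

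The only substantive input is the convexity of $A \cap B$, which is a standard feature of trees; everything else is a routine classification of connected subsets of a path. I do not anticipate any genuine obstacle — the lemma is a short combinatorial warm-up that sets up the hypothesis $\delta = \mathrm{d}(A,B) \geq 0$ with a well-defined finite overlap $[a,b]$ for the substantive point-counts on intersections of two tubes that follow in this appendix.
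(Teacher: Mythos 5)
Your proof is correct and follows essentially the same route as the paper: both arguments rest on the observation that, by uniqueness of geodesics in a tree, any two vertices of $A\cap B$ have their entire connecting path inside $A\cap B$, so $A\cap B$ is a connected (convex) subpath of $A$. You spell out the classification of connected subsets of a doubly infinite path in more detail and handle the infinite case explicitly, but no new idea is introduced beyond what the paper's one-line proof already invokes.
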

\begin{proof}
    This just follows from the connectedness of the tree and absence of cycles. If $x,y\in A\cap B$ then $[x,y]\subset A\cap B$.
\end{proof}

\begin{lemma} Let $A,B$ be two infinite geodesics at distance $\delta >0$ from each other in a $q+1$-regular tree.  Let $\alpha,\beta\geq 0$. Let $a\in A$ and $b\in B$ so that $\rd(a,b) = \delta$.
set of vertices at distance at most $\alpha$ from $A$ and $\beta$ from $B$. If $|\alpha-\beta|\leq \delta$ then \[T_\alpha(a)\cap T_\beta(b) = B_{\alpha}(a)\cap B_\beta(b).\]
\end{lemma}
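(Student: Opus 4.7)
The plan is to prove the non-trivial inclusion $T_\alpha(A)\cap T_\beta(B) \subseteq B_\alpha(a)\cap B_\beta(b)$, since the reverse inclusion $B_\alpha(a)\cap B_\beta(b) \subseteq T_\alpha(A)\cap T_\beta(B)$ is immediate from $d(x,A)\leq d(x,a)$ and $d(x,B)\leq d(x,b)$. The key geometric tool will be the nearest-point projection $p$ of $x$ onto the finite geodesic segment $[a,b]$, which has length $\delta$ and realizes the distance from $A$ to $B$.

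First I will recall a basic tree fact: since $\delta>0$ and the tree has no cycles, the segment $[a,b]$ meets $A$ only at $a$ and meets $B$ only at $b$ (its interior lies outside $A\cup B$). Consequently, for any point $y\in[a,b]$, the path $[x,y]$ factors as $[x,p]\cup[p,y]$, so $d(x,y)=d(x,p)+d(p,y)$. I will then do a case analysis on the location of $p$:

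\textbf{Case 1:} $p$ lies in the open segment $(a,b)$. The path $[x,a]=[x,p]\cup[p,a]$ meets $A$ only at its endpoint $a$, so $a$ is the unique point of $A$ on the path $[x,a]$ and hence the nearest point of $A$ to $x$, giving $d(x,a)=d(x,A)\leq\alpha$. By symmetry $d(x,b)=d(x,B)\leq\beta$.

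\textbf{Case 2:} $p=a$. Then the same argument applied to $B$ (the segment $[a,b]$ meets $B$ only at $b$) gives $d(x,b)=d(x,B)\leq\beta$. Moreover $d(x,b)=d(x,p)+d(p,b)=d(x,a)+\delta$, so $d(x,a)=d(x,b)-\delta\leq\beta-\delta\leq\alpha$, where the last inequality is exactly the hypothesis $\beta-\alpha\leq\delta$.

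\textbf{Case 3:} $p=b$. Symmetric to Case 2, using the other half of the hypothesis $\alpha-\beta\leq\delta$.

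The main (and really only) obstacle is the clean identification of the nearest points on $A$ and $B$ via the projection onto $[a,b]$; once this is in place, the bound is entirely driven by the inequality $|\alpha-\beta|\leq\delta$, which is precisely what makes the two boundary cases work. No case-by-case parity or midpoint considerations are needed since $a,b$ are vertices and $\alpha,\beta$ are nonnegative integers.
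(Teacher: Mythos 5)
Your proof is correct and uses essentially the same strategy as the paper: both split by the location of the nearest point of $v$ on $[a,b]$, treating the interior case and the two boundary cases symmetrically. The one small difference is in the boundary cases, where the paper invokes Lemma~\ref{lem:intergeoball} to conclude $v\in B_\alpha(a)\cap B_\beta(b)$ while you unpack that step into the direct inequality $d(x,a)=d(x,b)-\delta\leq\beta-\delta\leq\alpha$ --- same content, slightly more self-contained.
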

\begin{proof}
One inclusion is immediate. Let $X = T_\alpha(a)\cap T_\beta(b) $. We just need to show $X \subset B_{\alpha}(x)\cap B_\beta(y)$. 

Let $v\in X$. Let $w$ denote the closest vertex to $v$ on $[a,b]$.

If $w = a$ (resp. $w = b$) then the closest vector to $v$ on $B$ (resp. $A$) is $b$ (resp. $a$) hence $v\in B_{\beta}(b)$  (resp. $v\in B_\alpha(a)$) so we can use Lemma \ref{lem:intergeoball} to conclude that $v\in B_\alpha(a)\cap B_\beta(b)$.

If $w$ is in the interior of $[a,b]$ then $\rd(v,a) = \rd(v,A)$ and $\rd(v,b) = \rd(v,B)$ so we get $v\in B_{\alpha}(a)\cap B_\beta(b)$.
\end{proof}

\begin{lemma} Let $A,B$ be two infinite geodesics at distance $\delta >0$ from each other in a $q+1$-regular tree.  Let $\alpha,\beta\geq 0$. Let $a\in A$ and $b\in B$ so that $\rd(a,b) = \delta$.
set of vertices at distance at most $\alpha$ from $A$ and $\beta$ from $B$. If $|\alpha-\beta|\geq  \delta$ then \[T_\alpha(a)\cap T_\beta(b) =\left\lbrace \begin{array}{ll}T_{\alpha}(a)\cap B_\beta(b)&\text{if }\alpha\leq \beta\\B_{\alpha}(a)\cap T_\beta(B)&\text{if }\alpha\geq \beta \end{array}\right. ,\]
which we can deal with using Theorem \ref{thm:balltube}.
\end{lemma}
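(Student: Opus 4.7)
The inclusion $T_\alpha(A)\cap B_\beta(b)\subseteq T_\alpha(A)\cap T_\beta(B)$ is automatic, since $b\in B$ gives $B_\beta(b)\subseteq T_\beta(B)$, and similarly for the other case. So the work is in the reverse inclusion. By the symmetry swapping $(A,\alpha,a)\leftrightarrow(B,\beta,b)$, it suffices to treat the case $\alpha\leq\beta$, where the assumption $|\alpha-\beta|\geq\delta$ becomes $\beta\geq\alpha+\delta$. The plan is then to show: if $v\in T_\alpha(A)\cap T_\beta(B)$, then $\rd(v,b)\leq\beta$.

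Let $a'\in A$ and $b'\in B$ denote the unique closest vertices of $A$ and $B$ to $v$, so $\rd(v,a')\leq\alpha$ and $\rd(v,b')\leq\beta$. Since $\delta>0$ forces $A$ and $B$ to be disjoint, the segment $[a,b]$ is the unique bridge between the two geodesics, and consequently the geodesic in the tree from $a'$ to $b'$ decomposes as the concatenation
\[ [a',a]\cup[a,b]\cup[b,b'], \]
of length $\rd(a',a)+\delta+\rd(b,b')$. Let $c$ be the branching vertex where the geodesics $v\leadsto a'$ and $v\leadsto b'$ first diverge; equivalently, $c$ is the closest point of the path above to $v$. The argument splits according to the position of $c$ along this path.

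In the first case, $c\in[a',a]\cup[a,b]$: then $b$ lies on the subpath from $c$ to $b'$, so $\rd(c,b')=\rd(c,b)+\rd(b,b')$, and therefore
\[ \rd(v,b)=\rd(v,c)+\rd(c,b)\;\leq\;\rd(v,c)+\rd(c,b')=\rd(v,b')\leq\beta. \]
In the second case, $c\in[b,b']$: then the geodesic from $v$ to $a'$ passes through $c$, then backtracks along $B$ to $b$, crosses the bridge, and travels along $A$ to $a'$, giving
\[ \rd(v,a')=\rd(v,c)+\rd(c,b)+\delta+\rd(a,a')\;\geq\;\rd(v,b)+\delta. \]
Since $\rd(v,a')\leq\alpha$ and $\alpha\leq\beta-\delta$, we conclude $\rd(v,b)\leq\alpha-\delta\leq\beta$.

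The only real content is recognizing that the bridge $[a,b]$ determines the geodesic $[a',b']$ in the tree and case-splitting on where its closest point $c$ to $v$ lands. The main obstacle, such as it is, is making clean use of the hypothesis $\beta\geq\alpha+\delta$: it plays no role in the first case (which only needs the trivial estimate $\rd(v,b)\leq\rd(v,b')$), but it is exactly what converts the tube bound on $A$ into a ball bound centered at $b$ in the second case, where $v$ sits on the "$B$-side" of the bridge.
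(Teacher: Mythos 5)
The paper gives no proof of this lemma; it simply asserts it after proving the companion case $|\alpha-\beta|\leq\delta$, so there is nothing to compare against and your argument fills a genuine gap. The proof is correct: the symmetry reduction to $\alpha\leq\beta$, the automatic inclusion $B_\beta(b)\subseteq T_\beta(B)$, and the case split on where the median vertex $c$ of $(v,a',b')$ lands on the concatenated geodesic $[a',a]\cup[a,b]\cup[b,b']$ are all valid in a tree.

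One minor observation. In the second case you invoke $\alpha\leq\beta-\delta$ to obtain $\rd(v,b)\leq\alpha-\delta\leq\beta$, and in your closing remark you describe that hypothesis as ``exactly what converts the tube bound on $A$ into a ball bound centered at $b$.'' This slightly overstates its role: the step needs only $\alpha-\delta\leq\beta$, which already follows from $\alpha\leq\beta$ and $\delta>0$. So the containment $T_\alpha(A)\cap T_\beta(B)\subseteq B_\beta(b)$ in fact holds for all $\alpha\leq\beta$; the hypothesis $|\alpha-\beta|\geq\delta$ is better read as selecting the regime in which this description, rather than the $B_\alpha(a)\cap B_\beta(b)$ description of the previous lemma, is the convenient one to feed into Theorem~\ref{thm:balltube}, and at the boundary $|\alpha-\beta|=\delta$ the two descriptions agree. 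Nothing is wrong with your argument; the bound you derive is simply tighter than what the conclusion requires.
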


Putting together the previous two Lemmas, we get the following. 
\begin{prop}
    Let $A,B$ be two distinct infinite geodesics at distance $\delta >0$ from each other in a $q+1$-regular tree. Let $\alpha,\beta\in \mathbb{Z}_{\geq 0}$. The number of vertices in $T_\alpha(A)\cap T_\beta(B)$ is equal to 
    \[\left\lbrace  \begin{array}{ll}
      0  &\text{if }\alpha+\beta<\delta  \\ &\\
     1+(q+1)\frac{q^{(\alpha+\beta-\delta)/2}-1}{q-1}    & \text{if }\begin{array}{l}|\alpha-\beta|\leq \delta\leq \alpha+\beta\\ \alpha+\beta+\delta\equiv0\pmod2\end{array}\\&\\
      2\frac{q^{(\alpha+\beta-\delta+1)/2}-1}{q-1}            & \text{if }\begin{array}{l}|\alpha-\beta|\leq \delta\leq \alpha+\beta\\ \alpha+\beta+\delta\equiv1\pmod2\end{array}\\&\\
       {[}1+2(|\alpha-\beta|-\delta)]q^{\min(\alpha,\beta)} + 2\frac{q^{\min(\alpha,\beta)}-1}{q-1}  &\text{if }0<\delta\leq |\alpha-\beta|
    \end{array} \right.\]
\end{prop}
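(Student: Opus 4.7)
The plan is to case-split according to the relative sizes of $\alpha+\beta$ and $\delta$, reducing every case with $\delta>0$ to one of the two already-computed building blocks via the two lemmas immediately preceding the statement: Lemma \ref{lem:interballs} (ball--ball intersection, which is itself a ball) and Theorem \ref{thm:balltube} (tube--ball intersection). The final case $\delta=0$, where $A$ and $B$ share a whole segment of vertices, must be handled separately.

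For $\alpha+\beta<\delta$, any vertex $v$ in $T_\alpha(A)\cap T_\beta(B)$ would yield a path from $A$ to $B$ of length at most $\alpha+\beta<\delta$, contradicting the definition of $\delta$; the intersection is empty.

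For $|\alpha-\beta|\leq \delta\leq \alpha+\beta$, pick $a\in A$, $b\in B$ with $\mathrm{d}(a,b)=\delta$. The first preceding lemma identifies $T_\alpha(A)\cap T_\beta(B)$ with $B_\alpha(a)\cap B_\beta(b)$, and Lemma \ref{lem:interballs} realizes this as a single ball of radius $(\alpha+\beta-\delta)/2$ centered at the point $c\in[a,b]$ at distance $(\alpha-\beta+\delta)/2$ from $a$. Since $a$ is a vertex and edges have length $1$, the point $c$ is itself a vertex exactly when $\alpha+\beta+\delta\equiv 0\pmod 2$, and is the midpoint of an edge otherwise (in which case the radius is a half-integer). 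Plugging into the ball-at-a-vertex count $1+(q+1)\tfrac{q^{r}-1}{q-1}$ with $r=(\alpha+\beta-\delta)/2$ gives the first formula, and the ball-at-a-midpoint count $2\tfrac{q^{\lceil r\rceil}-1}{q-1}=2\tfrac{q^{(\alpha+\beta-\delta+1)/2}-1}{q-1}$ gives the second.

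For $0<\delta\leq|\alpha-\beta|$, assume without loss of generality $\beta\geq\alpha$. The second preceding lemma reduces the intersection to $T_\alpha(A)\cap B_\beta(b)$, where $b\in B$ is at distance $\delta$ from $A$. Applying Theorem \ref{thm:balltube} in the regime $\delta\leq\beta-\alpha$ gives $[1+2(\beta-\alpha-\delta)]q^\alpha+2\tfrac{q^\alpha-1}{q-1}$, which is exactly the stated formula with $\min(\alpha,\beta)=\alpha$ and $|\alpha-\beta|=\beta-\alpha$.

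The remaining case $\delta=0$ is the main obstacle, since the reduction lemmas assume a positive distance between the geodesics. The plan is direct enumeration: write $A\cap B$ as a segment containing $r$ vertices $v_1,\dots,v_r$. Partition $T_\alpha(A)\cap T_\beta(B)$ by closest vertex on $A\cup B$. At each shared vertex $v_i$ with $1<i<r$, the two ``perpendicular'' directions from $v_i$ (those not along $A$ or $B$) each contribute a full subtree, yielding a contribution that totals $r\cdot q^{\min(\alpha,\beta)}$ after accounting for the interior points, while at the two endpoints the tube-ball asymmetry analysis (as in the third case) contributes the remaining $2|\alpha-\beta|$ ``perpendicular fans'' of size $q^{\min(\alpha,\beta)}$ along the diverging branches, plus the two end caps absorbed into $2\tfrac{q^{\min(\alpha,\beta)}-1}{q-1}$. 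The delicate bookkeeping is ensuring no double-counting at the two endpoints of $A\cap B$ where the ``perpendicular'' direction flips from being along $A$ to being along $B$; once that is done correctly, the formula $[r+2|\alpha-\beta|]q^{\min(\alpha,\beta)}+2\tfrac{q^{\min(\alpha,\beta)}-1}{q-1}$ drops out.
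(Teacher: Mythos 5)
Your proof of the cases with $\delta>0$ is correct and follows the same route as the paper: the first of the two preceding lemmas reduces $T_\alpha(A)\cap T_\beta(B)$ to $B_\alpha(a)\cap B_\beta(b)$ when $|\alpha-\beta|\leq\delta$, Lemma \ref{lem:interballs} turns this into a single ball, and the parity of $\alpha+\beta+\delta$ determines whether the center $c$ is a vertex or a midpoint (since $\alpha-\beta$ and $\alpha+\beta$ have the same parity, $(\alpha-\beta+\delta)/2\in\mathbb{Z}$ iff $\alpha+\beta+\delta$ is even); the second preceding lemma together with Theorem \ref{thm:balltube} handles $0<\delta\leq|\alpha-\beta|$. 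Your identification $\lceil r\rceil=(\alpha+\beta-\delta+1)/2$ in the odd-parity case and the resulting midpoint-ball count are also right.

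One remark on scope: the proposition you are proving is stated only for $\delta>0$; the case $\delta=0$ with $|A\cap B|=r>0$ is a \emph{separate} proposition in the paper, with its own proof by a four-way partition of $T_\alpha(A)\cap T_\beta(B)$ according to the nearest vertex on $A\cup B$. Your final paragraph therefore proves something beyond what was asked, and in a form that is considerably sketchier than the paper's version---the phrases ``accounting for the interior points,'' ``the remaining $2|\alpha-\beta|$ perpendicular fans,'' and ``absorbed into'' do not by themselves resolve the double-counting at the two endpoints of $A\cap B$ that you yourself flag as delicate. As a proof of the stated proposition you should simply drop that paragraph; as a sketch for the companion $\delta=0$ proposition it is heading in the right direction but would need to be turned into the explicit $P_1,\dots,P_4$ partition with separate counts to be convincing.
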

We are left with the case of geodesics intersecting at finitely many vertices. 

\begin{prop}
    Let $A,B$ be two distinct infinite geodesics in a $q+1$-regular tree such that $A\cap B$ contains $r>0$ many vertices. Let $\alpha,\beta\in \mathbb{Z}_{\geq 0}$. The number of vertices in $T_\alpha(A)\cap T_\beta(b)$ is equal to 
    \[[r+2|\alpha-\beta|]q^{\min(\alpha,\beta)}+2\frac{q^{\min(\alpha,\beta)}-1}{q-1}.\]
\end{prop}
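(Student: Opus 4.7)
The strategy is to partition the vertices of the ambient tree $T$ by their unique closest point in the subtree $A \cup B$, then total up the contribution of each fiber to $T_\alpha(A) \cap T_\beta(B)$. Since $A \cap B$ is the finite segment $S = [a_0, a_{r-1}]$, the subgraph $A \cup B$ consists of $S$ together with four infinite rays attached at its endpoints (the two tails of $A$ outside $S$ and the two tails of $B$ outside $S$). For each $p \in A \cup B$ let $\pi^{-1}(p)$ denote the set of vertices of $T$ whose closest point on $A \cup B$ is $p$. If $e_p$ denotes the degree of $p$ in the subgraph $A \cup B$, then the number of vertices of $\pi^{-1}(p)$ at distance $k$ from $p$ equals $1$ for $k=0$ and $(q+1-e_p)q^{k-1}$ for $k \geq 1$. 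The degrees are: $e_p = 2$ at every interior vertex of $S$ (if $r \geq 2$) and at every tail vertex, $e_p = 3$ at the two endpoints of $S$ when $r \geq 2$, and $e_p = 4$ at the unique vertex of $S$ when $r = 1$.

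The key geometric identity is that for $v \in \pi^{-1}(p)$ at distance $k$ from $p$, the tree structure forces: $\rd(v,A) = \rd(v,B) = k$ if $p \in S$; $\rd(v,A) = k$ and $\rd(v,B) = k + j$ if $p$ lies on an $A$-tail at distance $j \geq 1$ from $S$; and symmetrically on a $B$-tail. Now assume without loss of generality $m := \min(\alpha, \beta) = \alpha$. Then $v \in T_\alpha(A) \cap T_\beta(B)$ translates to $k \leq \alpha$ if $p \in S$, to $k \leq \min(\alpha, \beta - j)$ if $p$ is on an $A$-tail at distance $j$ from $S$, and to $k \leq \alpha - j$ if $p$ is on a $B$-tail at distance $j$. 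A crucial miniature computation (playing the same role as in Proposition~\ref{prop:vertballoutofapt}) is that a degree-two vertex with allowed range $k \leq K$ contributes exactly $q^K$.

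Summing these contributions case by case (and splitting the $A$-tails into the portion $j \leq \beta - \alpha$, where $k$ is capped at $\alpha$, and the portion $\beta - \alpha < j \leq \beta$, where $k$ is capped at $\beta - j$), one finds: the two $A$-tails together yield $2(\beta - \alpha)q^\alpha + 2(q^\alpha - 1)/(q-1)$; the two $B$-tails together yield $2(q^\alpha - 1)/(q-1)$; and the segment $S$ yields $(r-2)q^\alpha + 2 + 2(q-2)(q^\alpha - 1)/(q-1)$ when $r \geq 2$, or $1 + (q-3)(q^\alpha - 1)/(q-1)$ when $r = 1$. Writing $C = (q^\alpha-1)/(q-1)$ and using the identity $2 + 2qC = 2q^\alpha + 2C$ collapses all boundary corrections, and the sum simplifies uniformly (in both the $r \geq 2$ and $r = 1$ cases) to $[r + 2(\beta - \alpha)]q^\alpha + 2C$, which is exactly the claimed formula.

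The main obstacle in carrying this out is the bookkeeping at the two endpoint vertices of $S$, whose degree in $A \cup B$ jumps from $2$ to $3$ (or to $4$ in the $r=1$ degeneration). Happily the "excess" contribution of the higher-degree endpoints cancels with the first boundary terms of the geometric tail sums, so that the single closed formula covers both $r = 1$ and $r \geq 2$ without a case split in the final answer.
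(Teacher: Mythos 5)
Your proof is correct and uses essentially the same decomposition as the paper: partitioning vertices by their nearest point on the subtree $A \cup B$ (the paper's $P_1, P_2, P_3, P_4$ correspond exactly to your interior-of-$S$, endpoints-of-$S$, $B$-tails, and $A$-tails). The one small improvement in your version is that by tracking contributions via the degree $e_p$ of the nearest point, your single computation uniformly handles $r = 1$ (degree-$4$ center) and $r \geq 2$ (degree-$3$ endpoints), whereas the paper treats $r=1$ as a separate case by appealing to the earlier tube-meets-ball formula with $\delta = 0$.
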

\begin{proof}
Assume that $\alpha\geq \beta$ and let $X$ be the set of vertices contained in $T_\alpha(A)\cap T_\beta(B)$.\\

If $r=1$ then the exact same argument as the last case of the previous section holds, since the entire ball of radius $\beta$ around the unique vertex of $A\cap B$ is contained in the ball of radius $\alpha$, we can use the same formula with $\delta=0$ and get 
\[|X| = [1+2(\alpha-\beta)]q^\beta+2\frac{q^{\beta}-1}{q-1} .\]

When $r\geq 2$, we can split the tree into 4 parts. 
\begin{itemize}
\item In {\bf\color{red} red} and {\bf\color{blue} blue} the apartments $A$ and $B$ respectively, with their intersection in {\bf black}.
\item In {\bf\color{brown} brown}: vertices  $P_1\subset X$ whose closest vertex on $A\cap B$ belongs to a vertex inside (not an endpoint) of $A\cap B$. 
\item In {\bf\color{green!50!olive!50} green}: vertices in $P_2\subset X$ whose closest vertex on $A\cap B$ is an endpoint of $A\cap B$.
\item In {\bf\color{cyan} cyan}: vertices in $P_3\subset X$ closer to $B$ than $A$.
\item In {\bf\color{orange} orange}: vertices in $P_4\subset X$ closer to $A$ than $B$.
\end{itemize}

\begin{center}
\[\begin{tikzpicture}
\draw[ultra thick] (-2,-0.3) -- (-1,0) -- (0,0.1) -- (1,0) -- (2,-.3);
\draw[ultra thick, color=blue] (2,-.3) -- (2.7,-.6) -- (3.2,-.9) -- (3.7,-1.3);
\draw[ultra thick, dashed, color=blue] (3.7,-1.3) -- (4.4,-2);
\draw[ultra thick, color=blue] (-2,-.3) -- (-2.7,-.6) -- (-3.2,-.9) -- (-3.7,-1.3);
\draw[ultra thick, dashed, color=blue] (-3.7,-1.3) -- (-4.4,-2);
\draw[ultra thick, color=red] (-2,-.3) -- (-2.7,0) -- (-3.2,.3) -- (-3.7,.7);
\draw[ultra thick, dashed, color=red] (-3.7,.7) -- (-4.4,1.3);
\draw[ultra thick, color=red] (2,-.3) -- (2.7,0) -- (3.2,.3) -- (3.7,.7);
\draw[ultra thick, dashed, color=red] (3.7,.7) -- (4.4,1.3);
\draw[ultra thick, color=green!50!olive!50] (-2,-.3)--(-3.5,-.3);
\draw[ultra thick, color=green!50!olive!50] (2,-.3)--(3.5,-.3);
\draw[ultra thick, color=green!50!olive!50] (3.5,-.3) -- +(40:1) node (G1) {};
\draw[ultra thick, color=green!50!olive!50] (3.5,-.3) -- +(0:1) node (G2) {};
\draw[ultra thick, color=green!50!olive!50] (3.5,-.3) -- +(-40:1) node (G3) {};
\draw[ultra thick, color=green!50!olive!50] (-3.5,-.3) -- +(140:1) node (G4) {};
\draw[ultra thick, color=green!50!olive!50] (-3.5,-.3) -- +(180:1) node (G5) {};
\draw[ultra thick, color=green!50!olive!50] (-3.5,-.3) -- +(-140:1) node (G6) {};
\draw[thick, color=green!50!olive!50] (G1) -- +(60:.4);
\draw[thick, color=green!50!olive!50] (G1) -- +(40:.4);
\draw[thick, color=green!50!olive!50] (G1) -- +(20:.4); 
\draw[thick, color=green!50!olive!50] (G2) -- +(20:.4);
\draw[thick, color=green!50!olive!50] (G2) -- +(00:.4);
\draw[thick, color=green!50!olive!50] (G2) -- +(-20:.4); 
\draw[thick, color=green!50!olive!50] (G3) -- +(-60:.4);
\draw[thick, color=green!50!olive!50] (G3) -- +(-40:.4);
\draw[thick, color=green!50!olive!50] (G3) -- +(-20:.4); 
\draw[thick, color=green!50!olive!50] (G4) -- +(120:.4);
\draw[thick, color=green!50!olive!50] (G4) -- +(140:.4);
\draw[thick, color=green!50!olive!50] (G4) -- +(160:.4); 
\draw[thick, color=green!50!olive!50] (G5) -- +(160:.4);
\draw[thick, color=green!50!olive!50] (G5) -- +(180:.4);
\draw[thick, color=green!50!olive!50] (G5) -- +(200:.4); 
\draw[thick, color=green!50!olive!50] (G6) -- +(-120:.4);
\draw[thick, color=green!50!olive!50] (G6) -- +(-140:.4);
\draw[thick, color=green!50!olive!50] (G6) -- +(-160:.4);
\draw[ultra thick, color=cyan] (2.7,-.6) -- +(-100:.7) node (B12) {};
\draw[ultra thick, color=cyan] (2.7,-.6)-- +(-130:.7) node (B11) {};
\draw[ultra thick, color=cyan] (3.2,-.9) -- +(-100:.7) node (B10) {};
\draw[ultra thick, color=cyan] (3.2,-.9) -- +(-130:.7) node (B9) {};
\draw[ultra thick, color=cyan] (3.7,-1.3) -- +(-100:.7) node (B8) {};
\draw[ultra thick, color=cyan] (3.7,-1.3) -- +(-130:.7) node (B7) {};
\draw[ultra thick, color=cyan] (-2.7,-.6) -- +(-80:.7) node (B6) {};
\draw[ultra thick, color=cyan] (-2.7,-.6)-- +(-50:.7) node (B5) {};
\draw[ultra thick, color=cyan] (-3.2,-.9) -- +(-80:.7) node (B4) {};
\draw[ultra thick, color=cyan] (-3.2,-.9) -- +(-50:.7) node (B3) {};
\draw[ultra thick, color=cyan] (-3.7,-1.3) -- +(-80:.7) node (B2) {};
\draw[ultra thick, color=cyan] (-3.7,-1.3) -- +(-50:.7) node (B1) {};
\draw[ thick, color=cyan] (B1) -- +(-80:0.4);
\draw[ thick, color=cyan] (B1) -- +(-65:0.4);
\draw[thick, color=cyan] (B1) -- +(-50:0.4);
\draw[ thick, color=cyan] (B2) -- +(-80:0.4);
\draw[ thick, color=cyan] (B2) -- +(-65:0.4);
\draw[thick, color=cyan] (B2) -- +(-50:0.4);
\draw[ thick, color=cyan] (B3) -- +(-80:0.4);
\draw[ thick, color=cyan] (B3) -- +(-65:0.4);
\draw[thick, color=cyan] (B3) -- +(-50:0.4);
\draw[ thick, color=cyan] (B4) -- +(-80:0.4);
\draw[ thick, color=cyan] (B4) -- +(-65:0.4);
\draw[thick, color=cyan] (B4) -- +(-50:0.4);
\draw[ thick, color=cyan] (B5) -- +(-60:0.4);
\draw[ thick, color=cyan] (B5) -- +(-45:0.4);
\draw[thick, color=cyan] (B5) -- +(-30:0.4);
\draw[ thick, color=cyan] (B6) -- +(-50:0.4);
\draw[ thick, color=cyan] (B6) -- +(-35:0.4);
\draw[thick, color=cyan] (B6) -- +(-20:0.4);
\draw[ thick, color=cyan] (B7) -- +(-100:0.4);
\draw[ thick, color=cyan] (B7) -- +(-115:0.4);
\draw[thick, color=cyan] (B7) -- +(-130:0.4);
\draw[ thick, color=cyan] (B8) -- +(-100:0.4);
\draw[ thick, color=cyan] (B8) -- +(-115:0.4);
\draw[thick, color=cyan] (B8) -- +(-130:0.4);
\draw[ thick, color=cyan] (B9) -- +(-100:0.4);
\draw[ thick, color=cyan] (B9) -- +(-115:0.4);
\draw[thick, color=cyan] (B9) -- +(-130:0.4);
\draw[ thick, color=cyan] (B10) -- +(-100:0.4);
\draw[ thick, color=cyan] (B10) -- +(-115:0.4);
\draw[thick, color=cyan] (B10) -- +(-130:0.4);
\draw[ thick, color=cyan] (B11) -- +(-120:0.4);
\draw[ thick, color=cyan] (B11) -- +(-135:0.4);
\draw[thick, color=cyan] (B11) -- +(-150:0.4);
\draw[ thick, color=cyan] (B12) -- +(-115:0.4);
\draw[ thick, color=cyan] (B12) -- +(-130:0.4);
\draw[thick, color=cyan] (B12) -- +(-145:0.4);
\draw[ultra thick, color=orange] (2.7,0) -- +(100:.7) node (O6) {};
\draw[ultra thick, color=orange] (2.7,0) -- +(130:.7) node (O5) {};
\draw[ultra thick, color=orange] (3.2,.3) -- +(100:.7) node (O4) {};
\draw[ultra thick, color=orange] (3.2,.3) -- +(130:.7) node (O3) {};
\draw[ultra thick, color=orange] (3.7,.7) -- +(100:.7) node (O2) {};
\draw[ultra thick, color=orange] (3.7,.7) -- +(130:.7) node (O1) {};
\draw[ thick, color=orange] (O1) -- +(100:0.4);
\draw[ thick, color=orange] (O1) -- +(115:0.4);
\draw[thick, color=orange] (O1) -- +(130:0.4);
\draw[ thick, color=orange] (O2) -- +(100:0.4);
\draw[ thick, color=orange] (O2) -- +(115:0.4);
\draw[thick, color=orange] (O2) -- +(130:0.4);
\draw[ thick, color=orange] (O3) -- +(100:0.4);
\draw[ thick, color=orange] (O3) -- +(115:0.4);
\draw[thick, color=orange] (O3) -- +(130:0.4);
\draw[ thick, color=orange] (O4) -- +(100:0.4);
\draw[ thick, color=orange] (O4) -- +(115:0.4);
\draw[thick, color=orange] (O4) -- +(130:0.4);
\draw[ thick, color=orange] (O5) -- +(120:0.4);
\draw[ thick, color=orange] (O5) -- +(135:0.4);
\draw[thick, color=orange] (O5) -- +(150:0.4);
\draw[ thick, color=orange] (O6) -- +(115:0.4);
\draw[ thick, color=orange] (O6) -- +(130:0.4);
\draw[thick, color=orange] (O6) -- +(145:0.4);
\draw[ultra thick, color=orange] (-2.7,0) -- +(80:.7) node (O12) {};
\draw[ultra thick, color=orange] (-2.7,0) -- +(50:.7) node (O11) {};
\draw[ultra thick, color=orange] (-3.2,.3) -- +(80:.7) node (O10) {};
\draw[ultra thick, color=orange] (-3.2,.3) -- +(50:.7) node (O9) {};
\draw[ultra thick, color=orange] (-3.7,.7) -- +(80:.7) node (O8) {};
\draw[ultra thick, color=orange] (-3.7,.7) -- +(50:.7) node (O7) {};
\draw[ thick, color=orange] (O7) -- +(80:0.4);
\draw[ thick, color=orange] (O7) -- +(65:0.4);
\draw[thick, color=orange] (O7) -- +(50:0.4);
\draw[ thick, color=orange] (O8) -- +(80:0.4);
\draw[ thick, color=orange] (O8) -- +(65:0.4);
\draw[thick, color=orange] (O8) -- +(50:0.4);
\draw[ thick, color=orange] (O9) -- +(80:0.4);
\draw[ thick, color=orange] (O9) -- +(65:0.4);
\draw[thick, color=orange] (O9) -- +(50:0.4);
\draw[ thick, color=orange] (O10) -- +(80:0.4);
\draw[ thick, color=orange] (O10) -- +(65:0.4);
\draw[thick, color=orange] (O10) -- +(50:0.4);
\draw[ thick, color=orange] (O11) -- +(60:0.4);
\draw[ thick, color=orange] (O11) -- +(45:0.4);
\draw[thick, color=orange] (O11) -- +(30:0.4);
\draw[ thick, color=orange] (O12) -- +(50:0.4);
\draw[ thick, color=orange] (O12) -- +(35:0.4);
\draw[thick, color=orange] (O12) -- +(20:0.4);
\draw[ultra thick, color=brown] (0,.1) -- +(-110:1) node (R1) {};
\draw[ultra thick, color=brown] (0,.1) -- +(-70:1) node (R2) {};
\draw[ultra thick, color=brown] (1,0) -- +(110:1) node (R3) {};
\draw[ultra thick, color=brown] (1,0) -- +(70:1) node (R4) {};
\draw[ultra thick, color=brown] (-1,0) -- +(110:1) node (R5) {};
\draw[ultra thick, color=brown] (-1,0) -- +(70:1) node (R6) {};
\draw[thick, color=brown] (R1) -- +(-70:.6);
\draw[thick, color=brown] (R1) -- +(-90:.6);
\draw[thick, color=brown] (R1) -- +(-110:.6);
\draw[thick, color=brown] (R2) -- +(-70:.6);
\draw[thick, color=brown] (R2) -- +(-90:.6);
\draw[thick, color=brown] (R2) -- +(-110:.6);
\draw[thick, color=brown] (R3) -- +(70:.6);
\draw[thick, color=brown] (R3) -- +(90:.6);
\draw[thick, color=brown] (R3) -- +(110:.6);
\draw[thick, color=brown] (R4) -- +(70:.6);
\draw[thick, color=brown] (R4) -- +(90:.6);
\draw[thick, color=brown] (R4) -- +(110:.6);
\draw[thick, color=brown] (R5) -- +(70:.6);
\draw[thick, color=brown] (R5) -- +(90:.6);
\draw[thick, color=brown] (R5) -- +(110:.6);
\draw[thick, color=brown] (R6) -- +(70:.6);
\draw[thick, color=brown] (R6) -- +(90:.6);
\draw[thick, color=brown] (R6) -- +(110:.6);
\node[draw,circle,inner sep=2pt,fill=black, thick] at (0,0.1) {};
\node[draw,circle,inner sep=2pt,fill=black, thick] at (1,0) {};
\node[draw,circle,inner sep=2pt,fill=black, thick] at (-1,0) {};
\node[draw,circle,inner sep=2pt,fill=black, thick] at (2,-.3) {};
\node[draw,circle,inner sep=2pt,fill=black, thick] at (-2,-.3) {};
\node[draw,circle,inner sep=2pt,fill=green!50!olive!50, thick] at (3.5,-.3) {};
\node[draw,circle,inner sep=2pt,fill=green!50!olive!50, thick] at (-3.5,-.3) {};
\node[draw,circle,inner sep=2pt,fill=green!50!olive!50, thick] at (G1) {};
\node[draw,circle,inner sep=2pt,fill=green!50!olive!50, thick] at (G2) {};
\node[draw,circle,inner sep=2pt,fill=green!50!olive!50, thick] at (G3) {};
\node[draw,circle,inner sep=2pt,fill=green!50!olive!50, thick] at (G4) {};
\node[draw,circle,inner sep=2pt,fill=green!50!olive!50, thick] at (G5) {};
\node[draw,circle,inner sep=2pt,fill=green!50!olive!50, thick] at (G6) {};
\node[draw,circle,inner sep=2pt,fill=blue, thick] at (2.7,-.6) {};
\node[draw,circle,inner sep=2pt,fill=blue, thick] at (3.2,-.9) {};
\node[draw,circle,inner sep=2pt,fill=blue, thick] at (3.7,-1.3) {};
\node[draw,circle,inner sep=2pt,fill=blue, thick] at (-2.7,-.6) {};
\node[draw,circle,inner sep=2pt,fill=blue, thick] at (-3.2,-.9) {};
\node[draw,circle,inner sep=2pt,fill=blue, thick] at (-3.7,-1.3) {};
\node[draw,circle,inner sep=2pt,fill=red, thick] at (-2.7,0) {};
\node[draw,circle,inner sep=2pt,fill=red, thick] at (-3.2,.3) {};
\node[draw,circle,inner sep=2pt,fill=red, thick] at (-3.7,.7) {};
\node[draw,circle,inner sep=2pt,fill=red, thick] at (2.7,0) {};
\node[draw,circle,inner sep=2pt,fill=red, thick] at (3.2,.3) {};
\node[draw,circle,inner sep=2pt,fill=red, thick] at (3.7,.7) {};
\node[draw,circle,inner sep=2pt,fill=orange, thick] at (O1) {};
\node[draw,circle,inner sep=2pt,fill=orange, thick] at (O2) {};
\node[draw,circle,inner sep=2pt,fill=orange, thick] at (O3) {};
\node[draw,circle,inner sep=2pt,fill=orange, thick] at (O4) {};
\node[draw,circle,inner sep=2pt,fill=orange, thick] at (O5) {};
\node[draw,circle,inner sep=2pt,fill=orange, thick] at (O6) {};
\node[draw,circle,inner sep=2pt,fill=orange, thick] at (O7) {};
\node[draw,circle,inner sep=2pt,fill=orange, thick] at (O8) {};
\node[draw,circle,inner sep=2pt,fill=orange, thick] at (O9) {};
\node[draw,circle,inner sep=2pt,fill=orange, thick] at (O10) {};
\node[draw,circle,inner sep=2pt,fill=orange, thick] at (O11) {};
\node[draw,circle,inner sep=2pt,fill=orange, thick] at (O12) {};
\node[draw,circle,inner sep=2pt,fill=brown, thick] at (R1) {};
\node[draw,circle,inner sep=2pt,fill=brown, thick] at (R2) {};
\node[draw,circle,inner sep=2pt,fill=brown, thick] at (R3) {};
\node[draw,circle,inner sep=2pt,fill=brown, thick] at (R4) {};
\node[draw,circle,inner sep=2pt,fill=brown, thick] at (R5) {};
\node[draw,circle,inner sep=2pt,fill=brown, thick] at (R6) {};
\node[draw,circle,inner sep=2pt,fill=cyan, thick] at (B1) {};
\node[draw,circle,inner sep=2pt,fill=cyan, thick] at (B2) {};
\node[draw,circle,inner sep=2pt,fill=cyan, thick] at (B3) {};
\node[draw,circle,inner sep=2pt,fill=cyan, thick] at (B4) {};
\node[draw,circle,inner sep=2pt,fill=cyan, thick] at (B5) {};
\node[draw,circle,inner sep=2pt,fill=cyan, thick] at (B6) {};
\node[draw,circle,inner sep=2pt,fill=cyan, thick] at (B7) {};
\node[draw,circle,inner sep=2pt,fill=cyan, thick] at (B8) {};
\node[draw,circle,inner sep=2pt,fill=cyan, thick] at (B9) {};
\node[draw,circle,inner sep=2pt,fill=cyan, thick] at (B10) {};
\node[draw,circle,inner sep=2pt,fill=cyan, thick] at (B11) {};
\node[draw,circle,inner sep=2pt,fill=cyan, thick] at (B12) {};
\end{tikzpicture}\]
\end{center}

For $P_1$, each of those vertices are at distance $\beta$ from $A\cap B$, so we only need to count vertices at distance at most $\beta$ from the $r-2$ points of the inside of $A\cap B$. Note that each of them have $q-1$ neighbors going out of $A\cap B$, hence we get 
\[|P_1| = (r-2)(1+(q-1)(1+\cdots + q^{\beta-1})) = (r-2)q^\beta.\]

For $P_2$, we count each endpoints, and their $q-2$ neighbors going outside of both rivers, and we get every vertices in those directions up to distance $\beta$. 

We get 
\[|P_2| = 2 (1+(q-1)(1+q+\cdots + q^{\beta-1})) = 2\left(q^\beta-\frac{q^\beta-1}{q-1}\right).\]

For $P_3$ we move along vertices in $B\backslash (A\cap B)$ at distance at most $\alpha$ from $A\cap B$ and count how many blue vertices are closest to that point. If a vertex on $B$ is at distance $i\geq 1$ from $A\cap B$ 
then we include every neighbor going out of $B$ at distance at most $\min(\beta, \alpha-i)$, itself has $q-1$ vertices going out of $B$ and then each subsequent vertex has $q$ neighbors further away. So we have 
\begin{align*}|P_3| &= 2\sum_{i=1}^\alpha (1+(q-1)(1+\cdots q^{\min(\alpha-i,\beta)-1})) =2\sum_{i=1}^\alpha q^{\min(\alpha-i,\beta)}\\
&=2\left(\sum_{i=1}^{\alpha-\beta} q^{\beta}+\sum_{i=\alpha-\beta+1}^\alpha q^{\alpha-i}\right)=2\left((\alpha-\beta)q^\beta + \frac{q^\beta-1}{q-1}\right).\end{align*}
The same argument shows 
\[|P_4| =2\sum_{i=1}^\beta (1+(q-1)(1+\cdots q^{\min(\beta-i, \alpha)-1})) = 2\sum_{i=1}^\beta q^{\min(\beta-i,\alpha)} . \] 
Since $\beta\leq \alpha$, we can rewrite it as 
\begin{align*}|P_4| &= 2\sum_{i=1}^\beta q^{\beta-i} = 2(1+\cdots +q^{\beta-1}) = 2\frac{q^\beta-1}{q-1}\end{align*}

We can conclude that in that case 
\begin{align*}|X| &= |P_1|+|P_2|+|P_3|+|P_4| \\
&= (r-2)q^\beta+  2\left(q^\beta-\frac{q^\beta-1}{q-1}\right) + 2\left((\alpha-\beta)q^\beta + \frac{q^\beta-1}{q-1}\right) +  2\frac{q^\beta-1}{q-1}\\
&= [r+2(\alpha-\beta)]q^\beta + 2 \frac{q^\beta-1}{q-1}.\end{align*}
\end{proof}

\section{Orbits and stable orbits}
\label{sec:storbits}
In this article, all groups have simply connected derived subgroups, therefore we will not be making any distinction between $\bar{F}$\emph{-orbits} and \emph{stable orbits}.

Let $\mathbf{G}$ be a reductive group over $F$ with simply connected derived subgroup. Let $X$ denote an $F$-variety  with a distinguished closed point $x\in X(F)$ and a transitive right $\mathbf{G}$-action.

The $\emph{orbit}$ of $x$ is defined to be the set \[\mathrm{Orb}_G(x):= x\cdot \mathbf{G}(F) = \{y\in X(F) : \exists g\in \mathbf{G}(F)\ y = x\cdot g \}.\]

The \emph{stable orbit} of $x$ is the set \[\mathrm{Orb_{G}^{\mathrm{st}}}(x):= (x\cdot \mathbf{G}(\bar{F}))\cap X(F) = \{y\in X(F) : \exists g\in \mathbf{G}(\bar{F})\ y =  x\cdot g\}.\]

\begin{rem}\label{rem:orbs}Let $\mathbf{G}_x$ denote stabilizer of $x$. We can idendify  $\mathrm{Orb}_G(x)$ with $\mathbf{G}_x(F)\backslash\mathbf{G}(F)$ whereas $\mathrm{Orb_{G}^{\mathrm{st}}}(x)$ is identified with $(\mathbf{G}_x\backslash\mathbf{G})(F)$.
\end{rem}

\begin{rem}\label{rem:obs}
Although the choice of having a right-action can seem unintuitive, we want to make sure the action matches the one present in Kottwitz notes where $X= \mathbf{G}$ and $g$ act on $x\in \mathbf{G}(F)$ by conjugation 
$x\mapsto g^{-1}xg$. In this exposition we had to either consider right-actions or compose everything with the map $g\mapsto g^{-1}$ which felt arbitrary. 
\end{rem}

To decompose a stable orbit into a union of orbits, we look at the Galois cohomology of the sequence 
\[1\rightarrow \mathbf{G}_x(\bar{F}) \rightarrow \mathbf{G}(\bar{F})\rightarrow (x\cdot \mathbf{G}(\bar{F}))\rightarrow 1,\]
which gives us 
\[1\rightarrow \mathbf{G}_x({F})\rightarrow \mathbf{G}(F)\overset{\varphi}{\rightarrow} \mathrm{Orb_{G}^{\mathrm{st}}}(x)\rightarrow H^1(F,\mathbf{G}_x(\bar{F}) )\overset{\psi}{\rightarrow} H^1(F, \mathbf{G}(\bar{F})).\]
By remark \ref{rem:obs}, the number of orbits within the stable orbit of $x$ is the cardinality of 
\[\mathrm{Orb_{G}^{\mathrm{st}}}(x)/\mathrm{Im}(\varphi) = \mathrm{Coker}(\varphi)\cong \mathrm{Ker}(\psi).\]

In this article, we will only deal with groups which have trivial first Galois cohomology group so we will be able to count the number of orbits within a stable orbit by evaluating 
\[|H^1(F, \mathbf{G}_x)|.\]

\begin{lemma}\label{lem:orbits:GLn}
When $\mathbf{G} = \mathrm{GL}_n$ acting by conjugation on itself, if $\gamma\in \mathbf{G}(F)$ is regular semisimple then 
\[\mathrm{Orb}_{\mathrm{GL}_n}(\gamma) = \mathrm{Orb}_{\mathrm{GL}_n}^{\mathrm{st}}(\gamma)\]
\end{lemma}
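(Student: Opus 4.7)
The plan is to use the general principle laid out in Appendix \ref{sec:storbits}: since $H^1(F,\mathrm{GL}_n(\bar F))=1$ by Hilbert 90, the number of rational orbits within the stable orbit of $\gamma$ is exactly $|H^1(F,\mathbf{G}_\gamma(\bar F))|$. So the lemma reduces to showing $H^1(F,\mathbf{T})=1$ for $\mathbf{T}=\mathbf{G}_\gamma$.

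First I would identify $\mathbf{T}$ explicitly. Since $\gamma$ is regular semisimple, its minimal polynomial coincides with its characteristic polynomial, so $F[\gamma]$ is an étale $F$-algebra of dimension $n$, which decomposes as $F[\gamma]\cong \prod_{i=1}^r F_i$ where each $F_i/F$ is a finite separable field extension. The centralizer of $\gamma$ in $\mathrm{GL}_n$ is then, as an algebraic group over $F$, the Weil restriction
\[
\mathbf{T}\;\cong\;\prod_{i=1}^{r} R_{F_i/F}\,\mathbb{G}_m.
\]

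Next I would apply Shapiro's lemma (or, equivalently, the fact that $H^1$ of a Weil restriction computes the $H^1$ of the original group over the extended field) together with Hilbert 90:
\[
H^1(F,\mathbf{T})\;=\;\prod_{i=1}^{r} H^1(F, R_{F_i/F}\,\mathbb{G}_m)\;\cong\;\prod_{i=1}^{r} H^1(F_i,\mathbb{G}_m)\;=\;1.
\]
Combining with the exact sequence of pointed cohomology sets attached to $1\to \mathbf{T}\to \mathrm{GL}_n\to \mathbf{T}\backslash \mathrm{GL}_n\to 1$ and the triviality of $H^1(F,\mathrm{GL}_n)$ (Hilbert 90), the map
\[
\mathbf{T}(F)\backslash \mathrm{GL}_n(F)\;\longrightarrow\;(\mathbf{T}\backslash \mathrm{GL}_n)(F)
\]
is surjective, which is exactly the statement $\mathrm{Orb}_{\mathrm{GL}_n}(\gamma)=\mathrm{Orb}^{\mathrm{st}}_{\mathrm{GL}_n}(\gamma)$ via the identifications of Remark \ref{rem:orbs}.

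There is no real obstacle here; the only subtle point is justifying the identification of $\mathbf{T}$ with the Weil restriction (as an $F$-group scheme, not merely on $F$-points), which is standard: the functor of points of $R_{F[\gamma]/F}\,\mathbb{G}_m$ is $A\mapsto (A\otimes_F F[\gamma])^\times$, and this is exactly the centralizer of $\gamma$ as a functor. Everything else is Hilbert 90 applied term by term.
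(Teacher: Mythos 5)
Your proposal is correct and follows essentially the same route as the paper: identify $\mathbf{G}_\gamma$ with $\prod_i R_{F_i/F}\mathbb{G}_m$ via the étale algebra $F[\gamma]$, then kill $H^1$ by Shapiro and Hilbert 90. The only (harmless) extra step you add is spelling out the long exact sequence of pointed sets, which the paper has already set up in the appendix.
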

\begin{proof}
Since $\gamma$ is regular semisimple, its centralizer is a quasi-split torus. Explicitly, let $F[\gamma] = F[t]/(\chi_\gamma(t))$ be the extension generated by $\gamma$, where $\chi_\gamma$ is the characteristic (and also minimal since $\gamma$ is regular) polynomial of $\gamma$. 
The extension $F[\gamma]$ is an \'etale algebra which we can decompose $F = F_1\oplus \cdots\oplus F_r$ where $F_i/F$ is a field extension. With this description we have 
\[\mathbf{G}_\gamma \cong \mathrm{R}_{F[\gamma]/F}\mathbb{G}_m \cong \prod_{i=1}^r \mathrm{R}_{F_i/F}\mathbb{G}_m.\]
In particular, $\mathbf{G}_\gamma(F) = \prod_{i=1}^r F_i^\times$. An immediate application of Shapiro's lemma and Hilbert Theorem 90 show that $H^1(F, \mathbf{G}_\gamma(\bar{F}))=1$ as desired. 
\end{proof}

\begin{lemma}\label{lemma:slnorbits}
Let $\mathbf{G} = \mathrm{SL}_n$ acting by conjugation on $\gamma \in \mathrm{GL}_n(F)$ where $\gamma$ is regular semisimple. Let $F[\gamma] = F[t]/(\chi_\gamma(t))\cong F_1\oplus \cdots \oplus F_r$ as in the proof of Lemma \ref{lem:orbits:GLn}. Then we have 
\[ \left|\mathrm{Orb}_{\mathrm{SL}_n}^{\mathrm{st}}(\gamma)/\mathrm{Orb}_{\mathrm{SL}_n}(\gamma)\right| = \left|F[\gamma]^\times/N_{F[\gamma]/F}(F[\gamma]^\times)\right|.\]
In particular, if $n=2$ and $F$ is a local field we have 
\[ \mathrm{Orb}_{\mathrm{SL}_n}^{\mathrm{st}}(\gamma)=\mathrm{Orb}_{\mathrm{SL}_n}(\gamma)\]
if the eigenvalues of $\gamma$ are in $F$, otherwise there are exactly two $\mathrm{SL}_n(F)$ orbits within a stable orbit.
\end{lemma}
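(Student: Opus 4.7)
My plan is to compute the centralizer of $\gamma$ in $\mathrm{SL}_n$ and then its first Galois cohomology, following the general recipe for orbits within a stable orbit set up in the remarks preceding the statement. First I would observe that since $\gamma$ is regular semisimple, its centralizer in $\mathrm{GL}_n$ is $\mathrm{R}_{F[\gamma]/F}\mathbb{G}_m$ as computed in the proof of Lemma \ref{lem:orbits:GLn}, and therefore the centralizer in $\mathrm{SL}_n$ is the kernel of $\det$ restricted to this torus, i.e.\ the norm-one torus
\[ T^{(1)} := \ker\!\left(\mathrm{R}_{F[\gamma]/F}\mathbb{G}_m \xrightarrow{N_{F[\gamma]/F}} \mathbb{G}_m\right). \]
This gives a short exact sequence of $F$-tori
\[ 1 \longrightarrow T^{(1)} \longrightarrow \mathrm{R}_{F[\gamma]/F}\mathbb{G}_m \xrightarrow{\ N\ } \mathbb{G}_m \longrightarrow 1. \]

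Next I would take the associated long exact sequence in Galois cohomology. Shapiro's lemma combined with Hilbert Theorem 90 applied factor by factor gives $H^1(F,\mathrm{R}_{F[\gamma]/F}\mathbb{G}_m)=\prod_i H^1(F_i,\mathbb{G}_m)=1$, and $H^1(F,\mathbb{G}_m)=1$ by Hilbert 90 again. Therefore the long exact sequence truncates to
\[ F[\gamma]^\times \xrightarrow{\ N_{F[\gamma]/F}\ } F^\times \longrightarrow H^1(F,T^{(1)}) \longrightarrow 1, \]
so $H^1(F,T^{(1)}) \cong F^\times / N_{F[\gamma]/F}(F[\gamma]^\times)$. (I read the right-hand side of the statement as $F^\times/N_{F[\gamma]/F}(F[\gamma]^\times)$; the expression $F[\gamma]^\times/N(F[\gamma]^\times)$ literally written does not parse since the norm lands in $F^\times$, and the $n=2$ verification below will confirm this reading.)

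Then I would invoke the general framework from Appendix \ref{sec:storbits}: the number of ordinary orbits inside the stable orbit of $\gamma$ equals the cardinality of the kernel of $H^1(F,(\mathrm{SL}_n)_\gamma) \to H^1(F,\mathrm{SL}_n)$, and by Kneser's theorem (or directly, Hilbert 90 for $\mathrm{SL}_n$) the target vanishes over our local non-archimedean $F$. Hence this kernel is all of $H^1(F,T^{(1)})$, which proves the first formula.

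Finally, for the $n=2$ case: if the eigenvalues of $\gamma$ lie in $F$ then $F[\gamma]\cong F\oplus F$ and the norm $(a,b)\mapsto ab$ is surjective onto $F^\times$, so the quotient is trivial; otherwise $F[\gamma]=E$ is a quadratic field extension, and by local class field theory $[F^\times : N_{E/F}(E^\times)] = [E:F] = 2$. The main substantive point is the identification of the centralizer as a norm-one torus and the verification that the connecting map in the long exact sequence indeed witnesses exactly the cocycles giving distinct rational orbits; I do not expect a real obstacle, only some care with signs/conventions in identifying the connecting homomorphism with the class of $F^\times/N(F[\gamma]^\times)$.
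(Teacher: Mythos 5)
Your proof is correct and follows essentially the same route as the paper: identify $(\mathrm{SL}_n)_\gamma$ as the norm-one torus $T^{(1)}=\ker(N_{F[\gamma]/F})$, take Galois cohomology of the short exact sequence $1\to T^{(1)}\to\mathrm{R}_{F[\gamma]/F}\mathbb{G}_m\to\mathbb{G}_m\to 1$, use Hilbert 90 and Shapiro to kill the middle and right $H^1$'s, then apply the appendix framework together with $H^1(F,\mathrm{SL}_n)=1$, and finish the $n=2$ case with local class field theory. You are also right to flag the notational issue: the displayed quotient should read $F^\times/N_{F[\gamma]/F}(F[\gamma]^\times)$, and the paper's own proof carries the same slip (the exact sequence is written with $\prod_{i}\mathbb{G}_m$ as target and the cohomology is identified with $F[\gamma]^\times/\mathrm{Im}(N)$, both of which should be the single copy $\mathbb{G}_m$ and $F^\times$ respectively); your version is the corrected one.
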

\begin{proof} In this case, the centralizer of $\gamma$ corresponds to norm $1$ elements of $F[\gamma]$. 
The first part comes from the Galois cohomology of the $\bar{F}$-points of the sequence 
\[1\rightarrow\mathbf{G}_\gamma \rightarrow \mathrm{R}_{F[\gamma]/F}\mathbb{G}_m\overset{\prod_{i}N_{F[\gamma]/F}}{\rightarrow} \prod_{i=1}^r\mathbb{G}_m\rightarrow 1.\]
 The first cohomology group of the middle term vanishes by Hilbert Theorem 90, we get that $ H^1(F,\mathbf{G}_\gamma (\bar{F}) ) \cong F[\gamma]^\times/\mathrm{Im}(N_{F[\gamma]/F})$.

When $F$ is local and $F[\gamma]/F$ is a  cyclic field extension, we have 
\[F[\gamma]^\times/N_{F[\gamma]/F}(F[\gamma]^\times) \cong \mathbb{Z}/[F[\gamma]:F]\mathbb{Z}\]
 by Tate--Nakayama duality. 
 
 In particular, when $n=2$, the algebra  $F[\gamma]$ is isomorphic to either $F\oplus F$ or  a quadratic field extension. In the latter case, 
 the paragraph above yields $2$ rational orbits within the stable orbit. When $F[\gamma]=F\oplus F$, the map 
  $F^\times\times F^\times \rightarrow F^\times, (x,y)\mapsto xy$ is surjective, hence there is only one rational orbit within the stable orbit of $\gamma$.
\end{proof}

\begin{prop}\label{prop:orbitsgl22}
Assume $F$ is a local field. Let $\mathbf{G} = \mathrm{GL}_2\times_{\det}\mathrm{GL}_2$
 acting by conjugation on itself and let $\gamma=(\gamma_1,\gamma_2)\in \mathbf{G}(F)$ be a regular semisimple element.
Let $F[\gamma] = F[t]/(\chi_\gamma(t)) \cong F[\gamma_1]\oplus F[\gamma_2]$.

We have $\mathrm{Orb}_{\mathbf{G}}^{\mathrm{st}}(\gamma)=\mathrm{Orb}_{\mathbf{G}}(\gamma)$ in the following cases:
\begin{itemize}
\item[$(i)$] If $F[\gamma] \cong F^{\oplus 4}$ ($\gamma$ is hyperbolic).
\item[$(ii)$] If $F[\gamma]\cong F^{\oplus 2}\oplus K$, where $K/F$.
\item[$(iii)$] If $F[\gamma]\cong K^{\oplus 2}$ where $K$ is a quadratic field extensions of $F$. 
\end{itemize}
When $F[\gamma]\cong K_1\oplus K_2$ where $K_1,K_2$ are quadratic field extensions, then there are 
exactly two rational orbits within the stable orbit of $\gamma$.
\end{prop}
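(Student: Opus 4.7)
The plan is to reduce the orbit count to one Galois cohomology computation. Since $H^1(F,\mathbf{G})=1$---immediate from the short exact sequence $1\to \mathrm{SL}_2\times\mathrm{SL}_2\to\mathbf{G}\xrightarrow{\det}\mathbb{G}_m\to 1$ combined with Hilbert 90 and $H^1(F,\mathrm{SL}_2)=0$---the discussion in Appendix \ref{sec:storbits} tells us that the number of rational orbits inside the stable orbit of $\gamma$ is exactly $|H^1(F,\mathbf{G}_\gamma)|$. So everything reduces to identifying this torus and computing its first Galois cohomology.

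Next I would make $\mathbf{G}_\gamma$ explicit. Writing $T_i = R_{F[\gamma_i]/F}\mathbb{G}_m$ for the centralizer of $\gamma_i$ in $\mathrm{GL}_2$ (as in Lemma \ref{lem:orbits:GLn}), the centralizer in the fiber product $\mathbf{G}$ is itself a fiber product: $\mathbf{G}_\gamma = T_1 \times_{\mathbb{G}_m} T_2$, where each $T_i\to\mathbb{G}_m$ is the norm ($=$ determinant). This yields a short exact sequence of tori
\[
1 \longrightarrow \mathbf{G}_\gamma \longrightarrow T_1 \times T_2 \xrightarrow{\;N_1/N_2\;} \mathbb{G}_m \longrightarrow 1,
\]
exact over $\bar F$ because each $N_i$ is already surjective on $\bar F$-points. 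Taking Galois cohomology and using $H^1(F, T_i)=0$ (Shapiro's lemma plus Hilbert 90, valid whether $F[\gamma_i]$ is a field or splits as $F\oplus F$), the connecting map identifies
\[
H^1(F,\mathbf{G}_\gamma) \;\cong\; F^\times\big/\bigl(N_{F[\gamma_1]/F}(F[\gamma_1]^\times)\cdot N_{F[\gamma_2]/F}(F[\gamma_2]^\times)\bigr).
\]

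The remainder is a direct case analysis on this quotient. When some $F[\gamma_i]\cong F\oplus F$, the corresponding norm is just the surjection $(a,b)\mapsto ab$, so the product of the two norm subgroups fills $F^\times$ and the quotient is trivial---this disposes of cases (i) and (ii). When both $F[\gamma_i]$ are quadratic field extensions, local class field theory characterizes each $N_{K/F}(K^\times)$ as an index-two subgroup of $F^\times$ depending only on the isomorphism class of $K/F$, so the problem reduces to whether the two norm subgroups coincide---equivalently, whether $F[\gamma_1]$ and $F[\gamma_2]$ are isomorphic as extensions. The main input is therefore the local norm theorem, and the only remaining bookkeeping is the trivial observation that two distinct index-two subgroups of an abelian group generate the whole group. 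The genuine content is all in the local arithmetic of quadratic extensions; no representation-theoretic obstacle arises.
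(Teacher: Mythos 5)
Your cohomological framework is correct and in fact more transparent than what the paper does. The reduction to counting $|H^1(F,\mathbf{G}_\gamma)|$ is justified (you supply the $H^1(F,\mathbf{G})=1$ argument via $1\to\mathrm{SL}_2^2\to\mathbf{G}\to\mathbb{G}_m\to 1$, which the paper merely asserts), and the key computation is the short exact sequence of tori
\[
1\longrightarrow \mathbf{G}_\gamma\longrightarrow T_1\times T_2\xrightarrow{\;(a,b)\mapsto N_1(a)N_2(b)^{-1}\;}\mathbb{G}_m\longrightarrow 1,
\]
which together with $H^1(F,T_i)=0$ (Shapiro plus Hilbert~90) gives
\[
H^1(F,\mathbf{G}_\gamma)\;\cong\; F^\times\big/\bigl(N_{F[\gamma_1]/F}(F[\gamma_1]^\times)\cdot N_{F[\gamma_2]/F}(F[\gamma_2]^\times)\bigr).
\]
The paper instead invokes Tate--Nakayama duality for the character lattice and a software package to produce the same quotient; your derivation is shorter and self-contained, and gives the uniform formula across all four cases rather than treating (i), (ii) separately.

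However, you stop just short of drawing the final conclusion, and if you had, you would have noticed a problem. Your formula says $H^1(F,\mathbf{G}_\gamma)\cong\mathbb{Z}/2\mathbb{Z}$ precisely when the two norm subgroups coincide, i.e.\ when $F[\gamma_1]\cong F[\gamma_2]$ are equal quadratic extensions, and $H^1=0$ when they are distinct quadratic extensions. This is the \emph{opposite} of what item~(iii) and the final sentence of the Proposition literally claim. Your answer \emph{does} agree with the displayed formula inside the paper's own proof, which reads $H^1\cong\mathbb{Z}/2\mathbb{Z}$ if $F[\gamma_1]=F[\gamma_2]$ and $0$ otherwise; so the Proposition's statement (and the remark following the proof, which claims $\mathbf{G}_\gamma\cong\mathbb{G}_m\times R_{K/F}\mathbb{G}_m$ in the equal-field case) appear to have the two quadratic-field cases swapped. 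Indeed when $F[\gamma_1]=F[\gamma_2]=K$ the map $(a,b)\mapsto(ab^{-1},b)$ gives $\mathbf{G}_\gamma\cong R^{(1)}_{K/F}\mathbb{G}_m\times R_{K/F}\mathbb{G}_m$, whose first factor is the anisotropic norm-one torus with $H^1\cong\mathbb{Z}/2\mathbb{Z}$, not $\mathbb{G}_m$. A blind proof of a stated result ought to flag such a disagreement explicitly rather than leave the conclusion as a dangling ``the problem reduces to whether the norm subgroups coincide''; here the discrepancy is genuine and is an error in the Proposition as printed, not in your argument.
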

\begin{proof} Note that if 
 $\mathbf{G}_\gamma \cong \mathbb{G}_m^3$ (case $(i)$) or case $\mathbf{G}_\gamma\cong \mathbb{G}_m\times R_{K/F}\mathbb{G}_m$ (case $(ii)$) then 
 we may use Hilbert Theorem 90 to 
 see that $H^1(F, \mathbf{G}_\gamma(\bar{F})) =1$.\\

When $F[\gamma_1]$ and $F[\gamma_2]$ are fields, then we can use the code in \cite{tamatori} and Nakayama duality, we get that 
\[H^1(F, G_\gamma) \cong H^1(F, \mathbf{X}^\star(G_\gamma)) \cong \left\lbrace\begin{array}{ll}
0     & F[\gamma_1]\neq F[\gamma_2]  \\
\mathbb{Z}/2\mathbb{Z}     & F[\gamma_1]= F[\gamma_2]
\end{array}\right. .\]
\end{proof}
\begin{rem}
We use the same notations as in the previous proposition. 
When $F[\gamma_1]$ and $F[\gamma_2]$ are fields, the set of rational points of the centralizer of $\gamma$ is 
    \[\{(x,y)\in (F[\gamma_1]\times F[\gamma_2])^\times\ :\ N_{F[\gamma_1]/F}(x)=N_{F[\gamma_2]/F}(y)\}.\]

    There are noteworthy applications of tori of this form over a global field related to norm problems and a projective variant of the Hasse Norm Problem, see \cite{phnp-preprint}.

    Also note that the case $F[\gamma_1] = F[\gamma_2]$ we can see in this observation that  
$\mathbf{G}_\gamma \cong \mathbb{G}_m\times \mathrm{R}_{F[\gamma_1]/F}\mathbb{G}_m$ hence Hilbert Theorem 90
yields that the stable orbit consists of a unique rational orbit.

\end{rem}
\begin{ex}
Let $\gamma = \begin{pmatrix}0&-1\\1&0\end{pmatrix}$. We have $\chi_\gamma (t) = t^2+1$.

\begin{itemize}
\item If $F = \mathbb{R}$ then $F[\gamma] = \mathbb{C}$ and the norm map $N_{\mathbb{C}/\mathbb{R}}$ is surjective hence a direct application of the previous lemma tells us that the stable orbit of $\gamma$ is equal 
to its orbit. In other words, if $h\in \mathrm{GL}_2(\mathbb{R})$ is $\mathrm{GL}_2(\mathbb{C})$-conjugate to $\gamma$, then it is also $\mathrm{GL}_2(\mathbb{R})$-conjugate to $\gamma$.
\item If $F = \mathbb{Q}_p$ where $p\equiv 3\ (\mathrm{mod}\ 4)$ then $F[\gamma]$ is the unique unramified extension of $\mathbb{Q}_p$, let's call it $\mathbb{Q}_{p^2}$. Since the extension is unramified and the norm map 
is surjective for all finite fields (hence on the residue fields), then we also have the equality of orbit and stable orbit of $\gamma$. 
\item If $F = \mathbb{Q}_p$ where $p\equiv 1\ (\mathrm{mod}\ 4)$ then $F[\gamma]\cong F\oplus F$ and again, the equality of orbit and stable orbits follow.  
\item If $F = \mathbb{Q}_2$ then $F[\gamma]$ is a quadratic extension and the image of the norm has index $2$ in $\mathbb{Q}_2$ hence there are two orbits within the stable orbit of $\gamma$. One can see that the matrix 
\[\gamma'= \begin{pmatrix}0&1\\-1&0\end{pmatrix}\]
is conjugate to $\gamma$ in $\mathrm{GL}_2(\mathbb{Q}_2)$ since 
\[\gamma = \begin{pmatrix}0&1\\1&0\end{pmatrix}\gamma' \begin{pmatrix}0&1\\1&0\end{pmatrix},\]
but they are not conjugate in $\mathrm{SL}_2(\mathbb{Q}_2)$. Indeed, if they were to be conjugate in $\mathrm{SL}_2(\mathbb{Q}_2)$ then we'd have 
\[g\gamma = \gamma' g\]
where $g$ has to have the form $\begin{pmatrix} a&b\\b&-a\end{pmatrix}$ with $a^2+b^2=-1$. This cannot happen in $\mathbb{Q}_2$ by checking that square residues modulo $8$ are $0,1,4$ and therefore $-1$ is not a sum of two squares. 
\end{itemize}
\end{ex}

\begin{ex}Let us fix $F$ to be a local nonarchimedean field with uniformizer $\varpi$. The group $\mathrm{GL}_2$ has two conjugacy classes of unipotent elements, represented by 
\[\begin{pmatrix}1&0\\0&1\end{pmatrix},\ \text{and}\ \begin{pmatrix}1&1\\0&1\end{pmatrix}.\]
The former element is its own conjugacy class, whereas the centralizer of the latter is the set
\[\left\lbrace \pmat{a&b\\0&a}\right\rbrace\cong F^\times \times F.\]
The additive and multiplicative versions of Hilbert theorem 90 together tell us that the first cohomology group of this centralizer is trivial, so once again we can verify that the orbit 
of $\pmat{1&1\\0&1}$  is its own stable orbit. Obviously, knowing that there are only two stable orbit of unipotent elements, one of them being trivial, would tell us immediately that both orbits are stable orbits, but this 
gives us a way of checking that the orbit of any unipotent element in $\mathrm{GL}_2$ is stable directly. \\

The situation is slightly more complicated for $\mathrm{SL}_2$, however. Assume that the residue characteristic of $F$ is odd. This time, the centralizer of $\pmat{1&1\\0&1}$ still has the same description but we need $a^2 = 1$ hence the centralizer is 
\[\boldsymbol{\mu}_2(F)\times F= \{\pm 1\}\times F.\]
The first cohomology group of $F$ is still trivial, however we have 
\[H^1(F, \boldsymbol{\mu}_2(\bar{F})) = H^1(F, \mathbb{Z}/2\mathbb{Z}) = \mathrm{Hom}(\mathrm{Gal}(\bar{F}/F),\mathbb{Z}/2\mathbb{Z}).\]
By class field theory, this set is isomorphic to the set of degree $2$ \'etale algebras over $F$, hence there are $4$ possibilities:
\[F\oplus F,\ F(\sqrt{\varpi}),\ F(\sqrt{\alpha}),\ \text{and}\ F(\sqrt{\alpha\varpi}),\]
where $\alpha\in \mathcal{O}_F$ has a nonsquare residue. There $4$ extensions give us the $4$ orbits within the stable orbit of $\pmat{1&1\\0&1}$, which are represented by 
\[\pmat{1&1\\0&1},\pmat{1&\alpha \\0&1},\ \pmat{1&\varpi\\0&1},\ \text{and}\ \pmat{1&\alpha\varpi\\0&1}.\]
\end{ex}

\begin{ex}
This example is not as important but it is a classic visual example. For a simpler setting, we look at $X = \mathfrak{sl}_2$ the vector space of trace zero matrices. Pick $M = \pmat{x&y\\z&-x}\in \mathfrak{sl}_2(\mathbb{R})$. In order to be stably conjugate to $M$, 
a matrix of $\mathfrak{sl}_2(\mathbb{R})$ only needs to share the same determinant so the stable orbit can be visualized as trace zero matrices of determinant $yz-x^2$, which is a real equation of a hyperboloid in $\mathbb{R}^3$. 

There are therefore three possibilities: 

\begin{itemize}
\item $yz-x^2 = 0$: this is the equation of a cone (called the nilpotent cone). It is connected and the orbit of $M$ is stable. 
\item $yz-x^2<0$: we say that  $M$ is  hyperbolic, and we can see that its stable orbit is a hyperboloid of one sheet (called hyperbolic hyperboloid) which is again connected, the orbit of $M$ is once again stable. 
\item $yz-x^2>0$: we say that $M$ is elliptic. We get the last possible shape, which is a hyperboloid of two sheets (called elliptic hyperboloid). Each sheet is a connected component which corresponds to one orbit within the stable orbits. Matrices belonging to different 
sheets will be conjugate over $\mathbb{C}$  but not over $\mathbb{R}$.
\end{itemize} 

This makes sense geometrically, outside of the nilpotent cone, both types of hyperboloids are the same connected surface over $\mathbb{C}$, we can do an easy change of variables $(x, y, z)\mapsto (ix,iy,iz)$ to send the solutions of $yz-x^2 = d$ to the solutions of $yz-x^2=-d$, however the set of real solutions is not always connected.   

\begin{figure}[H]
\centering
\begin{tikzpicture}[el/.style args={#1,#2}{draw,ellipse,minimum width=#1, minimum height=#2},outer sep=0pt,>=latex']

%hyperbolic
\node(el-1) [el={.75cm,2cm}]at (0,0){};
\node(el-2) [el={.75cm,2cm}]at (3,0){};
\path [fill=white] (el-1.87)rectangle([shift={(1cm,-.1cm)}]el-1.-87);
\draw (el-1.87)to[bend right=20](el-2.93) (el-1.-87)to[bend left=20](el-2.-93);

\begin{scope}
\path[clip](1,-1)rectangle(1.5,1);
%\node(el-3) [el={.6cm,1.4cm},densely dashed,thin]at (1.5,0){};
\end{scope}

%nilpotent

\node(el-3) [el={.75cm,2cm}]at (-1,0){};
\node(el-4) [el={.75cm,2cm}]at (4,0){};
\path [fill=white] (el-3.87)rectangle([shift={(0.5cm,-.1cm)}]el-3.-87);
\draw[dashed] (-0.25,0.7) -- (2.75,-0.5);
\draw (-1,1) -- (-0.25,0.7);
\draw (4,-1) -- (2.75,-0.5);
\draw[dashed] (-0.25,-0.7) -- (2.75,0.5);
\draw (-1,-1) -- (-0.25,-0.7);
\draw (4,1) -- (2.75,0.5);

%elliptic
\node(el-5) [el={.75cm,2cm}]at (-2.5,0){};
\node(el-6) [el={.75cm,2cm}]at (5.5,0){};
\path [fill=white] (el-5.87)rectangle([shift={(0.5cm,-.1cm)}]el-5.-87);
\path [fill=white] (el-6.87)rectangle([shift={(-0.5cm,-.1cm)}]el-6.-87);
\draw (-2.5,-1) to[out=30,in=-90] (-1.5,0) to[out=90, in=-30] (-2.5,1);
\draw (5.5,1) to[out=-150,in=90] (4.5,0) to[out=-90, in=150] (5.5,-1);
\end{tikzpicture}
\caption{From left to right in appearance: elliptic orbit, nilpotent orbit, hyperbolic orbit}
\end{figure}

\end{ex}

\subsection{Orbital integrals}
Using the same notations as before, assuming a choice of measures $\mu_x$ (resp. $\mu^{\mathrm{st}}_s$) on the  orbit (resp. stable orbit) of $x\in X$ and integrate specific test functions. Such an integral would be called a  \emph{orbital integral} (resp. stable orbital integral). 

We will denote them respectively by

\[O(\mathbf{G},x, f) := \int_{\mathrm{Orb}_G(x)}f(y)\ \mathrm{d}\mu_x(y),\]
\[O^{\mathrm{st}}(\mathbf{G},x, f) := \int_{\mathrm{Orb}^{\mathrm{st}}_G(x)}f(y)\ \mathrm{d}\mu^{\mathrm{st}}_{x}(y).\]

\begin{rem}\textbf{Compatibility of measures.} Assume that $\mathrm{Orb}^{\mathrm{st}}_G(x)$ can be written in a disjoint union of orbits, with representatives $x=x_0, \cdots, x_n$. In this article we will pick measures so that  
\[O^{\mathrm{st}}(\mathbf{G},x,f) = \sum_{i=1}^n O(\mathbf{G},x_i,f|_{\mathrm{Orb}_G(x_i)}).\]
This is however not always the case in the literature hence some of the exposition may be written elsewhere up to some constant.\\

In particular, given $\gamma\in \mathrm{GL}_n$,  we have that  $\mathrm{Orb}_{\mathrm{GL}_n}(\gamma) = \mathrm{Orb}^{\mathrm{st}}_{\mathrm{SL}_n}(\gamma)$, so we will pick measures on orbits so that 
\[O(\mathrm{GL}_n,\gamma,f) = \sum_{i=1}^n O(\mathrm{SL}_n,\gamma_i,f|_{\mathrm{Orb}_{\mathrm{SL}_n}(\gamma_i)}),\]
where $\gamma_1,\cdots, \gamma_n$ are representatives of the $\mathrm{SL}_n$ orbits of $\gamma$ within its $\mathrm{GL}_n$-orbit.

\end{rem}

In practice, the term orbital integral usually only refers to the specific choice of an algebraic group acting on either itself or its Lie algebra. 

\section{Aside: Relative orbital integrals}
\label{sec:relative}

With the recent progress on the relative Langlands Programme, the computation of relative orbital integrals with respect to spherical pairs has also gained importance.

In this section, we will compute orbital integrals for the diagonal action of $G = \mathrm{GL}_2$ on $\mathrm{GL}_2\times \mathrm{GL}_2$. Similar computations for the unitary group will be part of another paper with Wei Zhang, with applications to the Arithmetic Transfer Conjecture.

Let $\gamma = (\gamma_1,\gamma_2)\in \mathrm{GL}_2(F)\times\mathrm{GL}_2(F)$ be a regular semisimple element with respect to the diagonal action of $\mathrm{GL}_2(F)$. 
We want to compute
\[\mathrm{Orb}(\gamma_1,\gamma_2) = \int_{\mathrm{Orb}_{g\in\mathrm{GL}_2(F)}(\gamma_1,\gamma_2)}\mathds{1}_{\mathrm{GL}_2\times\mathrm{GL}_2({\mathcal{O}_F})}(g)\ \mathrm{d}g\]

Similarly to the classic setting, note that 
\[g\cdot(\gamma_1,\gamma_1) = (g^{-1}\gamma_1 g,g^{-1}\gamma_2 g)\in \mathrm{GL}_2\times\mathrm{GL}_2({\mathcal{O}_F}) = \mathrm{Stab}_{\mathrm{GL}_2(F)}\Lambda_0\times \mathrm{Stab}_{\mathrm{GL}_2(F)}\Lambda_0\]
if and only if 
\[\gamma_1\text{ fixes }g\Lambda_0\text{ and}\ \gamma_1\text{ fixes }g\Lambda_0.\]
We immediately get
\begin{prop}
    If $X^{\gamma_1}\cap X^{\gamma_2}$ is finite, then $\mathrm{Stab}_{\mathrm{GL}_2(F)}(\gamma_1,\gamma_2)$ is compact modulo center, and  
    \[\mathrm{Orb}(\gamma_1,\gamma_2) = \mathrm{vol}\left(F^\times\backslash \mathrm{Stab}_{\mathrm{GL}_2(F)}(\gamma_1,\gamma_2)\right)^{-1}|X^{\gamma_1}\cap X^{\gamma_2}|.\]
\end{prop}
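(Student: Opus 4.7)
The plan is to prove the two claims in sequence, first deducing compactness from the geometric fixed-point condition, then unfolding the orbital integral into a finite sum via the formula in \eqref{eq:orbitsum} and collapsing it with orbit--stabilizer.

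For the first claim, let $H = \mathrm{Stab}_{\mathrm{GL}_2(F)}(\gamma_1,\gamma_2)$. The key observation is that $H$ acts on the intersection $X^{\gamma_1}\cap X^{\gamma_2}$: indeed any $h \in H$ commutes with $\gamma_i$, so for $v \in X^{\gamma_i}$ we have $\gamma_i(hv) = h(\gamma_i v) = hv$. Since the intersection is finite by hypothesis, the homomorphism $H \to \mathrm{Sym}(X^{\gamma_1}\cap X^{\gamma_2})$ has finite image, and its kernel $K_H$ stabilizes each vertex in the intersection. Picking any $v$ in the intersection, $K_H \subset \mathrm{Stab}_H(v) = H \cap \mathrm{Stab}_G(v)$, and the stabilizer of a vertex in $G = \mathrm{GL}_2(F)$ is $F^\times \cdot gKg^{-1}$ (with $K = \mathrm{GL}_2(\mathcal{O}_F)$), which is compact modulo center. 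Hence $K_H$ is compact modulo center, and since it has finite index in $H$, so is $H$.

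For the integral formula, I will apply the general identity \eqref{eq:orbitsum} from \S\ref{sec:kottmethod} adapted to the diagonal action: normalizing $\mathrm{vol}(K) = 1$,
\[
\mathrm{Orb}(\gamma_1,\gamma_2) = \sum_{\substack{[g] \in H\backslash G/K \\ g\Lambda_0 \in X^{\gamma_1}\cap X^{\gamma_2}}} \frac{1}{\mathrm{vol}_{dh}(H \cap gKg^{-1})}.
\]
Because $Z = F^\times \subset H$, the double coset space parametrizing the sum is in bijection with $\bar{H}\backslash (X^{\gamma_1}\cap X^{\gamma_2})$, where $\bar{H} = H/Z$. I will then decompose the Haar measure on $H$ compatibly with $Z$, so that $dh = dz \cdot d\bar{h}$ with $\mathrm{vol}_{dz}(\mathcal{O}_F^\times) = 1$. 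Since $Z \cap gKg^{-1} = \mathcal{O}_F^\times$ and the stabilizer of $v = g\Lambda_0$ in $H$ equals $Z \cdot (H \cap gKg^{-1})$, this gives
\[
\mathrm{vol}_{dh}(H \cap gKg^{-1}) = \mathrm{vol}_{d\bar{h}}(\mathrm{Stab}_{\bar{H}}(v)).
\]

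Finally, applying the orbit--stabilizer relation $|\bar{H}\cdot v| = \mathrm{vol}_{d\bar{h}}(\bar{H})/\mathrm{vol}_{d\bar{h}}(\mathrm{Stab}_{\bar{H}}(v))$ to each orbit representative yields
\[
\mathrm{Orb}(\gamma_1,\gamma_2) = \frac{1}{\mathrm{vol}(F^\times \backslash H)} \sum_{v \in \bar{H}\backslash(X^{\gamma_1}\cap X^{\gamma_2})} |\bar{H}\cdot v| = \frac{|X^{\gamma_1}\cap X^{\gamma_2}|}{\mathrm{vol}(F^\times \backslash H)},
\]
which is the desired formula. No step is a serious obstacle; the only delicate point is the bookkeeping of measures when passing from $H$ to $\bar H$, which I have sketched above to ensure the normalization $\mathrm{vol}(K) = 1$ leads exactly to the volume of $F^\times \backslash H$ in the denominator.
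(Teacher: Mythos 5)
Your proof is correct and essentially supplies the argument the paper leaves implicit (the paper states the proposition with only the remark ``We immediately get'' after observing that $g\cdot(\gamma_1,\gamma_2)\in K\times K$ iff $g\Lambda_0\in X^{\gamma_1}\cap X^{\gamma_2}$). Your unfolding of the orbital integral into a sum over $\bar H$-orbits on $X^{\gamma_1}\cap X^{\gamma_2}$ weighted by inverse stabilizer volumes, followed by orbit--stabilizer, is exactly the intended computation, and your handling of the passage from $H$ to $\bar H=H/F^\times$ with the normalization $\mathrm{vol}(\mathcal{O}_F^\times)=1$ is the right bookkeeping. One small caveat worth flagging, though it affects the statement as much as your proof: the compactness argument (and hence the picking of a fixed vertex $v$) implicitly assumes $X^{\gamma_1}\cap X^{\gamma_2}\neq\emptyset$. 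If the intersection is empty (e.g.\ $\gamma_1=\gamma_2$ split with eigenvalues of unequal valuation, so neither fixes any vertex), $H$ need not be compact modulo center; in that degenerate case the orbital integral is trivially $0$ and the formula reads $0=0\cdot\mathrm{vol}^{-1}$ in a limiting sense, but the first clause of the proposition would fail. Under the standing assumption that the $\gamma_i$ lie in $F^\times\mathrm{GL}_2(\mathcal{O}_F)$ --- which is the only case in which the integrand is nonzero --- the intersection contains $\Lambda_0$ and your argument goes through without issue.
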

As in Kottwitz's work, we may choose a Haar measure on $\mathrm{Stab}_{\mathrm{GL}_2(F)}$ giving its quotient modulo center measure $1$ so that $\mathrm{Orb}(\gamma_1,\gamma_2) = |X^{\gamma_1}\cap X^{\gamma_2}|$.\\

Let us recall the structure of $X^{\gamma_i}$ depending on the type of $\gamma_i$ from \S\ref{sec:fixedpoints}.

\begin{itemize}
    \item If $\gamma_i$ is hyperbolic, then $X^{\gamma_i}$ is a tube containing all vertices within a certain distance to the appartment corresponding to the centralizer of $\gamma_i$.
    \item If $\gamma_i$ is elliptic and $F[\gamma_i]$  is unramified, then $X^{\gamma_i}$ is a ball centered at a vertex.
    \item If $\gamma_i$ is elliptic and $F[\gamma_i]$  is ramified, then $X^{\gamma_i}$ is a ball centered at a midpoint of an edge.
\end{itemize}

The only case where $X^{\gamma_1}\cap X^{\gamma_2}$ is infinite is when both $\gamma_1,\gamma_2$ are hyperbolic and $A_{\gamma_1}\cap A_{\gamma_2}$ is infinite. Let us treat the other cases first.

\subsection{Hyperbolic elements}

Assume that $\gamma_1,\gamma_2$ are hyperbolic, i.e. $G_{\gamma_1}$ and $G_{\gamma_2}$ are split tori. We have four possible relative configurations for $A_{\gamma_1}$ and $A_{\gamma_2}$:
\begin{itemize}
    \item (A) $A_{\gamma_1}\cap A_{\gamma_2} = \varnothing$.
    \item (B) $0<|A_{\gamma_1}\cap A_{\gamma_2}|<\infty$.
    \item (C) $A_{\gamma_1}= A_{\gamma_2}$.
    \item (D) $A_{\gamma_1}\neq A_{\gamma_2}$ and $|A_{\gamma_1}\cap A_{\gamma_2}|=\infty$.
\end{itemize}

\begin{figure}[H]
\begin{subfigure}{.24\textwidth}
\centering
\begin{tikzpicture}[scale=0.6]
\draw[very thick, color=gray!60] (-2,0) -- (2,0);

\draw[very thick, color=gray!60] (0,-0.5) -- ++(-15:1) -- ++(-40:1);
\draw[very thick, color=gray!60] (0,-0.5) -- ++(195:1) -- ++(220:1);
%\draw[ultra thick, color=red] (-0.5,0) -- ++(0:1) -- ++(40:1);
%\draw[ultra thick, color=red] (-0.5,0) -- ++(140:1);

%\begin{scope}[shift=(-90:0.5), rotate=180]
%\draw[ultra thick, color=blue] (-0.5,0) -- ++(0:1) -- ++(40:1);
%\draw[ultra thick, color=blue] (-0.5,0) -- ++(140:1);
%\end{scope}
\end{tikzpicture}
\caption{ }
\end{subfigure}
\begin{subfigure}{.24\textwidth}
\centering
\begin{tikzpicture}[scale=0.6]
\draw[very thick, color=gray!60] (-2,0) -- (2,0);

\draw[very thick, color=gray!60] (.5,0) -- ++(-15:1) -- ++(-40:0.8);
\draw[very thick, color=gray!60] (-.5,0) -- ++(195:1) -- ++(220:0.8);
\draw[ultra thick] (-.5,0) --(.5,0);
\end{tikzpicture}
\caption{}
\end{subfigure}
\begin{subfigure}{.24\textwidth}
\centering
\begin{tikzpicture}[scale=0.6]
\draw[ultra thick] (-2,0) -- (2,0);
\end{tikzpicture}
\caption{}
\end{subfigure}
\begin{subfigure}{.24\textwidth}
\centering
\begin{tikzpicture}[scale=0.6]
\draw[very thick, color=gray!60] (-2,0) -- (-.5,0);
\draw[ultra thick] (-.5,0) -- (2,0);
\draw[very thick, color=gray!60] (-.5,0) -- ++(195:1) -- ++(220:0.8);
\draw[ultra thick] (-.5,0) --(.5,0);
\end{tikzpicture}
\caption{ }
\end{subfigure}
\end{figure}

Cases (C) and (D) mean that $A_{\gamma_1}$ and $A_{\gamma_2}$ represent the same sector, i.e. point at infinity. Concretely speaking, this means that both tori $G_{\gamma_1}$ and $G_{\gamma_2}$ can be embedded in the same Borel subgroup. In case (C) those two subgroups are the same, but in case (D) we have $G_{\gamma_1}\cap G_{\gamma_2} = F^\times$, the center. As a result, the quotient of the stabilizer of $(\gamma_1,\gamma_2)$ by the center is trivial, hence the orbital integral is equal to the cardinality of $X^{\gamma_1}\cap X^{\gamma_2}$, which is infinite. We get that the orbital integral is not converging in case (D).

\begin{ex}
Let $\gamma_1 = \begin{pmatrix}
    1&0\\0&-1
\end{pmatrix}$ and $\gamma_2 =\begin{pmatrix}
    1&2\\0&-1
\end{pmatrix}  = \begin{pmatrix}
    1&-1\\0&1
\end{pmatrix}  \gamma_1 \begin{pmatrix}
    1&1\\0&1
\end{pmatrix}$. We have that $G_{\gamma_1}$ is the diagonal torus and $G_{\gamma_2} = \left\lbrace\begin{pmatrix}
        a&a-b\\0&b
    \end{pmatrix}: a,b\in F^\times\right\rbrace$.  We can verify that $G_{\gamma_1}\cap G_{\gamma_2}$ is the center.
\end{ex}

For case (C), however, the orbital integral is just an ordinary orbital integral over $\mathrm{GL}_2$, and therefore
  \[\mathrm{Orb}(\gamma_1,\gamma_2) = q^{\min(d_{\gamma_1}, d_{\gamma_2})}.\]

For case (A) and (C), the stabilizer of $(\gamma_1,\gamma_2)$ is compact modulo center, hence we have 
\[ \mathrm{Orb}(\gamma_1,\gamma_2) =|X^{\gamma_1}\cap X^{\gamma_2}|.\]

The explicit computations are carried out in \S \ref{sec:combin-tre}. Let us only mention the result here. In case (A), $\delta$ will denote the combinatorial distance between $A_{\gamma_1}$ and $A_{\gamma_2}$, which we will make explicit later.

We have the following.

\begin{itemize}
    \item Case (A). Up to permutation, assume that $d_{\gamma_1}\geq d_{\gamma_2}$.
    \[\mathrm{Orb}(\gamma_1,\gamma_2) =\left\lbrace \begin{array}{ll}0&\text{if } d_{\gamma_1}+d_{\gamma_2}< \delta\\ 
    &\\
1+(q+1)\frac{q^{(d_{\gamma_1}+d_{\gamma_2}-\delta)/2}-1}{q-1}&{\text{if }}\left\lbrace\begin{array}{l}|d_{\gamma_1}-d_{\gamma_2}|\leq \delta\leq d_{\gamma_1}+d_{\gamma_2}\\d_{\gamma_1}+d_{\gamma_2}-\delta\equiv 0\pmod2\\\end{array} \right.\\ &\\ 2\frac{q^{\frac{1}{2}(d_{\gamma_1}+d_{\gamma_2}-\delta+1)}-1}{q-1}&{\text{if }}\left\lbrace\begin{array}{l}|d_{\gamma_1}-d_{\gamma_2}|\leq \delta\leq d_{\gamma_1}+d_{\gamma_2}\\d_{\gamma_1}+d_{\gamma_2}-\delta\equiv 1\pmod2\\\end{array} \right.\\  &\\
(1+2(d_{\gamma_1}-\delta-d_{\gamma_2}))q^{d_{\gamma_2}} + 2\frac{q^{d_{\gamma_2}}-1}{q-1}&\text{if } \delta\leq |d_{\gamma_1}-d_{\gamma_2}|\\
\end{array}\right.\]

\item Case (B). Let $r = |A_{\gamma_1}\cap A_{\gamma_2}|$. We get 
\[\mathrm{Orb}(\gamma_1,\gamma_2) = (2(d_{\gamma_1}-d_{\gamma_2})+r)q^{d_{\gamma_2}}-2\frac{q^{d_{\gamma_2}}-1}{q-1}.
\]
\end{itemize}

%\subsection{Determining \texorpdfstring{$\delta$}{delta} and \texorpdfstring{$r$}{r}}
\subsection{Elliptic elements}
Up to permutation of $\gamma_1$ and $\gamma_2$ we  now have 3 cases. 

\begin{itemize}
    \item (E) $F[\gamma_1]$, $F[\gamma_2]$ are unramified.
    \item (F) $F[\gamma_1]$, $F[\gamma_2]$ are ramified.
    \item (G) $F[\gamma_1]$ is unramified, $F[\gamma_2]$ is ramified.
\end{itemize}
In all cases we need to compute the number of vertices within the intersection of two balls on the building (which is itself a ball). In case (E) both balls are centered at a vertex, in case (F) both balls are centered at the midpoint of an edge, and in case (G) there is one of each type.

Recall that we will use the fact here that $F$ has odd residue characteristic. In the non-tamely ramified case, the set of Galois-fixed points on the enlarged building can be strictly bigger than the smaller building. In that case, even a Galois-invariant apartment in the big building need not intersect the smaller one, so the distance of the closest vertices of the small building to the apartment may be larger than $1/2$. This distance would then need to be subtracted from the radius. Explicitly, the same results would be true, where in the case of ramified $\gamma_i$ $d_{\gamma_i}$, we replace $d_{\gamma_i}$ by $\frac{1}{2}\sup\{\mathrm{val}_{F[\gamma_i]}(\gamma_i-a): a\in\mathcal{O}_F^\times\}$ as in \cite[p.\ 418]{kott_bible} with the minor difference that our $d_{\gamma_i}$ is the distance from the closest point to the apartment, whereas in Kottwitz's notes it is the distance from the edge containing that point, hence the difference of $1/2$.  \\

We will let $\delta$ be the distance between the centers of $X^{\gamma_1}$ and $X^{\gamma_2}$. Equivalently, it is also the distance between the apartments corresponding to the centralizers of $\gamma_1$ and $\gamma_2$ in the extended building.

We will use Lemma \ref{lem:interballs} to reduce each count to the number of vertices within a ball. In particular, the second case of the Lemma tells us where the center of the ball is in the only non trivial case. 
\begin{itemize}
    \item Case (E). In the non trivial case, we either have a center on a vertex or on a midpoint, depending on the parity of $d_{\gamma_1}-d_{\gamma_2}+\delta$. We get
    \[\mathrm{Orb}(\gamma_1,\gamma_2) = \left\lbrace \begin{array}{ll}
       0& \text{if }d_{\gamma_1}+d_{\gamma_2}<\delta    \\
       &\\
        1 + (1+q)\frac{q^{\frac{1}{2}(d_{\gamma_1}+d_{\gamma_2}-\delta)}-1}{q-1}    &\text{if }\left\lbrace\begin{array}{l}
            |d_{\gamma_1}-d_{\gamma_2}|<\delta  \\
              d_{\gamma_1}+d_{\gamma_2}+\delta \equiv0\pmod2
        \end{array}\right.\\ &\\
         2\frac{q^{\frac{1}{2}(d_{\gamma_1}+d_{\gamma_2}-\delta+1)}-1}{q-1}&\text{if }\left\lbrace\begin{array}{l}
            |d_{\gamma_1}-d_{\gamma_2}|<\delta  \\
              d_{\gamma_1}+d_{\gamma_2}+\delta \equiv1\pmod2
        \end{array}\right.  \\
        &\\  
     1 + (1+q)\frac{q^{\min(d_{\gamma_1},d_{\gamma_2})}-1}{q-1}    &\text{if }\delta\leq |d_{\gamma_1}-d_{\gamma_2}|
    \end{array} \right.\]

       \begin{figure}[H]
   \centering
    \begin{tikzpicture}
        \draw[thick] (0,0) -- (7,0);
        \draw[thick, dashed] (7,0) -- (8,0);
        \begin{scope}[shift=(0:4)]    
        \draw[thick] (0,0) -- (0,0.5);
        \begin{scope}[shift=(90:0.5)]
        \draw[thick] (0,0) -- (120:0.6);
        \draw[thick] (0,0) -- (60:0.6);
        \begin{scope}[shift=(120:0.6)]
        \draw[thick] (0,0) -- (120:0.5);
        \draw[thick] (0,0) -- (60:0.5);
        \end{scope}
        \begin{scope}[shift=(60:0.6)]
        \draw[thick] (0,0) -- (120:0.5);
        \draw[thick] (0,0) -- (60:0.5);
        \end{scope}
        \end{scope}
        \end{scope}
        \foreach \i in {0,1,...,7}{
        \node[draw,circle,inner sep=2pt,fill=black, thick] at (\i,0) {};
        }
        \foreach \i in {0,1,...,4}{
        \node[draw,circle,inner sep=3pt,color=red, thick] at (\i,0) {};
        }
        \node[draw,circle,inner sep=1pt,fill=blue, thick] at (3.5,0) {};
        \node[draw,circle,inner sep=2pt,fill=red, thick] at (1,0) {};
        \node[draw,circle,inner sep=2pt,fill=green!40!gray, thick] at (6,0) {};
        \foreach\j in{3,4,...,7}{
        \begin{scope}[shift=(0:\j),scale=0.5]
        \draw[very thick,color=green!40!gray] (-.2,-0.3) -- (.2,-0.3) to [out=0,in=-90] (.3,-0.2) -- (.3,0.2) to [out=90,in=0] (.2,0.3) 
-- (-.2,0.3) to[out=180,in=90] (-.3,0.2) -- (-.3,-0.2) to[out=-90,in=180] (-.2,-0.3);
        \end{scope}}
        \begin{scope}[shift={(4,0.6)},scale=0.5]
        \node[draw,circle,inner sep=2pt,fill=black, thick] at (0,0) {};
        \draw[very thick,color=green!40!gray] (-.2,-0.3) -- (.2,-0.3) to [out=0,in=-90] (.3,-0.2) -- (.3,0.2) to [out=90,in=0] (.2,0.3) 
-- (-.2,0.3) to[out=180,in=90] (-.3,0.2) -- (-.3,-0.2) to[out=-90,in=180] (-.2,-0.3);
        \end{scope}
    \end{tikzpicture}
    \caption{Case $d_{\gamma_1}=3,\ d_{\gamma_2}=3,\ \delta=5$. The blue node is the center of $X^{\gamma_1}\cap X^{\gamma_2}$}
    \end{figure}
       \begin{figure}[H]
   \centering
    \begin{tikzpicture}
        \draw[thick] (0,0) -- (7,0);
        \draw[thick, dashed] (7,0) -- (8,0);
        \begin{scope}[shift=(0:4)]    
        \draw[thick] (0,0) -- (0,0.5);
        \begin{scope}[shift=(90:0.5)]
        \draw[thick] (0,0) -- (120:0.6);
        \draw[thick] (0,0) -- (60:0.6);
        \begin{scope}[shift=(120:0.6)]
        \draw[thick] (0,0) -- (120:0.5);
        \draw[thick] (0,0) -- (60:0.5);
        \end{scope}
        \begin{scope}[shift=(60:0.6)]
        \draw[thick] (0,0) -- (120:0.5);
        \draw[thick] (0,0) -- (60:0.5);
        \end{scope}
        \end{scope}
        \end{scope}
        \begin{scope}[shift=(0:3),rotate=180]    
        \draw[thick] (0,0) -- (0,0.5);
        \begin{scope}[shift=(90:0.5)]
        \draw[thick] (0,0) -- (120:0.6);
        \draw[thick] (0,0) -- (60:0.6);
        \begin{scope}[shift=(120:0.6)]
        \draw[thick] (0,0) -- (120:0.5);
        \draw[thick] (0,0) -- (60:0.5);
        \end{scope}
        \begin{scope}[shift=(60:0.6)]
        \draw[thick] (0,0) -- (120:0.5);
        \draw[thick] (0,0) -- (60:0.5);
        \end{scope}
        \end{scope}
        \end{scope}
        \foreach \i in {0,1,...,7}{
        \node[draw,circle,inner sep=2pt,fill=black, thick] at (\i,0) {};
        }
        \foreach \i in {0,1,...,4}{
        \node[draw,circle,inner sep=3pt,color=red, thick] at (\i,0) {};
        }
        \node[draw,circle,inner sep=2pt,fill=blue, thick] at (3,0) {};
        \node[draw,circle,inner sep=2pt,fill=red, thick] at (1,0) {};
        \node[draw,circle,inner sep=2pt,fill=green!40!gray, thick] at (6,0) {};
        \foreach\j in{2,3,...,7}{
        \begin{scope}[shift=(0:\j),scale=0.5]
        \draw[very thick,color=green!40!gray] (-.2,-0.3) -- (.2,-0.3) to [out=0,in=-90] (.3,-0.2) -- (.3,0.2) to [out=90,in=0] (.2,0.3) 
-- (-.2,0.3) to[out=180,in=90] (-.3,0.2) -- (-.3,-0.2) to[out=-90,in=180] (-.2,-0.3);
        \end{scope}}
        \begin{scope}[shift={(4,0.6)},scale=0.5]
        \node[draw,circle,inner sep=2pt,fill=black, thick] at (0,0) {};
        \draw[very thick,color=green!40!gray] (-.2,-0.3) -- (.2,-0.3) to [out=0,in=-90] (.3,-0.2) -- (.3,0.2) to [out=90,in=0] (.2,0.3) 
-- (-.2,0.3) to[out=180,in=90] (-.3,0.2) -- (-.3,-0.2) to[out=-90,in=180] (-.2,-0.3);
        \end{scope}
        \begin{scope}[shift={(4,0.6)}]
        \begin{scope}[shift=(60:0.5),scale=0.5]
        \node[draw,circle,inner sep=2pt,fill=black, thick] at (0,0) {};
        \draw[very thick,color=green!40!gray] (-.2,-0.3) -- (.2,-0.3) to [out=0,in=-90] (.3,-0.2) -- (.3,0.2) to [out=90,in=0] (.2,0.3) 
-- (-.2,0.3) to[out=180,in=90] (-.3,0.2) -- (-.3,-0.2) to[out=-90,in=180] (-.2,-0.3);
        \end{scope}
        \end{scope}
         \begin{scope}[shift={(4,0.6)}]
        \begin{scope}[shift=(120:0.5),scale=0.5]
        \node[draw,circle,inner sep=2pt,fill=black, thick] at (0,0) {};
        \draw[very thick,color=green!40!gray] (-.2,-0.3) -- (.2,-0.3) to [out=0,in=-90] (.3,-0.2) -- (.3,0.2) to [out=90,in=0] (.2,0.3) 
-- (-.2,0.3) to[out=180,in=90] (-.3,0.2) -- (-.3,-0.2) to[out=-90,in=180] (-.2,-0.3);
        \end{scope}
        \end{scope}
        \begin{scope}[shift={(3,-0.6)},scale=0.5]
        \node[draw,circle,inner sep=3pt,color=red, thick] at (0,0) {};
        \node[draw,circle,inner sep=2pt,fill=black, thick] at (0,0) {};
        \draw[very thick,color=green!40!gray] (-.2,-0.3) -- (.2,-0.3) to [out=0,in=-90] (.3,-0.2) -- (.3,0.2) to [out=90,in=0] (.2,0.3) 
-- (-.2,0.3) to[out=180,in=90] (-.3,0.2) -- (-.3,-0.2) to[out=-90,in=180] (-.2,-0.3);
        \end{scope}
    \end{tikzpicture}
    \caption{Case $d_{\gamma_1}=3,\ d_{\gamma_2}=4,\ \delta=5$. The blue node is the center of $X^{\gamma_1}\cap X^{\gamma_2}$}
    \end{figure}
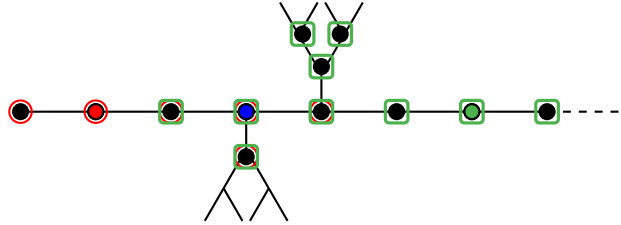
    \item Case (F). Here  both $d_{\gamma_1}$ and $d_{\gamma_2}$ are half-integers, whereas $\delta$ is an integer. Therefore, $d_{\gamma_1}-d_{\gamma_2}+r$ is still an integer, and depending on the parity, we get a vertex (if odd), or a half integer (if even). This is the same case as previously. The center of $X^{\gamma_1}\cap X^{\gamma_2}$ is also a half point if $d_{\gamma_1}-d_{\gamma_2}+\delta$ is even. Just note that by virtue of being half integers, the parity of the latter alternate sum is the opposite parity to $d_{\gamma_1}+d_{\gamma_2}+\delta$.
      \[\mathrm{Orb}(\gamma_1,\gamma_2) = \left\lbrace \begin{array}{ll}
       0& \text{if }d_{\gamma_1}+d_{\gamma_2}<\delta    \\
       &\\
       1 + (1+q)\frac{q^{\frac{1}{2}(d_{\gamma_1}+d_{\gamma_2}-\delta)}-1}{q-1}    &\text{if }\left\lbrace\begin{array}{l}
            |d_{\gamma_1}-d_{\gamma_2}|<\delta  \\
              d_{\gamma_1}+d_{\gamma_2}+\delta \equiv0\pmod2
        \end{array}\right.\\ &\\
       2\frac{q^{\frac{1}{2}(d_{\gamma_1}+d_{\gamma_2}-\delta+1)}-1}{q-1}&\text{if }\left\lbrace\begin{array}{l}
            |d_{\gamma_1}-d_{\gamma_2}|<\delta  \\
              d_{\gamma_1}+d_{\gamma_2}+\delta \equiv1\pmod2
        \end{array}\right.  \\
        &\\  
    2\frac{q^{\min(d_{\gamma_1},d_{\gamma_2})+1/2}-1}{q-1}    &\text{if }\delta\leq |d_{\gamma_1}-d_{\gamma_2}|
    \end{array} \right.\]

   \begin{figure}[H]
   \centering
    \begin{tikzpicture}
        \draw[thick] (0,0) -- (7,0);
        \draw[thick, dashed] (7,0) -- (8,0);
        \begin{scope}[shift=(0:4)]    
        \draw[thick] (0,0) -- (0,0.5);
        \begin{scope}[shift=(90:0.5)]
        \draw[thick] (0,0) -- (120:0.6);
        \draw[thick] (0,0) -- (60:0.6);
        \begin{scope}[shift=(120:0.6)]
        \draw[thick] (0,0) -- (120:0.5);
        \draw[thick] (0,0) -- (60:0.5);
        \end{scope}
        \begin{scope}[shift=(60:0.6)]
        \draw[thick] (0,0) -- (120:0.5);
        \draw[thick] (0,0) -- (60:0.5);
        \end{scope}
        \end{scope}
        \end{scope}
        \foreach \i in {0,1,...,7}{
        \node[draw,circle,inner sep=2pt,fill=black, thick] at (\i,0) {};
        }
        \foreach \i in {0,1,...,4}{
        \node[draw,circle,inner sep=3pt,color=red, thick] at (\i,0) {};
        }
        \node[draw,circle,inner sep=1pt,fill=blue, thick] at (3.5,0) {};
        \node[draw,circle,inner sep=1pt,fill=red, thick] at (0.5,0) {};
        \node[draw,circle,inner sep=1pt,fill=green!40!gray, thick] at (5.5,0) {};
        \foreach\j in{3,4,...,7}{
        \begin{scope}[shift=(0:\j),scale=0.5]
        \draw[very thick,color=green!40!gray] (-.2,-0.3) -- (.2,-0.3) to [out=0,in=-90] (.3,-0.2) -- (.3,0.2) to [out=90,in=0] (.2,0.3) 
-- (-.2,0.3) to[out=180,in=90] (-.3,0.2) -- (-.3,-0.2) to[out=-90,in=180] (-.2,-0.3);
        \end{scope}}
        \begin{scope}[shift={(4,0.6)},scale=0.5]
        \node[draw,circle,inner sep=2pt,fill=black, thick] at (0,0) {};
        \draw[very thick,color=green!40!gray] (-.2,-0.3) -- (.2,-0.3) to [out=0,in=-90] (.3,-0.2) -- (.3,0.2) to [out=90,in=0] (.2,0.3) 
-- (-.2,0.3) to[out=180,in=90] (-.3,0.2) -- (-.3,-0.2) to[out=-90,in=180] (-.2,-0.3);
        \end{scope}
    \end{tikzpicture}
    \caption{Case $d_{\gamma_1}=3.5,\ d_{\gamma_2}=2.5,\ \delta=5$. The blue node is the center of $X^{\gamma_1}\cap X^{\gamma_2}$}
    \end{figure}
       \begin{figure}[H]
   \centering
    \begin{tikzpicture}
        \draw[thick] (0,0) -- (7,0);
        \draw[thick, dashed] (7,0) -- (8,0);
        \begin{scope}[shift=(0:4)]    
        \draw[thick] (0,0) -- (0,0.5);
        \begin{scope}[shift=(90:0.5)]
        \draw[thick] (0,0) -- (120:0.6);
        \draw[thick] (0,0) -- (60:0.6);
        \begin{scope}[shift=(120:0.6)]
        \draw[thick] (0,0) -- (120:0.5);
        \draw[thick] (0,0) -- (60:0.5);
        \end{scope}
        \begin{scope}[shift=(60:0.6)]
        \draw[thick] (0,0) -- (120:0.5);
        \draw[thick] (0,0) -- (60:0.5);
        \end{scope}
        \end{scope}
        \end{scope}
        \begin{scope}[shift=(0:3),rotate=180]    
        \draw[thick] (0,0) -- (0,0.5);
        \begin{scope}[shift=(90:0.5)]
        \draw[thick] (0,0) -- (120:0.6);
        \draw[thick] (0,0) -- (60:0.6);
        \begin{scope}[shift=(120:0.6)]
        \draw[thick] (0,0) -- (120:0.5);
        \draw[thick] (0,0) -- (60:0.5);
        \end{scope}
        \begin{scope}[shift=(60:0.6)]
        \draw[thick] (0,0) -- (120:0.5);
        \draw[thick] (0,0) -- (60:0.5);
        \end{scope}
        \end{scope}
        \end{scope}
        \foreach \i in {0,1,...,7}{
        \node[draw,circle,inner sep=2pt,fill=black, thick] at (\i,0) {};
        }
        \foreach \i in {0,1,...,4}{
        \node[draw,circle,inner sep=3pt,color=red, thick] at (\i,0) {};
        }
        \node[draw,circle,inner sep=2pt,fill=blue, thick] at (3,0) {};
        \node[draw,circle,inner sep=1pt,fill=red, thick] at (0.5,0) {};
        \node[draw,circle,inner sep=1pt,fill=green!40!gray, thick] at (5.5,0) {};
        \foreach\j in{2,3,...,7}{
        \begin{scope}[shift=(0:\j),scale=0.5]
        \draw[very thick,color=green!40!gray] (-.2,-0.3) -- (.2,-0.3) to [out=0,in=-90] (.3,-0.2) -- (.3,0.2) to [out=90,in=0] (.2,0.3) 
-- (-.2,0.3) to[out=180,in=90] (-.3,0.2) -- (-.3,-0.2) to[out=-90,in=180] (-.2,-0.3);
        \end{scope}}
        \begin{scope}[shift={(4,0.6)},scale=0.5]
        \node[draw,circle,inner sep=3pt,color=red, thick] at (0,0) {};
        \node[draw,circle,inner sep=2pt,fill=black, thick] at (0,0) {};
        \draw[very thick,color=green!40!gray] (-.2,-0.3) -- (.2,-0.3) to [out=0,in=-90] (.3,-0.2) -- (.3,0.2) to [out=90,in=0] (.2,0.3) 
-- (-.2,0.3) to[out=180,in=90] (-.3,0.2) -- (-.3,-0.2) to[out=-90,in=180] (-.2,-0.3);
        \end{scope}
        \begin{scope}[shift={(4,0.6)}]
        \begin{scope}[shift=(60:0.5),scale=0.5]
        \node[draw,circle,inner sep=2pt,fill=black, thick] at (0,0) {};
        \draw[very thick,color=green!40!gray] (-.2,-0.3) -- (.2,-0.3) to [out=0,in=-90] (.3,-0.2) -- (.3,0.2) to [out=90,in=0] (.2,0.3) 
-- (-.2,0.3) to[out=180,in=90] (-.3,0.2) -- (-.3,-0.2) to[out=-90,in=180] (-.2,-0.3);
        \end{scope}
        \end{scope}
         \begin{scope}[shift={(4,0.6)}]
        \begin{scope}[shift=(120:0.5),scale=0.5]
        \node[draw,circle,inner sep=2pt,fill=black, thick] at (0,0) {};
        \draw[very thick,color=green!40!gray] (-.2,-0.3) -- (.2,-0.3) to [out=0,in=-90] (.3,-0.2) -- (.3,0.2) to [out=90,in=0] (.2,0.3) 
-- (-.2,0.3) to[out=180,in=90] (-.3,0.2) -- (-.3,-0.2) to[out=-90,in=180] (-.2,-0.3);
        \end{scope}
        \end{scope}
        \begin{scope}[shift={(3,-0.6)},scale=0.5]
        \node[draw,circle,inner sep=2pt,fill=black, thick] at (0,0) {};
        \draw[very thick,color=green!40!gray] (-.2,-0.3) -- (.2,-0.3) to [out=0,in=-90] (.3,-0.2) -- (.3,0.2) to [out=90,in=0] (.2,0.3) 
-- (-.2,0.3) to[out=180,in=90] (-.3,0.2) -- (-.3,-0.2) to[out=-90,in=180] (-.2,-0.3);
        \end{scope}
    \end{tikzpicture}
    \caption{Case $d_{\gamma_1}=3.5,\ d_{\gamma_2}=3.5,\ \delta=5$. The blue node is the center of $X^{\gamma_1}\cap X^{\gamma_2}$}
    \end{figure}
    \item Case (G). Now $d_{\gamma_1}$ is an integer and $d_{\gamma_2},\delta$ are  half-integers. The center of $X^{\gamma_1}\cap X^{\gamma_2}$ a vertex if $d_{\gamma_1}-d_{\gamma_2}+\delta$ is even and a midpoint if it is odd. Again, since $d_{\gamma_2}$ is a half-integer, this corresponds to the opposite parity for $d_{\gamma_1}+d_{\gamma_2}+\delta$. We therefore have
      \[\mathrm{Orb}(\gamma_1,\gamma_2) = \left\lbrace \begin{array}{ll}
       0& \text{if }d_{\gamma_1}+d_{\gamma_2}<\delta    \\
       &\\
       2\frac{q^{\frac{1}{2}(d_{\gamma_1}+d_{\gamma_2}-\delta+1)
       }-1}{q-1}  &\text{if }\left\lbrace\begin{array}{l}
            |d_{\gamma_1}-d_{\gamma_2}|<\delta  \\
              d_{\gamma_1}+d_{\gamma_2}+\delta \equiv0\pmod2
        \end{array}\right.\\ &\\
      1 + (1+q)\frac{q^{\frac{1}{2}(d_{\gamma_1}+d_{\gamma_2}-\delta)}-1}{q-1}  &\text{if }\left\lbrace\begin{array}{l}
            |d_{\gamma_1}-d_{\gamma_2}|<\delta  \\
              d_{\gamma_1}+d_{\gamma_2}+\delta \equiv1\pmod2
        \end{array}\right.  \\
        &\\  
    2\frac{q^{d_{\gamma_2}+1/2}-1}{q-1}    &\text{if }\left\lbrace\begin{array}{l}
            \delta\leq|d_{\gamma_1}-d_{\gamma_2}|\\
              d_{\gamma_1}> d_{\gamma_2}
        \end{array}\right. \\ &\\
         1+(1+q)\frac{q^{d_{\gamma_1}}-1}{q-1}    &\text{if }\left\lbrace\begin{array}{l}
            \delta\leq |d_{\gamma_1}-d_{\gamma_2}|\\
              d_{\gamma_1}< d_{\gamma_2}
        \end{array}\right. 
    \end{array} \right.\]

   \begin{figure}[H]
   \centering
    \begin{tikzpicture}
        \draw[thick] (0,0) -- (7,0);
        \draw[thick, dashed] (7,0) -- (8,0);
        \begin{scope}[shift=(0:4)]    
        \draw[thick] (0,0) -- (0,0.5);
        \begin{scope}[shift=(90:0.5)]
        \draw[thick] (0,0) -- (120:0.6);
        \draw[thick] (0,0) -- (60:0.6);
        \begin{scope}[shift=(120:0.6)]
        \draw[thick] (0,0) -- (120:0.5);
        \draw[thick] (0,0) -- (60:0.5);
        \end{scope}
        \begin{scope}[shift=(60:0.6)]
        \draw[thick] (0,0) -- (120:0.5);
        \draw[thick] (0,0) -- (60:0.5);
        \end{scope}
        \end{scope}
        \end{scope}
        \foreach \i in {0,1,...,7}{
        \node[draw,circle,inner sep=2pt,fill=black, thick] at (\i,0) {};
        }
        \foreach \i in {0,1,...,4}{
        \node[draw,circle,inner sep=3pt,color=red, thick] at (\i,0) {};
        }
        \node[draw,circle,inner sep=1pt,fill=blue, thick] at (3.5,0) {};
        \node[draw,circle,inner sep=2pt,fill=red, thick] at (0,0) {};
        \node[draw,circle,inner sep=1pt,fill=green!40!gray, thick] at (5.5,0) {};
        \foreach\j in{3,4,...,7}{
        \begin{scope}[shift=(0:\j),scale=0.5]
        \draw[very thick,color=green!40!gray] (-.2,-0.3) -- (.2,-0.3) to [out=0,in=-90] (.3,-0.2) -- (.3,0.2) to [out=90,in=0] (.2,0.3) 
-- (-.2,0.3) to[out=180,in=90] (-.3,0.2) -- (-.3,-0.2) to[out=-90,in=180] (-.2,-0.3);
        \end{scope}}
        \begin{scope}[shift={(4,0.6)},scale=0.5]
        \node[draw,circle,inner sep=2pt,fill=black, thick] at (0,0) {};
        \draw[very thick,color=green!40!gray] (-.2,-0.3) -- (.2,-0.3) to [out=0,in=-90] (.3,-0.2) -- (.3,0.2) to [out=90,in=0] (.2,0.3) 
-- (-.2,0.3) to[out=180,in=90] (-.3,0.2) -- (-.3,-0.2) to[out=-90,in=180] (-.2,-0.3);
        \end{scope}
    \end{tikzpicture}
    \caption{Case $d_{\gamma_1}=4,\ d_{\gamma_2}=2.5,\ \delta=5.5$. The blue node is the center of $X^{\gamma_1}\cap X^{\gamma_2}$}
    \end{figure}
       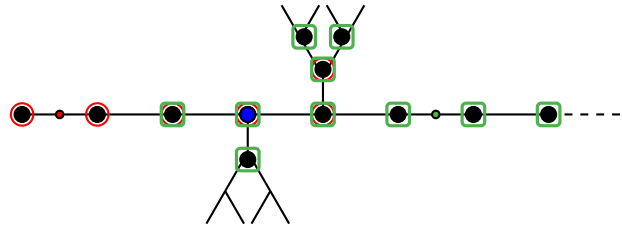
\begin{figure}[H]
   \centering
    \begin{tikzpicture}
        \draw[thick] (0,0) -- (7,0);
        \draw[thick, dashed] (7,0) -- (8,0);
        \begin{scope}[shift=(0:4)]    
        \draw[thick] (0,0) -- (0,0.5);
        \begin{scope}[shift=(90:0.5)]
        \draw[thick] (0,0) -- (120:0.6);
        \draw[thick] (0,0) -- (60:0.6);
        \begin{scope}[shift=(120:0.6)]
        \draw[thick] (0,0) -- (120:0.5);
        \draw[thick] (0,0) -- (60:0.5);
        \end{scope}
        \begin{scope}[shift=(60:0.6)]
        \draw[thick] (0,0) -- (120:0.5);
        \draw[thick] (0,0) -- (60:0.5);
        \end{scope}
        \end{scope}
        \end{scope}
        \begin{scope}[shift=(0:3),rotate=180]    
        \draw[thick] (0,0) -- (0,0.5);
        \begin{scope}[shift=(90:0.5)]
        \draw[thick] (0,0) -- (120:0.6);
        \draw[thick] (0,0) -- (60:0.6);
        \begin{scope}[shift=(120:0.6)]
        \draw[thick] (0,0) -- (120:0.5);
        \draw[thick] (0,0) -- (60:0.5);
        \end{scope}
        \begin{scope}[shift=(60:0.6)]
        \draw[thick] (0,0) -- (120:0.5);
        \draw[thick] (0,0) -- (60:0.5);
        \end{scope}
        \end{scope}
        \end{scope}
        \foreach \i in {0,1,...,7}{
        \node[draw,circle,inner sep=2pt,fill=black, thick] at (\i,0) {};
        }
        \foreach \i in {0,1,...,4}{
        \node[draw,circle,inner sep=3pt,color=red, thick] at (\i,0) {};
        }
        \node[draw,circle,inner sep=2pt,fill=blue, thick] at (3,0) {};
        \node[draw,circle,inner sep=2pt,fill=red, thick] at (0,0) {};
        \node[draw,circle,inner sep=1pt,fill=green!40!gray, thick] at (5.5,0) {};
        \foreach\j in{2,3,...,7}{
        \begin{scope}[shift=(0:\j),scale=0.5]
        \draw[very thick,color=green!40!gray] (-.2,-0.3) -- (.2,-0.3) to [out=0,in=-90] (.3,-0.2) -- (.3,0.2) to [out=90,in=0] (.2,0.3) 
-- (-.2,0.3) to[out=180,in=90] (-.3,0.2) -- (-.3,-0.2) to[out=-90,in=180] (-.2,-0.3);
        \end{scope}}
        \begin{scope}[shift={(4,0.6)},scale=0.5]
        \node[draw,circle,inner sep=2pt,fill=black, thick] at (0,0) {};
        \draw[very thick,color=green!40!gray] (-.2,-0.3) -- (.2,-0.3) to [out=0,in=-90] (.3,-0.2) -- (.3,0.2) to [out=90,in=0] (.2,0.3) 
-- (-.2,0.3) to[out=180,in=90] (-.3,0.2) -- (-.3,-0.2) to[out=-90,in=180] (-.2,-0.3);
        \end{scope}
        \begin{scope}[shift={(4,0.6)}]
        \begin{scope}[shift=(60:0.5),scale=0.5]
        \node[draw,circle,inner sep=2pt,fill=black, thick] at (0,0) {};
        \draw[very thick,color=green!40!gray] (-.2,-0.3) -- (.2,-0.3) to [out=0,in=-90] (.3,-0.2) -- (.3,0.2) to [out=90,in=0] (.2,0.3) 
-- (-.2,0.3) to[out=180,in=90] (-.3,0.2) -- (-.3,-0.2) to[out=-90,in=180] (-.2,-0.3);
        \end{scope}
        \end{scope}
         \begin{scope}[shift={(4,0.6)}]
        \begin{scope}[shift=(120:0.5),scale=0.5]
        \node[draw,circle,inner sep=2pt,fill=black, thick] at (0,0) {};
        \draw[very thick,color=green!40!gray] (-.2,-0.3) -- (.2,-0.3) to [out=0,in=-90] (.3,-0.2) -- (.3,0.2) to [out=90,in=0] (.2,0.3) 
-- (-.2,0.3) to[out=180,in=90] (-.3,0.2) -- (-.3,-0.2) to[out=-90,in=180] (-.2,-0.3);
        \end{scope}
        \end{scope}
        \begin{scope}[shift={(3,-0.6)},scale=0.5]
        \node[draw,circle,inner sep=3pt,color=red, thick] at (0,0) {};
        \node[draw,circle,inner sep=2pt,fill=black, thick] at (0,0) {};
        \draw[very thick,color=green!40!gray] (-.2,-0.3) -- (.2,-0.3) to [out=0,in=-90] (.3,-0.2) -- (.3,0.2) to [out=90,in=0] (.2,0.3) 
-- (-.2,0.3) to[out=180,in=90] (-.3,0.2) -- (-.3,-0.2) to[out=-90,in=180] (-.2,-0.3);
        \end{scope}
    \end{tikzpicture}
\caption{Case $d_{\gamma_1}=3.5,\ d_{\gamma_2}=3.5,\ \delta=5.5$. The blue node is the center of $X^{\gamma_1}\cap X^{\gamma_2}$}
    \end{figure}
\end{itemize}

\subsection{Mixed case}

We are only left with the case where --up to permutation-- $\gamma_1$ is hyperbolic and $\gamma_2$ is elliptic. The field extension $F[\gamma_2]/F$ may still be either ramified or unramified but surprisingly, as shown in Theorem \ref{thm:balltube}, the value of the integral does not depend on it. We get 

\begin{itemize}
    \item Case (h). If $\gamma_1$ is hyperbolic and $\gamma_2$ is elliptic, then 
    \[\mathrm{Orb}(\gamma_1,\gamma_2) = \left\lbrace \begin{array}{ll}
         0& \text{if }d_{\gamma_1}+d_{\gamma_2}<\delta  \\ & \\
         1 + (1+q)\frac{q^{\frac{1}{2}(d_{\gamma_1}+d_{\gamma_2}-\delta)}-1}{q-1}  &\text{if }\left\lbrace\begin{array}{l}
            |d_{\gamma_1}-d_{\gamma_2}|\leq \delta  \\
              d_{\gamma_1}+d_{\gamma_2}-\delta \equiv0\pmod2
        \end{array}\right.  \\
        &\\  
        2\frac{q^{\frac{1}{2}(d_{\gamma_1}+d_{\gamma_2}-\delta+1)
       }-1}{q-1}  &\text{if }\left\lbrace\begin{array}{l}
            |d_{\gamma_1}-d_{\gamma_2}|\leq\delta  \\
              d_{\gamma_1}+d_{\gamma_2}-\delta \equiv1\pmod2
        \end{array}\right.\\ &\\
          1+(1+q)\frac{q^{d_{\gamma_2}}-1}{q-1}    &\text{if }\left\lbrace\begin{array}{l}
            d_{\gamma_2}-d_{\gamma_1}<\delta< |d_{\gamma_1}-d_{\gamma_2}|\\
              d_{\gamma_2}\in \mathbb{Z}
        \end{array}\right. \\ &\\
            2\frac{q^{d_{\gamma_2}+1/2}-1}{q-1}    &\text{if }\left\lbrace\begin{array}{l}
            d_{\gamma_2}-d_{\gamma_1}<\delta< |d_{\gamma_1}-d_{\gamma_2}|\\
              d_{\gamma_2}\notin \mathbb{Z}
        \end{array}\right. \\ &\\
         {[}1+2(d_{\gamma_2}-d_{\gamma_1}-\delta)]q^{d_{\gamma_1} }+ 2\frac{q^{d_{\gamma_1}}-1}{q-1}&\text{if }0\leq \delta\leq d_{\gamma_2}-d_{\gamma_1}
    \end{array}\right.\]
    Note that we looked at the parity of $d_{\gamma_1}+d_{\gamma_2}-\delta$ as opposed to  $d_{\gamma_1}+d_{\gamma_2}+\delta$ used previously, since it allows us to match computations of cases (E) and (G) and avoid the distinction of whether $F[\gamma_2]$ is ramified or not. 
\end{itemize}
\bibliographystyle{amsalpha}
\bibliography{biblio}
\end{document}